\crefname{subsection}{subsection}{subsections}
\newcommand{\I}{{\mathcal{I}}}
\newcommand{\bcm}{\operatorname{B}}
\newcommand{\cB}{\mathcal{B}}
\newcommand{\C}{\mathbb{C}}    
\newcommand{\N}{\mathbb{N}}    
\newcommand{\NN}{\mathbb{N}_0} 
\newcommand{\R}{\mathbb{R}}    
\newcommand{\Z}{\mathbb{Z}}    
\newcommand{\PL}{\mathbb{P}}   
\newcommand{\pp}{\mathsf{p}}
\newcommand{\td}{\boldsymbol{\delta}}  
\newcommand{\lp}[1]{l_{#1}(\mathbb{Z})}
\newcommand{\lrs}[3]{(l_{#1}(\mathbb{Z}))^{#2\times #3}}
\newcommand{\setsp}{\;:\;}     
\newcommand{\fs}{\operatorname{fsupp}}  
\newcommand{\supp}{\operatorname{supp}}
\newcommand{\tp}{\mathsf{T}}  
\newcommand{\bp}{ \begin{proof} }
	\newcommand{\ep}{\hfill  \end{proof} }
\newcommand{\be}{ \begin{equation} }
\newcommand{\ee}{ \end{equation} }
\newcommand{\imply}{ \Longrightarrow }
\newcommand{\wh}{\widehat}
\renewcommand{\le}{\leqslant}
\renewcommand{\ge}{\geqslant}
\newcommand{\bs}{\backslash}
\newcommand{\ol}{\overline}
\newcommand{\la}{\langle}
\newcommand{\ra}{\rangle}
\newcommand{\Lp}[1]{L_{#1}(\mathbb{R})}
\newcommand{\AS}{\operatorname{\mathsf{AS}}} 
\newcommand{\HH}[1]{{H^{#1}(\mathbb{R})}}          
\newcommand{\sr}{\operatorname{sr}}  
\newcommand{\sm}{\operatorname{sm}}  
\newcommand{\vmo}{\operatorname{vm}} 
\newcommand{\vgu}{\upsilon} 
\newcommand{\sd}{\mathcal{S}}  
\newcommand{\si}{\mathtt{S}}            
\newcommand{\diag}{\operatorname{diag}}
\newcommand{\gl}{\lambda}
\newcommand{\gep}{\varepsilon}
\newcommand{\eps}{\epsilon}
\newtheorem{lemma}{Lemma}
\newtheorem{prop}[lemma]{Proposition}
\newtheorem{cor}[lemma]{Corollary}
\newtheorem{theorem}[lemma]{Theorem}
\newtheorem{example}{Example}
\newtheorem{algorithm}{Algorithm}
\definecolor{red}{rgb}{1,0,0}
\definecolor{blue}{rgb}{0,0,1}
\definecolor{green}{rgb}{0.2,0.6,0.15}
\definecolor{bluegreen}{rgb}{0,0.5,0.5}
\definecolor{violet}{rgb}{0.5,0,0.5}
\definecolor{ZurichBlue}{rgb}{.35,.35,.73}
\numberwithin{equation}{section}
\numberwithin{lemma}{section}
\numberwithin{figure}{section}
\numberwithin{table}{section}
\numberwithin{example}{section}
\begin{document}

\title{Wavelets on Intervals Derived from Arbitrary Compactly Supported Biorthogonal Multiwavelets}

\author[rvt1]{Bin Han\fnref{fn1}\corref{cor}}
\ead{bhan@ualberta.ca, http://www.ualberta.ca/$\sim$bhan}
\address[rvt1]{Department of Mathematical and Statistical Sciences,
University of Alberta, Edmonton, Alberta T6G 2G1, Canada.}
\cortext[cor]{Corresponding author.}
\fntext[fn1]{Research supported in part by Natural Sciences and Engineering Research Council (NSERC) of Canada under grant RGPIN-2019-04276 and Alberta Innovates}

\author[rvt1]{Michelle Michelle\fnref{fn1}}
\ead{mmichell@ualberta.ca}

\makeatletter \@addtoreset{equation}{section} \makeatother

\begin{abstract}
Orthogonal and biorthogonal (multi)wavelets on the real line have been extensively studied and employed in applications with success.
On the other hand, a lot of problems in applications such as images and solutions of differential equations are defined on bounded intervals or domains. Therefore, it is important in both theory and application to construct all possible wavelets on intervals with some desired properties from (bi)orthogonal (multi)wavelets on the real line.
Then wavelets on rectangular domains such as $[0,1]^d$ can be obtained through tensor product.
Vanishing moments of compactly supported wavelets are the key property for sparse wavelet representations and are closely linked to polynomial reproduction of their underlying refinable (vector) functions.
Boundary wavelets with low order vanishing moments often lead to undesired boundary artifacts as well as reduced sparsity and approximation orders near boundaries
in applications. Scalar orthogonal wavelets
and spline biorthogonal  wavelets
on the interval $[0,1]$ have been extensively studied in the literature.
Though multiwavelets enjoy some desired properties over scalar wavelets such as high vanishing moments and relatively short support, except a few concrete examples, there is currently no systematic method for constructing (bi)orthogonal multiwavelets on bounded intervals.
In contrast to current literature on constructing particular wavelets on intervals from special (bi)orthogonal (multi)wavelets,
from any arbitrarily given compactly supported (bi)orthogonal multiwavelet on the real line,
in this paper we propose two different approaches to construct/derive all possible locally supported (bi)orthogonal (multi)wavelets on $[0,\infty)$ or $[0,1]$ with or without prescribed vanishing moments, polynomial reproduction, and/or homogeneous boundary conditions.
The first approach generalizes the classical approach from scalar wavelets to multiwavelets, while the second approach is direct without explicitly involving any dual refinable functions and dual multiwavelets.
We shall also address wavelets on intervals satisfying general homogeneous boundary conditions.
Though constructing orthogonal (multi)wavelets on intervals is much easier than their biorthogonal counterparts, we show that some boundary orthogonal wavelets cannot have any vanishing moments if these orthogonal (multi)wavelets on intervals satisfy the homogeneous Dirichlet boundary condition.
In comparison with the classical approach, our proposed direct approach makes the construction of all possible locally supported (multi)wavelets on intervals easy.
Seven examples of orthogonal and biorthogonal multiwavelets on the interval $[0,1]$ will be provided to illustrate our construction approaches and proposed algorithms.
\end{abstract}

\begin{keyword}
Wavelets on intervals,
biorthogonal and orthogonal multiwavelets, vanishing moments,  polynomial reproduction, refinable vector functions, nonstationary multiwavelets, boundary wavelets, boundary conditions

\MSC[2010]{42C40, 41A15, 65T60}
\end{keyword}

\maketitle

\bigskip

\pagenumbering{arabic}

\section{Introduction and Motivations}
\label{sec:intro}

Compactly supported orthogonal and biorthogonal (multi)wavelets on the real line have been extensively studied and constructed in the literature, e.g., see \cite{cw92,cdf92,dau88} for scalar wavelets and \cite{dgh96,ghm94,gjx00,han01,hanbook,hz10,hkp06,keibook}
and references therein for multiwavelets.
Such wavelets are useful in many applications such as signal/image processing and numerical solutions of differential equations.
To explain our motivations for studying multiwavelets on intervals, let us first recall the definition of Riesz wavelets in
the square integrable function space $\Lp{2}$.
Let $\phi:=(\phi^1,\ldots,\phi^r)^\tp$ and $\psi:=(\psi^1,\ldots,\psi^s)^\tp$ be vectors of functions in $\Lp{2}$.
For $J\in \Z$, the multiwavelet affine system $\AS_J(\phi;\psi)$, generated by $\phi$ and $\psi$ through dyadic dilation and integer shifts, is defined to be
\be \label{as0}
\AS_J(\phi;\psi):=
\{\phi^\ell_{J;k} \setsp k\in \Z, \ell=1,\ldots,r\} \cup
\{\psi^\ell_{j;k} \setsp j\ge J, k\in \Z, \ell=1,\ldots,s\},
\ee
where
\[
\phi^\ell_{J;k}:=2^{J/2}\phi^\ell(2^J\cdot-k) \quad \mbox{and}\quad \psi^\ell_{j;k}:=2^{j/2}\psi^\ell(2^j\cdot-k).
\]
We say that $\AS_J(\phi;\psi)$ is \emph{a Riesz basis} of $\Lp{2}$ if
the linear span of $\AS_J(\phi;\psi)$ is dense in $\Lp{2}$ and
$\AS_J(\phi;\psi)$ is a Riesz sequence in
$\Lp{2}$, i.e., there exist positive constants $C_1$ and $C_2$ such that
\be \label{riesz}
C_1 \sum_{h\in \AS_J(\phi;\psi)} |c_h|^2
\le \Big\| \sum_{h\in \AS_J(\phi;\psi)} c_h h \Big\|^2_{\Lp{2}}
\le C_2 \sum_{h\in \AS_J(\phi;\psi)} |c_h|^2
\ee
for all finitely supported sequences $\{ c_h\}_{h\in \AS_J(\phi;\psi)}$.
By a simple scaling argument, it is easy to see (\cite{han12,hanbook}) that $\AS_J(\phi;\psi)$ is a Riesz basis of $\Lp{2}$ for some $J\in \Z$ if and only if  $\AS_J(\phi;\psi)$ is a Riesz basis of $\Lp{2}$ for all $J\in \Z$. Hence, we say that $\{\phi;\psi\}$ is \emph{a Riesz multiwavelet in $\Lp{2}$} if $\AS_0(\phi;\psi)$ is a Riesz basis of $\Lp{2}$.
If $\AS_0(\phi;\psi)$ is an orthonormal basis of $\Lp{2}$, then $\{\phi;\psi\}$ is called \emph{an orthogonal multiwavelet} in $\Lp{2}$.
Obviously, an orthogonal multiwavelet $\{\phi;\psi\}$ is a Riesz multiwavelet in $\Lp{2}$.
For the special case $r=1$, the vector function $\phi=(\phi^1,\ldots,\phi^r)^\tp$ becomes a scalar function and a Riesz (or orthogonal) multiwavelet $\{\phi;\psi\}$ with $r=1$ is often called a scalar Riesz (or orthogonal) wavelet.
For simplicity of presentation, we shall use wavelets to stand for both scalar wavelets and multiwavelets, which are often clear in the context.
Let $\tilde{\phi}:=(\tilde{\phi}^1,\ldots,\tilde{\phi}^r)^\tp$ and $\tilde{\psi}:=(\tilde{\psi}^1,\ldots,\tilde{\psi}^s)^\tp$ be vectors of functions in $\Lp{2}$. We say that $(\{\tilde{\phi};\tilde{\psi}\},\{\phi;\psi\})$ is \emph{a biorthogonal multiwavelet in $\Lp{2}$} if both $\{\tilde{\phi};\tilde{\psi}\}$ and $\{\phi;\psi\}$ are Riesz multiwavelets in $\Lp{2}$ such that $\AS_0(\tilde{\phi};\tilde{\psi})$ and $\AS_0(\phi;\psi)$ are biorthogonal to each other in $\Lp{2}$.
If $(\{\tilde{\phi};\tilde{\psi}\},\{\phi;\psi\})$ is a biorthogonal multiwavelet in $\Lp{2}$, then
$(\AS_J(\tilde{\phi};\tilde{\psi}),\AS_J(\phi;\psi))$ forms a pair of biorthogonal Riesz bases in $\Lp{2}$ for every $J\in \Z$ (\cite{han10,han12,hanbook}) and every function $f\in \Lp{2}$ has a wavelet representation:
{\small \be \label{expr}
	f= \sum_{\ell=1}^r \sum_{k\in \Z}  \la f, \tilde{\phi}^\ell_{J;k}\ra \phi^\ell_{J;k}+\sum_{j=J}^\infty \sum_{\ell=1}^s \sum_{k\in \Z} \la f, \tilde{\psi}^\ell_{j;k}\ra \psi^\ell_{j;k}
	=\sum_{\ell=1}^r \sum_{k\in \Z}  \la f, \phi^\ell_{J;k}\ra \tilde{\phi}^\ell_{J;k}+\sum_{j=J}^\infty \sum_{\ell=1}^s \sum_{k\in \Z} \la f, \psi^\ell_{j;k}\ra \tilde{\psi}^\ell_{j;k}
	\ee
}
with the above series converging unconditionally in $\Lp{2}$.  By \cite[Proposition~5]{han12}, \eqref{expr} implies
\[
f= \sum_{j=-\infty}^\infty \sum_{\ell=1}^s \sum_{k\in \Z} \la f, \tilde{\psi}^\ell_{j;k}\ra \psi^\ell_{j;k}
=\sum_{j=-\infty}^\infty \sum_{\ell=1}^s \sum_{k\in \Z} \la f, \psi^\ell_{j;k}\ra \tilde{\psi}^\ell_{j;k},\qquad f\in \Lp{2}
\]
with the above series converging unconditionally in $\Lp{2}$.
The sparse representation of a biorthogonal wavelet comes from the vanishing moments of the wavelet functions $\psi$ and $\tilde{\psi}$ (see \cite{cdf92,dau88}).
For a compactly supported (vector) function $\psi$, we say that $\psi$ has $m$ \emph{vanishing moments} if $\int_\R x^j \psi(x)dx=0$ for all $j=0,\ldots,m-1$. In particular, we define $\vmo(\psi):=m$ with $m$ being the largest such integer.

Numerous problems in applications such as signals/images and solutions of differential equations are defined on bounded intervals or domains. Therefore, it is important in both theory and application to construct all possible locally supported wavelets on an interval such as $[0,1]$ with desired properties from wavelets on the real line. Then wavelets on rectangular domains such as $[0,1]^d$ can be easily obtained through tensor product.
To construct wavelets on an interval,
in this paper we are particularly interested in compactly supported (bi)orthogonal multiwavelets $(\{\tilde{\phi};\tilde{\psi}\},\{\phi;\psi\})$ in $\Lp{2}$.
Since each interval on the real line $\R$ has at most two endpoints, for simplicity of discussion and presentation,
following \cite{cdv93},
it suffices to consider the one-sided interval $[0,\infty)$ with only one endpoint $0$.
For a nontrivial function $f\in L_2([0,\infty))$, it is obvious that $f(2^j\cdot)\in L_2([0,\infty))$ for all $j\in \Z$, i.e., the space $L_2([0,\infty))$ is invariant under dyadic dilation. However, $f(\cdot-k)\in L_2([0,\infty))$ cannot be true for all $k\in \Z$ if $f$ is nontrivial. That is, $L_2([0,\infty))$ is not invariant under integer shifts.
To maximally preserve the desired dilation and shift structure of  a wavelet system $\AS_J(\phi;\psi)$ in \eqref{as0} on the real line, for every element $\eta\in \AS_J(\phi;\psi)$, the popular and natural approach in the literature
for constructing a modified system $\AS_J(\Phi;\Psi)_{[0,\infty)}$ in $L_2([0,\infty))$ is to
\begin{enumerate}
	\item [1)] keep $\eta$ in $\AS_J(\Phi;\Psi)_{[0,\infty)}$ whenever possible if $\mbox{supp}(\eta)\subseteq [0,\infty)$ (such a kept element $\eta$ is often called an interior wavelet element), while drop $\eta$ if $\mbox{supp}(\eta)\subseteq (-\infty,0]$;
	\item[2)] otherwise, either drop $\eta$ or modify $\eta$ into a desired element in $L_2([0,\infty))$ (such $\eta$ after modification is often called a boundary wavelet element).
\end{enumerate}
In other words, a modified system $\AS_J(\Phi;\Psi)_{[0,\infty)}$ on $[0,\infty)$ adapted from $\AS_J(\phi;\psi)$ on $\R$ is given by
\be \label{ASPhiPsiI}
\AS_J(\Phi;\Psi)_{[0,\infty)}:=\{2^{J/2}\varphi(2^J\cdot) \setsp
\varphi \in \Phi\} \cup\{ 2^{j/2}\eta(2^j\cdot) \setsp j\ge J, \eta \in \Psi\},\qquad J\in \Z
\ee
with
\be \label{PhiPsiI}
\Phi:=\{\phi^L\}\cup \{\phi(\cdot-k) \setsp k\ge n_\phi\},\qquad  \Psi:=\{\psi^L\}\cup \{\psi(\cdot-k) \setsp k\ge n_\psi\},
\ee
where the boundary elements $\phi^L$ and $\psi^L$ are vectors/sets of compactly supported functions in $L_2([0,\infty))$ and
the integers $n_\phi,n_\psi$ are chosen so that all elements in $\{\phi(\cdot-k)\setsp k\ge n_\phi\}\cup \{\psi(\cdot-k) \setsp k\ge n_\psi\}$ are supported inside $[0,\infty)$ and hence are interior elements.
Generally speaking, the difficulty in constructing wavelets on $[0,\infty)$ lies in the above item 2) for constructing suitable boundary elements.
For simplicity of discussion, we remind the reader that a vector function is often used as an ordered set of functions and vice versa in many places including \eqref{PhiPsiI} throughout the paper.

Let $\{\phi;\psi\}$ be a Riesz multiwavelet in $\Lp{2}$ with $\phi=(\phi^1,\ldots,\phi^r)^\tp$ and $\psi=(\psi^1,\ldots,\psi^s)^\tp$.
Recall that a multiwavelet is called a scalar wavelet if $r=1$.
A family of compactly supported scalar orthogonal wavelets $\{\phi;\psi\}$ with increasing orders of vanishing moments has been constructed in Daubechies~\cite{dau88}.
Such Daubechies orthogonal wavelets $\{\phi;\psi\}$ on the real line $\R$ have been used in Meyer~\cite{mey91} and Cohen, Daubechies and Vial~\cite{cdv93} to construct orthogonal wavelets $\AS_J(\Phi;\Psi)_{[0,1]}$ on the unit interval $[0,1]$ such that $\vmo(\psi^L)=\vmo(\psi)$, that is, the boundary wavelet $\psi^L$ preserves the order of vanishing moments of $\psi$.
Also, see \cite{ahjp94,a93,cq04,jl93,mad97,pst95} and references therein for constructing scalar orthogonal wavelets on intervals from Daubechies orthogonal wavelets.

For $m\in \N$, the B-spline function $B_m$ of order $m$ is defined to be
\be \label{bspline}
B_1:=\chi_{(0,1]} \quad \mbox{and}\quad
B_m:=B_{m-1}*B_1=\int_0^1 B_{m-1}(\cdot-x) dx.
\ee
The B-spline $B_m$ possesses many desired properties such as
$B_m\in C^{m-2}(\R)$ with support $[0,m]$ and $B_m|_{(k,k+1)}$ is a nonnegative polynomial of degree $m-1$ for all $k\in \Z$. Having many desired properties, B-splines are extensively studied and used in approximation theory, wavelet analysis, and applied mathematics.
Employing some special properties of B-splines, \cite{dku99,mas96} with further
improvements in \cite{cer19,cf11,cf12,ds98,jia09,pri10}
have adapted
spline biorthogonal scalar wavelets $(\{\tilde{\phi};\tilde{\psi}\},\{\phi;\psi\})$ in \cite{cdf92} with $\phi$ being a $B$-spline function to the interval $[0,1]$.
The semi-orthogonal spline wavelets in \cite{cw92} are also adapted to the interval $[0,1]$ in \cite{cq04}.
Further developments on spline scalar wavelets on $[0,1]$ have been reported in \cite{jia09,cer19} and references therein.
But we are not aware of any published work for adapting a general non-spline biorthogonal scalar wavelets to the interval $[0,1]$.

On one hand, it is widely known that multiwavelets can  achieve higher orders of vanishing moments and better smoothness than scalar wavelets for a given prescribed support.
On the other hand, it is known (\cite{dau88}) that except the discontinuous Haar orthogonal wavelet and its trivial variants, a compactly supported real-valued scalar dyadic orthogonal wavelet cannot have symmetry.
However, real-valued dyadic orthogonal multiwavelets (\cite{ghm94})
are known to be able to achieve both symmetry and continuity. All these desired features make them particularly attractive for constructing wavelets on an interval.
However,
there are not much known constructed multiwavelets on intervals from (bi)orthogonal multiwavelets.
Though multiwavelets on intervals can be constructed from any compactly supported symmetric biorthogonal multiwavelets in \cite{ahl17,hanbook,hm18}, the constructed boundary wavelets often have low vanishing moments.
The study of multiwavelets is often much more involved and complicated than scalar wavelets (\cite{hanbook,keibook}).
As a consequence, it is not surprising that there are much fewer papers discussing how to derive wavelets on intervals from (bi)orthogonal multiwavelets on the real line. For some examples of multiwavelets on intervals, see \cite{ak12,ahl17,cer19,dhjk00,ds10,hanbook,hj02,hm18,hm99,kei15} and references therein.
In addition most known constructions concentrate on orthogonal multiwavelets and the constructed boundary wavelets $\psi^L$ often have lower orders of vanishing moments than $\psi$ (i.e., $\vmo(\psi^L)<\vmo(\psi)$).
Due to the importance of vanishing moments for reducing boundary artifacts and increased sparsity,
it is the purpose of this paper to systematically construct wavelets on an interval from any compactly supported (bi)orthogonal (multi)wavelets on the real line such that the boundary wavelets $\psi^L$ satisfy $\vmo(\psi^L)=\vmo(\psi)$. Our study reveals that it is much more difficult and complicated to construct biorthogonal multiwavelets than scalar orthogonal wavelets on intervals such that the boundary wavelets have high vanishing moments. This difficulty is largely due to

\begin{enumerate}
	\item[(1)] For biorthogonal wavelets, the primal wavelets often have short supports (which are highly desired in wavelet methods for numerical algorithms),
	while their dual wavelets have much longer supports. This makes
	construction of locally supported general biorthogonal wavelets on an interval much more involved and complicated than orthogonal wavelets on an interval.
	
	\item[(2)] The Riesz sequence property similar to \eqref{riesz} for an orthogonal wavelet on an interval
	trivially holds by taking $C_1=C_2=1$. But the establishment of the Riesz sequence property (in particular, the lower bound in \eqref{riesz}) of a biorthogonal wavelet on intervals is nontrivial. In this paper we shall provide a relatively simple proof in \cref{thm:phi:bessel} for this crucial property.
	
	\item[(3)] Though orthogonal (multi)wavelets on intervals can be easily constructed (see \cref{alg:Phi:orth}), by \cref{cor:ow:novm} and~\cref{thm:ow:vm}, their boundary wavelets cannot have high vanishing moments and satisfy prescribed homogeneous boundary conditions simultaneously. So, it is unavoidable to study general biorthogonal (multi)wavelets on intervals for some applications.
	
	\item[(4)] The study of refinable vector functions and biorthogonal multiwavelets with matrix-valued filters/masks is
	much more complicated than their scalar counterparts (e.g., see \cite{ak12,gjx00,han01,han03,hanbook,hj02,hz10,hm18,jrz99,keibook}).
	This makes the construction of orthogonal and biorthogonal multiwavelets on an interval from refinable vector functions more technical than their scalar counterparts.
	\item[(5)] Published works so far only concern about constructing \emph{one particular} (bi)orthogonal wavelet on intervals derived from \emph{some special} compactly supported (bi)orthogonal wavelet on the real line. Even for an orthogonal scalar wavelet $\{\phi;\psi\}$ on the real line, its derived orthogonal wavelets $\AS_0(\Phi;\Psi)_{[0,\infty)}$ on $[0,\infty)$ are not (essentially) unique and pathologically, all elements in $\Phi\cup\Psi$ could be supported inside $[N,\infty)$ for any $N\in \N$, see Example~\ref{expl:00} for details.
\end{enumerate}

From
any arbitrarily given compactly supported (bi)orthogonal multiwavelet on the real line, in this paper we shall propose two different systematic approaches with tractable algorithms for constructing
all possible locally supported
(bi)orthogonal (multi)wavelets on
$\I$ with or without prescribed vanishing moments, polynomial
reproduction, and/or homogeneous boundary conditions, where $\I=[0,\infty)$ or $\I=[0,N]$ with $N\in \N$.
This allows us to find suitable/optimal wavelets on intervals in applications.
The first approach generalizes the classical approach from scalar wavelets to multiwavelets by first constructing primal and dual refinable vector functions in $L_2(\I)$ and then deriving associated primal and dual wavelets in $L_2(\I)$.
Most known constructions of wavelets on intervals basically use this classical approach.
As we shall see in this paper,
it is necessarily much more complicated for the classical approach to construct biorthogonal multiwavelets than scalar orthogonal wavelets on intervals such that the boundary wavelets have high vanishing moments.
The second approach is by directly constructing the primal refinable vector functions and primal multiwavelets in $L_2(\I)$ without explicitly involving the dual refinable vector functions and dual multiwavelets.
In comparison with the classical approach, this direct approach is more general and
makes the construction of both scalar wavelets and multiwavelets on intervals easy.

To have a glimpse of the classical and direct approaches before jumping into the technical details and proofs, here we provide some outlines and road maps
of the classical and direct approaches
for constructing compactly supported biorthogonal wavelets on $[0,\infty)$ from an arbitrarily given compactly supported biorthogonal wavelet on the real line.
According to \cref{thm:bw} in \cref{sec:bwRplus},
a compactly supported biorthogonal wavelet
$(\{\tilde{\phi};\tilde{\psi}\},\{\phi;\psi\})$ in $\Lp{2}$ must satisfy
\[
\phi=2\sum_{k\in \Z} a(k)\phi(2\cdot-k),\;\;
\psi=2\sum_{k\in \Z} b(k)\phi(2\cdot-k),
\;\;\tilde{\phi}=2\sum_{k\in \Z}\tilde{a}(k)
\tilde{\phi}(2\cdot-k),\;\;
\tilde{\psi}=2\sum_{k\in \Z} \tilde{b}(k)
\tilde{\phi}(2\cdot-k),
\]
where $a,b,\tilde{a},\tilde{b}\in \lrs{0}{r}{r}$ and by $\lrs{0}{r}{r}$ we denote the space of all finitely supported sequences $u=\{u(k)\}_{k\in \Z}: \Z \rightarrow \C^{r\times r}$.
The above multiscale relations (called the refinable structures in this paper) are well known to play the key role for a fast multiwavelet transform.
By $\PL_{m-1}$ we denote the space of all polynomials of degree less than $m$. Define $m:=\vmo(\tilde{\psi})$ and $\tilde{m}:=\vmo(\psi)$ for vanishing moments. From such a given compactly supported biorthogonal wavelet $(\{\tilde{\phi};\tilde{\psi}\},\{\phi;\psi\})$ in $\Lp{2}$, we are interested in
deriving a compactly supported Riesz basis
$\AS_0(\Phi;\Psi)_{[0,\infty)}$
in $L_2([0,\infty))$ satisfying
\be \label{I:psi:0}
\phi^L=2A_L\phi^L(2\cdot)+2\sum_{k=n_\phi}^{\infty} A(k) \phi(2\cdot-k),\qquad
\psi^L=2B_L\phi^L(2\cdot)+2\sum_{k=n_\phi}^{\infty} B(k) \phi(2\cdot-k),
\ee
for some matrices $A_L,B_L$ and finitely supported sequences $A,B$,
where $\AS_0(\Phi;\Psi)_{[0,\infty)}$ is defined in \eqref{ASPhiPsiI} for $\Phi,\Psi$ in \eqref{PhiPsiI} with
compactly supported boundary vector functions $\phi^L,\psi^L$.
We shall prove in \cref{thm:wbd:0} that the unique dual Riesz basis of $\AS_0(\Phi;\Psi)_{[0,\infty)}$ must be given by $\AS_0(\tilde{\Phi};\tilde{\Psi})_{[0,\infty)}\subseteq L_2([0,\infty))$, defined similarly as in \eqref{PhiPsiI} with $\tilde{\Phi}=\{\tilde{\phi}^L\}\cup \{\tilde{\phi}(\cdot-k) \setsp k\ge n_{\tilde{\phi}}\}$ and $\tilde{\Psi}=\{\tilde{\psi}^L\}\cup \{\tilde{\psi}(\cdot-k) \setsp k\ge n_{\tilde{\psi}}\}$,
such that $\tilde{\phi}^L$ and $\tilde{\psi}^L$ must have \emph{compact support} and satisfy
\be \label{I:psi:dual:0}
\tilde{\phi}^L=2\tilde{A}_L\tilde{\phi}^L(2\cdot)+2\sum_{k=n_{\tilde{\phi}}}^{\infty} \tilde{A}(k) \tilde{\phi}(2\cdot-k),\qquad
\tilde{\psi}^L=2\tilde{B}_L\tilde{\phi}^L(2\cdot)+2\sum_{k=n_{\tilde{\phi}}}^{\infty} \tilde{B}(k) \tilde{\phi}(2\cdot-k),
\ee
for some matrices $\tilde{A}_L,\tilde{B}_L$ and finitely supported sequences $\tilde{A},\tilde{B}$.
I.e., $(\AS_0(\tilde{\Phi};\tilde{\Psi})_{[0,\infty)},
\AS_0(\Phi;\Psi)_{[0,\infty)})$ forms a compactly supported biorthogonal wavelet in $L_2([0,\infty))$ such that
all boundary elements $\phi^L, \psi^L$, $\tilde{\phi}^L,\tilde{\psi}^L$ have compact support and satisfy the refinable structures in \eqref{I:psi:0} and \eqref{I:psi:dual:0}.
As stated in \cref{thm:wbd}, the classical approach described in \cref{sec:classical} for constructing a compactly supported biorthogonal wavelet $(\AS_0(\tilde{\Phi};\tilde{\Psi})_{[0,\infty)},
\AS_0(\Phi;\Psi)_{[0,\infty)})$ in $L_2([0,\infty))$ has four major steps:
\begin{enumerate}
	\item[(S1)] Apply \cref{prop:phicut} and \Cref{subsec:Phi} to construct a compactly supported vector function $\phi^L$ in $L_2([0,\infty))$ satisfying the first identity in \eqref{I:psi:0}. To have polynomial reproduction, every $\pp \chi_{[0,\infty)}$ with $\pp\in \PL_{m-1}$ should be an infinite linear combination of elements in $\Phi$.
	
	\item[(S2)] Use \cref{alg:tPhi} to construct a compactly supported vector function $\tilde{\phi}^L$ in $L_2([0,\infty))$ such that the first identity in \eqref{I:psi:dual:0} holds and $\tilde{\Phi}$ is biorthogonal to $\Phi$. To have vanishing moments $\vmo(\psi^L)=\tilde{m}$ with $\tilde{m}:=\vmo(\psi)$, every $\pp \chi_{[0,\infty)}$ with $\pp\in \PL_{\tilde{m}-1}$ must necessarily be an infinite linear combination of elements in $\tilde{\Phi}$, see \cref{lem:vm} for details.
	
	\item[(S3)] Employ \cref{prop:Psi} to construct a compactly supported boundary primal wavelet $\psi^L$ such that the second identity in \eqref{I:psi:0} holds and $\Psi$ is perpendicular to $\tilde{\Phi}$.
	
	\item[(S4)] Employ \cref{prop:tPsi} to construct a compactly supported boundary dual wavelet $\tilde{\psi}^L$ such that the second identity in \eqref{I:psi:dual:0} holds, $\tilde{\Psi}$ is perpendicular to $\Phi$, and $\tilde{\Psi}$ is biorthogonal to $\Psi$.
\end{enumerate}

The classical approach for constructing special vector functions $\phi^L$ in (S1) is quite simple, because each entry of $\phi^L$ is either some $\phi(\cdot-k)\chi_{[0,\infty)}, k\in \Z$ or their linear combinations.
Once (S1) and (S2) are done,
based on two simple observations in  \cref{thm:rz} and \cref{lem:W}, $\psi^L$ in (S3) can be easily constructed by \cref{prop:Psi}. Though $\psi^L$ itself is not unique, the remark after \cref{prop:Psi} shows that the finite-dimensional space generated by $\psi^L$ modulated by the space spanned by $\{\psi(\cdot-k) \setsp k\ge n_\psi\}$ is uniquely determined by $\Phi$ and $\tilde{\Phi}$.
Once (S1)--(S3) are given, $\tilde{\psi}^L$ in (S4) can be easily constructed through  \cref{prop:tPsi} and both $\tilde{\psi}^L$ and $\tilde{\Psi}$ are uniquely determined by $\Phi,\tilde{\Phi}$ and $\Psi$. The Bessel property for the stability of $\AS_0(\tilde{\Phi};\tilde{\Psi})_{[0,\infty)}$ and $\AS_0(\Phi;\Psi)_{[0,\infty)}$
is guaranteed by \cref{thm:phi:bessel}.
To apply wavelet-based methods for numerically solving boundary value problems, all the elements in the Riesz basis $\AS_0(\Phi;\Psi)_{[0,\infty)}$ must satisfy prescribed homogeneous boundary conditions.
This can be easily done by applying \cref{prop:mod} to the constructed $\phi^L$ and $\Phi$ in (S1).

For a given orthogonal (multi)wavelet $\{\phi;\psi\}$ in $\Lp{2}$, because $\{\phi; \psi\}$ is biorthogonal to itself, (S2) can be avoided. Hence, adapting an orthogonal (multi)wavelet from the real line to $[0,\infty)$ becomes quite simple, because (S1) for constructing $\phi^L$ and (S3) for constructing $\psi^L$ are fairly easy, see \cref{alg:Phi:orth} for more details. However, by \cref{cor:ow:novm} and~\cref{thm:ow:vm}, their boundary wavelets $\psi^L$ cannot possess high vanishing moments and satisfy prescribed homogeneous boundary conditions simultaneously. By \cref{thm:ow:vm},
this also holds for nonstationary orthonormal wavelets on $[0,\infty)$.

The main complexity/difficulty for the classical approach is (S2) in \cref{alg:tPhi} for constructing vector functions $\tilde{\phi}^L$, whose entries are finite linear combinations of $\tilde{\phi}(2^j\cdot-k)\chi_{[0,\infty)}, k\in \Z$ with $j\in \{0,1\}$.
The complexity of (S2) is largely due to two facts: (1) The support of $\tilde{\phi}$ is often much longer than that of $\phi$. Therefore, there are many more elements $\tilde{\phi}(\cdot-k)$ essentially touch the endpoint $0$. (2) Because $\tilde{\Phi}$ is biorthogonal to $\Phi$, we must have $\#\tilde{\phi}^L=\#\phi^L+(n_{\tilde{\phi}}-n_\phi)(\#\phi)$ and consequently, we do not have any freedom about the length of $\tilde{\phi}^L$.
Therefore, it is no longer that easy or simple to construct even particular $\tilde{\phi}^L$ such that the first identity in \eqref{I:psi:dual:0} holds and $\tilde{\Phi}$ is biorthogonal to $\Phi$.

The difficulty in (S2) for the classical approach motivates us to propose a direct approach, which is more general but simpler than the classical approach.
The direct approach constructs $\phi^L$ and $\psi^L$ in \cref{thm:Phi:direct,thm:direct} without explicitly involving $\tilde{\phi}^L$ and $\tilde{\psi}^L$. Though the particularly constructed $\phi^L$ in (S1) by the classical approach
can be reused,
the direct approach in \cref{thm:Phi:direct} constructs all possible general vector functions $\phi^L$ in (S1) by directly employing the first identity in \eqref{I:psi:0} under the condition $\rho(A_L)<2^{-1/2}$, i.e., the spectral radius of $A_L$ is less than $2^{-1/2}$.
Without explicitly constructing $\tilde{\Phi}$ and $\tilde{\Psi}$,
inspired by \cref{lem:W}, the direct approach constructs $\psi^L$ in \cref{thm:direct} through the second identity in \eqref{I:psi:0} under
some necessary and sufficient conditions stated in \cref{thm:direct}. Now we can easily derive from \eqref{I:psi:0} matrices $\tilde{A}_L, \tilde{B}_L$ and finitely supported sequences $\tilde{A},\tilde{B}$ from \eqref{I:psi:0}. Then we only need to check the condition $\rho(\tilde{A}_L)<2^{-1/2}$ to obtain a compactly supported biorthogonal wavelet
$(\AS_0(\tilde{\Phi};\tilde{\Psi})_{[0,\infty)},
\AS_0(\Phi;\Psi)_{[0,\infty)})$ in $L_2([0,\infty))$, where $\tilde{\phi}^L$ and $\tilde{\psi}^L$ are defined in
\eqref{I:psi:dual:0}.
The proof of \cref{thm:direct} builds on \cref{thm:rz}, \cref{thm:phi:bessel} for stability, and convergence property of non-standard vector cascade algorithms (which are closely linked to nonstandard vector subdivision schemes). In addition, the direct approach can also improve the classical approach by constructing all possible general vector functions $\tilde{\phi}^L$ in (S2) through \cref{thm:direct:tPhi} by only requiring that $\rho(\tilde{A}_L)<2^{-1/2}$ and $A_L, \tilde{A}_L, A, \tilde{A}$ in
\eqref{I:psi:0} and \eqref{I:psi:dual:0}
should satisfy the identity in \eqref{filter:biorth}.

The procedure stated in \cref{thm:bw:0N} is well known (but without a proof) in the literature (e.g., see \cite{cdv93}) for adapting a compactly supported biorthogonal wavelet
$(\AS_0(\tilde{\Phi};\tilde{\Psi})_{[0,\infty)},
\AS_0(\Phi;\Psi)_{[0,\infty)})$ in $L_2([0,\infty))$ to a bounded interval $[0,N]$ with $N\in \N$. We shall provide a rigorous proof for \cref{thm:bw:0N} in this paper. The main idea of \cref{thm:bw:0N}
is quite simple: one constructs a closely related biorthogonal wavelet on the interval $(-\infty,N]$ whose interior elements are still given by $\psi_{j,k}:=2^{j/2}\psi(2^j\cdot-k)$ for some $j\in \NN$ and $k\in \Z$.
To obtain a locally supported biorthogonal wavelet on $[0,N]$, these two biorthogonal wavelets on $[0,\infty)$ and $(-\infty,N]$ are fused together in a straightforward way such that their boundary elements and all the common interior elements are kept.
The main steps in \cref{thm:bw:0N} for obtaining a biorthogonal wavelet on $(-\infty,N]$ are as follows. (1) Flip functions about the origin, that is, we define
\[
\mathring{\phi}:=\phi(-\cdot),\quad \mathring{\psi}:=\psi(-\cdot),\quad
\tilde{\mathring{\phi}}:=\tilde{\phi}(-\cdot) \quad \mbox{and}\quad \tilde{\mathring{\psi}}:=\tilde{\psi}(-\cdot).
\]
(2) Construct a biorthogonal wavelet
$(\AS_0(\tilde{\mathring{\Phi}};\tilde{\mathring{\Psi}})_{[0,\infty)},
\AS_0(\mathring{\Phi};\mathring{\Psi})_{[0,\infty)})$ in $L_2([0,\infty))$ from the flipped biorthogonal wavelet $(\{\tilde{\mathring{\phi}};\tilde{\mathring{\psi}}\},
\{\mathring{\phi};\mathring{\psi}\})$ in $\Lp{2}$. (3) Then $\{\tilde{h}(N-\cdot)\}_{ \tilde{h}\in \AS_0(\tilde{\mathring{\Phi}};\tilde{\mathring{\Psi}})_{[0,\infty)}}$ and $\{ h(N-\cdot)\}_{h\in \AS_0(\mathring{\Phi};\mathring{\Psi})_{[0,\infty)}}$ form a biorthogonal wavelet in $L_2((-\infty,N])$.
If all the vector functions in $\phi,\psi,\tilde{\phi},\tilde{\psi}$ possess symmetry, then a biorthogonal wavelet on $(-\infty,N]$ can be directly obtained from
the constructed biorthogonal wavelet
$(\AS_0(\tilde{\Phi};\tilde{\Psi})_{[0,\infty)},
\AS_0(\Phi;\Psi)_{[0,\infty)})$ in $L_2([0,\infty))$, see the remark after \cref{thm:bw:0N} for details.

The structure of the paper is as follows. In \cref{sec:bwRplus}, we shall study some basic properties of wavelets on the interval $[0,\infty)$ such as their Bessel properties and vanishing moments.
In \cref{sec:classical}, we shall generalize the classical approach from scalar wavelets to multiwavelets for constructing compactly supported biorthogonal wavelets on the interval $[0,\infty)$.
We shall also discuss in \cref{alg:Phi:orth} the construction of orthogonal (multi)wavelets on $[0,\infty)$ in \cref{sec:classical}.
In \cref{sec:direct}, we shall present the direct approach for constructing all possible compactly supported biorthogonal wavelets on $[0,\infty)$ from any given compactly supported biorthogonal (multi)wavelets on the real line. We shall also discuss how to further improve the classical approach by the direct approach.
In \cref{sec:bc}, we shall address stationary and nonstationary (multi)wavelets on $[0,\infty)$ satisfying any prescribed general homogeneous boundary conditions including Robin boundary conditions.
In \cref{sec:wbd}, we shall discuss how to construct wavelets on the interval $[0,N]$ with $N\in \N$ from wavelets on $[0,\infty)$.
Using the classical approach and the direct approach,
we shall present in \cref{sec:expl} a few examples of orthogonal and biorthogonal wavelets on the interval $[0,1]$ such that the boundary wavelets have high vanishing moments and prescribed homogeneous boundary conditions.
For improved readability, some technical proofs are postponed to \cref{sec:proof}.

\section{Properties of Biorthogonal Wavelets on the Interval $[0,\infty)$}
\label{sec:bwRplus}

In this section, we shall first recall some results on biorthogonal (multi)wavelets on the real line. Then we shall study some properties of biorthogonal wavelets on the interval $[0,\infty)$ which are derived from a compactly supported biorthogonal wavelet on the real line $\R$. Throughout the paper, for simplicity, wavelets stand for both scalar wavelets and multiwavelets.

\subsection{Biorthogonal wavelets on the real line}

To recall some results on biorthogonal wavelets on the real line,
let us first recall some definitions.
The Fourier transform used in this paper is defined to be $\wh{f}(\xi):=\int_\R f(x) e^{-ix\xi} d x, \xi\in \R$ for $f\in \Lp{1}$ and is naturally extended to square integrable functions in $\Lp{2}$.
By $\lrs{0}{r}{s}$ we denote the set of all finitely supported sequences $u=\{u(k)\}_{k\in \Z}: \Z \rightarrow \C^{r\times s}$. For $u=\{u(k)\}_{k\in \Z}\in \lrs{0}{r}{s}$, its Fourier series is defined to be
\[
\wh{u}(\xi):=\sum_{k\in \Z} u(k) e^{-ik\xi}\qquad \mbox{for}\quad \xi\in \R,
\]
which is an $r\times s$ matrix of $2\pi$-periodic trigonometric polynomials. An element in $\lrs{0}{r}{s}$ is often called a (matrix-valued) mask or filter in the literature.
By $\td$ we denote the Dirac sequence
such that $\td(0)=1$ and $\td(k)=0$ for all $k\in \Z\bs\{0\}$. Note that $\wh{\td}=1$.
By $f\in (\Lp{2})^{r\times s}$ we mean that $f$ is an $r\times s$ matrix of functions in $\Lp{2}$ and we define
\[
\la f,g\ra:=\int_{\R} f(x) \ol{g(x)}^\tp dx,\qquad
f\in (\Lp{2})^{r\times t}, g\in (\Lp{2})^{s\times t}.
\]

According to \cite[Theorem~4.5.1]{hanbook} and \cite[Theorem~7]{han12},
any biorthogonal wavelet $(\{\tilde{\phi};\tilde{\psi}\},\{\phi;\psi\})$ in $\Lp{2}$ must be intrinsically derived from refinable vector functions and biorthogonal wavelet filter banks. For simplicity, we only state the following result for compactly supported biorthogonal wavelets $(\{\tilde{\phi};\tilde{\psi}\},\{\phi;\psi\})$ in $\Lp{2}$.

\begin{theorem}\label{thm:bw} (\cite[Theorem~4.5.1]{hanbook} and \cite[Theorem~7]{han12})
	Let $\phi,\tilde{\phi}$ be $r\times 1$ vectors of compactly supported distributions and $\psi,\tilde{\psi}$ be $s\times 1$ vectors of compactly supported distributions on $\R$. Then $(\{\tilde{\phi};\tilde{\psi}\},\{\phi;\psi\})$ is a biorthogonal wavelet in $\Lp{2}$ if and only if the following are satisfied
	\begin{enumerate}
		\item[(1)] $\phi,\tilde{\phi}\in (\Lp{2})^r$ and $\ol{\wh{\phi}(0)}^\tp \wh{\tilde{\phi}}(0)=1$.
		\item[(2)] $\phi$ and $\tilde{\phi}$ are biorthogonal to each other: $\la \phi,\tilde{\phi}(\cdot-k)\ra= \td(k) I_r$ for all $k\in \Z$.
		\item[(3)] There exist low-pass filters $a,\tilde{a}\in \lrs{0}{r}{r}$ and high-pass filters
		$b,\tilde{b}\in \lrs{0}{s}{r}$ such that 
		\begin{align}
		&\phi=2\sum_{k\in \Z} a(k)\phi(2\cdot-k),\qquad
		\psi=2\sum_{k\in \Z} b(k)\phi(2\cdot-k), \label{refstr}\\
		&\tilde{\phi}=2\sum_{k\in \Z}\tilde{a}(k)
		\tilde{\phi}(2\cdot-k),\qquad
		\tilde{\psi}=2\sum_{k\in \Z} \tilde{b}(k)
		 \tilde{\phi}(2\cdot-k),\label{refstr:dual}
		\end{align}
		and $(\{\tilde{a};\tilde{b}\},\{a;b\})$ is a biorthogonal wavelet filter bank, i.e., $s=r$ and
		\be \label{bwfb}
		\left [ \begin{matrix}
			\wh{\tilde{a}}(\xi) &\wh{\tilde{a}}(\xi+\pi)\\
			\wh{\tilde{b}}(\xi) &\wh{\tilde{b}}(\xi+\pi)
		\end{matrix}\right]
		\left[ \begin{matrix}
			\ol{\wh{a}(\xi)}^\tp &\ol{\wh{b}(\xi)}^\tp\\
			\ol{\wh{a}(\xi+\pi)}^\tp &\ol{\wh{b}(\xi+\pi)}^\tp
		\end{matrix}\right]
		=I_{2r}, \qquad \xi\in \R.
		\ee
		\item[(4)] Both $\AS_0(\phi;\psi)$ and $\AS_0(\tilde{\phi};\tilde{\psi})$ are Bessel sequences in $\Lp{2}$, that is, there exists a positive constant $C$ such that
		\[
		\sum_{h\in \AS_0(\phi;\psi)} |\la f, h\ra|^2\le C\|f\|^2_{\Lp{2}}
		\quad \mbox{and}\quad
		\sum_{\tilde{h}\in \AS_0(\tilde{\phi};\tilde{\psi})} |\la f, \tilde{h}\ra|^2\le C\|f\|^2_{\Lp{2}},\qquad \forall\, f\in \Lp{2}.
		\]
	\end{enumerate}
\end{theorem}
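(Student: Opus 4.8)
The plan is to prove the two implications separately, in each case exploiting the biorthogonality of the affine systems at two consecutive scales and translating the resulting identities to the Fourier side.

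For the forward direction, assume $(\{\tilde\phi;\tilde\psi\},\{\phi;\psi\})$ is a biorthogonal wavelet in $\Lp{2}$. Conditions (1) and (4) are almost immediate. Each generator $\phi^\ell(\cdot-k)$ (resp.\ $\tilde\phi^\ell(\cdot-k)$) is an element of the Riesz sequence $\AS_0(\phi;\psi)$ (resp.\ $\AS_0(\tilde\phi;\tilde\psi)$), so $\phi,\tilde\phi\in(\Lp{2})^r$; the upper Riesz bound in \eqref{riesz} is exactly the Bessel bound required in (4); and the normalization $\ol{\wh\phi(0)}^\tp\wh{\tilde\phi}(0)=1$ encodes the nondegeneracy at the coarsest scale, following from density of $\AS_0(\phi;\psi)$ together with the level-$0$ biorthogonality. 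Condition (2) is simply the restriction of the assumed biorthogonality of $\AS_0(\phi;\psi)$ and $\AS_0(\tilde\phi;\tilde\psi)$ to the scaling generators.

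The substantive part of this direction is extracting the refinable structure (3). By the scaling argument recalled in the introduction, $\AS_1(\phi;\psi)$ and $\AS_1(\tilde\phi;\tilde\psi)$ also form a pair of biorthogonal Riesz bases. Expanding the level-$0$ generator $\phi^\ell(\cdot-k)\in\Lp{2}$ in the Riesz basis $\AS_1(\phi;\psi)$ via its dual, the coefficients against the wavelets $\tilde\psi_{j;k'}$ with $j\ge1$ all vanish, because each such $\tilde\psi_{j;k'}$ lies in $\AS_0(\tilde\phi;\tilde\psi)$ and is therefore biorthogonal to $\phi^\ell(\cdot-k)\in\AS_0(\phi;\psi)$. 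Hence only the $\phi_{1;k'}$-terms survive, which yields $\phi=2\sum_k a(k)\phi(2\cdot-k)$ with $a(k)$ read off from $\la\phi,\tilde\phi_{1;k}\ra$; the identical argument applied to $\psi^\ell(\cdot-k)$ gives $\psi=2\sum_k b(k)\phi(2\cdot-k)$, compact support of $\phi,\tilde\phi,\psi$ forces $a,b$ to be finitely supported, and the balance of scaling versus wavelet generators between two consecutive scales forces $s=r$. Running the same computation on the dual side produces $\tilde a,\tilde b$. Finally, inserting \eqref{refstr}--\eqref{refstr:dual} into the four families of biorthogonality relations between $\{\phi,\psi\}$ and $\{\tilde\phi,\tilde\psi\}$ and separating even and odd shifts produces, after a short Fourier-series computation, exactly the block identity \eqref{bwfb}.

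For the converse, assume (1)--(4). I would first propagate biorthogonality from scale $0$ to all scales. Substituting the refinement relations into the level-$0$ relation (2) and using \eqref{bwfb} (splitting each frequency into $\xi$ and $\xi+\pi$) upgrades (2) to the full single-scale system $\la\phi_{0;k},\tilde\psi_{0;k'}\ra=0$, $\la\psi_{0;k},\tilde\phi_{0;k'}\ra=0$, $\la\psi_{0;k},\tilde\psi_{0;k'}\ra=\td(k-k')I_r$; unitarity of the dyadic dilation then extends these across all scales $j\ge0$, giving biorthogonality of $\AS_0(\phi;\psi)$ and $\AS_0(\tilde\phi;\tilde\psi)$. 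It remains to show each is a Riesz basis. The upper Riesz bounds are precisely the Bessel bounds (4). For completeness and the lower bounds, I would telescope the refinement relations: the filter bank identity makes the scaling projections satisfy $P_{j+1}=P_j+Q_j$ with $P_jf:=\sum_{\ell,k}\la f,\tilde\phi^\ell_{j;k}\ra\phi^\ell_{j;k}$ and $Q_j$ the analogous wavelet operator, so that $P_0f+\sum_{j\ge0}Q_jf=\lim_{J\to\infty}P_Jf=f$ on a dense subspace of $\Lp{2}$, where the fine-scale limit uses the $L_2$-membership and normalization in (1). This reproducing identity together with the two Bessel bounds forces both systems to be Riesz bases, a standard frame-theoretic fact (cf.\ \cite{han12,hanbook}).

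The main obstacle is the Riesz basis property in the converse---specifically the lower bound and completeness---since everything else reduces to bookkeeping with \eqref{bwfb} on the Fourier side. This is exactly where hypothesis (4) is indispensable: biorthogonality of the two affine systems alone controls neither the lower frame bound nor the $\Lp{2}$-convergence of the telescoped reproducing identity. A secondary delicate point, in the forward direction, is guaranteeing that the derived matrix filters $a,b,\tilde a,\tilde b$ are genuinely finitely supported, which rests essentially on the compact support of $\phi,\psi,\tilde\phi,\tilde\psi$.
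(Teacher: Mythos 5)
This theorem is stated in the paper as a quoted result from \cite[Theorem~4.5.1]{hanbook} and \cite[Theorem~7]{han12}; the paper itself contains no proof of it, so there is nothing in-paper to compare your argument against. Your outline does follow the standard proof strategy of those references: in the forward direction you expand the level-$0$ generators in the dual pair of Riesz bases at level $1$ to read off the filters (the wavelet coefficients vanish because $\tilde{\psi}_{j;k}$ with $j\ge 1$ already lies in $\AS_0(\tilde{\phi};\tilde{\psi})$), and then convert the cross-scale biorthogonality relations into the Fourier-side identity \eqref{bwfb}; in the converse you propagate biorthogonality across scales via \eqref{bwfb} and obtain completeness from the telescoping $P_{j+1}=P_j+Q_j$ plus the two Bessel bounds. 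This is correct in outline, but three steps that you assert are precisely the ones that carry the real weight in the cited proofs and would need to be supplied or cited: (i) $s=r$ does not follow from a mere ``balance of generators'' --- it requires the length theory of finitely generated shift-invariant spaces (or an equivalent rank argument on the filter bank); (ii) $\lim_{J\to\infty}P_Jf=f$ in $\Lp{2}$ is the approximation property of the biorthogonal pair $(\phi,\tilde{\phi})$ and is a theorem in its own right, not a consequence of normalization alone; and (iii) in the forward direction, $\ol{\wh{\phi}(0)}^\tp\wh{\tilde{\phi}}(0)=1$ is likewise extracted from the fine-scale behaviour of the projections $P_J$, not directly from density plus level-$0$ biorthogonality. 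With those three ingredients made precise, your argument matches the standard proof.
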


A vector function $\phi$ satisfying \eqref{refstr} is called \emph{a refinable vector function} with a refinement filter/mask $a\in \lrs{0}{r}{r}$.
For a vector function $\phi$ we also regard $\phi$ as an ordered set and vice versa. We define $\#\phi$ to be the number of entries in $\phi$, that is, the cardinality of the set/vector $\phi$. For $r=1$, a refinable vector function is often called a (scalar) refinable function. By \cite[Theorems~4.6.5 and~6.4.6]{hanbook} or \cite[Theorem~2.3]{han03}, item (4) of \cref{thm:bw} can be replaced by
\begin{enumerate}
	\item[(4')] both $\psi$ and $\tilde{\psi}$ have at least one vanishing moment, i.e., $\int_{\R} \psi(x)dx=\int_{\R}\tilde{\psi}(x) dx=0$.
\end{enumerate}
Orthogonal and biorthogonal wavelets on the real line which are derived from refinable functions have been extensively studied, for example, see \cite{cw92,cdf92,dau88} for scalar wavelets and \cite{cdp97,ghm94,gjx00,han01,hanbook,hj02,hm07,hz10,jj03,jrz99,jiang99,keibook} and references therein for multiwavelets.
It is well known in these papers that the study and construction of multiwavelets and refinable vector functions are often much more involved and complicated than their scalar counterparts, largely because the refinable vector function $\phi$ and multiwavelet $\psi$ in \eqref{refstr} are vector functions with matrix-valued filters $a$ and $b$.

\subsection{The dual of a Riesz basis $\AS_0(\Phi;\Psi)_{[0,\infty)}$ on $[0,\infty)$}\label{subsec:dual}

To improve readability and to reduce confusion later about some notations,
in the following let us first state our conventions on some notations.
If not explicitly stated, $(\{\tilde{\phi};\tilde{\psi}\},\{\phi;\psi\})$ in this paper is always a compactly supported biorthogonal (multi)wavelet in $\Lp{2}$ satisfying all items (1)--(4) of \cref{thm:bw}, which necessarily implies $\#\phi=\#\psi$.
For a compactly supported (vector) function $\phi$ (or a finitely supported filter $a\in \lrs{0}{r}{s}$), we define $\fs(\phi)$ (or $\fs(a)$) to be the shortest interval with \emph{integer endpoints} such that $\phi$ (or $a$) vanishes outside $\fs(\phi)$ (or $\fs(a)$).
Throughout the paper we always define
\begin{align}
&[l_\phi, h_\phi]:=\fs(\phi),\quad
[l_\psi, h_\psi]:=\fs(\psi),\quad
[l_a, h_a]:=\fs(a),\quad
[l_b, h_b]:=\fs(b),\label{fs:phi:a}\\
&[l_{\tilde{\phi}}, h_{\tilde{\phi}}]:=\fs(\tilde{\phi}),\quad [l_{\tilde{\psi}}, h_{\tilde{\psi}}]:=\fs(\tilde{\psi}),\quad
[l_{\tilde{a}}, h_{\tilde{a}}]:=\fs(\tilde{a}),\quad [l_{\tilde{b}}, h_{\tilde{b}}]:=\fs(\tilde{b}).\label{fs:tphi:ta}
\end{align}
One can easily deduce from \eqref{refstr} (called the refinable structure in this paper)  that
\be \label{fs:relation}
[l_\phi,h_\phi]\subseteq[l_a,h_a]
\quad \mbox{and}\quad
[l_\psi,h_\psi]\subseteq [\lfloor \tfrac{l_b+l_\phi}{2}\rfloor,\lceil
\tfrac{h_b+h_\phi}{2}\rceil].
\ee
For the scalar case $r=1$, both $\subseteq$ in \eqref{fs:relation} become identities. But strict $\subseteq$ in \eqref{fs:relation} can happen for the multiwavelet case $r>1$, e.g., see \cref{ex:hardin}.
Note that $\fs(\phi(\cdot-k))=[k+l_\phi, k+h_\phi]$. Hence,
$\supp(\phi(\cdot-k))\subseteq (-\infty,0]$ for all integers $k\le-h_\phi$ and $\supp(\phi(\cdot-k))\subseteq [0,\infty)$ for all integers $k\ge -l_\phi$.
In other words, the point $0$ is an interior point of $\fs(\phi(\cdot-k))$ if and only if $1-h_\phi\le k\le -1-l_\phi$. On the other hand, we deduce from the refinable structure in \eqref{refstr} that
\begin{align}
&\phi(\cdot-k_0)=2\sum_{k=l_a+2k_0}^{h_a+2k_0}
a(k-2k_0)\phi(2\cdot-k),
\qquad k_0\in \Z, \label{refstr:k0}\\
&\psi(\cdot-k_0)=2\sum_{k=l_b+2k_0}^{h_b+2k_0}
b(k-2k_0)\phi(2\cdot-k),
\qquad k_0\in \Z. \label{refstr:k0:psi}
\end{align}
For any integer $n_\phi$ satisfying $n_\phi\ge\max(-l_\phi,-l_a)$, we have $l_a+2k_0\ge l_a+2n_\phi \ge n_\phi$ for all $k_0\ge n_\phi$ and consequently, we deduce from \eqref{refstr:k0} that
\be \label{nphi}
\fs(\phi(\cdot-k_0))\subseteq [0,\infty)
\quad \mbox{and}\quad
\phi(\cdot-k_0)=2\sum_{k=n_\phi}^\infty a(k-2k_0) \phi(2\cdot-k),\qquad \forall\; k_0\ge n_\phi.
\ee
Similarly, for any integer $n_\psi$ satisfying $n_\psi\ge \max(-l_\psi, \frac{n_\phi-l_b}{2})$, we have $l_b+2k_0\ge l_b+2n_\psi\ge n_\phi$ for all $k_0\ge n_\psi$ and hence we deduce from \eqref{refstr:k0:psi} that
\be \label{npsi}
\fs(\psi(\cdot-k_0))\subseteq [0,\infty)
\quad \mbox{and}\quad
\psi(\cdot-k_0)=2\sum_{k=n_\phi}^\infty b(k-2k_0) \phi(2\cdot-k),\qquad \forall\; k_0\ge n_\psi.
\ee
Throughout the paper, the integers $n_\phi$ and $n_\psi$ are always chosen (not necessary to be the smallest) such that \eqref{nphi} and \eqref{npsi} hold. We make the same convention for $n_{\tilde{\phi}}$ and $n_{\tilde{\psi}}$ similarly.
Let $\tilde{\phi}^L$ and $\tilde{\psi}^L$ be vector functions in $L_2([0,\infty))$. Similarly to $\Phi$ and $\Psi$ in \eqref{PhiPsiI}, we define
\be \label{tPhiPsiI}
\tilde{\Phi}:=\{ \tilde{\phi}^L\}\cup\{ \tilde{\phi}(\cdot-k) \setsp k\ge n_{\tilde{\phi}}\},\qquad
\tilde{\Psi}:=\{ \tilde{\psi}^L\}\cup\{ \tilde{\psi}(\cdot-k) \setsp k\ge n_{\tilde{\psi}}\}.
\ee
Under the following conditions for matching cardinality between $\Phi\cup \Psi$ and $\tilde{\Phi}\cup \tilde{\Psi}$:
\be\label{cardinality}
\#\tilde{\phi}^L-
\#\phi^L=(n_{\tilde{\phi}}-n_\phi)(\#\phi)
\quad \mbox{and}\quad
\#\tilde{\psi}^L-\#\psi^L=(n_{\tilde{\psi}}-n_\psi)(\#\psi),
\ee
throughout the paper, the mapping $\sim: \Phi\rightarrow \tilde{\Phi}$ with $h\mapsto \tilde{h}$ is always the default bijection between $\Phi$ and $\tilde{\Phi}$ such that $\phi(\cdot-k)$ corresponds to $\tilde{\phi}(\cdot-k)$ for all $k\ge \max(n_\phi,n_{\tilde{\phi}})$, and the bijection $\sim$ for other elements is determined by their corresponding positions in the ordered sets/vectors $\Phi$ and $\tilde{\Phi}$.
The bijection $\sim: \Psi\rightarrow \tilde{\Psi}$ is defined similarly by mapping $\psi(\cdot-k)$ to $\tilde{\psi}(\cdot-k)$ for all $k\ge \max(n_\psi,n_{\tilde{\psi}})$.


As we explained in \cref{sec:intro}, the pair $(\AS_J(\tilde{\phi};\tilde{\psi}),\AS_J(\phi;\psi))$ on the real line will be modified into a pair $(\AS_J(\tilde{\Phi};\tilde{\Psi})_{[0,\infty)},
\AS_J(\Phi;\Psi)_{[0,\infty)})$
of biorthogonal systems in $L_2([0,\infty))$ by keeping their elements supported inside $[0,\infty)$ as interior elements and by modifying their elements essentially touching the endpoint $0$ into boundary elements.
By the definition in \eqref{ASPhiPsiI} and a simple scaling argument as in \cite{han12}, it is straightforward to see that $\AS_J(\Phi;\Psi)_{[0,\infty)}$ is a Riesz (or orthonormal) basis of $L_2([0,\infty))$ for all $J\in \Z$ if and only if $\AS_0(\Phi;\Psi)_{[0,\infty)}$ is a Riesz (or orthonormal) basis of $L_2([0,\infty))$.

We now study the structure of compactly supported Riesz wavelets on $[0,\infty)$ in
the following result, whose proof is presented in \cref{sec:proof} and which plays a key role in our study of locally supported biorthogonal wavelets on intervals.

\begin{theorem}\label{thm:wbd:0}
	Let $(\{\tilde{\phi};\tilde{\psi}\},\{\phi;\psi\})$ be a compactly supported biorthogonal wavelet in $\Lp{2}$ with 
	a biorthogonal wavelet filter bank $(\{\tilde{a}\;\tilde{b}\},\{a;b\})$ satisfying items (1)--(4) of \cref{thm:bw}.
	Let $\phi^L$ and $\psi^L$ be vectors of compactly supported functions in $L_2([0,\infty))$. Define $l_\phi,h_\phi,l_a,h_a$ and $l_{\tilde{\phi}},h_{\tilde{\phi}},l_{\tilde{a}}, h_{\tilde{a}}$ as in \eqref{fs:phi:a} and \eqref{fs:tphi:ta}.
	Define $\Phi, \Psi$ as in \eqref{PhiPsiI} with integers $n_\phi\ge \max(-l_\phi,-l_a)$ and $n_\psi\ge \max(-l_\psi, \frac{n_\phi-l_b}{2})$.
	If $\AS_0(\Phi;\Psi)_{[0,\infty)}$ in \eqref{ASPhiPsiI} is a Riesz basis of $L_2([0,\infty))$
	and satisfies
	\begin{align}
	 &\phi^L=2A_L\phi^L(2\cdot)+2\sum_{k=n_\phi}^{\infty} A(k) \phi(2\cdot-k),
	\label{I:phi}\\
	 &\psi^L=2B_L\phi^L(2\cdot)+2\sum_{k=n_\phi}^{\infty} B(k) \phi(2\cdot-k),
	\label{I:psi}
	\end{align}
	for some matrices $A_L,B_L$ and finitely supported sequences $A,B$ of matrices,
	then
	\begin{enumerate}
		\item[(1)] there must exist compactly supported vector functions $\tilde{\phi}^L, \tilde{\psi}^L$ in $L_2([0,\infty))$ and integers $n_{\tilde{\phi}}\ge \max(-l_{\tilde{\phi}},-l_{\tilde{a}},n_\phi)$ and $n_{\tilde{\psi}}\ge \max(-l_{\tilde{\psi}},\frac{n_{\tilde{\phi}}-l_{\tilde{b}}}{2},n_\psi)$ satisfying \eqref{cardinality} such that $\AS_0(\tilde{\Phi};\tilde{\Psi})_{[0,\infty)}$ is the dual Riesz basis of $\AS_0(\Phi;\Psi)_{[0,\infty)}$ in $L_2([0,\infty))$,
		where
\[
\AS_0(\tilde{\Phi};\tilde{\Psi})_{[0,\infty)}:=\tilde{\Phi} \cup \{ 2^{j/2} \tilde{\eta}(2^j\cdot) \setsp j\in \N\cup\{0\}, \tilde{\eta}\in \tilde{\Psi}\}
\]
and $\tilde{\Phi}, \tilde{\Psi}$ are defined in \eqref{tPhiPsiI};
		
		\item[(2)] there exist matrices $\tilde{A}_L,\tilde{B}_L$ and finitely supported sequences $\tilde{A},\tilde{B}$ of matrices such that
		\begin{align}
		 &\tilde{\phi}^L=2\tilde{A}_L\tilde{\phi}^L(2\cdot)+2\sum_{k=n_{\tilde{\phi}}}^{\infty} \tilde{A}(k) \tilde{\phi}(2\cdot-k),
		\label{I:phi:dual}\\
		 &\tilde{\psi}^L=2\tilde{B}_L\tilde{\phi}^L(2\cdot)+2\sum_{k=n_{\tilde{\phi}}}^{\infty} \tilde{B}(k) \tilde{\phi}(2\cdot-k),
		\label{I:psi:dual}
		\end{align}
		and
		\begin{align}
		 &\fs(\tilde{\phi}(\cdot-k_0))\subseteq[0,\infty),\quad
		 \tilde{\phi}(\cdot-k_0)=2\sum_{k=n_{\tilde{\phi}}}^\infty
		\tilde{a}(k-2k_0) \tilde{\phi}(2\cdot-k),\qquad \forall\; k_0\ge n_{\tilde{\phi}}, \label{I:tphi}\\
		 &\fs(\tilde{\psi}(\cdot-k_0))\subseteq[0,\infty),\quad
		 \tilde{\psi}(\cdot-k_0)=2\sum_{k=n_{\tilde{\phi}}}^\infty
		\tilde{b}(k-2k_0) \tilde{\phi}(2\cdot-k),\qquad \forall\; k_0\ge n_{\tilde{\psi}}; \label{I:tpsi}
		\end{align}
		\item[(3)] Every element in $\Phi(2\cdot):=\{\phi^L(2\cdot)\}\cup\{
		\phi(2\cdot-k) \setsp k\ge n_\phi\}$ can be uniquely written as a finite linear combination of elements in $\Phi\cup\Psi$.
	\end{enumerate}
\end{theorem}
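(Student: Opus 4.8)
The plan is to establish part~(3) first, since the refinement/reconstruction structure it encodes drives the rest. Write $V_1:=\overline{\operatorname{span}}(\Phi(2\cdot))$ and let $C:=\{2^{j/2}\eta(2^j\cdot)\setsp j\ge1,\ \eta\in\Psi\}$ be the common finer-scale part, so that $\AS_0(\Phi;\Psi)_{[0,\infty)}=(\Phi\cup\Psi)\sqcup C$ and $\AS_1(\Phi;\Psi)_{[0,\infty)}$ has closed span $V_1$ together with $C$; both are Riesz bases of $L_2([0,\infty))$ by the scaling argument recalled before the theorem. The identities \eqref{I:phi}, \eqref{I:psi}, \eqref{nphi}, \eqref{npsi} say that every element of $\Phi\cup\Psi$ is a finite linear combination of elements of $\Phi(2\cdot)$, so $\overline{\operatorname{span}}(\Phi\cup\Psi)\subseteq V_1$. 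Since the closed span of a subset of a Riesz basis is a complemented subspace whose complement is the closed span of the remaining elements, both $\overline{\operatorname{span}}(\Phi\cup\Psi)$ and $V_1$ are complements of $\overline{\operatorname{span}}(C)$; writing any $v\in V_1$ as $v=c+w$ with $c\in\overline{\operatorname{span}}(C)$ and $w\in\overline{\operatorname{span}}(\Phi\cup\Psi)$ forces $c=v-w\in V_1\cap\overline{\operatorname{span}}(C)=\{0\}$, whence $V_1=\overline{\operatorname{span}}(\Phi\cup\Psi)$.

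It remains to upgrade this to \emph{finite} combinations, which is the crux of part~(3). I would encode \eqref{I:phi}--\eqref{npsi} as a transition matrix $T$ sending coefficient sequences over $\Phi\cup\Psi$ to coefficient sequences over $\Phi(2\cdot)$; part~(3) is exactly the surjectivity of $T$ on finitely supported sequences. Injectivity of $T$ is immediate, since $Tc$ is the coordinate vector over the Riesz sequence $\Phi(2\cdot)$ of the function $\sum_h c_h h$, so $Tc=0$ forces that function to vanish and hence $c=0$ because $\Phi\cup\Psi$ is itself a Riesz sequence. For surjectivity I would separate interior from boundary: in the interior, the perfect-reconstruction content of \eqref{bwfb} yields the explicit finite formula $\phi(2\cdot-m)=\sum_j \ol{\tilde{a}(m-2j)}^\tp\phi(\cdot-j)+\sum_j \ol{\tilde{b}(m-2j)}^\tp\psi(\cdot-j)$ on $\R$, whose index ranges lie in the interior once $m$ is large, so all but finitely many elements of $\Phi(2\cdot)$ already lie in the finite span of $\Phi\cup\Psi$. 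Eliminating the invertible banded Toeplitz interior part (a Schur-complement reduction) leaves a finite boundary block of $T$ whose invertibility follows from the injectivity of $T$ once a count shows the block is square; this endpoint index/dimension count is the first delicate point. Because $T$ is then banded and invertible, its inverse is banded as well, which is the form asserted in part~(3) and will be reused for the dual.

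For parts~(1) and~(2), the dual Riesz basis of $\AS_0(\Phi;\Psi)_{[0,\infty)}$ exists and is unique as an abstract Hilbert-space object, and I would identify its elements. For $k$ large, the real-line dual $\tilde{\phi}(\cdot-k)$ is supported in $[0,\infty)$ and, by the biorthogonality relations of the real-line wavelet together with support disjointness from the boundary elements $\phi^L,\psi^L$, it satisfies every defining biorthogonality relation of the dual of $\phi(\cdot-k)$; by uniqueness it \emph{is} that dual element, and the same reasoning identifies the interior dual wavelets as $\tilde{\psi}(\cdot-k)$ and, by dilation, the finer dual wavelets as $2^{j/2}\tilde{\psi}(2^j\cdot-k)$, so that \eqref{I:tphi} and \eqref{I:tpsi} hold for $n_{\tilde{\phi}},n_{\tilde{\psi}}$ chosen large enough. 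Collecting the remaining finitely many dual elements defines $\tilde{\phi}^L,\tilde{\psi}^L$, and choosing $n_{\tilde{\phi}},n_{\tilde{\psi}}$ to match cardinalities yields \eqref{cardinality}.

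The substantive remaining claims are that $\tilde{\phi}^L,\tilde{\psi}^L$ are \emph{compactly supported} and satisfy \eqref{I:phi:dual}, \eqref{I:psi:dual}; I expect this to be the main obstacle. Since $\tilde{\phi}^L$, being dual to a coarse-level element, is orthogonal to every primal wavelet $2^{j/2}\eta(2^j\cdot)$ with $j\ge1$, it lies in the dual coarse space at the next scale, i.e. in $\overline{\operatorname{span}}(\tilde{\Phi}(2\cdot))$. Transporting the banded reconstruction of part~(3) to the dual side — equivalently, reading the analysis filters off $T^{-1}$ and matching them against $\tilde{a},\tilde{b}$ through \eqref{bwfb} and \eqref{refstr:dual} — should express $\tilde{\phi}^L$ as a finite combination of $\tilde{\phi}^L(2\cdot)$ and finitely many $\tilde{\phi}(2\cdot-k)$, which is simultaneously \eqref{I:phi:dual} and a proof of compact support, and likewise for $\tilde{\psi}^L$ and \eqref{I:psi:dual}. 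The honest difficulty is converting the abstract membership $\tilde{\phi}^L\in\overline{\operatorname{span}}(\tilde{\Phi}(2\cdot))$ into a finite expansion; I would secure it by showing the dual two-scale system is governed by a banded invertible transition matrix (the conjugate inverse of $T$), so that finiteness passes from the primal to the dual side, and then bound $\operatorname{supp}(\tilde{\phi}^L)$ by iterating \eqref{I:phi:dual} and observing that the correction terms remain supported in a fixed bounded interval while the self-similar term $(2\tilde{A}_L)^n\tilde{\phi}^L(2^n\cdot)$ concentrates at the endpoint. Finally, combining \eqref{I:phi:dual}, \eqref{I:psi:dual} with \eqref{I:phi}, \eqref{I:psi} and biorthogonality produces the perfect-reconstruction identity relating the boundary filters, completing parts~(1) and~(2).
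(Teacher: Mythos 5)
Your identification of the interior dual elements (support disjointness from the boundary elements plus real-line biorthogonality plus uniqueness of the dual Riesz basis) matches the paper's argument. But there is a genuine gap at the central point of the theorem: the compact support of $\tilde{\phi}^L$ and $\tilde{\psi}^L$. The paper obtains this by a short trick you do not use: expand $\tilde{\phi}^L$ in the \emph{real-line} biorthogonal representation \eqref{expr}, $\tilde{\phi}^L=\sum_{k}\la \tilde{\phi}^L,\phi(\cdot-k)\ra\tilde{\phi}(\cdot-k)+\sum_{j\ge 0}\sum_k \la\tilde{\phi}^L,\psi_{j;k}\ra\tilde{\psi}_{j;k}$, and observe that orthogonality of $\tilde{\phi}^L$ to every element of $\AS_0(\Phi;\Psi)_{[0,\infty)}\setminus\{\mathring{\phi}^L\}$ kills all terms with $k\ge n_{\tilde{\phi}}$ (resp.\ $k\ge n_{\tilde{\psi}}$), so the expansion is supported in $(-\infty,M]$; intersecting with $[0,\infty)$ gives compact support, and \eqref{I:phi:dual}--\eqref{I:psi:dual} then follow by expanding in the scale-one biorthogonal pair. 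Your substitute route is circular where it matters: (a) the bandedness/invertibility of the boundary block of $T^{-1}$ is precisely what must be proved and is only asserted (you flag the dimension count yourself); and (b) even granting \eqref{I:phi:dual} with finitely supported $\tilde{A}$, iterating it does \emph{not} yield compact support --- the term $(2\tilde{A}_L)^n\tilde{\phi}^L(2^n\cdot)$ does not ``concentrate at the endpoint'' because $2^{-n}\supp(\tilde{\phi}^L)=[0,\infty)$ when $\tilde{\phi}^L$ is not already known to be compactly supported, and killing it in $L_2$ would require $\rho(\tilde{A}_L)<2^{-1/2}$, a spectral condition that is a hypothesis of Theorem 4.2 but is not available here.

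A secondary structural issue: you attack item (3) first via a Schur-complement reduction of the transition matrix, which is the hard direction. In the paper the order is reversed: once (1) and (2) are established with all dual boundary elements compactly supported, item (3) is immediate --- expand $\eta\in\Phi(2\cdot)$ in the biorthogonal pair at level zero, note $\Phi(2\cdot)\perp\tilde{\Psi}(2^j\cdot)$ for $j\ge 1$, and use compact support of both sides to see that only finitely many coefficients $\la\eta,\tilde{h}\ra$ are nonzero. I recommend restructuring the argument in that order and inserting the real-line expansion step; your closed-span identity $V_1=\ol{\operatorname{span}}(\Phi\cup\Psi)$ and the interior-dual identification can be kept as is.
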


Basically, \cref{thm:wbd:0} says that a compactly supported Riesz basis $\AS_0(\Phi;\Psi)_{[0,\infty)}$ of $L_2([0,\infty))$ satisfying \eqref{I:phi} and \eqref{I:psi} must have a dual \emph{compactly supported} Riesz basis $\AS_0(\tilde{\Phi};\tilde{\Psi})_{[0,\infty)}$ of $L_2([0,\infty))$ satisfying \eqref{I:phi:dual} and \eqref{I:psi:dual}.
\cref{thm:wbd:0} serves as our foundation for developing the classical approach through items (1) and (2) of \cref{thm:wbd:0} and the direct approach through item (3) of \cref{thm:wbd:0} for deriving wavelets on intervals from biorthogonal multiwavelets in $\Lp{2}$.

\subsection{Vanishing moments of biorthogonal wavelets on $[0,\infty)$}

Recall that $\psi$ has $m$ vanishing moments
if $\int_{\R} x^j \psi(x) dx=0$ for all $j=0,\ldots,m-1$. In particular, we define $\vmo(\psi):=m$ with $m$ being the largest such nonnegative integer.
By $\PL_{m-1}$ we denote the space of all polynomials of degree less than $m$.
Define $\NN:=\N\cup\{0\}$.
Let us now discuss the known relation between vanishing moments and polynomial reproduction for biorthogonal wavelets on the interval $[0,\infty)$.

\begin{lemma}\label{lem:vm}
	Let $\phi,\psi,\tilde{\phi},\tilde{\psi}$ be vectors of compactly supported functions in $\Lp{2}$.
	Let $\phi^L, \psi^L, \tilde{\phi}^L,\tilde{\psi}^L$ be vectors of compactly supported functions in $L_2([0,\infty))$.
	Suppose that $\AS_0(\tilde{\Phi};\tilde{\Psi})_{[0,\infty)}$ and $\AS_0(\Phi;\Psi)_{[0,\infty)}$ form a pair of biorthogonal Riesz bases in $L_2([0,\infty))$, where $\Phi, \Psi$ and $\tilde{\Phi},
	\tilde{\Psi}$ are defined in \eqref{PhiPsiI} and \eqref{tPhiPsiI}, respectively.
	Then $\min(\vmo(\psi^L),\vmo(\psi))\ge m$ if and only if every polynomial $\pp \chi_{[0,\infty)}$ on $[0,\infty)$ with $\pp\in \PL_{m-1}$
	can be written as an infinite linear combination of elements in $\tilde{\Phi}$.
\end{lemma}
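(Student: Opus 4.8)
The plan is to route both implications through a single computation: for $\pp\in\PL_{m-1}$, all the wavelet coefficients $\la \pp\chi_{[0,\infty)}, 2^{j/2}\eta(2^j\cdot)\ra$ with $j\ge 0$ and $\eta\in\Psi$ vanish if and only if $\min(\vmo(\psi^L),\vmo(\psi))\ge m$. Each such pairing is a genuine finite integral, since $\eta$ is compactly supported, even though $\pp\chi_{[0,\infty)}\notin\Lp{2}$. First I would evaluate them explicitly. For an interior element $\eta=\psi(\cdot-k)$ with $k\ge n_\psi$ (so that $\fs(\psi(\cdot-k))\subseteq[0,\infty)$), a change of variable turns $\la\pp\chi_{[0,\infty)},2^{j/2}\psi(2^j\cdot-k)\ra$ into $2^{-j/2}\int_\R \pp(2^{-j}(y+k))\ol{\psi(y)}^\tp dy$, while for the boundary element $\eta=\psi^L$ it equals $2^{-j/2}\int_0^\infty \pp(2^{-j}y)\ol{\psi^L(y)}^\tp dy$. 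Since the affine map $x\mapsto 2^{-j}(x+k)$ sends $\PL_{m-1}$ onto itself and $\{\pp(\cdot+k):\pp\in\PL_{m-1}\}=\PL_{m-1}$, all these integrals vanish for every $\pp\in\PL_{m-1}$, every $j\ge 0$, and every admissible $k$ precisely when $\vmo(\psi)\ge m$ and $\vmo(\psi^L)\ge m$. This establishes the equivalence.

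Granting it, the backward implication is short. If $\pp\chi_{[0,\infty)}=\sum_{\tilde h\in\tilde\Phi} c_{\tilde h}\tilde h$ is a locally finite combination of elements of $\tilde\Phi$, then pairing against any single $2^{j/2}\eta(2^j\cdot)$ leaves only finitely many nonzero terms, each of which vanishes because $\la \tilde h, 2^{j/2}\eta(2^j\cdot)\ra=0$ for all $\tilde h\in\tilde\Phi$ by the biorthogonality of the two Riesz bases. Hence every wavelet coefficient of $\pp\chi_{[0,\infty)}$ vanishes, and $\min(\vmo(\psi^L),\vmo(\psi))\ge m$ follows from the equivalence.

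For the forward implication, assume $\min(\vmo(\psi^L),\vmo(\psi))\ge m$, so by the equivalence every wavelet coefficient of $\pp\chi_{[0,\infty)}$ is zero. I would then introduce the candidate $g:=\sum_{h\in\Phi}\la\pp\chi_{[0,\infty)},h\ra\,\tilde h$, which lies in $L_{2,\mathrm{loc}}([0,\infty))$ because each coefficient is a finite integral and only finitely many $\tilde h$ meet any bounded set, and which is by construction a locally finite combination of elements of $\tilde\Phi$. The goal is $g=\pp\chi_{[0,\infty)}$. Testing the difference $w:=g-\pp\chi_{[0,\infty)}$ against a single primal basis element $h_0\in\AS_0(\Phi;\Psi)_{[0,\infty)}$ gives zero: if $h_0\in\Phi$, then $\la\tilde h,h_0\ra=0$ for $h\ne h_0$ collapses the finite sum for $\la g,h_0\ra$ to $\la\pp\chi_{[0,\infty)},h_0\ra$; if $h_0$ is a dilated wavelet, then $\la g,h_0\ra=0$ since $\la\tilde h,h_0\ra=0$ for all $\tilde h\in\tilde\Phi$, and $\la\pp\chi_{[0,\infty)},h_0\ra=0$ by the vanishing coefficients.

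The hard part is then to deduce $w=0$ from the fact that $w\in L_{2,\mathrm{loc}}([0,\infty))$ is orthogonal to every compactly supported primal basis element, since $w$ need not belong to $\Lp{2}$ and the usual completeness argument does not apply verbatim. I would resolve this by a uniform localization. Given a compactly supported $v\in L_2([0,\infty))$ with $\supp(v)\subseteq[0,N]$, expand $v$ in the primal Riesz basis; a primal element $h$ occurs with nonzero coefficient $\la v,\tilde h\ra$ only when its dual $\tilde h$ meets $[0,N]$, and then the fixed finite support lengths of $\phi,\psi,\tilde\phi,\tilde\psi$ and their boundary counterparts force $h$ to be supported in $[0,N+C]$ for a constant $C$ independent of the scale $j$. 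Thus the entire expansion of $v$ is supported in $[0,N+C]$ and converges there in $L_2$, and since $w\in L_2([0,N+C])$ on this bounded set I may distribute the now legitimate inner product over the convergent expansion and use $\la w,h_0\ra=0$ termwise to obtain $\la w,v\ra=0$. As $v$ is an arbitrary compactly supported element of $L_2([0,\infty))$, this forces $w=0$, so $\pp\chi_{[0,\infty)}=g$ is the desired infinite linear combination of elements of $\tilde\Phi$.
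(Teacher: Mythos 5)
Your proof is correct, and its sufficiency half is essentially the paper's: both reduce to the observation that pairing a locally finite combination of elements of $\tilde{\Phi}$ against a compactly supported element of $\Psi(2^j\cdot)$, $j\ge 0$, is a finite sum of terms that vanish by biorthogonality, and that vanishing of these pairings for all $\pp\in \PL_{m-1}$ is equivalent to $\min(\vmo(\psi^L),\vmo(\psi))\ge m$ (your preliminary change-of-variables computation, which the paper leaves implicit). The necessity half, however, takes a genuinely different route. The paper truncates the polynomial to $\pp\chi_{[0,N)}\in \Lp{2}$, expands this truncation in the pair of biorthogonal Riesz bases via \eqref{expr}, and observes that each wavelet term $\la \pp\chi_{[0,N)}, h(2^j\cdot)\ra\,\tilde{h}(2^jx)$ vanishes for a.e.\ $x\in[0,N-2N_0)$ — either $\tilde{h}(2^j\cdot)$ does not reach that region, or $h(2^j\cdot)$ lies entirely inside $[0,N)$ and the coefficient dies by vanishing moments — so the desired identity holds on $[0,N-2N_0)$ and one lets $N\to\infty$. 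You instead keep the non-$L_2$ function $\pp\chi_{[0,\infty)}$ throughout, write down the candidate expansion $g=\sum_{h\in\Phi}\la \pp\chi_{[0,\infty)},h\ra\tilde{h}$ directly, check that $w=g-\pp\chi_{[0,\infty)}$ annihilates every primal basis element, and then invoke an extra uniform-localization step (the primal expansion of any $v$ supported in $[0,N]$ involves only primal elements supported in $[0,N+C]$, with $C$ independent of scale) to upgrade ``orthogonal to all basis elements'' to $w=0$ for a function that is only locally square integrable. Both arguments are valid: the paper's truncation trick is shorter because it stays inside $\Lp{2}$, where unconditional convergence of the Riesz expansion is available for free, while your version makes explicit exactly which structural facts are used (the scale-uniform comparability of $\supp(h)$ and $\supp(\tilde{h})$) and identifies the expansion coefficients as $\la\pp,h\ra$ by the same formula the paper ultimately obtains.
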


\bp  Necessity. Suppose that $\min(\vmo(\psi^L),\vmo(\psi))\ge m$.
Since all functions in $\phi\cup\psi\cup \tilde{\phi}\cup\tilde{\psi}$ and $\phi^L\cup\psi^L\cup \tilde{\phi}^L\cup\tilde{\psi}^L$ have compact support, we assume that
they are supported inside $[-N_0,N_0]$ for some $N_0\in \N$.
For every polynomial $\pp\in \PL_{m-1}$ and $N\in \N$, we have $\pp\chi_{[0,N)}\in \Lp{2}$ and hence
\[
\pp\chi_{[0,N)}=\sum_{h\in \Phi} \la \pp\chi_{[0,N)}, h\ra \tilde{h}+\sum_{j=0}^\infty \sum_{h\in \Psi}\la \pp\chi_{[0,N)}, 2^{j/2} h(2^j\cdot)\ra 2^{j/2} \tilde{h}(2^j\cdot).
\]
Note that $\tilde{\psi}_{j;k}$ and $\psi_{j;k}$ are supported inside $[2^{-j}(k-N_0),2^{-j}(k+N_0)]$.
Since $\la \pp, h(2^j\cdot)\ra=0$ for all $h\in \Psi$ and $j\in \NN$,
we observe that
$\la \pp\chi_{[0,N)}, h(2^j\cdot)\ra \tilde{h}(2^j x)=0$ a.e. $x\in [0,N-2N_0)$ for all $h\in \Psi$ and $j\in \NN$. Therefore, we conclude from the above identity that
\[
\pp(x)\chi_{[0,\infty)}(x)=\pp(x)\chi_{[0,N)}(x)=\sum_{h\in \Phi} \la \pp\chi_{[0,N)}, h\ra \tilde{h}(x)=\sum_{h\in \Phi} \la \pp, h\ra \tilde{h}(x),\qquad a.e.\, x\in [0,N-2N_0).
\]
Taking $N\to \infty$ in the above identity, we conclude from the above identity that $\pp\chi_{[0,\infty)}$ with $\pp\in \PL_{m-1}$ can be written as an infinite linear combination of elements in $\tilde{\Phi}$.

Sufficiency. Suppose that
$\pp\chi_{[0,\infty)}$ with $\pp\in
\PL_{m-1}$ can be written as $\pp \chi_{[0,\infty)}=\sum_{h\in \Phi} c_{h}\tilde{h}$.
Since every element in $\Psi$ is perpendicular to $\tilde{\Phi}$ and all elements in $\Psi\cup\tilde{\Phi}$ have compact support, we have $\la \pp\chi_{[0,\infty)}, g\ra=
\sum_{h\in \Phi} c_{h} \la \tilde{h}, g\ra= 0$ for all $g\in \Psi$.
This proves $\min(\vmo(\psi^L), \vmo(\psi))=\vmo(\Psi)\ge m$.
\ep

The above same argument in \cref{lem:vm} can be also applied to biorthogonal wavelets on the real line $\R$ or intervals $[0,N]$ with $N\in \N$.
For a biorthogonal wavelet $(\{\tilde{\phi};\tilde{\psi}\},\{\phi;\psi\})$ in $\Lp{2}$,
that every polynomial in $\PL_{m-1}$ can be written as an infinite linear combination of $\phi(\cdot-k), k\in \Z$ if and only if $\vmo(\tilde{\psi})\ge m$.
We say that a (matrix-valued) filter $a\in \lrs{0}{r}{r}$ has \emph{order $m$ sum rules with a (moment) matching filter $\vgu\in \lrs{0}{1}{r}$} if $\wh{\vgu}(0)\wh{\phi}(0)=1$ and
\be \label{sr}
[\wh{\vgu}(2\cdot)\wh{a}]^{(j)}(0)=\wh{\vgu}^{(j)}(0)
\quad \mbox{and}\quad [\wh{\vgu}(2\cdot)\wh{a}(\cdot+\pi)]^{(j)}(0)=0,\qquad \forall\; j=0,\ldots,m-1.
\ee
In particular, we define $\sr(a)=m$ with $m$ being the largest such nonnegative integer.
Let $(\{\tilde{\phi};\tilde{\psi}\},\{\phi;\psi\})$ be
a compactly supported biorthogonal wavelet in $\Lp{2}$
with a finitely supported biorthogonal wavelet filter bank $(\{\tilde{a};\tilde{b}\},\{a;b\})$ in \cref{thm:bw}. Then
$\vmo(\tilde{\psi})\ge m$ if and only if $\sr(a)\ge m$. That is, $\vmo(\tilde{\psi})=\sr(a)$ and $\vmo(\psi)=\sr(\tilde{a})$.
Moreover, from \eqref{sr}, we further have $[\wh{v}\wh{\phi}]^{(j)}(2\pi k)=\td(k)\td(j)$ for all $j=0,\ldots,m-1$ and $k\in \Z$ (see \cite[(5.6.6)]{hanbook} and \cite{han01,han03jat}) and consequently, for all $\pp\in \PL_{m-1}$,
\be \label{polyrepr}
\pp=\sum_{k\in \Z} [\pp*\vgu](k) \phi(\cdot-k)
=\sum_{k\in \Z} \pp_{\vgu}(k) \phi(\cdot-k)
\quad \mbox{with} \quad
\pp_{\vgu}:=\pp*\vgu=\sum_{j=0}^\infty \frac{(-i)^j}{j!} \pp^{(j)}(\cdot) \wh{\vgu}^{(j)}(0),
\ee
since $\pp*\vgu:=\sum_{n\in \Z} \pp(\cdot-n) \vgu(n)=\pp_{\vgu}
\in \PL_{m-1}$ (see \cite[Lemma~1.2.1 and Theorem~5.5.1]{hanbook}).
Moreover, one can easily deduce
from \eqref{sr} that
the quantities $\wh{\vgu}^{(j)}(0), j=0,1,\ldots,m-1$ are determined
(see \cite[(5.6.10)]{hanbook}) through
$\wh{\vgu}(0)\wh{a}(0)=\wh{\vgu}(0)$ with $\wh{\vgu}(0)\wh{\phi}(0)=1$, and the following recursive formula
\be \label{vgu:expr}
\wh{\vgu}^{(j)}(0)=\sum_{k=0}^{j-1}
\frac{2^k j!}{k!(j-k)!}
\wh{\vgu}^{(k)}(0)\wh{a}^{(j-k)}(0) [I_r-2^j\wh{a}(0)]^{-1},\qquad j=1,\ldots, m-1
\ee
provided that $2^{-j}$ is not an eigenvalue of $\wh{a}(0)$ for all $j=1,\ldots,m-1$.
For $r=1$ and a scalar filter $a\in \lp{0}$ with $\wh{a}(0)=1$, a scalar filter/mask $a$ has order $m$ sum rules if and only if $(1+e^{-i\xi})^m \mid \wh{a}(\xi)$, which is equivalent to
$\wh{a}^{(j)}(\pi)=0$ for all $j=0,\ldots,m-1$.
For the scalar case $r=1$ and $\wh{a}(0)=1$, because $\wh{\phi}(\xi):=\prod_{j=1}^\infty \wh{a}(2^{-j}\xi)$ is well defined, we must have
$\wh{\vgu}^{(j)}(0)=[1/\wh{\phi}]^{(j)}(0)$ for all $j\in \NN$, which can be computed via \eqref{vgu:expr} by starting with $\wh{\vgu}(0)=1$.
The sum rules in \eqref{sr} for matrix-valued filters in \eqref{sr} make it more involved to study refinable vector functions and matrix-valued filters than their scalar counterparts. Biorthogonal multiwavelets in $L_2(\R)$ with high vanishing moments can be easily constructed by a coset by coset (CBC) algorithm in \cite[Theorem~3.4]{han01} or \cite[Algorithm~6.5.2]{hanbook}.
Moreover, the values $\wh{\tilde{\vgu}}^{(j)}(0), j\in \NN$ of the matching filter $\tilde{\vgu}$ for the dual mask $\tilde{a}$ are uniquely determined by the primal mask $a$ as given in \cite[Theorem~6.5.1]{hanbook} or \cite[Theorem~3.1]{han01} through the following identities: $\wh{\tilde{\vgu}}(0) \ol{\wh{a}(0)}^\tp=\wh{\tilde{\vgu}}(0)$ with $\wh{\tilde{\vgu}}(0)\wh{\tilde{\phi}}(0)=1$, and the following recursive formula:
\[
\wh{\tilde{\vgu}}^{(j)}(0)=
\sum_{k=0}^{j-1} \frac{j!}{k!(j-k)!}
\wh{\tilde{\vgu}}^{(k)}(0) \ol{\wh{a}^{(j-k)}(0)}^\tp [2^j I_r-\ol{\wh{a}(0)}^\tp]^{-1},\qquad j\in \N
\]
provided that $2^j$ is not an eigenvalue of $\wh{a}(0)$ for all $j\in \N$.

The following result constructs special $\phi^L$ satisfying \eqref{I:phi} with polynomial reproduction property.

\begin{prop}\label{prop:phicut}
	Let $\phi$ be an $r\times 1$ vector of compactly supported functions in $\Lp{2}$ such that $\phi=2\sum_{k\in \Z} a(k) \phi(2\cdot-k)$ for some finitely supported sequence $a\in \lrs{0}{r}{r}$. Define $[l_\phi,h_\phi]:=\fs(\phi)$ and $[l_a,h_a]:=\fs(a)$.
	For any integer $n_\phi\in \Z$ satisfying $n_\phi\ge \max(-l_\phi,-l_a)$,
	then
	\begin{enumerate}
		\item[(i)]
		the column vector function $\phi^c:=(\phi(\cdot-k)\chi_{[0,\infty)})_{1-h_\phi\le k\le n_\phi-1}$ (note that $\phi^c$ contains interior elements $\phi(\cdot-k), -l_\phi\le k\le n_\phi-1$) satisfies the following refinement equation:
		\be \label{phic:refstr}
		\phi^c=2A_{L_c} \phi^c(2\cdot)+2\sum_{k=n_\phi}^\infty A_c(k) \phi(2\cdot-k),
		\ee
		where $A_{L_c}=[a(k_0-2n)]_{1-h_\phi\le n,k_0\le n_\phi-1}$ and $A_c$ is a finitely supported sequence given by $A_c(k):=[a(k-2n)]_{1-h_\phi\le n\le n_\phi-1}$ for $k \ge n_\phi$, where $n$ is row index and $k_0$ is column index;
		
		\item[(ii)] if in addition the filter $a$ has order $m$ sum rules in \eqref{sr} with a matching filter $\vgu\in \lrs{0}{1}{r}$ satisfying $\wh{\vgu}(0)\wh{\phi}(0)=1$, then for any $j_0,\ldots,j_\ell\in \{0,\ldots,m-1\}$, the vector function $\phi^{\pp}$, whose entries are linear combinations of elements in $\phi^c$, is given by
		\be \label{phip}
		\phi^{\pp}:=
		 \sum_{k=1-h_{\phi}}^{n_{\phi}-1}
		\sum_{j=0}^{m-1}
		\frac{(-i)^j}{j!}\pp^{(j)}(k) \wh{\vgu}^{(j)}(0)\phi(\cdot-k)\chi_{[0,\infty)}
		\quad \mbox{with}\quad
		 \pp(x):=(x^{j_0},\ldots,x^{j_\ell})^\tp
		\ee
		must satisfy \eqref{I:phi} with $\phi^L$ being replaced by $\phi^{\pp}$, more precisely,
		\be \label{phip:refstr}
		\phi^{\pp}=
		2A_{L_\pp} \phi^{\pp}(2\cdot)+2\sum_{k=n_\phi}^\infty
		A_\pp(k)\phi(2\cdot-k)
		\quad \mbox{with}\quad
		 A_{L_\pp}:=\mbox{diag}(2^{-1-j_0},\ldots,2^{-1-j_\ell}),
		\ee
		where $A_\pp=\{A_\pp(k)\}_{k= n_\phi}^\infty$ is a finitely supported sequence given by $A_\pp(k)=0$ for $k<n_\phi$ and
		\be \label{App}
		A_\pp(k):=A_{L_\pp} \pp_{\vgu}(k)-\sum_{n=n_\phi}^\infty
		\pp_{\vgu}(n)a(k-2n)
		\quad \mbox{with}
		\quad
		\pp_{\vgu}(k):=\sum_{j=0}^{m-1} \frac{(-i)^j}{j!} \pp^{(j)}(k) \wh{\vgu}^{(j)}(0), \quad k\ge n_\phi.
		\ee
		In fact, $A_\pp(k)=0$ for all $k \ge l_a+2n_\phi$, where $[l_a,h_a]:=\fs(a)$.
	\end{enumerate}
\end{prop}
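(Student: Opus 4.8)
The plan is to treat the two parts separately, with part~(i) an essentially bookkeeping computation and part~(ii) the substantive one. For part~(i), I would start from the shifted refinement identity \eqref{refstr:k0}, namely $\phi(\cdot-k_0)=2\sum_{k}a(k-2k_0)\phi(2\cdot-k)$ for each $k_0$ with $1-h_\phi\le k_0\le n_\phi-1$, and multiply it by $\chi_{[0,\infty)}$. The key elementary fact is the dilation invariance $\chi_{[0,\infty)}(2\cdot)=\chi_{[0,\infty)}$, so that $\phi(2\cdot-k)\chi_{[0,\infty)}=(\phi(\cdot-k)\chi_{[0,\infty)})(2\cdot)$. I would then split the index $k$ into three ranges: for $k\ge n_\phi$ the shift $\phi(2\cdot-k)$ is already supported in $[0,\infty)$ so the cutoff disappears and these become the interior terms; for $k\le -h_\phi$ one has $\phi(2\cdot-k)\chi_{[0,\infty)}=0$; and for $1-h_\phi\le k\le n_\phi-1$ the term $\phi(2\cdot-k)\chi_{[0,\infty)}$ is exactly an entry of $\phi^c(2\cdot)$. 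Reading off the coefficient $a(k-2k_0)$ of each entry and stacking over $k_0$ produces \eqref{phic:refstr} with $A_{L_c}$ and $A_c$ as stated (the index names $n,k_0$ being a transpose of the natural row/column labelling). No real difficulty arises here.

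For part~(ii), the starting point is the polynomial reproduction \eqref{polyrepr}, which lets me write the truncated object as $\phi^{\pp}=\pp\chi_{[0,\infty)}-\sum_{k\ge n_\phi}\pp_\vgu(k)\phi(\cdot-k)$, since the terms with $k\le -h_\phi$ drop out and those with $1-h_\phi\le k\le n_\phi-1$ assemble into \eqref{phip}. I would then exploit the monomial self-similarity $\pp=2A_{L_\pp}\,\pp(2\cdot)$, which holds componentwise because $x^{j_i}=2^{-j_i}(2x)^{j_i}$ and $2A_{L_\pp}=\operatorname{diag}(2^{-j_0},\dots,2^{-j_\ell})$. Multiplying by $\chi_{[0,\infty)}$ and invoking reproduction once more at the finer scale gives $\pp\chi_{[0,\infty)}=2A_{L_\pp}\big[\phi^{\pp}(2\cdot)+\sum_{k\ge n_\phi}\pp_\vgu(k)\phi(2\cdot-k)\big]$. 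Finally I refine each interior tail $\phi(\cdot-k)$, $k\ge n_\phi$, via \eqref{nphi}, and collect the coefficient of every $\phi(2\cdot-k)$. This yields \eqref{phip:refstr} together with the formula \eqref{App} for $A_\pp$.

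It remains to prove that $A_\pp$ has finite support. The crucial ingredient is the refinement compatibility of the reproduction coefficients, $A_{L_\pp}\pp_\vgu(k)=\sum_{n\in\Z}\pp_\vgu(n)a(k-2n)$ for every $k$, which is exactly the algebraic content of the order-$m$ sum rules \eqref{sr} and follows from the same analysis underlying \eqref{polyrepr} (cf.\ \cite[Theorem~5.5.1]{hanbook}). Subtracting this full-line identity from \eqref{App} collapses $A_\pp(k)$ to the finite tail $\sum_{n\le n_\phi-1}\pp_\vgu(n)a(k-2n)$, and since $a(k-2n)=0$ once $k-2n>h_a$, this sum vanishes as soon as $k$ leaves the bounded range determined by $\fs(a)$ and the truncation index $n_\phi$. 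I expect this compatibility identity to be the main obstacle: proving it cleanly requires either the known equivalence between sum rules and refinable polynomial reproduction, or, if one prefers a self-contained coefficient comparison in the two expansions of $\pp$ in terms of $\{\phi(2\cdot-k)\}$, the stability (linear independence of the integer shifts) of $\phi$, which one must invoke rather than assume only formally.
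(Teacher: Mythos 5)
Your proposal is correct and follows essentially the same route as the paper: part (i) by multiplying the shifted refinement relations by $\chi_{[0,\infty)}$ and splitting the index range, and part (ii) by combining the polynomial reproduction \eqref{polyrepr} with the monomial scaling identity $\pp=2A_{L_\pp}\pp(2\cdot)$ and then using the subdivision/sum-rule compatibility $\sd_a\pp_{\vgu}=2A_{L_\pp}\pp_{\vgu}$ to collapse $A_\pp(k)$ to a finite tail. The paper establishes that compatibility identity by citing the known sum-rule results (\cite[Theorem~1.2.4 and Lemma~1.2.1]{hanbook}) rather than via stability of the integer shifts, which is the first of the two options you mention.
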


\bp
By the refinement equation $\phi=2\sum_{k\in \Z} a(k) \phi(2\cdot-k)$, we deduce that
\[
\phi(\cdot-n)\chi_{[0,\infty)}=2\sum_{k\in \Z} a(k-2n) \phi(2\cdot-k)\chi_{[0,\infty)}
=2\sum_{k\in \Z} a(k-2n) \Big(\phi(\cdot-k)\chi_{[0,\infty)}\Big)(2\cdot).
\]
Note that $\phi(\cdot-n)\chi_{[0,\infty)}=0$ for all $n\le -h_\phi$ and $\phi(\cdot-n)\chi_{[0,\infty)}=\phi(\cdot-n)$ for all $n\ge -l_\phi$.
For $1-h_\phi\le n\le n_\phi-1$, by $n_\phi\ge -l_\phi$, we have
\be \label{phi:c:refstr}
\phi(\cdot-n)\chi_{[0,\infty)}
=2\sum_{k=1-h_\phi}^{n_\phi-1} a(k-2n) \Big(\phi(\cdot-k)\chi_{[0,\infty)}\Big)(2\cdot)
+2\sum_{k=n_\phi}^{\infty} a(k-2n) \phi(2\cdot-k).
\ee
Therefore, \eqref{phic:refstr} holds and we proved item (i).

To prove item (ii), by \eqref{polyrepr}
we have $\pp(x)=\phi^{\pp}(x)+\sum_{k=n_\phi}^\infty \pp_{\vgu}(k) \phi(x-k)$ for all $x\in [0,\infty)$.
Because
$\pp(x)=2A_{L_\pp} \pp(2x)$ trivially holds for $x\in [0,\infty)$, we have
$\pp=2A_{L_\pp}\pp(2\cdot)=
2 A_{L_\pp}\phi^{\pp}(2\cdot)+
\sum_{k=n_\phi}^\infty 2A_{L_\pp} \pp_{\vgu}(k) \phi(2\cdot-k)$.
By $n_\phi\ge \max(-l_\phi,-l_a)$,
\eqref{nphi} must hold and
then on $[0,\infty)$ we have
\begin{align*}
\phi^{\pp}&=\pp-\sum_{n=n_\phi}^\infty \pp_{\vgu}(n)\phi(\cdot-n)
=2A_{L_\pp}\phi^{\pp}(2\cdot)+\sum_{k=n_\phi}^\infty 2A_{L_\pp} \pp_{\vgu}(k) \phi(2\cdot-k)
-\sum_{n=n_\phi}^\infty \pp_{\vgu}(n)\phi(\cdot-n)\\
&=2A_{L_\pp}\phi^{\pp}(2\cdot)+2\sum_{k=n_\phi}^\infty A_{L_\pp} \pp_{\vgu}(k) \phi(2\cdot-k)
-2\sum_{n=n_\phi}^\infty \sum_{k=n_\phi}^\infty
\pp_{\vgu}(n)a(k-2n) \phi(2\cdot-k)\\
&=2A_{L_\pp}\phi^{\pp}(2\cdot)+2\sum_{k=n_\phi}^\infty A_\pp (k)\phi(2\cdot-k).
\end{align*}
We now prove $A_\pp(k)=0$ for all $k \ge l_a+2n_\phi$.
Define the subdivision operator
\[
[\sd_a v](n):=2\sum_{k\in \Z} v(k)a(n-2k) \quad \mbox{for} \quad n\in \Z.
\]
We conclude from the definition of $A_\pp$ in \eqref{App} that $A_\pp(k)=A_{L_\pp} \pp_{\vgu}(k)-2^{-1} \sd_a \pp_{\vgu}(k)$ for all $k\ge l_a+2n_\phi$. Define $\wh{u}(\xi):=\wh{\vgu}(2\xi)\wh{a}(\xi)$.
We deduce from \eqref{sr} that
\be \label{vgu2a}
\wh{u}^{(j)}(0)=\wh{\vgu}^{(j)}(0)
\quad \mbox{and}\quad
\wh{u}^{(j)}(\pi)=0,\qquad \forall\, j=0,\ldots,m-1.
\ee
Since $\pp_{\vgu}=\pp*\vgu$, by \cite[Theorem~1.2.4 and Lemma~1.2.1]{hanbook}, we conclude from \eqref{vgu2a} that
\[
\sd_a \pp_{\vgu}=\sd_u \pp=(\pp(2^{-1}))*u
=2A_{L_\pp} (\pp*u)=2A_{L_\pp} (\pp*\vgu)=
2A_{L_\pp} \pp_{\vgu}.
\]
This proves $A_\pp(k)=A_{L_\pp} \pp_{\vgu}(k)-2^{-1} \sd_a \pp_{\vgu}(k)=0$ for all $k\ge l_a+2n_\phi$. Therefore, item (ii) holds.
\ep

\subsection{Stability and construction of biorthogonal wavelets on $[0,\infty)$}
We shall adopt the following notation:
\be \label{subspace}
\si_j(H):=\ol{\mbox{span}\{f(2^j\cdot) \setsp f\in H\}},\qquad j\in \Z, H\subseteq \Lp{2},
\ee
where the overhead bar refers to closure in $\Lp{2}$. For a countable subset $H$ of $\Lp{2}$ or $L_2([0,\infty))$, we define $\ell_2(H)$ to be the linear space of all sequences $\{c_h\}_{h\in H}$ satisfying $\sum_{h\in H} |c_h|^2<\infty$. For a Bessel sequence $H$ in $\Lp{2}$, we say that $H$ is \emph{$\ell_2$-linearly independent} if $\sum_{h\in H} c_h h=0$ for some $\{c_h\}_{h\in H}\in \ell_2(H)$, then we must have $c_h=0$ for all $h\in H$.

In \cref{thm:wbd:0}, it is necessary that $\Phi,\Psi$ in \eqref{PhiPsiI} and $\tilde{\Phi},\tilde{\Psi}$ in \eqref{tPhiPsiI} must be Riesz sequences in $L_2([0,\infty))$.
We need the following result later for constructing wavelets on $[0,\infty)$.

\begin{theorem}\label{thm:rz}
	Let $(\{\tilde{\phi};\tilde{\psi}\},
	\{\phi;\psi\})$ be a compactly supported biorthogonal wavelet in $\Lp{2}$ satisfying items (1)--(4) of \cref{thm:bw}. Let $\Phi:=\{\phi^L\}\cup\{\phi(\cdot-k) \setsp k\ge n_\phi\}\subseteq L_2([0,\infty))$ as in \eqref{PhiPsiI} with $\phi^L$ having compact support.
	Then the following statements are equivalent:
	\begin{enumerate}
		\item[(1)] $\Phi$ is a Riesz sequence in $L_2([0,\infty))$, i.e., there exists a positive constant $C$ such that
		\be \label{riesz:Phi}
		C^{-1} \sum_{h\in \Phi} |c_h|^2\le \Big\| \sum_{h\in \Phi} c_h h\Big\|_{\Lp{2}}^2\le C \sum_{h\in \Phi} |c_h|^2,\qquad \forall\, \{c_h\}_{h\in \Phi}\in \ell_2(\Phi).
		\ee

		\item[(2)] $\Phi$ is $\ell_2$-linearly independent, i.e., $c_h=0$ for all $h\in \Phi$ if $\sum_{h\in \Phi} c_h h=0$ with $\{c_h\}_{h\in \Phi}\in \ell_2(\Phi)$.
		\item[(3)] $\{\phi^L\} \cup\{\phi(\cdot-k) \setsp n_\phi\le k<N_\phi\}$ is linearly independent, where $N_\phi:=\max(n_\phi,h_{\phi^L}-l_{\tilde{\phi}})$ with $[l_{\phi^L},h_{\phi^L}]:=\fs(\phi^L)$ and $[l_{\tilde{\phi}},h_{\tilde{\phi}}]:=\fs(\tilde{\phi})$.
		\item[(4)] There exists $\tilde{H}:=\{\tilde{\eta}^L\}\cup\{\tilde{\phi}(\cdot-k) \setsp k\ge N_\phi\}\subset L_2([0,\infty))$ such that $\tilde{\eta}^L$ has compact support, $\#\tilde{\eta}^L=\#\phi^L+(N_\phi-n_\phi)(\#\phi)$ and $\tilde{H}$ is biorthogonal to $\Phi$, where $N_\phi$ is defined in item (3).
	\end{enumerate}
	Moreover, for $\Phi=\{\phi^L\}\cup\{\phi(\cdot-k) \setsp k\ge n_\phi\}$ such that
	item (3) fails, perform the following procedure:
	\begin{enumerate}
		\item[(S1)] Initially take $E:=\{\phi(\cdot-k) \setsp n_\phi\le k<N_\phi\}$, which must be linearly independent.
		\item[(S2)] Visit all elements $\eta\in \phi^L$ one by one: replace $E$ by $E\cup\{\eta\}$ if $E\cup\{\eta\}$ is linearly independent; otherwise, delete $\eta$ from $\phi^L$.
	\end{enumerate}
	Update $\phi^L:=E\bs \{\phi(\cdot-k) \setsp n_\phi\le k<N_\phi\}$, i.e., the updated $\phi^L$ is obtained by removing redundant elements in the original $\phi^L$. Then the new $\Phi$ is a Riesz sequence in $L_2([0,\infty))$ and preserves $\si_0(\Phi)$.
\end{theorem}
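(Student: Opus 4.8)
The plan is to prove the four statements equivalent through the cycle $(1)\Rightarrow(2)\Rightarrow(3)\Rightarrow(4)\Rightarrow(1)$, with the construction in $(3)\Rightarrow(4)$ as the only substantive step. The implication $(1)\Rightarrow(2)$ is immediate: if $\sum_{h\in\Phi}c_h h=0$ with $\{c_h\}\in\ell_2(\Phi)$, the lower bound in \eqref{riesz:Phi} forces $C^{-1}\sum_{h\in\Phi}|c_h|^2\le0$, so $c_h=0$. For $(2)\Rightarrow(3)$ I argue by contraposition: a nontrivial finite linear dependence among $\{\phi^L\}\cup\{\phi(\cdot-k)\setsp n_\phi\le k<N_\phi\}$, extended by zeros on the tail $\{\phi(\cdot-k)\setsp k\ge N_\phi\}$, is a nontrivial sequence in $\ell_2(\Phi)$ annihilating $\Phi$, contradicting $\ell_2$-linear independence.

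The heart of the argument is $(3)\Rightarrow(4)$. Write $F:=(F_1,\ldots,F_d)$ for the ordered set $\{\phi^L\}\cup\{\phi(\cdot-k)\setsp n_\phi\le k<N_\phi\}$ of scalar functions, so $d=\#\phi^L+(N_\phi-n_\phi)(\#\phi)$ and (3) says the Hermitian Gram matrix $G:=[\la F_i,F_j\ra]_{i,j=1}^d$ is positive definite, hence invertible. First I would record that the calibration $N_\phi=\max(n_\phi,h_{\phi^L}-l_{\tilde\phi})$ makes the tail dual translates harmless: for $k\ge N_\phi$ one has $\fs(\tilde\phi(\cdot-k))\subseteq[0,\infty)$ (since $h_{\phi^L}\ge0$), while $\la F_i,\tilde\phi(\cdot-k)\ra=0$ for every $i$, because $N_\phi\ge h_{\phi^L}-l_{\tilde\phi}$ separates the supports of $\phi^L$ and $\tilde\phi(\cdot-k)$, and because $k>k'\ge n_\phi$ makes $\la\phi(\cdot-k'),\tilde\phi(\cdot-k)\ra=\td(k-k')I_r=0$ by the real-line biorthogonality in item (2) of \cref{thm:bw}. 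Invertibility of $G$ then yields the canonical dual basis $F_1^*,\ldots,F_d^*\in\operatorname{span}(F)$ with $\la F_i,F_j^*\ra=\td(i-j)$, each a finite combination of $F$ and so compactly supported in $L_2([0,\infty))$. These $F_j^*$ are generally \emph{not} perpendicular to the tail primals, so I would correct them with the oblique projection $QF_j^*:=\sum_{k\ge N_\phi}\la F_j^*,\phi(\cdot-k)\ra\,\tilde\phi(\cdot-k)$, a \emph{finite} sum (compact supports) that is compactly supported and, by the previous observation, lives in $[0,\infty)$. Setting $\tilde\eta^L:=(F_1^*-QF_1^*,\ldots,F_d^*-QF_d^*)$ gives $\#\tilde\eta^L=d$; each entry is perpendicular to all $\phi(\cdot-k)$, $k\ge N_\phi$, since $\la F_j^*-QF_j^*,\phi(\cdot-\ell)\ra=\la F_j^*,\phi(\cdot-\ell)\ra-\la F_j^*,\phi(\cdot-\ell)\ra=0$ for $\ell\ge N_\phi$; and biorthogonality to $F$ survives because $\la F_i,QF_j^*\ra=0$ (all pairings $\la F_i,\tilde\phi(\cdot-k)\ra$ vanish for $k\ge N_\phi$), so $\la F_i,\tilde\eta^L_j\ra=\td(i-j)$. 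Adjoining the tail $\{\tilde\phi(\cdot-k)\setsp k\ge N_\phi\}$ produces $\tilde H$ biorthogonal to $\Phi$ with the required cardinality, proving (4).

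For $(4)\Rightarrow(1)$ I would manufacture the Riesz bounds from the dual. Both $\Phi$ and $\tilde H$ are Bessel sequences in $L_2([0,\infty))$, each being finitely many compactly supported functions together with half-line shifts of a single compactly supported generator; the upper bound follows from the standard bracket estimate, or directly from \cref{thm:phi:bessel}. This gives the upper bound in \eqref{riesz:Phi} for $\Phi$. For the lower bound, given finitely supported $\{c_h\}_{h\in\Phi}$, set $f:=\sum_{h\in\Phi}c_h h$; biorthogonality gives $c_h=\la f,\tilde h\ra$, so with the Bessel constant $\tilde C$ of $\tilde H$ one gets $\sum_{h\in\Phi}|c_h|^2=\sum|\la f,\tilde h\ra|^2\le\tilde C\|f\|_{\Lp2}^2$, i.e.\ $\tilde C^{-1}\sum|c_h|^2\le\|f\|^2$. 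Hence $\Phi$ is a Riesz sequence.

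Finally, for the \emph{Moreover} part I would show the greedy procedure returns a maximal linearly independent subset without changing $\si_0(\Phi)$. The seed $\{\phi(\cdot-k)\setsp n_\phi\le k<N_\phi\}$ is linearly independent because the integer shifts of $\phi$ are biorthogonal to those of $\tilde\phi$ on $\R$, and for $k\ge n_\phi\ge-l_\phi$ they lie entirely in $[0,\infty)$; so (S1) is justified. Each boundary element $\eta$ deleted in (S2) lies in $\operatorname{span}(E)$ at the moment of deletion, hence in the span of the final $E\subseteq$ updated $\Phi$, so its removal leaves $\operatorname{span}(\Phi)$ and therefore $\si_0(\Phi)$ unchanged; meanwhile the updated $F$ equals the linearly independent $E$, so the equivalence $(3)\Leftrightarrow(1)$ just established shows the new $\Phi$ is a Riesz sequence. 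I expect the main obstacle to be exactly $(3)\Rightarrow(4)$: the dual must be simultaneously compactly supported, supported in $[0,\infty)$, perpendicular to the tail, and biorthogonal to $F$, and it is precisely the support calibration through $N_\phi$ together with the oblique correction $Q$ that reconciles these competing demands.
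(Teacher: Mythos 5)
Your proposal is correct, and its overall skeleton coincides with the paper's: the cycle $(1)\Rightarrow(2)\Rightarrow(3)$, then $(4)\Rightarrow(1)$ via the Bessel property of $\Phi$ and of the dual system combined with biorthogonality, and finally the greedy deletion argument for the \emph{Moreover} part. The one place where you genuinely diverge is the key implication $(3)\Rightarrow(4)$. The paper proceeds non-constructively: for each $\eta$ in the boundary block it considers the subspace $S\subseteq L_2([0,N])$ of functions orthogonal to $\Phi\setminus\{\eta\}$, observes that only finitely many of these constraints are nontrivial on $[0,N]$ so that $S$ is infinite-dimensional, and then argues that $S$ cannot be entirely orthogonal to $\eta$ without violating the linear independence in item (3); the dual element $d(\eta)$ is thus some unspecified function supported in $[0,N]$. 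You instead invert the Gram matrix of the finite block $F$ to get the canonical dual basis $F_j^*\in\operatorname{span}(F)$ and then apply the oblique correction $Q$ built from the tail translates $\tilde{\phi}(\cdot-k)$, $k\ge N_\phi$, using exactly the support calibration $N_\phi\ge h_{\phi^L}-l_{\tilde{\phi}}$ and the real-line biorthogonality to verify that the correction costs nothing against $F$ while killing the pairings with the tail primals. Your route is fully constructive and places the boundary duals in the concrete space $\operatorname{span}(F)+\operatorname{span}\{\tilde{\phi}(\cdot-k)\setsp k\ge N_\phi\}$ (at the price of a possibly larger support, inherited from $\tilde{\phi}$), whereas the paper's route keeps the duals supported in $[0,N]$ but gives no formula for them; both establish item (4) as stated, and all the supporting verifications in your write-up (finiteness of the sum defining $Q$, support containment in $[0,\infty)$, preservation of biorthogonality) are sound.
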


\bp (1)$\imply$(2)$\imply$(3) is obvious. Using a standard argument, we now prove (4)$\imply$(1).
Since $\phi^L$ and $\{\phi(\cdot-k) \setsp k\in \Z\}$ are obviously Bessel sequences in $\Lp{2}$, we conclude that $\Phi$ is a Bessel sequence in $L_2([0,\infty))$, i.e.,
there exists a positive constant $C$ such that
\be \label{Phi:bessel}
\sum_{h\in \Phi} |\la f, h\ra|^2 \le C \|f\|_{\Lp{2}}^2, \qquad \forall\; f\in \Lp{2},
\ee
which is well known to be equivalent to the second inequality in \eqref{riesz:Phi}.
Similarly, $\tilde{H}$ must be a Bessel sequence in $L_2([0,\infty))$, i.e.,
\eqref{Phi:bessel} holds with $\Phi$ being replaced by $\tilde{H}$ (probably with a different constant $C$).
For $\{c_h\}_{h\in \Phi}\in \ell_2(\Phi)$,
define $f:=\sum_{h\in \Phi} c_h h$. Using the biorthogonality in item (4),
we have $c_h=\la f, \tilde{h}\ra$, where $\tilde{h}$ is the corresponding element of $h$ in $\tilde{H}$. Now it follows from \eqref{Phi:bessel} with $\Phi$ being replaced by $\tilde{H}$ that the first inequality in \eqref{riesz:Phi} holds. This proves (4)$\imply$(1).

To complete the proof, let us prove the key step (3)$\imply$(4). Note that $\fs(\tilde{\phi}(\cdot-k))=[k+l_{\tilde{\phi}},k+h_{\tilde{\phi}}]$ for $k\in \Z$. Hence, for $k\ge h_{\phi^L}-l_{\tilde{\phi}}$, we have $k+l_{\tilde{\phi}}\ge h_{\phi^L}$ and trivially, $\la \phi^L,\tilde{\phi}(\cdot-k)\ra=0$ due to their essentially disjoint supports.
Let $\eta\in \{\phi^L\}\cup\{\phi(\cdot-k) \setsp n_\phi\le k<N_\phi\}$. Then $\fs(\eta)\subseteq [0, N]$ with $N:=\max(h_{\phi^L}, N_\phi+h_\phi-1,1)$.
We now prove that there exists $d(\eta)\in L_2([0, N])$ such that
\be \label{deta}
\la d(\eta), \eta\ra=1\quad \mbox{and}\quad \la d(\eta), g\ra=0 \quad \forall\, g\in \Phi\bs\{\eta\}.
\ee
Define $S:=\{f\in L_2([0, N])
\setsp \la f,g\ra=0\; \forall\, g\in \Phi\bs \{\eta\}\}$. For all integers $k\ge N-l_\phi$, we observe that $\fs(\phi(\cdot-k))$ is contained inside $[N,\infty)$ and consequently, $\phi(\cdot-k) \perp L_2([0, N])$ for all $k\ge N-l_\phi$. Therefore, we conclude that
\[
S=\{f\in L_2([0, N])
\setsp \la f,g\ra=0\; \forall\, g\in \{\phi^L\}\cup \{\phi(\cdot-k) \setsp n_\phi\le k<N-l_\phi\}, g\ne \eta\}.
\]
Since $L_2([0, N])$ has infinite dimension, the above identity forces that $S$ must have infinite dimension. In particular, $S$ is not empty. We now prove that there must exist $d(\eta)\in S$ such that $\la d(\eta),\eta\ra=1$. Suppose not. Then $\eta \perp S$. By the definition of the space $S$, we must have $\Phi\perp L_2([0,N])$, which forces $\eta=0$ by $\eta\in \Phi \cap L_2([0,N])$.
This contradicts our assumption in item (3). Hence, we proved the existence of $d(\eta)\in S$ such that $\la d(\eta),\eta\ra=1$. Now it is straightforward to check that \eqref{deta} holds. Let $\tilde{\eta}^L$ be the vector/set of all elements $d(\eta)$ for $\eta\in \{\phi^L\}\cup\{\phi(\cdot-k) \setsp n_\phi\le k<N_\phi\}$.
Then the derived $\tilde{H}$ must be biorthogonal to $\Phi$. This proves (3)$\imply$(4).

Suppose that
$\vec{c}\phi^L+\sum_{k=n_\phi}^{N_\phi-1} c_k \phi(\cdot-k)=0$. If $\vec{c}=0$, then $\sum_{k=n_\phi}^{N_\phi-1} c_k \phi(\cdot-k)=0$ and by biorthogonality of integer shifts of $\phi$ and $\tilde{\phi}$, we conclude that $c_k=0$ for all $n_\phi\le k<N_\phi$. Hence, if item (3) fails, then $\vec{c}\ne 0$ and we can remove the redundant entry in $\phi^L$ corresponding to a nonzero entry in $\vec{c}$.
In this way, the new $\Phi$ with fewer elements in $\phi^L$ can satisfy item (3) and preserves $\si_0(\Phi)$.
\ep

\cref{thm:rz} can be applied to all $\Phi,\Psi$ in \eqref{PhiPsiI} and $\tilde{\Phi},\tilde{\Psi}$ in \eqref{tPhiPsiI}. If any of $\Phi,\Psi, \tilde{\Phi},\tilde{\Psi}$ is not $\ell_2$-linearly independent, by \cref{thm:rz}, then we can always remove
the redundant elements in $\phi^L, \psi^L,\tilde{\phi}^L,\tilde{\psi}^L$ to turn $\Phi,\Psi,\tilde{\Phi}, \tilde{\Psi}$ into Riesz sequences in $L_2([0,\infty))$.

To study the Bessel property of $\AS_0(\Phi;\Psi)_{[0,\infty)}$, we need the definition of the Sobolev space $\HH{\tau}$ with $\tau\in \R$.
Recall that the Sobolev space $\HH{\tau}$ with $\tau\in \R$ consists of all tempered distributions $f$ on $\R$ such that $\int_\R |\wh{f}(\xi)|^2(1+|\xi|^2)^\tau d\xi<\infty$.

\begin{theorem} \label{thm:phi:bessel}
	Let $\phi$ be an $r\times 1$ vector of compactly supported functions in $\Lp{2}$ such that $\phi=2\sum_{k\in \Z} a(k) \phi(2\cdot-k)$ for some finitely supported sequence $a\in \lrs{0}{r}{r}$.
	Let $\psi$ be a vector of compactly supported functions in $\Lp{2}$.
	Define $[l_\phi,h_\phi]:=\fs(\phi)$
	and $[l_\psi,h_\psi]:=\fs(\psi)$.
	Let $n_\phi\ge \max(-l_\phi, -l_a)$ and $n_\psi\ge -l_\psi$.
	Define $\Phi,\Psi$ as in \eqref{PhiPsiI} with finite subsets $\phi^L\cup \psi^L$ of compactly supported functions in $L_2([0,\infty))$.
	If $\psi$ has at least one vanishing moment (i.e., $\wh{\psi}(0)=\int_\R \psi(x) dx=0$)
	and $\psi\cup \phi^L\cup \psi^L\subseteq \HH{\tau}$ for some $\tau>0$ (this latter technical condition always holds if each element in $\psi\cup \phi^L\cup \psi^L$ is a finite linear combination of $\phi(2^j\cdot-k)\chi_{[0,\infty)}$ with $j,k\in \Z$),
	then $\AS_J(\Phi;\Psi)_{[0,\infty)}$ must be a Bessel sequence in $L_2([0,\infty))$ for all $J\in \Z$, that is, there exists a positive constant $C$, which is independent of $J$, such that
	\be \label{AS:bessel}
	\sum_{h\in \AS_J(\Phi;\Psi)_{[0,\infty)}} |\la f, h\ra|^2\le C\|f\|_{L_2([0,\infty))}^2, \qquad \forall\, f\in L_2([0,\infty)).
	\ee
\end{theorem}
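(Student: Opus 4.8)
The plan is to split $\AS_J(\Phi;\Psi)_{[0,\infty)}$ into four groups according to $\Phi=\{\phi^L\}\cup\{\phi(\cdot-k)\setsp k\ge n_\phi\}$ and $\Psi=\{\psi^L\}\cup\{\psi(\cdot-k)\setsp k\ge n_\psi\}$, and to bound the Bessel sum of each group separately by $C\|f\|^2_{L_2([0,\infty))}$ with a constant independent of $J$; adding the four bounds then yields \eqref{AS:bessel}. The four groups are: the boundary scaling functions $\{2^{J/2}\phi^L(2^J\cdot)\}$ at the single scale $J$; the interior scaling functions $\{2^{J/2}\phi(2^J\cdot-k)\setsp k\ge n_\phi\}$, again at scale $J$; the interior wavelets $\{2^{j/2}\psi(2^j\cdot-k)\setsp j\ge J,\,k\ge n_\psi\}$; and the boundary wavelets $\{2^{j/2}\eta(2^j\cdot)\setsp j\ge J,\,\eta\in\psi^L\}$. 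Since a subsystem of a Bessel sequence is again Bessel with the same bound, I may freely enlarge the translation range to all $k\in\Z$ in the two interior groups.

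The two single-scale scaling groups are the easy part. For the interior scaling functions I would use that, because $\phi$ is compactly supported in $\Lp{2}$, the bracket $\sum_{l\in\Z}\wh{\phi}(\cdot+2\pi l)\ol{\wh{\phi}(\cdot+2\pi l)}^\tp$ is an $r\times r$ matrix of trigonometric polynomials and hence bounded; this makes $\{\phi(\cdot-k)\setsp k\in\Z\}$ a Bessel sequence, and dilation by $2^J$ together with the normalization $2^{J/2}$ preserves the Bessel bound, so the bound is independent of $J$. The finite boundary set $\{2^{J/2}\phi^L(2^J\cdot)\}$ is trivially Bessel, with bound $(\#\phi^L)\max_{\eta\in\phi^L}\|\eta\|^2_{\Lp{2}}$, again unchanged by the dilation. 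Neither estimate uses the vanishing moment or the Sobolev regularity.

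The substance lies in the two multiscale wavelet groups, which I would treat by almost-orthogonality. For the interior wavelets I would estimate the Gram entries $\la 2^{j/2}\psi(2^j\cdot-k),\,2^{j'/2}\psi(2^{j'}\cdot-k')\ra$ on the Fourier side: the single vanishing moment $\wh{\psi}(0)=0$ forces $|\wh{\psi}(\xi)|\le C|\xi|$ near the origin, which produces decay in the scale difference $|j-j'|$, while the hypothesis $\psi\in\HH{\tau}$ is used to tame the high-frequency tail; together with the decay in the translation parameter coming from the compact support of $\psi$, this should yield a Gram kernel bounded by $C\,2^{-\epsilon|j-j'|}(1+|k-2^{j-j'}k'|)^{-1-\epsilon}$ for some $\epsilon>0$. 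A Schur test (or a Cotlar--Stein estimate) then shows the associated Gram operator is bounded on $\ell_2$, which is equivalent to the required Bessel bound, and since the estimate involves no coarsest scale it is automatically independent of $J$. The boundary wavelets are handled in the same spirit but are simpler, since there are no translations: for each component $\eta$ of $\psi^L$ I would show $|\la 2^{j/2}\eta(2^j\cdot),\,2^{j'/2}\eta(2^{j'}\cdot)\ra|\le C\,2^{-\epsilon|j-j'|}$, where a $2^{-|j-j'|/2}$ gain is already forced by the mismatch of dilations and the residual factor is controlled purely by $\eta\in\HH{\tau}$ and the compact support of $\eta$ (so that no vanishing moment of $\psi^L$ is needed); summing the resulting geometric series over $j,j'\ge J$ gives a $J$-independent bound.

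The main obstacle is precisely this cross-scale almost-orthogonality estimate for the interior wavelets under the weakest possible regularity: the statement only assumes one vanishing moment and $\psi\in\HH{\tau}$ for some $\tau>0$, and the parenthetical remark shows that after multiplication by $\chi_{[0,\infty)}$ one generally cannot expect $\tau\ge 1/2$, so I cannot invoke any pointwise decay of $\wh{\psi}$ at infinity. The delicate step is to combine the $L_2$-type Sobolev bound with the zero of $\wh{\psi}$ at the origin and the compact support so as to extract a genuinely summable decay rate in $|j-j'|$; I expect this to require a frequency splitting at a threshold of the form $|\xi|\sim 2^{\theta|j-j'|}$, estimating the low part through the vanishing moment and the high part through the weighted $\HH{\tau}$ norm, and then optimizing over $\theta\in(0,1)$ to land on a positive exponent $\epsilon$.
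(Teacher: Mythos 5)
Your overall strategy is sound and the four-group splitting is legitimate (Bessel bounds add over a finite union, so no cross-group Gram entries are needed), but your route is genuinely different from the paper's and leaves its hardest step only sketched. The paper does not prove any almost-orthogonality estimates at all: it observes that appending $\phi$ to $\phi^c$ of \cref{prop:phicut} produces a compactly supported refinable vector function, cites a known regularity result to get $\HH{\tau}$ membership, and then invokes the Bessel criterion for affine systems on the real line (\cite[Corollary~4.6.6]{hanbook} or \cite[Theorem~2.3]{han03}: compact support, $\HH{\tau}$ with $\tau>0$, and one vanishing moment imply the Bessel property of $\AS_0(\phi;\psi)$) to dispose of all interior elements at once. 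For the boundary wavelets it uses an antisymmetrization trick: setting $\psi^S:=\psi^L-\psi^L(-\cdot)$ manufactures a vanishing moment for free, the same cited criterion applies to $\psi^S$, and $\la f,\psi^S_{j;0}\ra=\la f,\psi^L_{j;0}\ra$ for every $f$ supported in $[0,\infty)$, so the boundary sums are dominated by a Bessel sum on $\R$. This is shorter and sidesteps exactly the point you identify as delicate.

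Two soft spots in your version deserve attention. First, for the boundary wavelets the $2^{-|j-j'|/2}$ gain is \emph{not} forced by the dilation mismatch alone: for $j>j'$ one only gets $|\la \eta_{j;0},\eta_{j';0}\ra|\le \|\eta\|_{\Lp{2}}\,\|\eta\chi_{[0,2^{j'-j}N]}\|_{\Lp{2}}$ with $\fs(\eta)\subseteq[0,N]$, and for a general compactly supported $\eta\in\Lp{2}$ this local mass can decay arbitrarily slowly. The hypothesis $\eta\in\HH{\tau}$ rescues you (Sobolev embedding into $L_p$ with $1/p=1/2-\tau$ plus H\"older gives $\|\eta\chi_{[0,\delta]}\|_{\Lp{2}}\le C\delta^{\tau}$, hence geometric decay with $\epsilon=\tau$), so your claim is true but for a reason you should state; as written the justification is wrong even though the conclusion holds. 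Second, the interior-wavelet Gram estimate under only $\vmo(\psi)\ge 1$ and $\psi\in\HH{\tau}$ with possibly tiny $\tau$ is precisely the content of the lemma the paper cites; your frequency-splitting plan is the standard way to prove it, but as submitted it is an acknowledged gap rather than a proof. If you intend a self-contained argument you must carry that step out; otherwise it is more economical to cite the real-line Bessel criterion as the paper does and reserve your effort for the boundary terms, where the reflection trick reduces everything to the same citation.
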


\bp Let $\phi^c$ be defined as in \cref{prop:phicut}.
Let $\mathring{\phi}$ be a vector function obtained by appending $\phi$ to $\phi^c$.
By the refinement equation $\phi=2\sum_{k\in \Z} a(k) \phi(2\cdot-k)$
and \eqref{phic:refstr}, it is straightforward to see that the vector function $\mathring{\phi}$ is a compactly supported refinable vector function with a finitely supported matrix-valued filter. Because all entries in $\mathring{\phi}$ belong to $\Lp{2}$ and have compact support, we conclude by \cite[Corollary~5.8.2 or Corollary~6.3.4]{hanbook} (also see \cite[Theorem~2.2]{han03}) that there exists $\tau>0$ such that every entry in $\mathring{\phi}$ belongs to $\HH{\tau}$. In particular, we conclude that all the entries in $\phi\cup \phi^c$ belong to $\HH{\tau}$.
Note that $\phi(\cdot-n)\chi_{[0,\infty)}=0$ for all $n\le -h_\phi$ and $\phi(\cdot-n)\chi_{[0,\infty)}=\phi(\cdot-n)$ for all $n\ge -l_\phi$. Consequently, by $n_\phi\ge -l_a$, $\phi(\cdot-n)\chi_{[0,\infty)}\in \HH{\tau}$ for all $n\in \Z$.
Hence, all elements of $\phi(2^j\cdot-k)\chi_{[0,\infty)}$ with $j,k\in \Z$ must belong to $\HH{\tau}$. In particular,
if each element in $\psi\cup \phi^L\cup \psi^L$ is a finite linear combination of $\phi(2^j\cdot-k)\chi_{[0,\infty)}$ with $j,k\in \Z$, then we must have $\phi\cup\psi\cup \phi^L\cup \psi^L\subseteq \HH{\tau}$. Since $\phi\cup \psi \subseteq \HH{\tau}$ with $\tau>0$ and $\psi$ has at least one vanishing moment, by \cite[Corollary~4.6.6]{hanbook} or \cite[Theorem~2.3]{han03},
$\AS_0(\phi;\psi)$ must be a Bessel sequence in $\Lp{2}$, that is, there exists a positive constant $C$ such that
\[
\sum_{h\in \AS_0(\phi;\psi)}|\la f, h\ra|^2\le C\|f\|_{\Lp{2}}^2\qquad \mbox{for all}\quad f\in \Lp{2}.
\]
Since all the elements in $\AS_0(\Phi;\Psi)_{[0,\infty)}$ but not in $\AS_0(\phi;\psi)$ are $\phi^L$ and $\psi^L_{j;0}:=2^{j/2}\psi^L(2^j\cdot)$ for $j\in \NN:=\N\cup\{0\}$,
to prove \eqref{AS:bessel} with $J=0$ and $C$ being replaced by $2C$, it suffices to prove
\be \label{bessel:L}
\|\la f, \phi^L\ra\|_{l_2}^2+
\sum_{j=0}^\infty
\|\la f, \psi^L_{j;0}\ra\|_{l_2}^2\le C\|f\|_{L_2([0,\infty))}^2,\qquad \forall\, f\in L_2([0,\infty)).
\ee
Define $\phi^S:=\phi^L-\phi^L(-\cdot)$ and
$\psi^S:=\psi^L-\psi^L(-\cdot)$. Since $\phi^L\cup \psi^L\subseteq \HH{\tau}$ with $\tau>0$, all elements in $\phi^S \cup \psi^S$ belong to $\HH{\tau}$ and have compact support with one vanishing moment. Now we conclude from \cite[Corollary~4.6.6]{hanbook} or \cite[Theorem~2.3]{han03}
that there exists a positive constant $C$ such that
\[
\sum_{j=0}^\infty \sum_{k\in \Z} \|\la f, \phi^S_{j;k}\ra\|_{l_2}^2+
\sum_{j=0}^\infty \sum_{k\in \Z}
\|\la f, \psi^S_{j;k}\ra\|_{l_2}^2\le C\|f\|_{L_2(\R)}^2,\qquad \forall\, f\in L_2(\R),
\]
where $\psi^S_{j;k}:=2^{j/2}\psi^S(2^j\cdot-k)$.
For $f\in L_2([0,\infty))$, we have $\fs(f)\subseteq [0,\infty)$ and
trivially
$\la f, \psi^S_{j;0}\ra=
\la f, \psi^L_{j;0}\ra$. Consequently, it follows directly from the above inequality that \eqref{bessel:L} must hold and hence \eqref{AS:bessel} holds for $J=0$. By a simple scaling argument (e.g., see \cite[Proposition~4 and (2.6)]{han12} and \cite[Theorem 4.3.3]{hanbook}),  \eqref{AS:bessel} must hold for all $J\in \Z$ with the same constant $C$.
\ep

Based on \cref{thm:bw,thm:wbd:0,thm:phi:bessel}, we now present the following result, whose proof is given in \cref{sec:proof}, for constructing compactly supported biorthogonal wavelets on $[0,\infty)$.

\begin{theorem}\label{thm:wbd}
	Let $(\{\tilde{\phi};\tilde{\psi}\},\{\phi;\psi\})$ be a compactly supported biorthogonal wavelet in $\Lp{2}$ with a biorthogonal wavelet filter bank $(\{\tilde{a};\tilde{b}\},\{a;b\})$
	satisfying
	items (1)--(4) of \cref{thm:bw}.
	Define integers $l_\phi,l_\psi,
	l_a,h_a$ as in \eqref{fs:phi:a} and
	$l_{\tilde{\phi}}, l_{\tilde{\psi}}, l_{\tilde{a}}, l_{\tilde{b}}$ as in \eqref{fs:tphi:ta}.
	Let $\phi^L, \psi^L, \tilde{\phi}^L, \tilde{\psi}^L$ be finite sets of compactly supported functions in $L_2([0,\infty))$.
	Let $n_\phi,n_\psi,n_{\tilde{\phi}},n_{\tilde{\psi}}$ be integers satisfying \eqref{cardinality}.
	Define $\Phi,\Psi$ as in \eqref{PhiPsiI} and $\tilde{\Phi},\tilde{\Psi}$ as in \eqref{tPhiPsiI}.
	Assume that
\[
\phi^L\cup \psi^L\cup \tilde{\phi}^L \cup\tilde{\psi}^L \subseteq \HH{\tau}\quad \mbox{for some}\quad \tau>0
\]
and
	\begin{enumerate}
		\item[(i)]  $\Phi\subset L_2([0,\infty))$
		satisfies both \eqref{nphi} and \eqref{I:phi} for some matrix $A_L$ and some finitely supported sequence $A$ of matrices.
		\item[(ii)] $\tilde{\Phi}\subset L_2([0,\infty))$ is biorthogonal to $\Phi$, and $\tilde{\Phi}$ satisfies both \eqref{I:phi:dual} and \eqref{I:tphi}
		for some matrix $\tilde{A}_L$ and some finitely supported sequence $\tilde{A}$ of matrices.
		
		\item[(iii)] $\si_0(\Psi)=\si_1(\Phi)\cap
		(\si_0(\tilde{\Phi}))^\perp$ and
		$\Psi$ satisfies both \eqref{npsi} and \eqref{I:psi} for some matrix $B_L$ and some finitely supported sequence $B$ of matrices.

		\item[(iv)]
		 $\si_0(\tilde{\Psi})=\si_1(\tilde{\Phi})\cap
		(\si_0(\Phi))^\perp$, $\tilde{\Psi}$ is biorthogonal to $\Psi$, and $\tilde{\Psi}$
		satisfies \eqref{I:psi:dual} and \eqref{I:tpsi} for some matrix $\tilde{B}_L$ and some finitely supported sequence $\tilde{B}$ of matrices.
	\end{enumerate}
	Then $\AS_J(\tilde{\Phi};\tilde{\Psi})_{[0,\infty)}$ and $\AS_J(\Phi;\Psi)_{[0,\infty)}$, as defined in \eqref{ASPhiPsiI}, form a pair of compactly supported biorthogonal Riesz bases in $L_2([0,\infty))$ for every $J\in \Z$.
\end{theorem}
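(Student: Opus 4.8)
The plan is to prove the assertion for $J=0$ and then transfer to arbitrary $J\in\Z$ by the unitary dyadic-scaling argument already used in \cref{sec:intro} and \cref{subsec:dual}. Throughout I abbreviate $V_j:=\si_j(\Phi)$, $W_j:=\si_j(\Psi)$, $\tilde V_j:=\si_j(\tilde\Phi)$ and $\tilde W_j:=\si_j(\tilde\Psi)$. I would isolate four facts: (a) a Bessel (upper) bound for both affine systems; (b) full cross-biorthogonality of $\AS_0(\Phi;\Psi)_{[0,\infty)}$ and $\AS_0(\tilde\Phi;\tilde\Psi)_{[0,\infty)}$; (c) the lower Riesz bound; and (d) completeness in $L_2([0,\infty))$. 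Given (a)--(d) a soft duality argument closes the proof: (a)+(b)+(c) make each system a Riesz sequence with the other as its biorthogonal system, and (d) upgrades these Riesz sequences to Riesz bases, so the pair is a pair of biorthogonal Riesz bases of $L_2([0,\infty))$.

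For (a) I would apply \cref{thm:phi:bessel} twice. Its hypotheses hold: $\phi^L\cup\psi^L$ and $\tilde\phi^L\cup\tilde\psi^L$ lie in $\HH{\tau}$ by assumption, both $\psi$ and $\tilde\psi$ carry at least one vanishing moment (condition (4$'$) following \cref{thm:bw}), and the integers $n_\phi,n_\psi,n_{\tilde\phi},n_{\tilde\psi}$ satisfy the required lower bounds, so both affine systems are Bessel. For (b) I first record the refinement consequences of (i)--(iv): \eqref{I:phi} together with \eqref{nphi} gives $V_0\subseteq V_1$, and (ii) gives $\tilde V_0\subseteq\tilde V_1$; dilating yields $V_j\subseteq V_{j+1}$ and $\tilde V_j\subseteq\tilde V_{j+1}$ for all $j\in\NN$. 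Dilating (iii) and (iv) by $2^j$ gives $W_j=V_{j+1}\cap\tilde V_j^{\perp}$ and $\tilde W_j=\tilde V_{j+1}\cap V_j^{\perp}$, whence $W_j\perp\tilde V_j$ and $\tilde W_j\perp V_j$. With these inclusions the biorthogonality relations follow by scale comparison: same-scale $\phi$--$\tilde\phi$ and $\psi$--$\tilde\psi$ pairings are exactly (ii) and the dilate of (iv); for $j<j'$ the scale-$j$ wavelet element lies in $W_j\subseteq V_{j'}$ while the scale-$j'$ dual wavelet element lies in $\tilde W_{j'}\subseteq V_{j'}^{\perp}$, and symmetrically for $j>j'$; finally a $\phi$-element sits in $V_0\subseteq V_j\perp\tilde W_j$ and a $\tilde\phi$-element in $\tilde V_0\subseteq\tilde V_j\perp W_j$, annihilating all remaining $\phi$--$\tilde\psi$ and $\psi$--$\tilde\phi$ cross terms.

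Property (c) is then immediate: for finitely supported $\{c_h\}$ set $f=\sum_h c_h h$; by (b) one has $c_h=\la f,\tilde h\ra$, so the Bessel bound of the dual system from (a) gives $\sum_h|c_h|^2\le C\|f\|^2_{L_2([0,\infty))}$, the lower Riesz bound, and symmetrically for the dual system. The substantive content is (d). I would first promote the nesting to a genuine splitting $V_{j+1}=V_j+W_j$: since $\Phi,\tilde\Phi$ are biorthogonal Bessel systems, the oblique projection $Pf:=\sum_{h\in\Phi}\la f,\tilde h\ra h$ is a bounded idempotent onto $V_0$, and for $f\in V_1$ one checks $f-Pf\perp\tilde\Phi$, hence $f-Pf\in V_1\cap\tilde V_0^{\perp}=W_0$ by (iii); dilating gives $V_{j+1}=V_j+W_j$ for every $j$. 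Telescoping shows $V_J\subseteq\overline{\operatorname{span}}\,\AS_0(\Phi;\Psi)_{[0,\infty)}$ for all $J$, so completeness reduces to density of $\bigcup_j V_j$ in $L_2([0,\infty))$.

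The main obstacle is precisely this last density claim, where the endpoint $0$ must be controlled. Here I would extend $f\in L_2([0,\infty))$ by zero to $\R$ and use that, since $\wh{\phi}(0)\neq0$ by \cref{thm:bw}(1), $\phi$ generates a biorthogonal MRA and the associated real-line projections onto $\si_j(\phi)$ converge to the identity in $L_2(\R)$. Splitting such a projection of $f$ into its interior part over $k\ge n_\phi$, which lies in $V_j$, and the finitely many boundary terms with $-h_{\tilde\phi}<k<n_\phi$, a direct estimate bounds the boundary part by $C\|f\chi_{[0,2^{-j}C']}\|_2\to0$; hence $\operatorname{dist}(f,V_j)\to0$ and $\bigcup_j V_j$ is dense. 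This yields completeness of $\AS_0(\Phi;\Psi)_{[0,\infty)}$, and completeness of the dual system then follows automatically, since a system biorthogonal to a Riesz basis and lying in the ambient space is its canonical dual, hence itself a Riesz basis. Assembling (a)--(d) proves the statement for $J=0$, and the scaling argument delivers every $J\in\Z$.
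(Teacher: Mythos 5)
Your proposal is correct and follows essentially the same route as the paper's proof: the Bessel bounds come from \cref{thm:phi:bessel}, cross-biorthogonality is obtained by the same scale-comparison argument (the paper packages it via the sets $S_{j_0}$ from the proof of \cref{thm:wbd:0}), the lower Riesz bound is the standard duality argument as in (4)$\imply$(1) of \cref{thm:rz}, and completeness is proved by the identical oblique-projection splitting $\si_1(\Phi)=\si_0(\Phi)+\si_0(\Psi)$ followed by density of $\bigcup_j\si_j(\Phi)$ using $2^{-j}n_\phi\to 0$. The only cosmetic differences are that you spell out the endpoint density estimate more explicitly and deduce completeness of the dual system from uniqueness of the biorthogonal family rather than by running the symmetric argument.
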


\section{Classical Approach for Constructing Biorthogonal Wavelets on $[0,\infty)$}
\label{sec:classical}

The main ingredients of the classical approach for constructing (bi)orthogonal wavelets on intervals are outlined in items (i)--(iv) of \cref{thm:wbd}.
Most papers in the current literature (e.g., \cite{ahjp94,ahl17,a93,cer19,cf11,cf12,cq04,cdv93,dhjk00,dku99,hj02,hm99,jia09,mad97,mey91,pst95,pri10})
employ (variants of) the classical approach to construct particular wavelets on intervals from special (bi)orthogonal wavelets
on the real line such as Daubechies orthogonal wavelets in \cite{dau88} and spline biorthogonal scalar wavelets in \cite{cdf92}.
From any arbitrarily given compactly supported biorthogonal (multi)wavelets on the real line,
the main goal of this section is to follow the classical approach outlined in \cref{thm:wbd} for constructing all possible compactly supported biorthogonal wavelets $(\AS_0(\tilde{\Phi};\tilde{\Psi})_{[0,\infty)},
\AS_0(\Phi;\Psi)_{[0,\infty)})$
on the interval $[0,\infty)$ with or without vanishing moments and polynomial reproduction, under the restriction that every boundary element in $\Phi$ (or $\tilde{\Phi}$) is a finite linear combination of $\phi(\cdot-k)\chi_{[0,\infty)}$ (or $\tilde{\phi}(\cdot-k)\chi_{[0,\infty)}$) with $k\in \Z$.
As we shall see in this section, though adapting orthogonal (multi)wavelets from the real line to $[0,\infty)$ is easy, constructing general compactly supported biorthogonal (multi)wavelets on $[0,\infty)$ is often much more involved and complicated than their orthogonal counterparts.
The complexity of the classical approach in this section also motivates us to propose a direct approach in \cref{sec:direct} to construct all possible biorthogonal (multi)wavelets on $[0,\infty)$ without explicitly constructing the dual refinable functions $\tilde{\Phi}$ and dual wavelets $\tilde{\Psi}$, while removing the restrictions on the boundary elements in $\Phi$ and $\tilde{\Phi}$.

\subsection{Construct refinable $\Phi$ satisfying item (i) of \cref{thm:wbd}}
\label{subsec:Phi}

Though elements in $\phi^L$ in \cref{thm:wbd:0} could be any compactly supported functions in $L_2([0,\infty))$, in this section we only consider particular $\phi^L$.
The general case $\phi^L\subset L_2([0,\infty))$ will be addressed later in \cref{thm:Phi:direct}.

To satisfy item (i) of \cref{thm:wbd}, we have only two conditions \eqref{nphi} and \eqref{I:phi} on $\Phi=\{\phi^L\}\cup \{\phi(\cdot-k) \setsp k\ge n_\phi\}$.
As we discussed before, \eqref{nphi} is trivially true by choosing any integer $n_\phi$ satisfying $n_\phi\ge \max(-l_\phi,-l_a)$. So, the main task is to construct $\phi^L$ to satisfy \eqref{I:phi}.
By \cref{prop:phicut},
there are three straightforward choices of $\phi^L$ satisfying \eqref{I:phi}:
\begin{enumerate}
	\item[(C1)] $\phi^L=\phi^c$ in item (i) of \cref{prop:phicut} satisfies
	\eqref{I:phi} with $A_L=A_{L_c}$ and $A=A_c$ in \eqref{phic:refstr};
	\item[(C2)] $\phi^L=\phi^{\pp}$ in \eqref{phip} in item (ii) of \cref{prop:phicut}
	satisfies \eqref{I:phi} with $A_L=A_{L_\pp}$ and $A=A_\pp$, where $\pp(x)=(x^{j_0},\ldots,x^{j_\ell})^\tp$ such that $\{j_0,\ldots, j_\ell\}\subseteq \{0,\ldots, m-1\}$ with $m:=\sr(a)$;
	
	\item[(C3)] $\phi^L:=2\sum_{k=n_\phi}^\infty A(k) \phi(2\cdot-k)$ with a finitely supported sequence $A$ satisfies \eqref{I:phi} with $A_L=0$.
\end{enumerate}
Through the following breaking and merging steps,
many new $\phi^L$ satisfying \eqref{I:phi}
can be obtained
from the finite-dimensional space generated by known/given $\phi^L$ such as in (C1)--(C3):

\begin{enumerate}
	\item[(BS)] Breaking Step: For $\phi^L$ satisfying \eqref{I:phi}, write $A_L=C^{-1} \diag(J_1,\ldots, J_N)C$ in its Jordan normal form and define $(\phi^{L_{J_\ell}})_{1\le \ell\le N}:=C\phi^L$ with $\#\phi^{L_{J_\ell}}=N_{J_\ell}\in \N$, where $J_\ell$ is an $N_{J_\ell}\times N_{J_\ell}$ (basic Jordan block) matrix given by
	\be \label{jordan}
	J_\ell:=\left[ \begin{matrix}
		\gl_\ell &1 &0 &\cdots &0\\
		0 &\gl_\ell &1 &\cdots &0\\
		\vdots &\vdots &\ddots &\vdots &\vdots\\
		0 &0 &\cdots &\gl_\ell &1\\
		0 &0 &0 &\cdots &\gl_\ell\end{matrix}\right],
	\qquad \gl_\ell \in \C.
	\ee
	Then
	every $\phi^{L_{J_\ell}}$ and its truncated vector functions by throwing away the first $n$ entries of $\phi^{L_{J_\ell}}$ with $1\le n<N_{J_\ell}$
	satisfy \eqref{I:phi} and $\mbox{span}(\phi^L)= \mbox{span}(\cup_{\ell=1}^N \phi^{L_{J_\ell}})$,
	since
	\[
	\left[
	\begin{matrix}
	\phi^{L_{J_1}}\\
	\vdots\\
	\phi^{L_{J_N}}
	\end{matrix}\right]
	=2
	\left[
	\begin{matrix}
	J_1 \phi^{L_{J_1}}(2\cdot)\\
	\vdots\\
	J_N \phi^{L_{J_N}}(2\cdot)
	\end{matrix}\right]
	+2\sum_{k=n_\phi}^\infty C A(k) \phi(\cdot-k).
	\]

	\item[(MS)] Merging Step: For vector functions $\phi^{L_1}$ and $\phi^{L_2}$ satisfying \eqref{I:phi}, i.e.,
\[
\phi^{L_1}=2A_{L_1}\phi^{L_1}(2\cdot)+
	2\sum_{k=n_\phi}^\infty A_1(k) \phi(2\cdot-k)
\]
and
\[ \phi^{L_2}=2A_{L_2}\phi^{L_2}(2\cdot)+
	2\sum_{k=n_\phi}^\infty A_2(k) \phi(2\cdot-k),
\]
then $\phi^L:=\phi^{L_1}\cup \phi^{L_2}$ satisfies \eqref{I:phi} with $A_L:=\diag(A_{L_1},A_{L_2})$ and $A(k)=[A_1(k)^\tp, A_2(k)^\tp]^\tp$ for all $k\ge n_\phi$.
\end{enumerate}

Note that we can always add $\phi^\pp$ to $\phi^L$ by the merging step (MS) for polynomial reproduction.
$\Phi=\{\phi^L\}\cup\{\phi(\cdot-k) \setsp k\ge n_\phi\}$
satisfying item (i) of \cref{thm:wbd} is not necessarily a Riesz sequence in $L_2([0,\infty))$. However,
we can always perform the procedure in \cref{thm:rz} to remove redundant elements in $\phi^L$ so that the new $\Phi$ is a Riesz sequence and still satisfies item (i) of \cref{thm:wbd}.
The particular choice of $\phi^L=\phi^{\pp}$ in item (ii) of \cref{prop:phicut} with $\pp(x)=(1,x,\ldots,x^{m-1})^\tp$ and $m:=\sr(a)$ was first considered in \cite{cdv93} for Daubechies orthogonal wavelets in \cite{dau88} and used in \cite{dku99,mas96} for spline biorthogonal scalar wavelets. The particular choice $\phi^L=\phi^c$ in item (i) of \cref{prop:phicut} was originally employed in \cite{mey91} for Daubechies orthogonal wavelets and used in \cite{cf11,pri10} for spline biorthogonal scalar wavelets with improved condition numbers.

We further study some properties of $\Phi$ satisfying item (i) of \cref{thm:wbd} in the following result, which is useful later for us to construct $\Psi$ satisfying item (iii) of \cref{thm:wbd}.

\begin{lemma}\label{lem:W}
	Let $(\{\tilde{\phi};\tilde{\psi}\},\{\phi;\psi\})$ be a compactly supported biorthogonal wavelet in $\Lp{2}$ with a finitely supported biorthogonal wavelet filter bank $(\{\tilde{a};\tilde{b}\},\{a;b\})$
	satisfying items (1)--(4) of \cref{thm:bw}.
	Suppose that $\Phi:=\{\phi^L\}\cup\{\phi(\cdot-k) \setsp k\ge n_\phi\}\subset L_2([0,\infty))$ with $\phi^L$ having compact support satisfies item (i) of \cref{thm:wbd}.
	For any integer $n_\psi\in \Z$ satisfying \eqref{npsi}, define
	\be \label{mphi}
	m_{\phi}:=
	\max(2n_\phi+h_{\tilde{a}},
	2n_\psi+h_{\tilde{b}}),
	\ee
	where $[l_{\tilde{a}},h_{\tilde{a}}]:=\fs(\tilde{a})$ and $[l_{\tilde{b}},h_{\tilde{b}}]:=\fs(\tilde{b})$, and define
	\be \label{H}
	H:=\{\phi(\cdot-k) \setsp k\ge n_\phi\}\cup \{\psi(\cdot-k)\setsp k\ge n_\psi\},
	\ee
	then
	\be \label{psi:linearcomb}
	\phi(2\cdot-k_0) \; \mbox{is a finite linear combination of elements in $H$ for all}\; k_0\ge m_{\phi}
	\ee
	and the finite-dimensional quotient space $\si_1(\Phi)/\si_0(H)$  has a basis, which can be selected from $\{\phi^L(2\cdot)\}\cup \{\phi(2\cdot-k) \setsp n_\phi\le k< m_\phi\}$.
\end{lemma}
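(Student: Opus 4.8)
The plan is to reduce both assertions to the perfect-reconstruction identity carried by the biorthogonal wavelet filter bank $(\{\tilde a;\tilde b\},\{a;b\})$. On the real line each fine-scale generator $\phi(2\cdot-k_0)$ lies in the shift-invariant space generated by $\phi(2\cdot-k)$, $k\in\Z$, which is the biorthogonal direct sum of the spaces generated by $\phi(\cdot-k)$ and by $\psi(\cdot-k)$, with dual system $\tilde\phi(\cdot-k),\tilde\psi(\cdot-k)$. First I would expand $\phi(2\cdot-k_0)$ against this dual system. A short computation using the dual refinement relations \eqref{refstr:dual} together with the biorthogonality $\la\phi,\tilde\phi(\cdot-k)\ra=\td(k)I_r$ from item~(2) of \cref{thm:bw} evaluates the coefficients as $\la\phi(2\cdot-k_0),\tilde\phi(\cdot-k)\ra=\ol{\tilde a(k_0-2k)}^\tp$ and $\la\phi(2\cdot-k_0),\tilde\psi(\cdot-k)\ra=\ol{\tilde b(k_0-2k)}^\tp$, yielding
\[
\phi(2\cdot-k_0)=\sum_{k\in\Z}\ol{\tilde a(k_0-2k)}^\tp\phi(\cdot-k)+\sum_{k\in\Z}\ol{\tilde b(k_0-2k)}^\tp\psi(\cdot-k),
\]
a sum with only finitely many nonzero terms because $\tilde a,\tilde b$ are finitely supported.

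For \eqref{psi:linearcomb} I would simply track supports. A nonzero $\tilde a(k_0-2k)$ forces $k\ge (k_0-h_{\tilde a})/2$ and a nonzero $\tilde b(k_0-2k)$ forces $k\ge (k_0-h_{\tilde b})/2$, where $h_{\tilde a},h_{\tilde b}$ are the right endpoints of $\fs(\tilde a),\fs(\tilde b)$. The threshold $m_\phi$ in \eqref{mphi} is engineered exactly so that $k_0\ge m_\phi$ makes the first bound at least $n_\phi$ and the second at least $n_\psi$; hence every surviving $\phi(\cdot-k)$ has $k\ge n_\phi$ and every surviving $\psi(\cdot-k)$ has $k\ge n_\psi$, so the whole finite combination lies in $H$. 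This is precisely \eqref{psi:linearcomb}.

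For the quotient statement I would first check the inclusion $\si_0(H)\subseteq\si_1(\Phi)$: by \eqref{nphi} each $\phi(\cdot-k)$ with $k\ge n_\phi$ and by \eqref{npsi} each $\psi(\cdot-k)$ with $k\ge n_\psi$ is a finite combination of the generators $\phi(2\cdot-k)$, $k\ge n_\phi$, of $\si_1(\Phi)$. Then \eqref{psi:linearcomb} gives $\phi(2\cdot-k_0)\in\si_0(H)$ for every $k_0\ge m_\phi$. Setting $G_1:=\{\phi^L(2\cdot)\}\cup\{\phi(2\cdot-k)\setsp n_\phi\le k<m_\phi\}$, the remaining generators $\phi(2\cdot-k_0)$, $k_0\ge m_\phi$, of $\si_1(\Phi)$ are absorbed into $\si_0(H)$, so I would conclude $\si_1(\Phi)=\mathrm{span}(G_1)+\si_0(H)$; passing to the quotient, the images of the finite set $G_1$ span $\si_1(\Phi)/\si_0(H)$, and one extracts from $G_1$ a subset whose images form a basis.

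The step I expect to be the main obstacle is promoting the dense inclusion $\si_1(\Phi)\subseteq\ol{\mathrm{span}(G_1)+\si_0(H)}$ to the exact equality $\si_1(\Phi)=\mathrm{span}(G_1)+\si_0(H)$, which is what makes the quotient genuinely finite-dimensional. Here I would invoke the standard Hilbert-space fact that the sum of a finite-dimensional subspace and a closed subspace is again closed: since $\mathrm{span}(G_1)$ is finite-dimensional and $\si_0(H)$ is closed by its definition in \eqref{subspace}, the sum $\mathrm{span}(G_1)+\si_0(H)$ is closed and therefore coincides with its own closure, which contains all generators of $\si_1(\Phi)$. Everything else is routine bookkeeping with the finitely supported filters and the shift relations \eqref{nphi}--\eqref{npsi}.
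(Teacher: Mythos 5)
Your proposal is correct and follows essentially the same route as the paper's proof: the paper likewise expands $\phi(2\cdot-k_0)$ in the biorthogonal system via \eqref{expr} and \eqref{refstr:dual} to obtain the identity with coefficients $\ol{\tilde a(k_0-2k)}^\tp$ and $\ol{\tilde b(k_0-2k)}^\tp$, reads off \eqref{psi:linearcomb} from the support bounds built into $m_\phi$, and concludes that $\si_0(H)$ together with the finite set $\{\phi^L(2\cdot)\}\cup\{\phi(2\cdot-k)\setsp n_\phi\le k<m_\phi\}$ spans $\si_1(\Phi)$. Your explicit remark that a finite-dimensional subspace plus a closed subspace is closed is a point the paper leaves implicit, but it is the standard justification and does not constitute a different argument.
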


\bp Using \eqref{expr} with $f=\phi(2\cdot-k_0)$ and $J=0$,
we deduce from \eqref{refstr:dual} that for all $k_0\in \Z$,
\be \label{phi2k0}
\phi(2\cdot-k_0)=\sum_{k=\lceil \frac{k_0-h_{\tilde{a}}}{2}\rceil}
^{\lfloor \frac{k_0-l_{\tilde{a}}}{2}\rfloor}
\ol{\tilde{a}(k_0-2k)}^\tp\phi(\cdot-k)+
\sum_{k=\lceil \frac{k_0-h_{\tilde{b}}}{2}\rceil}
^{\lfloor \frac{k_0-l_{\tilde{b}}}{2}\rfloor}
\ol{\tilde{b}(k_0-2k)}^\tp \psi(\cdot-k).
\ee
By the definition of $m_\phi$ in \eqref{mphi}, we have $\frac{k_0-h_{\tilde{a}}}{2}\ge
n_\phi$ and
$\frac{k_0-h_{\tilde{b}}}{2}\ge n_\psi$ for all $k_0\ge m_\phi$.
Hence, \eqref{phi2k0} implies \eqref{psi:linearcomb}.
By \eqref{nphi} and \eqref{I:phi} in item (i) of \cref{thm:wbd},
we have $H\subseteq \si_1(\Phi)$ and hence $\si_0(H)\subseteq \si_1(\Phi)$. So, $\si_1(\Phi)/\si_0(H)$ is well defined.
By \eqref{psi:linearcomb},
\[
\si_0(H)+\si_0(\{\phi^L(2\cdot)\}\cup \{\phi(2\cdot-k)\}_{k=n_\phi}^{m_\phi-1})
=\si_1(\Phi).
\]
Hence, the dimension of $\si_1(\Phi)/\si(H)$ is no more than $(\#\phi^L)+(m_\phi-n_\phi)(\#\phi)<\infty$.
\ep

\subsection{Construction of orthogonal wavelets on $[0,\infty)$}

We call $\{\phi;\psi\}$ \emph{an orthogonal wavelet in $\Lp{2}$} if $(\{\phi;\psi\},\{\phi;\psi\})$ is a biorthogonal wavelet in $\Lp{2}$, i.e., $\AS_0(\phi;\psi)$ is an orthonormal basis of $\Lp{2}$. Similarly, we call $\{a;b\}$ \emph{an orthogonal wavelet filter bank} if $(\{a;b\},\{a;b\})$ is a biorthogonal wavelet filter bank.
As a direct consequence of \cref{lem:W} and \cref{thm:wbd,thm:rz}, we have

\begin{algorithm}\label{alg:Phi:orth}
	Let $\{\phi;\psi\}$ be a compactly supported orthogonal wavelet in $\Lp{2}$ associated with a finitely supported orthogonal wavelet filter bank $\{a;b\}$.
	\begin{enumerate}
		\item[(S1)] Construct $\Phi=\{\phi^L\}\cup\{\phi(\cdot-k) \setsp k\ge n_\phi\}\subseteq L_2([0,\infty))$ (e.g., by \Cref{subsec:Phi} or \cref{thm:Phi:direct}) such that item (i) of \cref{thm:wbd} holds but $\Phi$ is not necessarily a Riesz sequence in $L_2([0,\infty))$.
		
		\item[(S2)] Apply the following Gram-Schmidt orthonormalization procedure to $\Phi$:
		\begin{enumerate}
			\item[(1)] Initially take $E:=\{\phi(\cdot-k) \setsp n_\phi\le k<N_\phi\}$ with $N_\phi:=\max(n_\phi, h_{\phi^L}-l_\phi)$, where $[l_{\phi^L},h_{\phi^L}]:=\fs(\phi^L)$ and $[l_\phi,h_\phi]:=\fs(\phi)$;
			\item[(2)] Visit all elements $\eta\in \phi^L$ one by one: replace $E$ by $E\cup \{\mathring{\eta}/\|\mathring{\eta}\|_{\Lp{2}}\}$ if $\|\mathring{\eta}\|_{\Lp{2}}\ne 0$, where
			 $\mathring{\eta}:=\eta-\sum_{h\in E}\la \eta, h\ra h$; otherwise, delete $\eta$ from $\phi^L$;
			\item[(3)] Update/redefine $\phi^L:=E\bs \{\phi(\cdot-k) \setsp n_\phi\le k<N_\phi\}$.
		\end{enumerate}
		Then $\Phi$ with the updated boundary vector function $\phi^L$ is an orthonormal system in $L_2([0,\infty))$.
		\item[(S3)] Select an integer $n_\psi$ satisfying \eqref{npsi} and $\la \phi^L, \psi(\cdot-k)\ra=0$ for all $k\ge n_\psi$. For example, we can choose any integer $n_\psi$ satisfying $n_\psi\ge \max(-l_\psi, \frac{n_\phi-l_b}{2}, h_{\phi^L})$ with $[l_\psi,h_\psi]:=\fs(\psi)$ and $[l_b,h_b]:=\fs(b)$.
		Define
\[
\mathring{\psi}^L:= \{\phi^L(2\cdot)\}\cup \{\phi(2\cdot-k) \setsp n_\phi\le k< m_\phi\}
\]
with $m_\phi:=\max(2n_\phi+h_a, 2n_\psi+h_b)$.
		Define $M_\phi:= \lceil\max( \frac{h_{{\phi^L}}}{2}, \frac{h_\phi+m_\phi-1}{2})\rceil-l_\phi$
		and
		calculate
		\[
		\psi^L:=\mathring{\psi}^L-
		\la \mathring{\psi}^L,\phi^L\ra\phi^L-
		\sum_{k=n_\phi}^{M_\phi-1} \la \mathring{\psi}^L,\phi(\cdot-k)\ra\phi(\cdot-k).
		\]
		
		\item[(S4)] Update $\psi^L$ by applying the similar Gram-Schmidt orthonormalization procedure in (S2) to
		 $\{\psi^L\}\cup\{\psi(\cdot-k)\setsp n_\psi\le k<N_\psi\}$ with $N_\phi$ being replaced by $N_\psi:=\max(n_\psi, h_{\psi^L}-l_{\tilde{\psi}})$.
	\end{enumerate}
	Then $\AS_J(\Phi;\Psi)_{[0,\infty)}$ in \eqref{ASPhiPsiI} with $\Psi:=\{\psi^L\}\cup \{\psi(\cdot-k) \setsp k\ge n_\psi\}$ is an orthonormal basis of $L_2([0,\infty))$ for all $J\in \Z$.
	Moreover, if we append $\phi^{\pp}$ with $\pp(x):=(1,x,\ldots, x^{\sr(a)-1})^\tp$ in item (ii) of \cref{prop:phicut} to $\phi^L$ in (S1),
	then all elements in $\Psi$ must have $\sr(a)$ vanishing moments.
\end{algorithm}

The choice of $M_\phi$ for defining $\psi^L$ in (S3) of \cref{alg:Phi:orth}
guarantees that $\supp(\mathring{\psi}^L)\cap \fs(\phi(\cdot-k))$ is at most a singleton and hence $\la \mathring{\psi}^L, \phi(\cdot-k)\ra=0$ for all $k\ge M_\phi$.
See \cref{thm:integral} below
for calculating inner products in
the Gram-Schmidt orthonormalization procedure in (S2) and (S4) of \cref{alg:Phi:orth}.
Though biorthogonal wavelets on the real line are flexible to design (\cite{han01,hanbook}) and important in many applications, as we shall see later, the construction of biorthogonal wavelets on $[0,\infty)$ is much more involved than their orthogonal counterparts.

As we shall discuss in \cref{sec:bc},
if an orthogonal (multi)wavelet on $[0,\infty)$ satisfies homogeneous boundary conditions, then some of its boundary elements often can have no or low order of vanishing moments.
The following result is a special case of \cref{thm:ow:vm}.

\begin{cor}\label{cor:ow:novm}
	Suppose that $\AS_0(\Phi; \Psi)_{[0,\infty)}$ is an orthonormal basis of $L_2([0,\infty))$ such that all elements of $\Phi\cup \Psi$ are compactly supported and are continuous near $0$. If $\eta(0)=0$ for all $\eta\in \Psi$, then there must exist some $\eta\in \Psi$ such that $\eta$ does not have any vanishing moments, i.e., $\int_0^\infty \eta(x) dx\ne 0$.
\end{cor}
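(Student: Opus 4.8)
The plan is to argue by contradiction: suppose that, contrary to the claim, \emph{every} $\eta\in\Psi$ has a vanishing moment, so that $\int_0^\infty\eta(x)\,dx=0$ in addition to the standing hypotheses $\eta(0)=0$ and continuity near $0$ of all elements of $\Phi\cup\Psi$. Set $V_n:=\si_n(\Phi)$ and $W_j:=\si_j(\Psi)$. Since $\AS_0(\Phi;\Psi)_{[0,\infty)}$ is an orthonormal basis, item~(3) of \cref{thm:wbd:0} (in the self-dual orthogonal case) gives the orthogonal multiresolution decomposition $V_n=V_0\oplus W_0\oplus\cdots\oplus W_{n-1}$ and $L_2([0,\infty))=V_0\oplus\bigoplus_{j\ge0}W_j$. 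Let $P_n$ and $Q_j$ be the orthogonal projections onto $V_n$ and $W_j$, so $P_n=P_0+\sum_{j=0}^{n-1}Q_j$. For any compactly supported $f$ that is continuous at $0$, each of $P_nf,P_0f,Q_jf$ is a sum that is locally finite near $0$ of basis elements, hence continuous near $0$; the goal is to derive two incompatible statements about the boundary value $P_nf(0)$.

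First I would exploit the Dirichlet hypothesis. For each fixed $j$ only finitely many $\eta_{j;0}:=2^{j/2}\eta(2^j\cdot)$, $\eta\in\Psi$, are nonzero near $0$, and each satisfies $\eta_{j;0}(0)=2^{j/2}\eta(0)=0$; therefore $Q_jf(0)=\sum_{\eta\in\Psi}\langle f,\eta_{j;0}\rangle\,\eta_{j;0}(0)=0$, whence $P_nf(0)=P_0f(0)$ for every $n$.

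Second I would use the vanishing-moment hypothesis. Since $\int_0^\infty\eta=0$ for all $\eta\in\Psi$ forces $\min(\vmo(\psi^L),\vmo(\psi))\ge1$, \cref{lem:vm} (with $m=1$ and $\tilde\Phi=\Phi$) yields constant reproduction $P_0\chi_{[0,\infty)}=\chi_{[0,\infty)}$ near $0$. Writing $P_n$ through its kernel $K_n(x,y):=\sum_{\varphi\in\Phi}\varphi_n(x)\overline{\varphi_n(y)}$ with $\varphi_n:=2^{n/2}\varphi(2^n\cdot)$, the slice $K_n(0,\cdot)$ involves only the finitely many $\varphi\in\Phi$ with $\varphi(0)\ne0$, so it is supported in a shrinking interval $[0,2^{-n}M]$ with $\|K_n(0,\cdot)\|_{L_1}\le\sum_{\varphi}|\varphi(0)|\,\|\varphi\|_{L_1(\R)}$ uniformly in $n$, while a scale-invariance computation together with constant reproduction gives $\int_0^\infty K_n(0,y)\,dy=\sum_{\varphi}\varphi(0)\overline{\int_0^\infty\varphi}=1$ for all $n$. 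Thus $\{K_n(0,\cdot)\}$ is an approximate identity at the endpoint, and $|P_nf(0)-f(0)|=\bigl|\int_0^\infty K_n(0,y)(f(y)-f(0))\,dy\bigr|\le\|K_n(0,\cdot)\|_{L_1}\sup_{[0,2^{-n}M]}|f-f(0)|\to0$, so $P_nf(0)\to f(0)$ for every $f$ continuous at $0$.

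Combining the two steps gives $f(0)=P_0f(0)=\sum_{\varphi\in\Phi}\langle f,\varphi\rangle\,\varphi(0)$ for all compactly supported $f$ continuous at $0$. The right-hand side is a finite sum (only finitely many $\varphi(0)\ne0$) and, by the Bessel inequality for the orthonormal system $\Phi$, is bounded by $\bigl(\sum_{\varphi}|\varphi(0)|^2\bigr)^{1/2}\|f\|_{L_2}$; hence $f\mapsto f(0)$ would be $L_2$-bounded, which is false (test against spikes with $f(0)=1$ and $\|f\|_{L_2}\to0$). This contradiction proves the claim. I expect the main obstacle to be the second step, namely establishing the boundary pointwise convergence $P_nf(0)\to f(0)$ by showing that $K_n(0,\cdot)$ is a genuine approximate identity at $0$: this is precisely where the vanishing-moment hypothesis is used (through the normalization $\int_0^\infty K_n(0,\cdot)=1$), and some care is needed because the $L_2$-convergence of $P_nf$ does not by itself control point values at the endpoint.
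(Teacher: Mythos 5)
Your overall strategy is attractive and genuinely different from the paper's, but it has a gap at its very first structural step. You set $V_n:=\si_n(\Phi)$ and write $P_n=P_0+\sum_{j=0}^{n-1}Q_j$, citing item (3) of \cref{thm:wbd:0} for the decomposition $\si_n(\Phi)=\si_0(\Phi)\oplus\si_0(\Psi)\oplus\cdots\oplus\si_{n-1}(\Psi)$. But \cref{thm:wbd:0} takes the refinable structure \eqref{I:phi}--\eqref{I:psi} as a \emph{hypothesis}, and \cref{cor:ow:novm} does not assume it: the corollary only assumes that $\AS_0(\Phi;\Psi)_{[0,\infty)}$ is an orthonormal basis with compactly supported elements continuous near $0$ and $\eta(0)=0$ on $\Psi$. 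Without \eqref{I:phi} and \eqref{I:psi} one does not know $\si_0(\Phi)\cup\si_0(\Psi)\subseteq\si_1(\Phi)$ (nor the reverse inclusion from item (3)), so the partial-sum operator $S_n:=P_0+\sum_{j<n}Q_j$ of the actual orthonormal expansion need not coincide with the projection $P_n$ onto $\si_n(\Phi)$. This matters because your approximate-identity argument in Step 2 is tied to the kernel $K_n(x,y)=\sum_{\varphi\in\Phi}\varphi_n(x)\overline{\varphi_n(y)}$ of $P_n$; the kernel of $S_n$ restricted to $x=0$ is just $K_0(0,\cdot)$ (all wavelet contributions vanish there), which does not concentrate, so one cannot conclude $S_nf(0)\to f(0)$. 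Pointwise convergence of the expansion at the boundary point is exactly the hard part, and it is not supplied by $L_2$ convergence. Under the additional assumption that \eqref{I:phi} and \eqref{I:psi} hold (which is true for every system constructed in the paper), your Steps 1--3 do go through --- the normalization $\int_0^\infty K_n(0,y)\,dy=1$ via \cref{lem:vm}, the uniform $L_1$ bound, and the bounded-point-evaluation contradiction are all correct --- so the argument is salvageable, but only for a narrower statement than the one claimed.

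For comparison, the paper proves the corollary as the case $n_{\bcm}=0$ of \cref{thm:ow:vm}, and that proof deliberately avoids any multiresolution nesting: one picks a single compactly supported continuous $f$ with $f\equiv 1$ near $0$, observes that for large $j$ every wavelet $2^{j/2}\eta(2^j\cdot-k)$ is either supported away from $0$ or supported where $f$ is constant (so its coefficient vanishes by the assumed vanishing moment), and hence obtains an \emph{exactly finite} expansion of $f$ near $0$ as in \eqref{f:expr:1}; evaluating at $x=0$ and using \cref{thm:bc} (which transfers $\eta(0)=0$ from $\Psi$ to $\Phi$) forces $f(0)=0$, contradicting $f(0)=1$. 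The special choice of $f$ is what replaces your approximate-identity step, and it is why the paper's argument works for nonstationary systems and without assuming the boundary elements are refinable. If you want to keep your more quantitative route (bounded point evaluation at $0$), you should either add \eqref{I:phi}--\eqref{I:psi} to your hypotheses or replace Step 2 by the paper's locally-constant test function trick.
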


To illustrate the complexity of wavelets on intervals, let us present an ``abnormal'' example here.

\begin{example}\label{expl:00}
	\normalfont
	Let $N\in \N$ be an arbitrary integer.
	Let $\phi=\chi_{[0,1]}$ and $\psi=\chi_{(0,1/2]}-\chi_{(1/2,1]}$. Then $\{\phi;\psi\}$ is the well-known Haar orthogonal wavelet in $\Lp{2}$. Define $\phi^L:=\emptyset$ and $n_\phi:=2N$. Then $\Phi=\{\phi(\cdot-k) \setsp k\ge 2N\}$ obviously satisfies item (i) of \cref{thm:wbd}. Define
	$\psi^L=\{\phi(\cdot-k) \setsp N\le k<2N\}$
	and $n_\psi:=N$.
	For every $J\in \Z$,  then $\AS_J(\Phi;\Psi)_{[0,\infty)}$
	with $\Psi:=\{\psi^L\}\cup \{\psi(\cdot-k) \setsp k\ge N\}$
	must be an orthonormal basis in $L_2([0,\infty))$ such that all elements in $\Phi\cup\Psi$ are supported inside $[N,\infty)$ and the boundary wavelet $\psi^L$ does not have any vanishing moments.
	This appears weird but not surprising at all.
	For simplicity, we present a self-contained proof here only for $J=0$ and the general case follows by a simple scaling argument. It is easy to directly check that $\AS_0(\Phi;\Psi)_{[0,\infty)}$ is an orthonormal system in $L_2([0,\infty))$ by noting that $\Psi$ is perpendicular to $\Phi$ and
	\be \label{haar}
	\si_0(\Phi) \oplus \si_0(\Psi)=
	\si_0(\Phi\cup \psi^L)\oplus \si_0(\{\psi(\cdot-k) \setsp k\ge N\})=\si_1(\Phi).
	\ee
	To prove that $\AS_0(\Phi;\Psi)_{[0,\infty)}$ is dense in $L_2([0,\infty))$,
	using \eqref{haar}, we observe that for any $n\in \N$,
	\[
	\si_n(\Phi)=\si_0(\Phi)\oplus \si_0(\Psi)\oplus\si_1(\Psi)\oplus\cdots\oplus \si_{n-1}(\Psi)\subseteq \AS_0(\Phi;\Psi)_{[0,\infty)}.
	\]
	Note that $\si_n(\Phi)=\si_0(\Phi(2^n\cdot))=\si_0(\{ \phi(2^n(\cdot-2^{-n}k)) \setsp k\ge 2N\})$.
	Because $\lim_{n\to \infty} 2^{-n} N=0$,
	we now conclude that $\cup_{n=1}^\infty \si_n(\Phi)$ is indeed dense in $L_2([0,\infty))$. Hence, we proved that $\AS_0(\Phi;\Psi)_{[0,\infty)}$ is dense in $L_2([0,\infty))$ and thus is an orthonormal basis in $L_2([0,\infty))$.
	Similar examples can be constructed from any compactly supported orthogonal (multi)wavelets using \cref{alg:Phi:orth}.
\end{example}

To perform the Gram-Schmidt orthonormalization procedure in (S2) and (S4) of \cref{alg:Phi:orth},
we have the following result (see \cref{sec:proof} for its proof) to compute $\int_0^1 \tilde{\phi}(x-m) \ol{\phi(x-n)}^\tp dx$ for all $m,n\in \Z$ from any two arbitrary compactly supported refinable vector functions.

\begin{theorem}\label{thm:integral}
	Let $\phi,\tilde{\phi}$ be two $r\times 1$ vectors of compactly supported functions in $\Lp{2}$ such that $\phi=2\sum_{k\in\Z} a(k) \phi(2\cdot-k)$ and $\tilde{\phi}=2\sum_{k\in\Z} \tilde{a}(k) \tilde{\phi}(2\cdot-k)$ for some finitely supported filters $a,\tilde{a}\in \lrs{0}{r}{r}$. Assume that $\wh{\phi}(0)\ne 0$ and $\wh{\tilde{\phi}}(0)\ne 0$.
	Define $[l_\phi,h_\phi]:=\fs(\phi)$ and $[l_{\tilde{\phi}},h_{\tilde{\phi}}]:=\fs(\tilde{\phi})$.
	\begin{enumerate}
		\item[(S1)] Define two vector functions by
\[
\vec{\phi}:=[\phi(\cdot-1+h_\phi)\chi_{[0,1]},\ldots, \phi(\cdot+l_\phi)\chi_{[0,1]}]^\tp \quad\mbox{and}\quad \vec{\tilde{\phi}}:=[\tilde{\phi}(\cdot-1+h_{\tilde{\phi}})
		\chi_{[0,1]},\ldots, \tilde{\phi}(\cdot+l_{\tilde{\phi}})\chi_{[0,1]}]^\tp.
\]
Then
		\be \label{vecphi:refeq}
		\vec{\phi}=2A_0 \vec{\phi}(2\cdot)+2A_1 \vec{\phi}(2\cdot-1) \quad \mbox{and}\quad \vec{\tilde{\phi}}=2\tilde{A}_0 \vec{\tilde{\phi}}(2\cdot)+2\tilde{A}_1 \vec{\tilde{\phi}}(2\cdot-1)
		\ee
		with $A_\gamma:=[a(k+\gamma-2j)]_{1-h_\phi\le j, k\le -l_\phi}$ and $\tilde{A}_\gamma:=[\tilde{a}(k+\gamma-2j)]_{1-h_{\tilde\phi}\le j, k\le -l_{\tilde{\phi}}}$ for $\gamma=0,1$, where $j$ is for the row index and $k$ is for the column index.
		\item[(S2)] If all the entries in $\vec{\phi}$ are not linearly independent on $[0,1]$, then we delete as many entries as possible from $\vec{\phi}$ so that all the deleted entries are linear combinations of entries kept. Do the same for $\vec{\tilde{\phi}}$. Then \eqref{vecphi:refeq} still holds with $A_0, A_1, \tilde{A}_0$ and $\tilde{A}_1$ being appropriately modified.
		
		\item[(S3)] Define $M:=\la \vec{\tilde{\phi}}, \vec{\phi} \ra:=\int_0^1 \vec{\tilde{\phi}}(x) \ol{\vec{\phi}(x)}^\tp dx$. Then the matrix $M$ is uniquely determined by the system of linear equations given by
		\be \label{Mphi}
		M=2\tilde{A}_0 M \ol{A_0}^\tp+2\tilde{A}_1 M \ol{A_1}^\tp
		\ee
		under the normalization condition
		\be \label{Mphi:normalization}
		\vec{\tilde{v}} M \ol{\vec{v}}^\tp=1,
		\ee
		where $\vec{v}$ is the unique row vector satisfying $\vec{v}(A_0+A_1)=\vec{v}$ and $\vec{v}\wh{\vec{\phi}}(0)=1$, while similarly $\vec{\tilde{v}}$ is the unique row vector satisfying $\vec{\tilde{v}}(\tilde{A}_0+\tilde{A}_1)=\vec{\tilde{v}}$ and $\vec{\tilde{v}}\wh{\vec{\tilde{\phi}}}(0)=1$.
	\end{enumerate}
	Define $M_{\tilde{\phi},\phi}:=(\int_0^1 \tilde{\phi}(x-j) \ol{\phi(x-k)}^\tp dx)_{1-h_{\tilde{\phi}}\le j\le -l_{\tilde{\phi}}, 1-h_{\phi}\le k\le -l_{\phi}}$. If (S2) is not performed, then $M_{\tilde{\phi},\phi}$ agrees with $M$ as in (S3); Otherwise, we obtain $M_{\tilde{\phi},\phi}$ from $M$ using the linear combinations in (S2). Hence, all integrals $\int_0^1 \tilde{\phi}(x-j)\ol{\phi(x-k)}^\tp dx$ for $j,k\in \Z$ can be obtained from $M_{\tilde{\phi},\phi}$.
\end{theorem}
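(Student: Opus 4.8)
The plan is to establish the three steps (S1)--(S3) in order, deriving first the localized refinement equation \eqref{vecphi:refeq}, then the linear system \eqref{Mphi} for the Gramian $M$, and finally its unique solvability under the normalization \eqref{Mphi:normalization}.

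First I would prove (S1) by restricting the refinement equation of $\phi$ to the unit interval. Writing $\phi(\cdot-j)=2\sum_{m}a(m-2j)\phi(2\cdot-m)$ and multiplying by $\chi_{[0,1]}$, I split $[0,1]=[0,\tfrac12]\cup[\tfrac12,1]$. On $[0,\tfrac12]$ one has $2x\in[0,1]$, so $\phi(2x-m)\chi_{[0,1]}(x)$ is precisely the entry of $\vec{\phi}(2\cdot)$ indexed by the shift $m$ (and it is identically zero unless $1-h_\phi\le m\le -l_\phi$, since otherwise $\fs(\phi(\cdot-m))$ misses $[0,1]$); on $[\tfrac12,1]$ one has $2x-1\in[0,1]$ and $\phi(2x-m)=\phi((2x-1)-(m-1))$ is the entry of $\vec{\phi}(2\cdot-1)$ indexed by $m-1$. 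Reading off the coefficient of each entry gives exactly $A_0=[a(k-2j)]$ and $A_1=[a(k+1-2j)]$, i.e. $A_\gamma=[a(k+\gamma-2j)]$ over the stated index ranges; the same computation on $\tilde{\phi}$ produces $\tilde{A}_0,\tilde{A}_1$. The point that makes this identity \emph{exact} rather than merely approximate is that any shift which would protrude beyond a half-interval contributes the zero function after restriction, so no truncation term ever appears.

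Next I would derive the linear system \eqref{Mphi}. Substituting the two identities of \eqref{vecphi:refeq} into $M=\int_0^1\vec{\tilde{\phi}}(x)\ol{\vec{\phi}(x)}^\tp dx$ expands $M$ into four terms. Since every entry of $\vec{\phi},\vec{\tilde{\phi}}$ is supported in $[0,1]$, the dilates $\vec{\phi}(2\cdot),\vec{\tilde{\phi}}(2\cdot)$ are supported in $[0,\tfrac12]$ while $\vec{\phi}(2\cdot-1),\vec{\tilde{\phi}}(2\cdot-1)$ are supported in $[\tfrac12,1]$, so the two cross terms integrate to $0$ by essentially disjoint supports. In each surviving term the change of variable $y=2x$ (resp. $y=2x-1$) turns the inner integral into $\tfrac12 M$, and the prefactor $2\cdot 2=4$ combines with $\tfrac12$ to give $M=2\tilde{A}_0 M\ol{A_0}^\tp+2\tilde{A}_1 M\ol{A_1}^\tp$, which is \eqref{Mphi}.

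The main obstacle is the uniqueness assertion, together with the value $1$ in \eqref{Mphi:normalization}. Equation \eqref{Mphi} only says that $M$ is a fixed point of the linear map $T(X):=2\tilde{A}_0 X\ol{A_0}^\tp+2\tilde{A}_1 X\ol{A_1}^\tp$, i.e. an eigenvector of $T$ for the eigenvalue $1$, and a single scalar condition selects a unique point from this eigenspace only if the eigenspace is one-dimensional and the functional $X\mapsto \vec{\tilde{v}}X\ol{\vec{v}}^\tp$ is nonzero on it. This is exactly where (S2) is used: after deleting the linearly dependent entries so that the components of $\vec{\phi}$ (and of $\vec{\tilde{\phi}}$) are linearly independent on $[0,1]$, the map $T$ is the mixed transition (transfer) operator of the refinable pair, and its eigenvalue $1$ is simple by the standard spectral theory of transition operators for compactly supported $\Lp{2}$ refinable vector functions as in \cite{hanbook}. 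For the normalization I would write $\vec{\tilde{v}}M\ol{\vec{v}}^\tp=\int_0^1(\vec{\tilde{v}}\vec{\tilde{\phi}})\ol{(\vec{v}\vec{\phi})}\,dx$ and identify the scalar functions $\vec{v}\vec{\phi}$ and $\vec{\tilde{v}}\vec{\tilde{\phi}}$: integrating \eqref{vecphi:refeq} shows $\wh{\vec{\phi}}(0)=(A_0+A_1)\wh{\vec{\phi}}(0)$, so $\wh{\vec{\phi}}(0)$ is the right eigenvector of $A_0+A_1$ matched by the left eigenvector $\vec{v}$ through $\vec{v}\,\wh{\vec{\phi}}(0)=1$; these eigen-relations force $\vec{v}\vec{\phi}$ to reproduce the constant $1$ on $[0,1]$ (and likewise $\vec{\tilde{v}}\vec{\tilde{\phi}}\equiv 1$), whence the integral equals $\int_0^1 1\,dx=1$. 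I therefore expect the genuinely technical work to be (i) checking that the deletion in (S2) preserves \eqref{vecphi:refeq} — folding the coefficients of each discarded component into the retained columns of $A_0,A_1,\tilde{A}_0,\tilde{A}_1$ is routine linear algebra — and (ii) the simplicity of the eigenvalue $1$ of $T$ under the independence secured by (S2). Once these are in place, the concluding reindexing that recovers every $\int_0^1\tilde{\phi}(x-j)\ol{\phi(x-k)}^\tp dx$ from the blocks of $M$ (inverting the (S2) combinations when they were used) is immediate.
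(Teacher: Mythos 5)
Your outline for (S1), the derivation of \eqref{Mphi} from disjoint supports and a change of variables, and the normalization computation $\vec{\tilde{v}}M\ol{\vec{v}}^\tp=\int_0^1(\vec{\tilde{v}}\vec{\tilde{\phi}})\ol{(\vec{v}\vec{\phi})}\,dx=1$ via the partition of unity all match the paper's proof step for step. The one place where your argument has a genuine gap is exactly the step you flag as the technical core: the simplicity of the eigenvalue $1$ of $T(X)=2\tilde{A}_0X\ol{A_0}^\tp+2\tilde{A}_1X\ol{A_1}^\tp$. You appeal to ``the standard spectral theory of transition operators,'' but $T$ is a \emph{mixed} operator built from two different refinable vectors $\phi$ and $\tilde{\phi}$; the off-the-shelf simplicity results concern the self-Gramian transition operator of a single refinable vector and do not literally apply here. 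Citing them does not close the step.

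The paper proves simplicity directly. Vectorizing, $T$ becomes $2(\ol{A_0}\otimes\tilde{A}_0+\ol{A_1}\otimes\tilde{A}_1)$. Linear independence of the shifts of $\vec{\phi}$ (secured by (S2)) gives, via \cite[Corollary~5.6.12 and Proposition~5.6.2]{hanbook}, not merely $\vec{v}(A_0+A_1)=\vec{v}$ but the stronger sum-rule relations $\vec{v}A_0=\vec{v}A_1=\tfrac12\vec{v}$ (and likewise for $\vec{\tilde{v}}$); these are what make $\ol{\vec{v}}\otimes\vec{\tilde{v}}$ a left $1$-eigenvector of $T$, and your weaker sum condition alone would not suffice. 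Simplicity then follows from a joint-spectral-radius argument: on $U:=\ker\vec{v}$ one has $A_\gamma U\subseteq U$ and $\lim_{n\to\infty}2^{n/2}\|\{A_0,A_1\}^nu\|_{l_2}=0$ (a consequence of $\vec{\phi}\in(\Lp{2})^s$ with stable shifts, as in \cite{hj98}), together with the uniform bound $2^{n/2}\|\{A_0,A_1\}^nw\|_{l_2}\le C_w$; combining these through the Kronecker structure and Cauchy--Schwarz shows $T^n\to0$ on $\ol{u}\otimes\tilde{w}$, $\ol{w}\otimes\tilde{u}$, and $\ol{u}\otimes\tilde{u}$, so every eigenvalue other than the simple eigenvalue $1$ has modulus strictly less than one. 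You would need to supply this (or an equivalent) argument for your proof to be complete.
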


Suppose that $a$ has order one sum rule with a matching filter $\vgu\in \lrs{0}{1}{r}$ and $\wh{\vgu}(0)\wh{\phi}(0)=1$ and
$\tilde{a}$ has order one sum rule with a matching filter $\tilde{\vgu}\in \lrs{0}{1}{r}$ and $\wh{\tilde{\vgu}}(0)\wh{\tilde{\phi}}(0)=1$.
Then by \cite[Proposition~5.6.2]{hanbook}, we have $\wh{\vgu}(0)\wh{\phi}(2\pi k)=0$ for all $k\in \Z\bs\{0\}$.
By Poisson summation formula, we must have $\wh{\vgu}(0)\sum_{k\in \Z} \phi(\cdot-k)=1$. Consequently, we conclude that $\wh{\vgu}(0) \phi(x-1+h_\phi)+\cdots+\wh{\vgu}(0)\phi(x+l_\phi)=1$ for almost every $x\in [0,1]$. Similarly, we have
$\wh{\tilde{\vgu}}(0)\sum_{k\in \Z} \tilde{\phi}(\cdot-k)=1$ and $\wh{\tilde{\vgu}}(0) \tilde{\phi}(x-1+h_{\tilde{\phi}})+\cdots+
\wh{\tilde{\vgu}}(0)\tilde{\phi}(x+l_{\tilde{\phi}})=1$ for almost every $x\in [0,1]$.
If (S2) is not performed, then $M_{\phi,\tilde{\phi}}=M$ and it is easy to directly check that $\vec{v}=[\wh{\vgu}(0),\ldots, \wh{\vgu}(0)]$, $\vec{\tilde{v}}=[\wh{\tilde{\vgu}}(0),\ldots,\wh{\tilde{\vgu}}(0)]$, and hence, \eqref{Mphi:normalization} becomes
\be \label{Mphi:normalize}
[\wh{\tilde{\vgu}}(0),\ldots,\wh{\tilde{\vgu}}(0)] M_{\tilde{\phi},\phi}
[\ol{\wh{\vgu}(0)},\ldots,\ol{\wh{\vgu}(0)}]^\tp=1.
\ee
A particular case of \cref{thm:integral} is $\tilde{\phi}=\eta$ with
$\eta(x):=(1,x,\ldots,x^m)^\tp\chi_{[0,1]}$, which is a refinable vector function. This allows us to compute $\int_k^{k+1} x^j \phi(x) dx$ for all $j\in \NN$ and $k\in \Z$.
Note that $\eta$ is refinable:
$\eta(x)=C_0
\eta(2x)+C_1\eta(2x-1)$
with $C_0:=\mbox{diag}(1,2^{-1},\ldots,2^{-m})$ and $C_1:= [2^{-m} \binom{j}{\ell}]_{0\le j \le m, 0 \le \ell \le j}$.

\subsection{Construct refinable $\tilde{\Phi}$ satisfying item (ii) of \cref{thm:wbd}}
\label{subsec:tPhi}

Though elements in $\tilde{\phi}^L$ in \cref{thm:wbd:0} could be any compactly supported functions in $L_2([0,\infty))$, in this section
we present an algorithm to construct a particular $\tilde{\Phi}=\{\tilde{\phi}^L\}
\cup\{\tilde{\phi}(\cdot-k)\setsp k\ge n_{\tilde{\phi}}\}$ satisfying item (ii) of \cref{thm:wbd}. The general case $\tilde{\phi}^L\subseteq L_2([0,\infty))$ will be addressed in \cref{thm:direct:tPhi}.

\begin{algorithm}\label{alg:tPhi}
	Let $(\{\tilde{\phi};\tilde{\psi}\},\{\phi;\psi\})$ be a compactly supported biorthogonal wavelet in $\Lp{2}$
	with a biorthogonal wavelet filter bank $(\{\tilde{a};\tilde{b}\},\{a;b\})$
	satisfying items (1)--(4) of \cref{thm:bw}.
	Let $0\le m\le \sr(a)$ and $0\le \tilde{m}\le \sr(\tilde{a})$.
	Assume that
	$\Phi=\{\phi^L\}\cup\{\phi(\cdot-k) \setsp k\ge n_\phi\}\subseteq L_2([0,\infty))$, with $\phi^L$ having compact support, satisfies item (i) of \cref{thm:wbd} (e.g., $\Phi$
	is obtained by \Cref{subsec:Phi} or \cref{thm:Phi:direct}) and $\Phi$ is a Riesz sequence in $L_2([0,\infty))$.
	Define $[l_{\tilde{\phi}},h_{\tilde{\phi}}]:=\fs(\tilde{\phi})$ and $[l_{\tilde{a}},h_{\tilde{a}}]:=\fs(\tilde{a})$.
	\begin{enumerate}
		\item[(S1)]  Choose $n_{\tilde{\phi}}\ge \max(-l_{\tilde{\phi}},-l_{\tilde{a}},n_\phi)$ such that $n_{\tilde{\phi}}$ is the smallest integer satisfying $\la \tilde{\phi}(\cdot-k), \phi^L\ra=0$ for all $k\ge n_{\tilde{\phi}}$, e.g., we can take any $n_{\tilde{\phi}}\ge \max(-l_{\tilde{\phi}},-l_{\tilde{a}},n_\phi, h_{\phi^L})$ with $[l_{\phi^L},h_{\phi^L}]:=\fs(\phi^L)$.
		
		\item[(S2)] Since $n_{\tilde{\phi}}\ge n_\phi$, we define a vector function $\mathring{\phi}^L:=\{\phi^L\}\cup\{ \phi(\cdot-k) \setsp n_\phi\le k< n_{\tilde{\phi}}\}$.
		
		\item[(S3)] Define a vector function
		 $\breve{\phi}^L:=\tilde{\phi}^c\cup \tilde{\phi}^h$, where
		 $\tilde{\phi}^c:=\{\tilde{\phi}(\cdot-k)\chi_{[0,\infty)}\}_{1-h_{\tilde{\phi}}\le k\le n_{\tilde{\phi}}-1}$ and
		\be \label{phih}
		 \tilde{\phi}^h:=2\sum_{k=n_{\tilde{\phi}}}^{n_h-1}
		\tilde{A}(k) \tilde{\phi}(2\cdot-k) \quad \mbox{and}\quad
		\la \tilde{\phi}^h, \phi(\cdot-k)\ra=0,\qquad \forall\; k \ge n_{\tilde{\phi}},
		\ee
		where $n_h:=2\max(n_{\phi^L}, h_\phi+n_{\tilde{\phi}})-l_{\tilde{\phi}}$ with $\{\tilde{A}(k)\}_{k=n_{\tilde{\phi}}}^{n_h-1}$ to be determined.
		Note that $\breve{\phi}^L=2\breve{A}_L \breve{\phi}^L(2\cdot)+2\sum_{k=n_{\tilde{\phi}}}^\infty \breve{A}(k)\tilde{\phi}(2\cdot-k)$ for some matrix $\breve{A}_L$ and finitely supported sequence $\breve{A}$.
		
		\item[(S4)] Apply item (BS) of \Cref{subsec:Phi} to break $\breve{\phi}^L$ into short vector functions $\breve{\phi}_1,\ldots, \breve{\phi}_N$ with each satisfying \eqref{I:phi:dual}.
		Initially define $\eta^L=\emptyset$. We add/merge $\breve{\phi}_\ell$ into $\eta^L$ if $\la \eta^L \cup \breve{\phi}_\ell, \mathring{\phi}^L\ra$ has full rank. Repeat this procedure until $\#\eta^L=\#\mathring{\phi}^L$. Then $\tilde{\phi}^L:=\la \eta^L,\mathring{\phi}^L\ra^{-1} \eta^L$ is a well-defined vector function, because the square matrix $\la \eta^L,\mathring{\phi}^L\ra$ is invertible.
		
		\item[(S4')] Assume that $\{\breve{\phi}^L\}\cup \{\tilde{\phi}(\cdot-k) \setsp k\ge n_{\tilde{\phi}}\}$ is a Riesz sequence; otherwise, remove redundant elements from $\breve{\phi}^L$.
		Instead of (S4),
		we can alternatively obtain $\tilde{\phi}^L:=C \breve{\phi}^L$, where the unknown $(\#\mathring{\phi})\times (\#\breve{\phi}^L)$ matrix $C$ is determined by solving $C\la \breve{\phi}^L, \mathring{\phi}^L\ra=I_{\#\mathring{\phi}^L}$ and $C \breve{A}_L= C \breve{A}_L \la \breve{\phi}^L, \mathring{\phi}^L\ra C$.
	\end{enumerate}
	Then $\tilde{\Phi}:=\{\tilde{\phi}^L\}\cup\{\tilde{\phi}(\cdot-k)\setsp k\ge n_{\tilde{\phi}}\}$ satisfies item (ii) of \cref{thm:wbd}.
\end{algorithm}

\bp By the choice of $n_{\tilde{\phi}}$ in (S1),
for every $k\ge n_{\tilde{\phi}}$,
we have $\la \tilde{\phi}(\cdot-k), \eta\ra=0$ for all $\eta\in \Phi\bs \{\phi(\cdot-k)\}$.
Note that the integer $n_h$ in (S3) is chosen so that $\fs(\mathring{\phi}^L)$
is essentially disjoint with $\supp(\tilde{\phi}(2\cdot-k))$ for all $k\ge n_h$
and hence $\la \tilde{\phi}(2\cdot-k), \mathring{\phi}^L\ra=0$ for all $k\ge n_h$.
By the definition of $\tilde{\phi}^c$, it is trivial to see that $\la \tilde{\phi}^c, \phi(\cdot-k)\ra=0$ for all $k\ge n_{\tilde{\phi}}$.
This and \eqref{phih} imply that $\la \tilde{\phi}^L, \phi(\cdot-k)\ra=0$ for all $k\ge n_{\tilde{\phi}}$.
Now the claim holds trivially for the choice of $\tilde{\phi}^L$ in (S4).

The condition $C \la \tilde{\phi}^c, \mathring{\phi}^L\ra=
I_{\#\mathring{\phi}^L}$ in (S4') is obviously equivalent to the biorthogonality condition $\la \tilde{\phi}^L, \mathring{\phi}^L\ra=I_{\#\mathring{\phi}^L}$.
For the choice of $\tilde{\phi}^L$ in (S4'), we have
\[
\tilde{\phi}^L=C
\breve{\phi}^L=
2C\breve{A}_L \breve{\phi}^L(2\cdot)+2\sum_{k=n_{\tilde{\phi}}}^\infty C\breve{A}(k)\tilde{\phi}(2\cdot-k).
\]
Since $\{\breve{\phi}^L\}\cup \{\tilde{\phi}(\cdot-k) \setsp k\ge n_{\tilde{\phi}}\}$ is a Riesz sequence,
\eqref{I:phi:dual} holds if and only if $C \breve{A}_L=\tilde{A}_L C$. Due to the biorthogonality between $\tilde{\phi}^L$ and $\mathring{\phi}^L$, we must have $\tilde{A}_L=\la \tilde{\phi}^L,\mathring{\phi}^L(2\cdot)\ra
=C \la \breve{\phi}^L,\mathring{\phi}^L(2\cdot)\ra
=C \breve{A}_L \la \breve{\phi}^L, \mathring{\phi}^L\ra$.
Now the condition $C\breve{A}_L=C \breve{A}_L \la \breve{\phi}^L, \mathring{\phi}^L\ra C$ in (S4') guarantees that $\tilde{\phi}^L$ satisfies \eqref{I:phi:dual} with $\tilde{A}_L=C \breve{A}_L \la \breve{\phi}^L, \mathring{\phi}^L\ra$.
Hence, the claim holds for the choice of $\tilde{\phi}^L$ in (S4').
\ep

For spline biorthogonal scalar wavelets $(\{\tilde{\phi};\tilde{\psi}\},\{\phi;\psi\})$ in \cite{cdf92} with $\phi=B_m$ in \eqref{bspline} such that $\tilde{m}:=\sr(\tilde{a})\ge m$ and $m+\tilde{m}$ is an even integer,
\cite{dku99}
considered the particular choice $\phi^L=\phi^\pp$
and
$\tilde{\phi}^L=\la \tilde{\pp},\mathring{\phi}^L\ra^{-1} \tilde{\phi}^{\tilde{\pp}}$ in (S4)
with $\pp(x)=(1,x,\ldots,x^{m-1})^\tp$ and $\tilde{\pp}(x)=(1,x,\ldots,x^{\tilde{m}-1})^\tp$ as in \cref{prop:phicut}.
\cite{cf11,pri10} instead took the particular choice $\phi^L=\phi^c$ in \cref{prop:phicut}, which has more boundary elements than \cite{dku99}.
Then \cite[(4.3)]{pri10} and \cite[(32)]{cf11} proposed
to take $\eta^L=\tilde{\phi}^{\tilde{\pp}}\cup \tilde{\phi}^h$ in (S4) by properly choosing $\tilde{A}$ in \eqref{phih}.

\subsection{Construct wavelets $\Psi$ and $\tilde{\Psi}$ satisfying items (iii) and (iv) of \cref{thm:wbd}}
\label{subsec:PsitPsi}

Assume that $\Phi$ and $\tilde{\Phi}$ satisfy items (i) and (ii) of \cref{thm:wbd} but without restricting that $\phi^L$ and $\tilde{\phi}^L$ are special choices as discussed in \Cref{subsec:Phi,subsec:tPhi}.
We now address how to construct $\Psi$ and $\tilde{\Psi}$ satisfying items (iii) and (iv) of \cref{thm:wbd}.

From $\Phi$ and $\tilde{\Phi}$ satisfying items (i) and (ii) of \cref{thm:wbd},
we now construct $\Psi$ satisfying item (iii) of \cref{thm:wbd} as follows.

\begin{prop}\label{prop:Psi}
	Let $(\{\tilde{\phi};\tilde{\psi}\},\{\phi;\psi\})$ be a compactly supported biorthogonal wavelet in $\Lp{2}$ satisfying items (1)--(4) of \cref{thm:bw}.
	Suppose that $\Phi$ and $\tilde{\Phi}$ as defined in \eqref{PhiPsiI} and \eqref{tPhiPsiI}, consisting
	of compactly supported functions in $L_2([0,\infty))$,
	satisfy items (i) and (ii) of \cref{thm:wbd}. Define
	$[l_{\phi^L},h_{\phi^L}]:=\fs(\phi^L)$ and
	 $[l_{\tilde{\phi}^L},h_{\tilde{\phi}^L}]:=\fs(\tilde{\phi}^L)$.
	Take $n_\psi\in \Z$ to be the smallest integer satisfying
	\be \label{npsi:small}
	n_\psi\ge \max(-l_\psi, \tfrac{n_\phi-l_b}{2}, h_{\tilde{\phi}^L}-l_\psi)
	\ee
	and define $m_\phi:=\max(2n_\phi+h_{\tilde{a}}, 2n_\psi+h_{\tilde{b}})$ as in \eqref{mphi}.
	Let $\mathring{\psi}^L:=
	\{\phi^L(2\cdot)\}\cup\{\phi(2\cdot-k) \setsp n_\phi\le k<m_\phi\}$.
	Calculate a compactly supported vector function $\psi^L$ by
	\be \label{psiL:constr}
	\psi^L:=\mathring{\psi}^L-
	\la \mathring{\psi}^L,\tilde{\phi}^L\ra\phi^L-
	\sum_{k=n_\phi}^{M_\phi-1} \la \mathring{\psi}^L,\tilde{\phi}(\cdot-k)\ra\phi(\cdot-k),
	\ee
	where $M_\phi:= \lceil\max( \frac{h_{{\phi^L}}}{2}, \frac{h_\phi+m_\phi-1}{2})\rceil-l_{\tilde{\phi}}$.
	Then $\Psi:=\{\psi^L\}\cup \{\psi(\cdot-k) \setsp k\ge n_\psi\}$ satisfies item (iii) of \cref{thm:wbd}.
	Moreover, if we apply item (3) of \cref{thm:rz} with $\Phi$ being replaced by $\Psi$ to remove the redundant elements of $\psi^L$ in $\{\psi^L\}\cup\{\psi(\cdot-k) \setsp n_\psi\le k<N_\psi\}$ with $N_\psi:=\max(n_\psi, h_{\psi^L}-l_{\tilde{\psi}})$, then $\Psi$ with updated $\psi^L$ satisfies item (iii) of \cref{thm:wbd} and $\Psi$ is a Riesz sequence in $L_2([0,\infty))$.
\end{prop}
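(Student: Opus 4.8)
The statement to establish is that $\Psi:=\{\psi^L\}\cup\{\psi(\cdot-k)\setsp k\ge n_\psi\}$ satisfies item (iii) of \cref{thm:wbd}, which bundles three assertions: the interior refinement relation \eqref{npsi}, the boundary refinement relation \eqref{I:psi}, and the subspace identity $\si_0(\Psi)=\si_1(\Phi)\cap(\si_0(\tilde{\Phi}))^\perp$. The plan is to dispatch the two refinement relations immediately and then prove the subspace identity by a two-sided inclusion; the forward inclusion rests on orthogonality relations, while the reverse inclusion is where \cref{lem:W} together with an oblique-projection/direct-sum argument does the real work. For \eqref{npsi}, the bound \eqref{npsi:small} forces $n_\psi\ge\max(-l_\psi,\tfrac{n_\phi-l_b}{2})$, so \eqref{npsi} holds automatically for the interior generators. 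For \eqref{I:psi}, note that $\mathring{\psi}^L$ consists by definition of $\phi^L(2\cdot)$ and finitely many $\phi(2\cdot-k)$, while by item (i) of \cref{thm:wbd} (the identity \eqref{I:phi}) and \eqref{nphi} each $\phi^L$ and each $\phi(\cdot-k)$ subtracted in \eqref{psiL:constr} is itself a finite combination of $\phi^L(2\cdot)$ and $\phi(2\cdot-j)$, $j\ge n_\phi$; hence $\psi^L$ is a finite linear combination of these, yielding a matrix $B_L$ and a finitely supported $B$.

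For the forward inclusion $\si_0(\Psi)\subseteq\si_1(\Phi)\cap(\si_0(\tilde{\Phi}))^\perp$, membership in $\si_1(\Phi)$ is exactly \eqref{npsi} and \eqref{I:psi} just established. For the orthogonality $\Psi\perp\si_0(\tilde{\Phi})$ I would check the pairings case by case. The interior--interior pairings $\la\psi(\cdot-k),\tilde{\phi}(\cdot-j)\ra=0$ hold for all $j,k$ since the primal wavelet is orthogonal to the dual refinable function, a standard consequence of the biorthogonal wavelet structure in \cref{thm:bw}. The pairings $\la\psi(\cdot-k),\tilde{\phi}^L\ra=0$ for $k\ge n_\psi$ hold by essentially disjoint supports, which is precisely the purpose of the term $h_{\tilde{\phi}^L}-l_\psi$ in \eqref{npsi:small}. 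The two remaining families $\la\psi^L,\tilde{\phi}^L\ra$ and $\la\psi^L,\tilde{\phi}(\cdot-j)\ra$ for $j\ge n_{\tilde{\phi}}$ are where \eqref{psiL:constr} is engineered: substituting \eqref{psiL:constr} and expanding, one uses the biorthogonality of $\Phi$ and $\tilde{\Phi}$ from item (ii) together with the real-line biorthogonality $\la\phi(\cdot-k),\tilde{\phi}(\cdot-j)\ra=\td(k-j)I_r$, and the truncation of the sum at $M_\phi$ is legitimate because the choice of $M_\phi$ makes $\supp(\mathring{\psi}^L)$ essentially disjoint from $\fs(\tilde{\phi}(\cdot-j))$ for $j\ge M_\phi$, so the dropped coefficients $\la\mathring{\psi}^L,\tilde{\phi}(\cdot-j)\ra$ vanish. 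Conceptually, \eqref{psiL:constr} subtracts from $\mathring{\psi}^L$ its oblique biorthogonal projection onto $\si_0(\Phi)$ computed against $\tilde{\Phi}$, so that $\psi^L$ lands in $(\si_0(\tilde{\Phi}))^\perp$.

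For the reverse inclusion I would first record the decomposition $\si_1(\Phi)=\si_0(\Phi)\oplus W$ with $W:=\si_1(\Phi)\cap(\si_0(\tilde{\Phi}))^\perp$. Since $\Phi$ and $\tilde{\Phi}$ are biorthogonal and (being compactly supported) Bessel, they are Riesz sequences, so $Pf:=\sum_{h\in\Phi}\la f,\tilde{h}\ra h$ is a bounded projection onto $\si_0(\Phi)$ with $f-Pf\perp\si_0(\tilde{\Phi})$; this gives both $\si_0(\Phi)+W=\si_1(\Phi)$ and $\si_0(\Phi)\cap W=\{0\}$ (an element of $\si_0(\Phi)$ annihilated by every $\tilde{h}$ must vanish). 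Next I would invoke \cref{lem:W}, which yields $\si_1(\Phi)=\si_0(H)+\si_0(\mathring{\psi}^L)$ with $H$ as in \eqref{H} and $\mathring{\psi}^L$ supplying a basis of the quotient. Both summands lie in $\si_0(\Phi)+\si_0(\Psi)$: indeed $H\subseteq\Phi\cup\Psi$ up to the boundary elements, while $\mathring{\psi}^L=\psi^L+\bigl(\text{the subtracted part of \eqref{psiL:constr}}\bigr)\in\si_0(\Psi)+\si_0(\Phi)$. Hence $\si_0(\Phi)+\si_0(\Psi)=\si_1(\Phi)$. Combining this with $\si_0(\Psi)\subseteq W$ from the forward inclusion and the direct sum $\si_1(\Phi)=\si_0(\Phi)\oplus W$ forces $\si_0(\Psi)=W$, which is the desired subspace identity.

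Finally, the ``Moreover'' claim follows by applying item (3) of \cref{thm:rz} with $\Phi$ replaced by $\Psi$ to discard the redundant entries of $\psi^L$ in $\{\psi^L\}\cup\{\psi(\cdot-k)\setsp n_\psi\le k<N_\psi\}$: this turns $\Psi$ into a Riesz sequence while preserving $\si_0(\Psi)$, and since the retained boundary elements still satisfy \eqref{I:psi} individually and \eqref{npsi} is untouched, item (iii) persists. The step I expect to be the main obstacle is the bookkeeping in the forward inclusion for $\psi^L$, namely matching the literal formula \eqref{psiL:constr} to the biorthogonal projection when $n_{\tilde{\phi}}>n_\phi$ (so that $\#\tilde{\phi}^L>\#\phi^L$ and $\tilde{\phi}^L$ is biorthogonal to the enlarged boundary block $\{\phi^L\}\cup\{\phi(\cdot-k)\setsp n_\phi\le k<n_{\tilde{\phi}}\}$ rather than to $\phi^L$ alone); all the cancellations must be tracked through the bijection $\sim$ and the corresponding biorthogonality blocks before one can read off $\la\psi^L,\tilde{\phi}^L\ra=0$.
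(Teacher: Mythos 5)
Your proposal is correct and follows essentially the same route as the paper: verify \eqref{npsi} and the perpendicularity $\Psi\perp\tilde{\Phi}$ by the support conditions built into \eqref{npsi:small} and $M_\phi$ together with the biorthogonality of $\Phi$ and $\tilde{\Phi}$, then obtain $\si_1(\Phi)=\si_0(\Phi)+\si_0(\Psi)$ from \cref{lem:W} and conclude the subspace identity. The only difference is presentational: you spell out the direct-sum/oblique-projection argument for the reverse inclusion and flag the $n_{\tilde{\phi}}>n_\phi$ bookkeeping, both of which the paper's proof leaves implicit ("Therefore, we must have $\si_0(\Psi)=\si_1(\Phi)\cap(\si_0(\tilde{\Phi}))^\perp$").
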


\bp By the definition of $n_\psi$ in \eqref{npsi:small}, \eqref{npsi} holds. Note that $\fs(\psi(\cdot-k))=[k+l_{\psi}, k+h_{\psi}]$ for $k\in \Z$. Hence, for $k\ge n_{\tilde{\psi}}$, we deduce from \eqref{npsi:small} that $n_\psi\ge h_{\tilde{\phi}^L}-l_\psi$. Therefore, due to essentially disjoint support, we trivially have $\la \psi(\cdot-k), \tilde{\phi}^L\ra=0$ and
$\psi(\cdot-k) \perp \tilde{\Phi}$
for all $k\ge n_{\tilde{\psi}}$.

Due to essentially disjoint support, we have $\la \mathring{\psi}^L,\tilde{\phi}(\cdot-k)\ra=0$ for all $k\ge M_\phi$.
Since $\tilde{\Phi}$ is biorthogonal to $\Phi$ by item (ii) of \cref{thm:wbd}, we trivially deduce from the definition of $\psi^L$ in \eqref{psiL:constr} that $\psi^L \perp \tilde{\Phi}$. Therefore, $\Psi$ is perpendicular to $\tilde{\Phi}$ and $\si_0(\Psi) \perp \si_0(\tilde{\Phi})$.
By \eqref{psi:linearcomb} in \cref{lem:W},
we have $\si_1(\Phi)=\si_0(\Phi)+\si_0(
\{\mathring{\psi}^L\} \cup\{\psi(\cdot-k) \setsp k\ge n_{\psi}\})$ and hence
$\si_1(\Phi)=\si_0(\Phi)+\si_0(\Psi)$.
Therefore,
we must have $\si_0(\Psi)=\si_1(\Phi)\cap (\si_0(\tilde{\Phi}))^\perp$.
Because all functions in $\Phi\cup\tilde{\Phi}$ are compactly supported, $\psi^L$ defined in \eqref{psiL:constr} obviously has compact support. We can also directly check \eqref{I:psi} using the definition of $\psi^L$ in \eqref{psiL:constr} and the relations in \eqref{I:phi} and \eqref{nphi}.
Hence,
$\Psi$ satisfies item (iii) of \cref{thm:wbd}.
\ep

The integer $n_\psi$ satisfying
\eqref{npsi:small} can be replaced by the smallest $n_\psi\in \Z$ such that $\psi(\cdot-k)\in \si_1(\Phi)$ and $\psi(\cdot-k)\perp \tilde{\Phi}$ for all $k\ge n_\psi$.
The choice of $M_\phi$ for defining $\psi^L$ in \eqref{psiL:constr}
guarantees that $\supp(\mathring{\psi}^L)\cap \fs(\tilde{\phi}(\cdot-k))$ is at most a singleton and hence $\la \mathring{\psi}^L, \tilde{\phi}(\cdot-k)\ra=0$ for all $k\ge M_\phi$.
To guarantee that $\Psi$ is a Riesz sequence in \cref{prop:Psi}, we can avoid using \cref{thm:rz} to reduce the redundant elements in $\psi^L$ by replacing $\mathring{\psi}^L$ with
a suitable subset of $\mathring{\psi}^L$, which forms a basis of
the quotient space $\si_1(\Phi)/\si_0(\Phi\cup \{\psi(\cdot-k) \setsp k\ge n_\psi\})$.
See the remark after \cref{thm:direct} about how to find such desired subset of $\mathring{\psi}^L$.
Though $\psi^L$ itself is not unique, the finite-dimensional space $\si_0(\Psi)/\si_0(\{\psi(\cdot-k) \setsp k\ge n_\psi\})$ (or equivalently,
$\mbox{span}(\psi^L) \mod \mbox{span}\{\psi(\cdot-k) \setsp k\ge n_\psi\}$)
is uniquely determined by $\Phi$ and $\tilde{\Phi}$ satisfying items (i) and (ii) of \cref{thm:wbd}.

For $\Phi,\tilde{\Phi}$ and $\Psi$ satisfying items (i)--(iii) of \cref{thm:wbd}, it is easy to construct the dual wavelet $\tilde{\Psi}$ satisfying item (iv) of \cref{thm:wbd}, mainly due to the uniqueness of $\tilde{\Psi}$.

\begin{prop}\label{prop:tPsi}
	Let $(\{\tilde{\phi};\tilde{\psi}\},\{\phi;\psi\})$ be a compactly supported biorthogonal wavelet in $\Lp{2}$ satisfying items (1)--(4) of \cref{thm:bw}.
	Suppose that $\Phi,\tilde{\Phi}$ and $\Psi$ as defined
	in \eqref{PhiPsiI} and \eqref{tPhiPsiI}, consisting
	of compactly supported functions in $L_2([0,\infty))$,
	satisfy items (i)--(iii) of \cref{thm:wbd}. Assume that $\Psi$ is a Riesz sequence in $L_2([0,\infty))$. Define
	\[
	 [l_{\phi^L},h_{\phi^L}]:=\fs(\phi^L),\quad
	 [l_{\psi^L},h_{\psi^L}]:=\fs(\psi^L),\quad
	 [l_{\tilde{\phi}^L},h_{\tilde{\phi}^L}]:=\fs(\tilde{\phi}^L).
	\]
	Take $n_{\tilde{\psi}}\in \Z$ to be the smallest integer satisfying
	\be \label{ntpsi:small}
	n_{\tilde{\psi}}\ge \max(-l_{\tilde{\psi}}, \tfrac{n_{\tilde{\phi}}
		-l_{\tilde{b}}}{2}, h_{\phi^L}-l_{\tilde{\psi}}, h_{\psi^L}-l_{\tilde{\psi}}, n_\psi)
	\ee
	and $m_{\tilde{\phi}}:=\max(2n_{\tilde{\phi}}+h_{\tilde{a}},2n_{\tilde{\psi}}+h_{\tilde{b}})$.
	For each element $\eta \in \{\psi^L\}\cup\{\psi(\cdot-k) \setsp n_\psi\le k<n_{\tilde{\psi}}\}$, there exists a unique sequence $\{c_{\eta}(h)\}_{h\in \Phi}\in \ell_2(\Phi)$ such that
	\be \label{FindtPsi}
	\la d_\eta, g\ra=
	\begin{cases}
		1 &\text{if $g=\eta$},\\
		0 &\text{if $g\in (\Phi \cup \Psi)\bs \{\eta\}$}
	\end{cases}
	\qquad \mbox{with}\qquad
	d_\eta:=\sqrt{2} \sum_{h\in \Phi} c_{\eta}(h) \tilde{h}(2\cdot)\in \si_1(\tilde{\Phi})
	\ee
	and
	\be \label{mtphi:cond}
	c_\eta(\phi(\cdot-k))=0\qquad \forall\; k\ge m_{\tilde{\phi}}.
	\ee
	Then
	$\tilde{\Psi}:=\{\tilde{\psi}^L\}\cup \{ \tilde{\psi}(\cdot-k) \setsp k\ge n_{\tilde{\psi}}\}$ with $\tilde{\psi}^L:=\{d_\eta \setsp \eta\in \{\psi^L\}\cup\{\psi(\cdot-k) \setsp n_{\psi}\le k<n_{\tilde{\psi}}\}\}$
	satisfies item (iv) of \cref{thm:wbd} and all functions in $\tilde{\psi}^L$ have compact support.
\end{prop}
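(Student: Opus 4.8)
The plan is to mirror, on the dual side, the construction of $\Psi$ in \cref{prop:Psi}, reducing the entire statement to the unique solvability of a finite linear system. First I would record the reductions that make the problem finite. For the interior indices $k\ge n_{\tilde{\psi}}$ the desired dual element is simply $\tilde{\psi}(\cdot-k)$: it lies in $\si_1(\tilde{\Phi})$ by \eqref{I:tpsi}, it is biorthogonal to the interior translates $\psi(\cdot-j)$ on the real line, and the bounds $n_{\tilde{\psi}}\ge h_{\phi^L}-l_{\tilde{\psi}}$ and $n_{\tilde{\psi}}\ge h_{\psi^L}-l_{\tilde{\psi}}$ in \eqref{ntpsi:small} force $\fs(\tilde{\psi}(\cdot-k))$ to be essentially disjoint from $\supp(\phi^L)\cup\supp(\psi^L)$, so these interior duals are automatically perpendicular to $\phi^L$ and $\psi^L$. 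Hence only the finitely many boundary elements $\eta\in\{\psi^L\}\cup\{\psi(\cdot-k)\setsp n_\psi\le k<n_{\tilde{\psi}}\}$ remain, and for each such $\eta$ I must produce a compactly supported $d_\eta\in\si_1(\tilde{\Phi})$ satisfying \eqref{FindtPsi}.

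The structural facts I would use are that $\{\sqrt{2}\,h(2\cdot)\setsp h\in\Phi\}$ and $\{\sqrt{2}\,\tilde{h}(2\cdot)\setsp h\in\Phi\}$ are biorthogonal, which follows from item (ii) of \cref{thm:wbd} by a scaling computation, and that $\{\tilde{h}(2\cdot)\setsp h\in\Phi\}$ is precisely $\tilde{\Phi}(2\cdot)$ by the cardinality bookkeeping \eqref{cardinality} and the definition of the bijection $\sim$; together with the density identity $\si_1(\Phi)=\si_0(\Phi)+\si_0(\Psi)$ established inside the proof of \cref{prop:Psi} via \cref{lem:W}. Writing $d_\eta=\sqrt{2}\sum_{h\in\Phi}c_\eta(h)\tilde{h}(2\cdot)$ and pairing against $\sqrt{2}\,g(2\cdot)$ gives $c_\eta(g)=\la d_\eta,\sqrt{2}\,g(2\cdot)\ra$, so imposing the finite-support normalization \eqref{mtphi:cond} makes $d_\eta$ a finite combination of $\tilde{\phi}^L(2\cdot)$ and of $\tilde{\phi}(2\cdot-k)$ with $n_{\tilde{\phi}}\le k<m_{\tilde{\phi}}$; every such candidate is therefore compactly supported. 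Because of the compact supports and the bounds $n_\psi,n_{\tilde{\psi}},m_{\tilde{\phi}}$ together with the finite supports of $\phi^L,\psi^L,\tilde{\phi}^L$, only finitely many of the conditions $\la d_\eta,g\ra=\delta_{g,\eta}$, $g\in\Phi\cup\Psi$, are nonvacuous, the rest holding by disjoint supports. This turns \eqref{FindtPsi}--\eqref{mtphi:cond} into a finite linear system $G\,c_\eta=e_\eta$, whose number of unknowns is $\#\tilde{\phi}^L+(m_{\tilde{\phi}}-n_{\tilde{\phi}})(\#\tilde{\phi})$, i.e. the dimension bound furnished by \cref{lem:W} applied to $\tilde{\Phi}$.

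The main obstacle is the invertibility of $G$, which I would split into injectivity and squareness. Injectivity is clean: if $f$ is a finite combination of the generators $\sqrt{2}\,\tilde{h}(2\cdot)$ attached to the unknowns and $\la f,g\ra=0$ for every active $g$, then $f\perp g$ for the inactive $g$ as well by disjoint supports, hence $f\perp\Phi\cup\Psi$; since $\overline{\operatorname{span}(\Phi\cup\Psi)}=\si_1(\Phi)$ and the pairing between the biorthogonal Riesz bases $\sqrt{2}\,\Phi(2\cdot)$ of $\si_1(\Phi)$ and $\sqrt{2}\,\tilde{\Phi}(2\cdot)$ of $\si_1(\tilde{\Phi})$ is nondegenerate, an $f\in\si_1(\tilde{\Phi})$ with $f\perp\si_1(\Phi)$ must vanish, whence $c_\eta=0$. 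For a square injective system this yields invertibility, and then both existence and uniqueness of $c_\eta$; the resulting $d_\eta$, a finite combination of compactly supported functions, has compact support, and by uniqueness of the dual of a Riesz basis it is the genuine dual element. The delicate point I anticipate is the squareness itself, namely verifying that the number of active equations coming from $\Phi\cup\Psi$ equals the number of unknowns dictated by \cref{lem:W} for $\tilde{\Phi}$; this matching, rather than the linear algebra, is where the careful support counting lives.

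Finally I would assemble item (iv) of \cref{thm:wbd}. Biorthogonality of $\tilde{\Psi}$ to $\Psi$ and the relation $\tilde{\Psi}\perp\Phi$ are exactly \eqref{FindtPsi} for the boundary $\eta$, together with the real-line relations and the support separation from \eqref{ntpsi:small} for the interior $\tilde{\psi}(\cdot-k)$. The refinement identity \eqref{I:psi:dual} is read off directly from $d_\eta=\sqrt{2}\sum_h c_\eta(h)\tilde{h}(2\cdot)$ by collecting the $\tilde{\phi}^L(2\cdot)$-terms into $\tilde{B}_L$ and the $\tilde{\phi}(2\cdot-k)$-terms into $\tilde{B}$, while \eqref{I:tpsi} is the standing interior relation for $\tilde{\psi}$. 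For $\si_0(\tilde{\Psi})=\si_1(\tilde{\Phi})\cap(\si_0(\Phi))^\perp$, the inclusion $\subseteq$ is immediate from $d_\eta\in\si_1(\tilde{\Phi})$ and $\tilde{\Psi}\perp\Phi$; equality follows from the dual of item (iii), namely the biorthogonal splitting $\si_1(\tilde{\Phi})=\si_0(\tilde{\Phi})\oplus\si_0(\tilde{\Psi})$ forced by $\si_1(\Phi)=\si_0(\Phi)\oplus\si_0(\Psi)$ and the nondegenerate pairing, so that both sides share the same codimension over $\si_0(\tilde{\Phi})$.
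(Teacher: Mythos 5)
Your reduction to a finite linear system leaves the existence half of the proposition unproven, and you say so yourself: the ``squareness'' of $G$ is exactly the point on which your argument hinges, and it is not established. Injectivity of $G$ (your clean step) only yields solvability of $G\,c_\eta=e_\eta$ if the number of active constraints $\la d_\eta, g\ra=\delta_{g,\eta}$, $g\in\Phi\cup\Psi$, equals the number of unknowns permitted by \eqref{mtphi:cond}; but the active constraints are determined by which supports of $\phi^L,\psi^L,\phi(\cdot-k),\psi(\cdot-k)$ meet $\supp(d_\eta)$, a count governed by $h_\phi,h_\psi,h_{\phi^L},h_{\psi^L}$, while the unknowns are counted by $m_{\tilde{\phi}}-n_{\tilde{\phi}}$ and $\#\tilde{\phi}^L$. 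There is no reason these match, and generically the system is overdetermined, so injectivity alone gives nothing. The paper sidesteps this entirely: it first shows $\Phi\cup\Psi$ is a Riesz sequence (a step absent from your proposal, needed so that $\Phi\cup\Psi$ is a Riesz basis of $\si_1(\Phi)$), writes each $g\in\Phi\cup\Psi$ as $g=\sum_{h\in\Phi}w_g(h)h(2\cdot)$ with $w_g\in\ell_2(\Phi)$, and obtains $d_\eta$ from the orthogonal projection in $\ell_2(\Phi)$ of $w_\eta$ onto the complement of $W_\eta:=\overline{\operatorname{span}}\{w_g\setsp g\in(\Phi\cup\Psi)\bs\{\eta\}\}$, which produces a (possibly infinitely supported) solution with no counting whatsoever. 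Only afterwards does it deduce \eqref{mtphi:cond}: since $d_\eta\perp H$ with $H=\{\phi(\cdot-k)\setsp k\ge n_\phi\}\cup\{\psi(\cdot-k)\setsp k\ge n_{\tilde{\psi}}\}$, and \cref{lem:W} (applied with $n_\psi$ replaced by $n_{\tilde{\psi}}$) expresses $\phi(2\cdot-k)$ for $k\ge m_{\tilde{\phi}}$ as a finite combination of elements of $H$, one gets $c_\eta(\phi(\cdot-k))=\la d_\eta,\sqrt{2}\,\phi(2\cdot-k)\ra=0$. In other words, compact support of $\tilde{\psi}^L$ is a \emph{consequence} of the construction, not an ansatz you are free to impose and then solve under.

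To repair your argument you would either have to carry out the support counting you defer (which I do not believe closes in general), or abandon the finite ansatz and prove existence in $\ell_2(\Phi)$ first --- at which point you are reproducing the paper's projection argument, including the preliminary verification that $\Phi\cup\Psi$ is a Riesz sequence. Your treatment of the interior elements, the uniqueness of $d_\eta$, the extraction of $\tilde{B}_L,\tilde{B}$ from the expansion of $d_\eta$, and the identification of $d_{\psi(\cdot-m)}$ with $\tilde{\psi}(\cdot-m)$ for $m\ge n_{\tilde{\psi}}$ are all consistent with the paper and fine as sketched.
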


\bp
Note that all $\Phi,\tilde{\Phi}$ and $\Psi$ are Riesz sequences.
We first prove that $\Phi\cup \Psi$ is a Riesz sequence. Suppose not. Then the lower Riesz bound of $\Phi\cup \Psi$ is zero and there exists a sequence $\{c_n\}_{n=1}^\infty$ in $\ell_2(\Phi\cup \Psi)$ such that $\sum_{h\in \Phi\cup \Psi} |c_n(h)|^2=1$ and $\lim_{n\to \infty} \|F_n\|_{\Lp{2}}=0$, where $F_n:=f_n+g_n$ with $f_n:=\sum_{h\in \Phi} c_n(h) h$ and $g_n:=\sum_{h\in \Psi} c_n(h)h$. Because $\tilde{\Phi}$ is biorthogonal to $\Phi$ and $\tilde{\Phi} \perp \Psi$, for $\tilde{h}\in \tilde{\Phi}$, we have $\la F_n, \tilde{h}\ra=\la f_n,\tilde{h}\ra=c_n(h)$, where $h$ is the corresponding element of $\tilde{h}$ in $\Phi$. Then
\[
\lim_{n\to \infty} \sum_{h\in \Phi} |c_n(h)|^2=\lim_{n\to \infty}\sum_{h\in \Phi} |\la F_n, \tilde{h}\ra|^2=\lim_{n\to \infty}\sum_{\tilde{h}\in \tilde{\Phi}} |\la F_n, \tilde{h}\ra|^2=0,
\]
because $\tilde{\Phi}$ is a Riesz sequence and $\lim_{n\to \infty} \|F_n\|_{\Lp{2}}=0$.
Then $\lim_{n\to \infty} \|f_n\|_{\Lp{2}}=0$
since $\Phi$ is a Riesz sequence.
For any $\gep>0$, there exists $N\in \N$ such that $\sum_{h\in \Phi} |c_n(h)|^2\le \gep$, $\|F_n\|_{\Lp{2}}\le \gep$, and
$\|f_n\|_{\Lp{2}}\le \gep$
for all $n\ge N$.
Thus,
\[
\sum_{h\in \Psi} |c_n(h)|^2=1-\sum_{h\in \Phi} |c_n(h)|^2\ge 1-\gep
\]
and $\|g_n\|_{\Lp{2}}
=\|F_n-f_n\|_{\Lp{2}}
\le 2\gep$. This shows that the lower Riesz bound of $\Psi$ cannot be larger than $\frac{2\gep}{\sqrt{1-\gep}}$ for all $0<\gep<1$,
which contradicts that $\Psi$ is a Riesz sequence. Hence, $\Phi\cup \Psi$ must be a Riesz sequence.

Because $\Phi\cup \Psi$ is a Riesz sequence and $\si_0(\Phi\cup \Psi)=\si_1(\Phi)$ by item (iii) of \cref{thm:wbd}, $\Phi\cup\Psi$ is a Riesz basis of $\si_1(\Phi)$.
Since $\Phi(2\cdot)$ is also a Riesz basis of $\si_1(\Phi)$, for every $g\in \Phi\cup \Psi$, we define $w_g\in \ell_2(\Phi)$ to be the unique sequence satisfying $g=\sum_{h\in \Phi} w_g(h) h(2\cdot)$.
Let $\eta\in \Psi$ and define $W_\eta$ to be the closed linear span of $w_g, g\in \Phi\cup \Psi\bs \{\eta\}$. Then there exists a unique element $v_\eta\in \ell_2(\Phi)$ such that $w_\eta-v_\eta\in W_\eta$ and $v_\eta \perp W_\eta$.
Because $\Phi\cup \Psi$ is a Riesz basis of $\si_1(\Phi)$, we must have $v_\eta\ne 0$ and $\la v_\eta, w_\eta\ra_{\ell_2(\Phi)} \ne 0$.
Define $f:=\sum_{h\in \Phi} v_\eta(h) \tilde{h}(2\cdot)$. Then $f\in \si_1(\tilde{\Phi})$ and $f\ne 0$ by $v_\eta\ne 0$.
Because $\tilde{\Phi}$ is biorthogonal to $\Phi$,
we must have $\la f,\eta\ra\ne 0$ by $\la v_\eta, w_\eta\ra_{\ell_2(\Phi)} \ne 0$
and $f\perp (\Phi\cup\Psi)\bs\{\eta\}$ by
$v_\eta\perp W_\eta$.
Obviously, $d_\eta:=
\la f,\eta\ra^{-1} f\in \si_1(\tilde{\Phi})$ must satisfy \eqref{FindtPsi}.
Suppose that both $d_\eta, d'_\eta\in \si_1(\tilde{\Phi})$ satisfy \eqref{FindtPsi}. Then $d_\eta-d'_\eta \perp \Phi\cup \Psi$.
Hence, $d_\eta-d'_\eta \perp \si_0(\Phi\cup\Psi)=\si_1(\Phi)$.
Since $\tilde{\Phi}$ is biorthogonal to $\Phi$ and $d_\eta-d'_\eta\in \si_1(\tilde{\Phi})$, we must have $d_\eta=d'_\eta$. This proves the existence and uniqueness of $d_\eta$.

We now prove that $d_{\psi(\cdot-m)}=\tilde{\psi}(\cdot-m)$ for all $m\ge n_{\tilde{\psi}}$.
Since $n_{\tilde{\psi}}\ge \max(-l_{\tilde{\psi}}, \frac{n_{\tilde{\phi}}-l_{\tilde{b}}}{2})$, we observe that \eqref{I:tpsi} holds.
Note that $\fs(\tilde{\psi}(\cdot-k))=[k+l_{\tilde{\psi}},k+h_{\tilde{\psi}}]$ for $k\in \Z$.
Since $n_{\tilde{\psi}}\ge h_{\phi^L}-l_{\tilde{\psi}}$ by \eqref{ntpsi:small},
for $m\ge n_{\tilde{\psi}}$
we have $m+l_{\tilde{\psi}}\ge h_{\phi^L}$ and hence $\la \phi^L,\tilde{\psi}(\cdot-m)\ra=0$. Consequently, $\tilde{\psi}(\cdot-m)$ is perpendicular to all elements in $\Phi$.
By the same argument and $n_{\tilde{\psi}}\ge h_{\psi^L}-l_{\tilde{\psi}}$,
$\tilde{\psi}(\cdot-m)$ is also perpendicular to all elements in $\{\psi^L\}\cup \{\psi(\cdot-k) \setsp n_\psi\le k<n_{\tilde{\psi}}\}$. Now we conclude that
\eqref{FindtPsi} must hold with $\eta=\psi(\cdot-m)$ and $d_\eta=\tilde{\psi}(\cdot-m)$. This proves $d_{\psi(\cdot-m)}=\tilde{\psi}(\cdot-m)$ for all $m\ge n_{\tilde{\psi}}$.

Let $\eta \in \{\psi^L\}\cup\{\psi(\cdot-k) \setsp n_\psi\le k<n_{\tilde{\psi}}\}$. We now prove that \eqref{mtphi:cond} must hold.
By \cref{lem:W} with $n_\psi$ being replaced with $n_{\tilde{\psi}}$, we conclude from \eqref{psi:linearcomb} that
\be \label{phi:0}
\phi(2\cdot-k_0) \; \mbox{is a finite linear combination of elements in $H$ for all}\; k_0\ge m_{\tilde{\phi}},
\ee
where $H:=\{\phi(\cdot-k)\setsp k\ge n_\phi\}\cup \{\psi(\cdot-k)\setsp k\ge n_{\tilde{\psi}}\}$.
By the definition of $d_\eta$ in \eqref{FindtPsi} and $\eta \in \{\psi^L\}\cup\{\psi(\cdot-k) \setsp n_\psi\le k<n_{\tilde{\psi}}\}$, we have $d_\eta \perp H$. For any integer $k\ge m_{\tilde{\phi}}$, by the biorthogonality of $\Phi$ and $\tilde{\Phi}$, we deduce from \eqref{phi:0} and $d_\eta \perp H$ that
$c_{\eta}(\phi(\cdot-k))=
\la d_\eta, \sqrt{2} \phi(2\cdot-k)\ra=0$. This proves \eqref{mtphi:cond}.
Hence, $\tilde{\psi}^L$ has compact support and $\tilde{\Psi}$ satisfies item (iv) of \cref{thm:wbd}.
\ep

\section{Direct Approach for Constructing Biorthogonal Wavelets on $[0,\infty)$}
\label{sec:direct}

The general construction
using the classical approach in \cref{sec:classical} (in particular, \Cref{subsec:tPhi})
is often complicated and it restricts the choices of $\phi^L$ and $\tilde{\phi}^L$ in \Cref{subsec:Phi,subsec:tPhi}.
In this section, we propose a direct approach to construct all possible compactly supported biorthogonal wavelets on $[0,\infty)$ without explicitly involving the dual refinable functions $\tilde{\Phi}$ and dual wavelets $\tilde{\Psi}$.

We first address how to construct general $\Phi=\{\phi^L\}\cup\{\phi(\cdot-k) \setsp k\ge n_\phi\}$ with $\phi^L\subseteq L_2([0,\infty))$ satisfying item (i) of \cref{thm:wbd} and having compact support.

\begin{theorem}\label{thm:Phi:direct}
	Let $\phi$ be a compactly supported refinable vector function satisfying $\phi=2\sum_{k\in \Z} a(k) \phi(2\cdot-k)$ for some finitely supported matrix-valued filter $a\in \lrs{0}{r}{r}$. Take $n_\phi\in \Z$ satisfying \eqref{nphi}.
	Let $A_L$ be an $N\times N$ matrix satisfying
	\be \label{rho:AL}
	\rho(A_L)<2^{-1/2}, \quad \mbox{that is, the spectral radius of $A_L$ is less than}\; 2^{-1/2}.
	\ee
	Define an $N\times 1$ vector function $\phi^L$ by
	\be \label{phiL:direct}
	\phi^L:=\sum_{j=1}^\infty 2^{j-1} A_L^{j-1} g(2^j\cdot)\quad \mbox{with}\quad g:=2\sum_{k=n_\phi}^\infty A(k) \phi(\cdot-k),
	\ee
	where $A$ is a finitely supported sequence of $N\times (\#\phi)$ matrices.
	Then $\phi^L$ is a well-defined compactly supported vector function in $L_2([0,\infty))\cap \HH{\tau}$ for some $\tau>0$, \eqref{I:phi} holds, i.e., $\phi^L=2A_L\phi^L(2\cdot)+2\sum_{k=n_\phi}^\infty A(k) \phi(2\cdot-k)$,
	and
	 $\Phi=\{\phi^L\}\cup\{\phi(\cdot-k)\setsp k\ge n_\phi\}$ satisfies item (i) of \cref{thm:wbd}.
\end{theorem}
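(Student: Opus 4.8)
The plan is to establish the three assertions in turn---well-definedness as a compactly supported $L_2$ function, the refinement identity \eqref{I:phi}, and membership in $\HH{\tau}$---and then read off item~(i) of \cref{thm:wbd}. First I would note that, since $A$ is finitely supported and each $\phi(\cdot-k)$ with $k\ge n_\phi$ is supported in $[0,\infty)$ by \eqref{nphi}, the function $g=2\sum_{k=n_\phi}^\infty A(k)\phi(\cdot-k)$ is a finite linear combination of shifts of $\phi$, hence a compactly supported $L_2$ function with $\fs(g)\subseteq[0,T]$ for some $T>0$; consequently each term $g(2^j\cdot)$ is supported in $[0,2^{-j}T]\subseteq[0,T]$, so $\phi^L$, if it converges, is automatically supported in $[0,\infty)$ and compactly supported. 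For the $L_2$ convergence of the defining series I would combine $\|g(2^j\cdot)\|_{\Lp{2}}=2^{-j/2}\|g\|_{\Lp{2}}$ with the spectral-radius hypothesis: choosing $\gep>0$ with $q:=2^{1/2}(\rho(A_L)+\gep)<1$ (possible precisely because $\rho(A_L)<2^{-1/2}$) and using $\|A_L^{j-1}\|\le C(\rho(A_L)+\gep)^{j-1}$, the $\Lp{2}$ norm of the $j$-th term is dominated by a constant times $q^{j-1}$. This is exactly where the hypothesis enters, and it explains the threshold $2^{-1/2}$: the dilation factor $2^{j-1}$ and the $L_2$ scaling $2^{-j/2}$ combine into $2^{j/2-1}$, which must be absorbed by the decay of $\|A_L^{j-1}\|$. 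Thus the series converges absolutely in $\Lp{2}$ and $\phi^L\in L_2([0,\infty))$ is compactly supported.

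The refinement identity \eqref{I:phi} I would verify by a direct index shift: dilating the series gives $2A_L\phi^L(2\cdot)=\sum_{j=1}^\infty 2^j A_L^j g(2^{j+1}\cdot)=\sum_{i=2}^\infty 2^{i-1}A_L^{i-1}g(2^i\cdot)$, and restoring the missing $i=1$ term $g(2\cdot)=2\sum_{k=n_\phi}^\infty A(k)\phi(2\cdot-k)$ reconstitutes the full series for $\phi^L$. Hence $\phi^L=2A_L\phi^L(2\cdot)+2\sum_{k=n_\phi}^\infty A(k)\phi(2\cdot-k)$, which is precisely \eqref{I:phi}.

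The main obstacle is the $\HH{\tau}$ regularity, which is not merely formal. I would handle it by stacking $\phi^L$ and $\phi$ into a single vector function $\Theta:=(\phi^L,\phi)^\tp$ and observing that \eqref{I:phi} together with $\phi=2\sum_k a(k)\phi(2\cdot-k)$ expresses $\Theta$ as $\Theta=2\sum_n M(n)\Theta(2\cdot-n)$ for a finitely supported matrix mask $M$ (block-triangular, carrying $A_L$ in the zero-shift block, the $A(k)$ in the off-diagonal blocks, and the $a(k)$ in the $\phi$-block); finiteness of $\fs(M)$ uses that both $A$ and $a$ are finitely supported. Since $\Theta$ is a compactly supported refinable vector function in $\Lp{2}$ with a finitely supported mask, \cite[Corollary~5.8.2 or~6.3.4]{hanbook} (equivalently \cite[Theorem~2.2]{han03}) yields that every entry of $\Theta$, in particular every entry of $\phi^L$, belongs to $\HH{\tau}$ for some $\tau>0$; this mirrors the argument already used in the proof of \cref{thm:phi:bessel}. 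Alternatively, one can bypass the combined vector and estimate $\|\phi^L\|_{\HH{\tau}}$ termwise from $\|g(2^j\cdot)\|_{\HH{\tau}}\le 2^{j(\tau-1/2)}\|g\|_{\HH{\tau}}$: for $\tau>0$ small enough that $2^\tau q<1$ and $g\in\HH{\tau}$ (the latter holding below the Sobolev smoothness of $\phi$), the series converges in $\HH{\tau}$, which also recovers the $L_2$ statement at $\tau=0$.

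With these in hand, item~(i) of \cref{thm:wbd} follows immediately: $\Phi=\{\phi^L\}\cup\{\phi(\cdot-k)\setsp k\ge n_\phi\}\subset L_2([0,\infty))$ by the support statements and \eqref{nphi}, the relation \eqref{nphi} holds by the choice of $n_\phi$, and \eqref{I:phi} has just been verified with the prescribed $A_L$ and finitely supported $A$.
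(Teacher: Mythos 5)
Your proof is correct, and the core of it (the index-shift verification of \eqref{I:phi} and the use of $\rho(A_L)<2^{-1/2}$ to absorb the factor $2^{j/2}$ coming from dilation and $L_2$ scaling) matches the paper exactly. The one place where you organize things differently is the Sobolev regularity. The paper does not prove $L_2$ convergence separately: it first notes $\phi\in \HH{t}$ for some $t>0$ by \cite[Corollary~5.8.2 or~6.3.4]{hanbook}, hence $g\in\HH{\tau}$ for all small $\tau>0$, and then runs the series estimate directly in $\HH{\tau}$ using $\|2^j g(2^j\cdot)\|_{\HH{\tau}}\sim 2^{(1/2+\tau)j}\|g\|_{\HH{\tau}}$ together with $\|A_L^j\|\le C_\gep 2^{\gep j}\rho(A_L)^j$, choosing $\tau\in(0,t]\cap(0,-1/2-\log_2\rho(A_L))$; this yields compact support, $L_2([0,\infty))$ membership, and $\HH{\tau}$ membership in one pass. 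Your secondary route is exactly this. Your primary route---establish $L_2$ convergence with the elementary bound $q^{j-1}$, verify \eqref{I:phi}, then stack $\Theta=(\phi^L,\phi)^\tp$ into a compactly supported refinable vector function with a finitely supported block-triangular mask and invoke the same corollary---is also valid and mirrors the argument the paper itself uses in the proof of \cref{thm:phi:bessel} for $\mathring{\phi}=\phi^c\cup\phi$. What the stacking route buys is a clean separation of convergence from regularity (no need to know $g\in\HH{\tau}$ before summing); what the paper's route buys is that a single estimate delivers all three conclusions simultaneously and makes the admissible range of $\tau$ explicit. Either way, item~(i) of \cref{thm:wbd} then follows as you say.
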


\bp
Since $\phi$ is a compactly supported refinable vector function with a finitely supported filter $a\in \lrs{0}{r}{r}$, by \cite[Corollary~5.8.2 or Corollary~6.3.4]{hanbook} (also see \cite[Theorem~2.2]{han03}),
the refinable vector function $\phi$ must belong to $\HH{t}$ for some $t>0$.
By our assumption in \eqref{rho:AL}, we have $\log_2\rho(A_L)<-1/2$ and hence, $t_{A_L}:=-1/2-\log_2 \rho(A_L)>0$. So,
$(0,t]\cap (0, t_{A_L})$ is nonempty.
Let $\tau\in (0,t]\cap (0, t_{A_L})$. Then $\tau>0$.
Since $\phi\subseteq \HH{\tau}$ by $0<\tau\le t$ and $n_\phi$ satisfies \eqref{nphi},
we see that $g$ must be a compactly supported vector function in $L_2([0,\infty))\cap \HH{\tau}$ and hence there exists a positive constant $C$ independent of $j$ (e.g., see \cite[(3.7)]{han10}) such that
\[
C^{-1} \|g\|_{\HH{\tau}}\le
2^{(-1/2-\tau) j}\|2^j g(2^j\cdot)\|_{\HH{\tau}}\le C\|g\|_{\HH{\tau}}
\]
for all $j\in \N\cup\{0\}$.
Applying the triangle inequality, we deduce that
\[
\left\| \sum_{j=1}^\infty 2^{j-1} A_L^{j-1} g(2^{j}\cdot)\right\|_{\HH{\tau}}\le
\sum_{j=1}^\infty 2^{j-1} \|A_L^{j-1}\| \|g(2^{j}\cdot)\|_{\HH{\tau}}
\le 2^{-1} C \|g\|_{\HH{\tau}} \sum_{j=1}^\infty
2^{(1/2+\tau)j}\|A_L^{j-1}\|.
\]
Since $\tau<t_{A_L}$, we have $t_{A_L}-\tau>0$.
For any $0<\gep<t_{A_L}-\tau$, since $\rho(A_L)=\lim_{j\to \infty} \|A_L^j\|^{1/j}$, there exists a positive constant $C_\gep$ such that $\|A_L^j\|\le C_\gep 2^{\gep j} (\rho(A_L))^j$ for all $j\in \N \cup\{0\}$. Therefore,
\[
\sum_{j=1}^\infty 2^{(1/2+\tau)j}\|A_L^{j-1}\|
\le C_\gep \sum_{j=1}^\infty 2^{(1/2+\tau)j} 2^{\gep(j-1)} (\rho(A_L))^{j-1}=
2^{(1/2+\tau)}C_\gep \sum_{j=0}^\infty 2^{(\tau+\gep-t_{A_L})j}<\infty,
\]
because $\tau+\gep-t_{A_L}<0$ by $0<\gep<t_{A_L}-\tau$. Hence we proved $\phi^L\subseteq \HH{\tau}$ with $\tau>0$ and thus, $\phi^L\subseteq L_2([0,\infty))$.
Since $g$ has compact support, $\phi^L$ in \eqref{phiL:direct} obviously has compact support and
\[
\phi^L=g(2\cdot)+2 A_L \sum_{j=1}^\infty 2^{j-1} A_L^{j-1} g(2^{j+1}\cdot)=
g(2\cdot)+2A_L \phi^L(2\cdot)=2A_L \phi^L(2\cdot)+2\sum_{k=n_\phi}^\infty A(k) \phi(2\cdot-k).
\]
This proves \eqref{I:phi}.
Hence, $\Phi=\{\phi^L\}\cup\{\phi(\cdot-k)\setsp k\ge n_\phi\}$ satisfies item (i) of \cref{thm:wbd}.
\ep

To achieve polynomial reproduction, we can simply append the vector function $\phi^\pp$ in item (ii) of \cref{prop:phicut} to the vector function $\phi^L$ in
\cref{thm:Phi:direct} to create a new $\phi^L$; or equivalently, we can append the associated refinement coefficients of $\phi^\pp$ to the coefficients $A_L$ and $A$ in \eqref{phiL:direct} instead.
Then remove redundant elements in the new $\phi^L$ by \cref{thm:rz}.
To construct orthogonal wavelets on $[0,\infty)$, we can simply replace (S1) of \cref{alg:Phi:orth} by \cref{thm:Phi:direct}.
As we discussed in \Cref{subsec:Phi}, without loss of generality, $A_L$ in \eqref{phiL:direct} can be taken as a block diagonal matrix of Jordan matrices in \eqref{jordan}.
Define a vector function $\mathring{\phi}^L$ by appending $\phi$ to $\phi^L$. Since $\phi^L$ satisfies \eqref{I:phi}, we see that $\mathring{\phi}^L$ is a compactly supported vector function associated with a finitely supported filter. Consequently, we can apply \cref{thm:integral} to compute $\int_0^1 \phi^L(x-m) \ol{\eta(x-n)}^\tp dx$ for all $m,n\in \Z$ for any compactly supported refinable vector function $\eta$.
Generalizing results on vector subdivision schemes and refinable vector functions in \cite[Chapter~5]{hanbook}, in fact we can prove through a technical argument that the condition in \eqref{rho:AL} must hold if $\Phi=\{\phi^L\}\cup\{\phi(\cdot-k) \setsp k\ge n_\phi\}$ satisfies item (i) of \cref{thm:wbd} and is a Riesz sequence.
We shall address this technical issue elsewhere.

The direct approach is to construct $\Psi$ directly.
The following result will be proved in \cref{sec:proof}.

\begin{theorem}\label{thm:direct}
	Let $(\{\tilde{\phi};\tilde{\psi}\},\{\phi;\psi\})$ be a compactly supported biorthogonal wavelet in $\Lp{2}$ with a biorthogonal wavelet filter bank $(\{\tilde{a};\tilde{b}\},\{a;b\})$ satisfying items (1)--(4) of \cref{thm:bw}.
	Suppose that $\Phi=\{\phi^L\}\cup\{\phi(\cdot-k)\setsp k \ge n_\phi\}\subseteq L_2([0,\infty))$ satisfies
	item~(i) of \cref{thm:wbd}, $\Phi$ is a Riesz sequence in $L_2([0,\infty))$, and $\phi^L$ is a compactly supported vector function in $L_2([0,\infty))\cap \HH{\tau}$ for some $\tau>0$.
	Take an integer $n_\psi\ge \max(-l_\psi,\frac{n_\phi-l_b}{2})$
	and define
	 $m_{\phi}:=\max(2n_\phi+h_{\tilde{a}},2n_\psi+h_{\tilde{b}})$ as in \eqref{mphi}.
	Let $m, n_0\in \NN$ (we often take $0\le m\le \sr(\tilde{a})$ and $n_0=0$).
	Construct $\Psi:=\{\psi^L\}\cup\{\psi(\cdot-k) \setsp k\ge n_\psi\}$ such that
	\begin{enumerate}
		\item[(i)] $\psi^L\subseteq \mbox{span}(\{\phi^L(2\cdot)\}\cup \{ \phi(2\cdot-k) \setsp n_\phi\le k<m_\phi+n_0\})$, $\psi^L$ has $m$ vanishing moments with $\vmo(\psi^L)\ge m$, and
		the set $\psi^L$ (which is regarded as in $\si_1(\Phi)/\si_0(\Phi\cup \{\psi(\cdot-k)\setsp k\ge n_\psi\})$)
		is a basis of the finite-dimensional quotient space $\si_1(\Phi)/\si_0(\Phi\cup \{\psi(\cdot-k)\setsp k\ge n_\psi\})$.
		
		\item[(ii)]  Every element in $\phi^L(2\cdot)\cup \phi^E(2\cdot)$ is a finite linear combination of elements in $\Phi\cup \Psi$, where $\phi^E:=\{\phi(\cdot-k)\setsp n_\phi\le k<m_\phi\}$.
		That is, for some integers $h_C\ge n_\phi$ and $h_D\ge n_\psi$,
		\be \label{inv:phiL}
		\left[ \begin{matrix}
			\phi^L(2\cdot)\\
			 \phi^E(2\cdot)\end{matrix}\right]
		=
		A_0 \phi^L+B_0 \psi^L+\sum_{n_\phi\le k<h_C} C(k) \phi(\cdot-k)+\sum_{n_\psi\le k<h_D}
		D(k) \psi(\cdot-k)
		\ee
		for some matrices $A_0,B_0$, $C(k)$ with $n_{\phi}\le k< h_C$, and $D(k)$ with $n_{\psi}\le k< h_D$.
	\end{enumerate}
	Define
	$n_{\tilde{\phi}}:=
	\max(m_\phi, h_C, -l_{\tilde{\phi}}, 1-l_{\tilde{a}})$ and
	$n_{\tilde{\psi}}:=\max(n_\psi, h_D,
	-l_{\tilde{\psi}}, \lceil \tfrac{n_{\tilde{\phi}}
		-l_{\tilde{b}}+1}{2}\rceil)$.
	Then we must have
	\be \label{phi2k0:2}
	 \phi(2\cdot-k_0)=\sum_{k=n_\phi}^{n_{\tilde{\phi}}-1}
	\ol{\tilde{a}(k_0-2k)}^\tp \phi(\cdot-k)+
	\sum_{k=n_\psi}^{n_{\tilde{\psi}}-1}
	\ol{\tilde{b}(k_0-2k)}^\tp \psi(\cdot-k),\qquad \forall\, m_\phi\le k_0<n_{\tilde{\phi}}.
	\ee
	Now we can rewrite/combine \eqref{inv:phiL} and \eqref{phi2k0:2} together into the following equivalent form:
	\be \label{inv:phiL:2}
	 \mathring{\phi}^L(2\cdot)=\ol{\tilde{A}_L}^\tp \mathring{\phi}^L+\ol{\tilde{B}_L}^\tp \mathring{\psi}^L,
	\ee
	where
	$\mathring{\phi}^L:=\{\phi^L\} \cup \{\phi(\cdot-k) \setsp n_\phi\le k<n_{\tilde{\phi}}\}$, $\mathring{\psi}^L:=\{\psi^L\}\cup \{\psi(\cdot-k) \setsp n_\psi\le k<n_{\tilde{\psi}}\}$, and the matrices $\tilde{A}_L, \tilde{B}_L$ are uniquely determined by $A_0, B_0, \{C(k)\}_{k=n_\phi}^{h_C-1}$, $\{D(k)\}_{k=n_\psi}^{h_D-1}$ and the filters $\tilde{a}, \tilde{b}$.
	If
	\be \label{tAL}
	\rho(\tilde{A}_L)<2^{-1/2}, \quad \mbox{that is, the spectral radius of $\tilde{A}_L$ is less than}\; 2^{-1/2},
	\ee
	then the following $\tilde{\phi}^L$ and $\tilde{\psi}^L$ are well-defined compactly supported vector functions in $L_2([0,\infty))$:
	\begin{align}
	&\tilde{\phi}^L:=\sum_{j=1}^\infty 2^{j-1} \tilde{A}_L^{j-1} \tilde{g}(2^j\cdot)\quad \mbox{with}\quad
	 \tilde{g}:=2\sum_{k=n_{\tilde{\phi}}}^{\infty} \tilde{A}(k) \tilde{\phi}(\cdot-k),
	\label{tphi:implicit}\\
	&\tilde{\psi}^L:=2\tilde{B}_L \tilde{\phi}^L(2\cdot)+2\sum_{k= n_{\tilde{\phi}}}^\infty \tilde{B}(k) \tilde{\phi}(2\cdot-k),
	\label{tpsi:implicit}
	\end{align}
	where the $(\#\mathring{\phi}^L)\times (\#\phi)$ matrices $\tilde{A}(k)$ and
	$(\#\mathring{\psi}^L)\times (\#\phi)$ matrices $\tilde{B}(k), k\in \Z$ are defined by
	\be \label{tAB}
	\tilde{A}(k):=
	\left[ \begin{matrix}
		0_{(\#\phi^L)\times (\#\phi)}\\
		\tilde{a}(k-2n_{\phi})\\
		\vdots\\
		\tilde{a}(k-2(n_{\tilde{\phi}}-1))
	\end{matrix}\right]
	\qquad \mbox{and}\qquad
	\tilde{B}(k):=
	\left[ \begin{matrix}
		0_{(\#\psi^L)\times (\#\phi)}\\
		\tilde{b}(k-2n_{\psi})\\
		\vdots\\
		\tilde{b}(k-2(n_{\tilde{\psi}}-1))
	\end{matrix}\right],\qquad k\in \Z.
	\ee
	Then $\AS_J(\tilde{\Phi};\tilde{\Psi})_{[0,\infty)}$ and $\AS_J(\Phi;\Psi)_{[0,\infty)}$ form a pair of biorthogonal Riesz bases of
	$L_2([0,\infty))$ for every $J\in \Z$, where $\tilde{\Phi}:=\{\tilde{\phi}^L\}\cup \{\tilde{\phi}(\cdot-k)\setsp k\ge n_{\tilde{\phi}}\}$ and $\tilde{\Psi}:=\{\tilde{\psi}^L\}\cup \{\tilde{\psi}(\cdot-k)\setsp k\ge n_{\tilde{\psi}}\}$.
\end{theorem}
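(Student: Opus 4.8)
The plan is to verify the four hypotheses (i)--(iv) of \cref{thm:wbd} for the constructed $\Phi,\Psi,\tilde\Phi,\tilde\Psi$ and then invoke \cref{thm:wbd} to obtain the pair of biorthogonal Riesz bases; item~(i) is part of the assumptions. I would first record the algebraic identities. Formula \eqref{phi2k0:2} is read off the real-line expansion \eqref{phi2k0} by checking that, for $m_\phi\le k_0<n_{\tilde\phi}$, the two summation ranges there collapse to $n_\phi\le k<n_{\tilde\phi}$ and $n_\psi\le k<n_{\tilde\psi}$; this is exactly what the choices $m_\phi\ge 2n_\phi+h_{\tilde a}$, $n_{\tilde\phi}\ge 1-l_{\tilde a}$, and $n_{\tilde\psi}\ge\lceil(n_{\tilde\phi}-l_{\tilde b}+1)/2\rceil$ guarantee. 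Splicing \eqref{inv:phiL} (which covers $\phi^L(2\cdot)$ and $\phi^E(2\cdot)$) together with \eqref{phi2k0:2} (which covers $\phi(2\cdot-k)$ for $m_\phi\le k<n_{\tilde\phi}$) then expresses the whole fine-scale vector $\mathring\phi^L(2\cdot)$ through $\mathring\phi^L$ and $\mathring\psi^L$, giving \eqref{inv:phiL:2}; the bounds $n_{\tilde\phi}\ge h_C$ and $n_{\tilde\psi}\ge h_D$ ensure that every coefficient index lands inside $\mathring\phi^L$ or $\mathring\psi^L$, so $\tilde A_L,\tilde B_L$ are well-defined.

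Next I would construct the dual functions. Since \eqref{I:phi:dual} with the data $\tilde A_L,\tilde A$ is precisely the refinement equation treated in \cref{thm:Phi:direct} applied to $(\tilde\phi,\tilde a)$ in place of $(\phi,a)$, the hypothesis \eqref{tAL} yields, via that theorem, that $\tilde\phi^L$ in \eqref{tphi:implicit} is a well-defined compactly supported vector function in $L_2([0,\infty))\cap\HH{\tau}$ for some $\tau>0$, that it satisfies \eqref{I:phi:dual}, and that $\tilde\Phi$ satisfies \eqref{I:tphi}; defining $\tilde\psi^L$ by \eqref{tpsi:implicit} makes \eqref{I:psi:dual} hold by construction and \eqref{I:tpsi} hold by the real-line refinement of $\tilde\psi$ together with the support choice of $n_{\tilde\psi}$. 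In particular $\phi^L\cup\psi^L\cup\tilde\phi^L\cup\tilde\psi^L\subseteq\HH{\tau}$, so \cref{thm:phi:bessel} applies to both ladders and shows that $\AS_J(\Phi;\Psi)_{[0,\infty)}$ and $\AS_J(\tilde\Phi;\tilde\Psi)_{[0,\infty)}$ are Bessel sequences (using that $\psi$ and $\tilde\psi$ each carry a vanishing moment).

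The heart of the proof, and the step I expect to be the main obstacle, is the full boundary biorthogonality: that the stacked pair $(\{\tilde\phi^L;\tilde\psi^L\},\{\mathring\phi^L;\mathring\psi^L\})$ is biorthogonal, which together with the real-line relations and the support separations yields that $\tilde\Phi$ is biorthogonal to $\Phi$ (item (ii)) and $\tilde\Psi$ to $\Psi$ (part of item (iv)). I would prove this by a transition-operator / non-standard vector cascade argument: the Gram matrix $\langle\mathring\phi^L,\tilde\phi^L\rangle$ and the finitely many cross-Gram sequences $\langle\mathring\phi^L,\tilde\phi(\cdot-k)\rangle$, $\langle\phi(\cdot-k),\tilde\phi^L\rangle$ (and their $\psi,\tilde\psi$ analogues) satisfy, after substituting \eqref{I:phi}, \eqref{I:psi}, \eqref{I:phi:dual}, \eqref{I:psi:dual} and the real-line biorthogonality, a single linear fixed-point system driven by a transition operator. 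The crucial consistency check is that the biorthogonal value $(I,0)$ is a fixed point; here the perfect-reconstruction identity \eqref{inv:phiL:2}, the definitions \eqref{tAB} of $\tilde A,\tilde B$, and the filter-bank biorthogonality \eqref{bwfb} of \cref{thm:bw} are used in concert to cancel all interior contributions and leave the identity. Uniqueness of the fixed point then forces biorthogonality, and uniqueness is exactly where the twin spectral-radius conditions \eqref{rho:AL} and \eqref{tAL} enter: the homogeneous part of the boundary block is the map $G\mapsto 2A_L G\,\overline{\tilde A_L}^\tp$, with spectral radius bounded by $2\rho(A_L)\rho(\tilde A_L)<1$, and the cross-sequence blocks are governed by the subdivision of $a$ (resp.\ $\tilde a$) composed with right multiplication by $\overline{\tilde A_L}^\tp$ (resp.\ $\overline{A_L}^\tp$), again contractive; convergence of the cascade \eqref{tphi:implicit} then guarantees the Gram data is the biorthogonal one.

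Finally I would assemble the span conditions. The inclusions $\si_0(\Psi)\subseteq\si_1(\Phi)$ (from (i) and \eqref{npsi}) and $\si_0(\Psi)\perp\si_0(\tilde\Phi)$ (from biorthogonality) are immediate; condition (ii) via \eqref{inv:phiL} gives $\si_1(\Phi)=\si_0(\Phi)+\si_0(\Psi)$, and writing any $f\in\si_1(\Phi)\cap(\si_0(\tilde\Phi))^\perp$ as $f_0+f_1$ with $f_0\in\si_0(\Phi)$, $f_1\in\si_0(\Psi)$ and using $\langle f,\tilde\Phi\rangle=0$ and $\langle f_1,\tilde\Phi\rangle=0$ forces $f_0\perp\tilde\Phi$, hence $f_0=0$ by biorthogonality; this yields item~(iii). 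Item~(iv) follows by the mirror-image argument on the dual side, the biorthogonality $\langle\mathring\psi^L,\tilde\psi^L\rangle=I$ coming from the same cascade computation and the dual perfect-reconstruction being the biorthogonal-filter-bank counterpart of \eqref{inv:phiL:2}. With (i)--(iv) and the $\HH{\tau}$ membership established, \cref{thm:wbd} delivers that $\AS_J(\tilde\Phi;\tilde\Psi)_{[0,\infty)}$ and $\AS_J(\Phi;\Psi)_{[0,\infty)}$ form a pair of biorthogonal Riesz bases of $L_2([0,\infty))$ for every $J\in\Z$, as claimed.
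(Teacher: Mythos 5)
Your overall architecture --- verifying items (i)--(iv) of \cref{thm:wbd}, reading \eqref{phi2k0:2} off \eqref{phi2k0}, splicing it with \eqref{inv:phiL} to obtain \eqref{inv:phiL:2}, producing $\tilde{\phi}^L$ from \cref{thm:Phi:direct} under \eqref{tAL}, and getting the Bessel bounds from \cref{thm:phi:bessel} --- coincides with the paper's proof. The divergence, and the gap, lies in how you establish the boundary biorthogonality. You propose that the Gram data of $(\mathring{\phi}^L,\tilde{\phi}^L)$ together with the cross-Gram sequences solves a linear fixed-point system whose unique solution is the biorthogonal one, with uniqueness coming from contractivity of $G\mapsto 2A_L G\,\ol{\tilde{A}_L}^\tp$ and of the cross-blocks. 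Two problems. First, your contraction bound $2\rho(A_L)\rho(\tilde{A}_L)<1$ invokes \eqref{rho:AL}, i.e.\ $\rho(A_L)<2^{-1/2}$, which is \emph{not} a hypothesis of \cref{thm:direct}: the theorem assumes only that $\Phi$ satisfies item (i) of \cref{thm:wbd}, is a Riesz sequence, and that $\rho(\tilde{A}_L)<2^{-1/2}$. The paper explicitly remarks after \cref{thm:Phi:direct} that deducing \eqref{rho:AL} from these hypotheses is a technical matter deferred to other work, so you cannot use it here without proof. Second, the claimed contractivity of the cross-Gram blocks (subdivision by $a$ or $\tilde{a}$ composed with multiplication by $\ol{\tilde{A}_L}^\tp$ or $\ol{A_L}^\tp$) is asserted but not justified; on the relevant infinite sequence spaces these operators are not contractions in any obvious norm, and controlling them is exactly the delicate transition-operator analysis your sketch elides.

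The paper avoids both difficulties by a one-sided cascade anchored at a known dual: since $\Phi$ is a Riesz sequence, \cref{thm:rz} supplies a compactly supported $\tilde{H}=\{\tilde{\eta}^L\}\cup\{\tilde{\phi}(\cdot-k) \setsp k\ge n_{\tilde{\phi}}\}$ biorthogonal to $\Phi$; one sets $\tilde{f}_0:=\tilde{\eta}^L$, iterates $\tilde{f}_n:=2\tilde{A}_L\tilde{f}_{n-1}(2\cdot)+\tilde{g}(2\cdot)$, and checks by induction, using only the identity $\tilde{\mathcal{N}}\ol{\mathcal{N}}^\tp=2^{-1}I$ (which follows from the linear independence of $\Phi\cup\Psi$ together with \eqref{M} and \eqref{tM}), that each iterate remains biorthogonal to $\Phi$ (the primal side needs no convergence because $\mathring{\phi}^L$ is an exact fixed point of its own refinement). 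The single condition \eqref{tAL} then gives $\tilde{f}_n\to\tilde{\phi}^L$ in $\Lp{2}$, and biorthogonality passes to the limit; no spectral information about $A_L$ and no cross-block analysis is required. To close your argument you would need either to adopt this device or to supply proofs of \eqref{rho:AL} and of the cross-block contractivity; your remaining steps (the span identity $\si_1(\Phi)=\si_0(\Phi)+\si_0(\Psi)$ and the verification of items (iii)--(iv)) are fine once biorthogonality is in hand.
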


By item (3) of \cref{thm:wbd:0}, items (i) and (ii) of \cref{thm:direct} are necessary conditions on $\Psi$ for $\AS_J(\Phi;\Psi)_{[0,\infty)}$ to be a Riesz basis of $L_2([0,\infty))$ satisfying both \eqref{I:phi} and \eqref{I:psi}.
We often take $n_\psi\in \Z$ to be the smallest integer such that $\psi(\cdot-k)\in \si_1(\Phi)$ for all $k\ge n_\psi$.
We now discuss how to construct all possible \[
\psi^L\subseteq \mbox{span}(\{\phi^L(2\cdot)\}\cup \{\phi(2\cdot-k) \setsp n_\phi\le k<m_\phi+n_0\})
\]
satisfying both items (i) and (ii) of \cref{thm:direct}.
Since $\Phi(2\cdot)$ is a Riesz basis of $\si_1(\Phi)$,
we observe that
\[
\eta=\sum_{h\in \Phi} c_\eta(h) h(2\cdot)
\quad \mbox{with}\quad
c_\eta:=\{c_\eta(h)\}_{h\in \Phi}\in \ell_2(\Phi),
\qquad \mbox{for}\; \eta\in \si_1(\Phi).
\]
Let $S:=\phi^L\cup \phi^E$ and $T:=\{\phi(\cdot-k) \setsp k\ge m_\phi\}$.
Then $\Phi=S\cup T$ and we can write $c_\eta=c_\eta \chi_S+c_\eta \chi_T$ for all $\eta\in \si_1(\Phi)$.
Define $H:=\Phi\cup \{\psi(\cdot-k) \setsp k\ge n_\psi\}$. By \eqref{refstr} and \eqref{I:phi}, we have $\si_0(H)\subseteq \si_1(\Phi)$.
Now we can find a finite subset $H_0\subseteq H$ such that $M_0:=\{c_\eta\chi_{S} \setsp \eta\in H_0\}$ is a basis of the finite-dimensional space spanned by $c_\eta\chi_S, \eta\in H$. In other words, $H_0 \cup T(2\cdot)$ is another Riesz basis of $\si_1(\Phi)$.
Next we find a generating set (not necessarily a basis) $\psi^b$ of
the finite-dimensional space
\[
W:=\{ g\in \mbox{span}(\{\phi^L(2\cdot)\}\cup\{\phi(2\cdot-k)\setsp n_\phi\le k<m_\phi+n_0\}) \setsp \vmo(g)\ge m\},
\]
which is not empty by taking $n_0$ large enough (we often set $n_0=0$). If $\psi(\cdot-k)\in W$ for some $k\in \Z$,
to have as many interior wavelets as possible, then we always keep $\psi(\cdot-k)$ in $\psi^b$.
We now find a subset $\psi^L\subseteq \psi^b$ such that $M_0 \cup M_1$ is a basis of $\R^{\#S}$, where
$M_1:=\{c_\eta\chi_S \setsp \eta\in \psi^L\}$.
Since both $\Phi(2\cdot)$ and $H_0 \cup T(2\cdot)$ are Riesz bases of $\si_1(\Phi)$,
item (i) must hold for the constructed $\psi^L$.
We now prove that $\psi^L$ satisfies item (ii) of \cref{thm:direct}.
Let $\eta\in S$. Then $c_{\eta(2\cdot)}=\td_\eta \in \R^{\#S}$.
Since $M_0 \cup M_1$ is a basis of $\R^{\#S}$,
by $M_0\cup M_1=\{c_h \chi_S \setsp h\in \psi^L\cup H_0\}$, we have
$c_{\eta(2\cdot)}=\sum_{h\in \psi^L\cup H_0} d_{\eta,h} c_h \chi_S$
for some $d_{\eta,h}\in \C$. Observing $c_h=c_h\chi_S+c_h \chi_T$, we obtain
\[
c_{\eta(2\cdot)}-\sum_{h\in \psi^L\cup H_0} d_{\eta,h} c_h=-\sum_{h\in \psi^L\cup H_0}
d_{\eta,h} c_h \chi_T.
\]
Using the refinable structure in \eqref{refstr} and \eqref{I:phi},
we conclude that every sequence $c_h$ for $h\in \psi^L\cup H$ must have only finitely many nonzero entries.
Hence, by $H_0\subseteq H$, the sequence $\sum_{h\in \psi^L\cup H_0} d_{\eta,h} c_h \chi_T$ has only finitely many nonzero entries. Therefore, we conclude from \eqref{psi:linearcomb} that
$\sum_{h\in \psi^L\cup H_0}
d_{\eta,h} c_h \chi_T$
must be a finite linear combination of elements in $H$.
Consequently, $\eta(2\cdot)-\sum_{h\in \psi^L\cup H_0}
d_{\eta,h} h$ must be a finite linear combination of elements in $H$.
This proves item (ii) of \cref{thm:direct}.

Instead of constructing $\Psi$ first
in \cref{thm:direct},  we can first construct $\tilde{\Phi}$ satisfying item (ii) of \cref{thm:wbd} below. Then construct $\Psi$ by \cref{prop:Psi} and $\tilde{\Psi}$ by \cref{prop:tPsi}.
The classical approach in \cref{sec:classical} can be improved by the following result, whose proof is given in \cref{sec:proof}.

\begin{theorem}\label{thm:direct:tPhi}
	Let $(\{\tilde{\phi};\tilde{\psi}\},\{\phi;\psi\})$ be a compactly supported biorthogonal wavelet in $\Lp{2}$
	with a biorthogonal wavelet filter bank $(\{\tilde{a}; \tilde{b}\},\{a;b\})$
	satisfying items (1)--(4) of \cref{thm:bw}.
	Suppose that $\Phi=\{\phi^L\}\cup\{\phi(\cdot-k)\setsp k \ge n_\phi\}\subseteq L_2([0,\infty))$ with $\phi^L$ having compact support satisfies
	item~(i) of \cref{thm:wbd} and $\Phi$ is a Riesz sequence in $L_2([0,\infty))$.
	Let $n_{\tilde{\phi}}$ be chosen as in item (S1) of \cref{alg:tPhi}.
	Define $N:=\#\phi^L+(n_{\tilde{\phi}}-n_\phi)(\#\phi)$ and  let $\tilde{A}_L$ be an $N\times N$ matrix satisfying \eqref{tAL}.
	For a finitely supported sequence $\tilde{A}$ of $N\times (\#\phi)$ matrices, define $\tilde{\phi}^L$ as in \eqref{tphi:implicit}.
	By \cref{thm:Phi:direct}, $\tilde{\phi}^L$ is a well-defined compactly supported vector function in $L_2([0,\infty))\cap \HH{\tau}$ for some $\tau>0$.
	If
	\be \label{filter:biorth}
	\tilde{A}_L \ol{A_L}^\tp+
	\sum_{k=n_{\tilde{\phi}}}^\infty \tilde{A}(k) \ol{A(k)}^\tp=I_{N},
	\ee
	where $A_L$ and $\{A(k)\}_{k=n_{\tilde{\phi}}}^\infty$ are augmented version in \eqref{I:phi} with $\phi^L$ being replaced by $\mathring{\phi}^L:=\{\phi^L\}\cup\{\phi(\cdot-k) \setsp n_\phi\le k<n_{\tilde{\phi}}\}$,
	then $\tilde{\Phi}$ is biorthogonal to $\Phi$ and satisfies item (ii) of \cref{thm:wbd}, where $\tilde{\Phi}:=\{\tilde{\phi}^L\}\cup\{\tilde{\phi}(\cdot-k) \setsp k\ge n_{\tilde{\phi}}\}$.
\end{theorem}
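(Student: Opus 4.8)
The plan is to check the three requirements that constitute item~(ii) of \cref{thm:wbd} for $\tilde\Phi$: that $\tilde\Phi$ obeys the refinable identity~\eqref{I:phi:dual}, the interior identity~\eqref{I:tphi}, and is biorthogonal to $\Phi$. The first two are essentially free. Since $\tilde\phi^L$ is given by the explicit series in~\eqref{tphi:implicit}, the very telescoping computation carried out in the proof of \cref{thm:Phi:direct} (now with $\tilde\phi,\tilde A_L,\tilde A$ in place of $\phi,A_L,A$) shows that $\tilde\phi^L$ is a compactly supported function in $L_2([0,\infty))\cap\HH{\tau}$ satisfying $\tilde\phi^L=2\tilde A_L\tilde\phi^L(2\cdot)+2\sum_{k\ge n_{\tilde\phi}}\tilde A(k)\tilde\phi(2\cdot-k)$, which is~\eqref{I:phi:dual}; and~\eqref{I:tphi} is just~\eqref{nphi} written for $\tilde\phi$, valid because $n_{\tilde\phi}\ge\max(-l_{\tilde\phi},-l_{\tilde a})$ by the choice in~(S1) of \cref{alg:tPhi}. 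So the entire content of the theorem reduces to proving the biorthogonality relation $\la\Phi,\tilde\Phi\ra=I$.

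To organize this, I would rewrite $\Phi=\{\mathring\phi^L\}\cup\{\phi(\cdot-k)\setsp k\ge n_{\tilde\phi}\}$ with $\mathring\phi^L=\{\phi^L\}\cup\{\phi(\cdot-k)\setsp n_\phi\le k<n_{\tilde\phi}\}$, so that $\#\mathring\phi^L=N=\#\tilde\phi^L$ and the natural bijection pairs $\mathring\phi^L$ with $\tilde\phi^L$ and $\phi(\cdot-k)$ with $\tilde\phi(\cdot-k)$ for $k\ge n_{\tilde\phi}$. The biorthogonality then splits into four blocks, two of which are immediate: $\la\phi(\cdot-k),\tilde\phi(\cdot-k')\ra=\td(k-k')I_r$ for $k,k'\ge n_{\tilde\phi}$ holds by item~(2) of \cref{thm:bw}, and $\la\mathring\phi^L,\tilde\phi(\cdot-k)\ra=0$ for $k\ge n_{\tilde\phi}$ holds since the interior entries of $\mathring\phi^L$ are handled by the same integer-shift biorthogonality while $\la\phi^L,\tilde\phi(\cdot-k)\ra=0$ for such $k$ by the choice of $n_{\tilde\phi}$ in~(S1). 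The remaining two blocks are coupled and carry all the difficulty: the $N\times N$ matrix $\mathcal G:=\la\mathring\phi^L,\tilde\phi^L\ra$ and the matrices $P(k):=\la\phi(\cdot-k),\tilde\phi^L\ra$ for $k\ge n_{\tilde\phi}$, which I must show equal $I_N$ and $0$.

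Next I would feed the refinable identities~\eqref{I:phi} for $\mathring\phi^L$ and~\eqref{I:phi:dual} for $\tilde\phi^L$ into these inner products. Using $\la f(2\cdot),g(2\cdot)\ra=\tfrac12\la f,g\ra$ and the two vanishing blocks above, one obtains a closed linear recursion expressing $(\mathcal G,\{P(k)\}_k)$ as the image of itself under a transfer operator $\mathcal T$ whose coefficients are built from $A_L,A,\tilde A_L,\tilde A$ and the masks $a,\tilde a$. The point of hypothesis~\eqref{filter:biorth} is exactly that $(\mathcal G,\{P(k)\}_k)=(I_N,0)$ is a fixed point of $\mathcal T$: substituting these values and splitting the interior mask sums into even/odd cosets by means of~\eqref{bwfb}, the defining equation for $\mathcal G$ reduces precisely to~\eqref{filter:biorth}, while the equations for the $P(k)$ reduce to the coset orthogonality $\sum_k a(k)\ol{\tilde a(k-2j)}^\tp=\tfrac12\td(j)I_r$ of the masks.

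The main obstacle is uniqueness: I must rule out any other fixed point, so that the actual Gram data — which is a bounded fixed point of $\mathcal T$ — must be $(I_N,0)$. This is where the spectral hypotheses enter. The boundary part of $\mathcal T$ acts like $X\mapsto 2A_L X\ol{\tilde A_L}^\tp$, whose spectral radius is $2\rho(A_L)\rho(\tilde A_L)$; since $\rho(\tilde A_L)<2^{-1/2}$ by~\eqref{tAL} and $\rho(A_L)<2^{-1/2}$ holds automatically (by the remark following \cref{thm:Phi:direct}, because $\Phi$ satisfies item~(i) of \cref{thm:wbd} and is a Riesz sequence), this factor is $<1$. The interior part of $\mathcal T$ is the cross-correlation transition operator of the pair $\phi,\tilde\phi$, whose relevant spectrum is forced below $1$ by $\phi,\tilde\phi\in\HH{\tau}$ with $\tau>0$ — precisely the convergence of the nonstandard vector cascade/subdivision scheme already invoked for \cref{thm:direct}. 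A contraction estimate on the appropriate weighted $\ell_2$ sequence space then gives $\mathcal T$ a unique bounded fixed point, forcing $\mathcal G=I_N$ and $P(k)=0$ for all $k\ge n_{\tilde\phi}$. This establishes biorthogonality and completes item~(ii); the Riesz-sequence property of $\tilde\Phi$ then follows for free from \cref{thm:rz}.
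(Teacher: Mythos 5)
Your reduction of the problem to the biorthogonality of $\tilde\Phi$ and $\Phi$, and the block decomposition isolating $\mathcal G=\la\mathring\phi^L,\tilde\phi^L\ra$ and $P(k)=\la\phi(\cdot-k),\tilde\phi^L\ra$, match the setup of the paper's proof. But from there the paper takes a different and essentially easier route that avoids the step where your argument has its gap: it never proves uniqueness of a fixed point of a transfer operator. Instead, since $\Phi$ is a Riesz sequence, \cref{thm:rz} supplies \emph{some} compactly supported $\tilde H=\tilde\eta^L\cup\{\tilde\phi(\cdot-k)\}_{k\ge n_{\tilde\phi}}$ exactly biorthogonal to $\Phi$; one then runs the cascade iteration $\tilde f_n=2\tilde A_L\tilde f_{n-1}(2\cdot)+\tilde g(2\cdot)$ starting from $\tilde f_0=\tilde\eta^L$, checks by induction (using the filter identity $\tilde{\mathcal N}\,\ol{\mathcal N}^\tp=2^{-1}I$ coming from \eqref{bwfb} and \eqref{filter:biorth}) that biorthogonality to $\Phi$ is preserved at every step, and finally uses $\rho(\tilde A_L)<2^{-1/2}$ only to show $\tilde f_n\to\tilde\phi^L$ in $\Lp{2}$, so that biorthogonality passes to the limit. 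Starting the iteration from an exact dual is the trick that makes the uniqueness question moot.

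Your version, by contrast, stands or falls on the uniqueness of the bounded fixed point of $\mathcal T$, and that is where there is a genuine gap. First, the stated reason for the interior contraction is wrong: the cross-correlation transition operator of the biorthogonal pair $(\phi,\tilde\phi)$ has eigenvalue $1$ (the true interior cross-Gram sequence $\td(\cdot)I_r$ is a $1$-eigenvector), and membership of $\phi,\tilde\phi$ in $\HH{\tau}$ does not remove this eigenvalue. What actually controls the iteration is that, after substituting the known interior biorthogonality, every term of the \emph{linear} part of the recursion for $(\mathcal G,P)$ carries a factor $\ol{\tilde A_L}^\tp$ on the right; the $n$-fold iterate is then dominated by $\|\tilde A_L^n\|\approx 2^{-n/2}$ against the $O(2^{n/2})$ growth of the length-$n$ words in $A_L,A,a$ on the left, and making that precise requires exactly the $L_2$-boundedness estimates for the vector cascade algorithm that you have not carried out. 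Second, your verification that $(I_N,0)$ is a fixed point mis-identifies the required filter condition: the equation for $P(k_0)$ with $P\equiv 0$ does not reduce to the mask coset orthogonality $\sum_k a(k)\ol{\tilde a(k-2j)}^\tp=\tfrac12\td(j)I_r$, but to $\sum_{k} a(k-2k_0)\ol{\tilde A(k)}^\tp=0$ for all $k_0\ge n_{\tilde\phi}$ --- a perpendicularity between the boundary dual filter $\tilde A$ and the shifted interior primal mask, i.e.\ the off-diagonal block of $\tilde{\mathcal N}\,\ol{\mathcal N}^\tp=2^{-1}I$, which is a separate condition from \eqref{filter:biorth} and from \eqref{bwfb}. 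Both points need to be repaired before your route closes.
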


If $\tilde{\Phi}$ is biorthogonal to $\Phi$, then \eqref{filter:biorth} must hold. Hence, \eqref{filter:biorth} is a necessary condition for the biorthogonality between $\tilde{\Phi}$ and $\Phi$.
\cref{thm:direct:tPhi} generalizes \cref{alg:tPhi} for the classical approach.

\section{Biorthogonal wavelets on $[0,\infty)$ satisfying homogeneous boundary conditions}
\label{sec:bc}

In this section we study (bi)orthogonal wavelets on $[0,\infty)$ satisfying given boundary conditions.

For a polynomial $\pp(x)=\sum_{j=0}^\infty c_j x^j$, it is convenient to use the notation
$\pp(\tfrac{d}{dx})=\sum_{j=0}^\infty c_j \frac{d^j}{dx^j}$ for a differential operator. Let $\I:=[0,\infty)$.
To study wavelets on $\I$ with general homogeneous boundary conditions such as Robin boundary conditions, it is necessary for us to study nonstationary wavelet systems in $L_2(\I)$.
For subsets $\Phi_j$ and $\Psi_j$ of functions in $L_2(\I)$ with $j\in \Z$, we define
\be \label{ASPhijPsij}
\AS_J(\Phi_J;\{\Psi_j\}_{j=J}^\infty)_{\I}
:=\{ 2^{J/2}\varphi(2^J\cdot) \setsp \varphi\in \Phi_J\} \cup\{ 2^{j/2} \eta(2^j\cdot)\setsp \eta\in \Psi_j, j\ge J\}, \qquad J\in \Z.
\ee
If $\Phi_j=\Phi$ and $\Psi_j=\Psi$ for all $j\in \Z$, then $\AS_J(\Phi_J;\{\Psi_j\}_{j=J}^\infty)_{[0,\infty)}
=\AS_J(\Phi;\Psi)_{[0,\infty)}$ as in \eqref{ASPhiPsiI}.

For wavelets $\Psi_j(2^j\cdot), j\ge J$ satisfying prescribed boundary conditions, the following result shows that
all the elements in
$\AS_J(\Phi_J;\{\Psi_j\}_{j=J}^\infty)$ must satisfy the same prescribed boundary conditions.

\begin{theorem}\label{thm:bc}
	Let $J\in \Z$ and $\I=[0,\infty)$.
	Let $(\{\tilde{\phi};\tilde{\psi}\},\{\phi;\psi\})$ be a compactly supported biorthogonal wavelet in $\Lp{2}$ satisfying items (1)--(4) of \cref{thm:bw}.
	Let $n_\phi\in \Z$ such that
	$\fs(\eta(\cdot-k_0))\subseteq \I$
	for all $k_0\ge n_\phi$ and $\eta\in \phi\cup\psi\cup\tilde{\phi}\cup \tilde{\psi}$.
	Let
	$\{\phi_J^L, \tilde{\phi}_J^L\}\cup \{\psi^L_j, \tilde{\psi}_j^L\}_{j=J}^\infty\subseteq L_2(\I)$ have compact support and satisfy
	$\lim_{j\to \infty} 2^{-j} h_{\tilde{\psi}^L_j}=0$, where $[l_{\tilde{\psi}^L_j}, h_{\tilde{\psi}^L_j}]:=\fs(\tilde{\psi}_j^L)$.
	Define $\Phi_J$ and $\Psi_j, j\ge J$  by
	\be \label{def:PhiPsij}
	\Phi_J:=\{\phi^L_J\}\cup \{\phi(\cdot-k) \setsp k \ge n_\phi\},\qquad
	\Psi_j:=\{\psi^L_j\}\cup \{ \psi(\cdot-k) \setsp k\ge n_\phi\}
	\ee
	and
	\[
	\tilde{\Phi}_J:=\{\tilde{\phi}^L_J\}\cup \{\tilde{\phi}(\cdot-k) \setsp k\ge n_\phi\},
	\qquad
	\tilde{\Psi}_j:=\{\tilde{\psi}^L_j\}\cup \{ \tilde{\psi}(\cdot-k) \setsp k\ge n_\phi\}.
	\]
	Suppose that $\AS_J(\Phi_J; \{\Psi_j\}_{j=J}^\infty)_{\I}$ and $\AS_J(\tilde{\Phi}_J; \{\tilde{\Psi}_j\}_{j=J}^\infty)_{\I}$ form a pair of biorthogonal Riesz bases in $L_2(\I)$.
	Let $\pp_0,\ldots,\pp_\ell \in \PL_{m-1}$ be polynomials of degree less than $m$.
	Suppose that each function $\eta\in \AS_J(\Phi_J; \{\Psi_j\}_{j=J}^\infty)_{\I}$
	has continuous derivatives of all orders less than $m$ on $[0,\gep_\eta)$ for some $\gep_\eta>0$.
	If all the wavelet functions in $\AS_J(\Phi_J;\{\Psi_j\}_{j=J}^\infty)_{\I}$
	satisfy the following homogeneous boundary conditions prescribed by $\pp_0,\ldots,\pp_{\ell}$ as follows:
	\be \label{bc:psi}
	\pp_0(\tfrac{d}{dx})(\eta(2^j x))|_{x=0}=\cdots=
	 \pp_\ell(\tfrac{d}{dx})(\eta(2^jx))|_{x=0}=0\quad \forall\;
	\eta\in \Psi_j, j\ge J,
	\ee
	then the refinable functions $\Phi_J(2^J\cdot)$ in $\AS_J(\Phi_J;\{\Psi_j\}_{j=J}^\infty)_{\I}$ must satisfy the same boundary conditions
	\be \label{bc:phi}
	 \pp_0(\tfrac{d}{dx})(\varphi(2^Jx))|_{x=0}
	=\cdots=
	 \pp_\ell(\tfrac{d}{dx})(\varphi(2^Jx))|_{x=0}=0\quad \forall\;
	\varphi\in \Phi_J.
	\ee
	That is, if all the elements in 
	$\{2^{j/2}\eta(2^j\cdot) \setsp \eta\in \Psi_j, j\ge J\}$
	satisfy the homogeneous boundary conditions in \eqref{bc:psi},
	then all elements in $\AS_J(\Phi_J; \{\Psi_j\}_{j=J}^\infty)_{\I}$ must satisfy the same boundary conditions.
\end{theorem}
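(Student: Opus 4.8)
The plan is to reduce the differential boundary conditions to a finite-dimensional condition on one-sided Taylor jets at the origin, and then to transfer that condition from the wavelet elements to the refinable elements through the biorthogonal expansion, using the smoothness and support-decay hypotheses to control the wavelet tail near $0$.

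First I would attach to each function $\eta$ that is $C^{m-1}$ on some $[0,\gep_\eta)$ its right jet $J_0\eta:=(\eta(0),\eta'(0),\dots,\eta^{(m-1)}(0))$. Writing $\pp_i(x)=\sum_n c_{i,n}x^n$ and setting $L_i(v):=\sum_n c_{i,n}v_n$ and $\mathcal K:=\bigcap_{i=0}^{\ell}\ker L_i\subseteq\C^m$, the condition \eqref{bc:psi} is exactly $J_0(\eta(2^j\cdot))\in\mathcal K$ for all $\eta\in\Psi_j$, $j\ge J$, while \eqref{bc:phi} is exactly $J_0(\varphi(2^J\cdot))\in\mathcal K$ for all $\varphi\in\Phi_J$. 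Since an interior shift $\phi(\cdot-k)$ with $k$ large is supported away from $0$, its jet vanishes; hence only the finitely many boundary-adapted elements (the entries of $\phi^L_J$ together with the finitely many shifts touching $0$) carry a nonzero jet, and it suffices to prove $J_0(\varphi(2^J\cdot))\in\mathcal K$ for these finitely many $\varphi$.

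Next I would fix an arbitrary $g\in L_2(\I)$ that is smooth and compactly supported and satisfies the boundary conditions (so $J_0 g\in\mathcal K$), and expand it in the primal Riesz basis as $g=s_g+w_g$, where $s_g:=\sum_{\varphi\in\Phi_J}\la g,2^{J/2}\tilde\varphi(2^J\cdot)\ra\,2^{J/2}\varphi(2^J\cdot)$ is the projection onto $\si_J(\Phi_J)$ and $w_g:=\sum_{j\ge J}\sum_{\eta\in\Psi_j}\la g,2^{j/2}\tilde\eta(2^j\cdot)\ra\,2^{j/2}\eta(2^j\cdot)$ is the wavelet part. Because $\supp(g)$ is compact, only finitely many dual refinable functions $\tilde\varphi(2^J\cdot)$ meet it, so $s_g$ is a \emph{finite} combination of the $\varphi(2^J\cdot)$; thus $s_g$ is $C^{m-1}$ near $0$ with a genuine jet $J_0 s_g=\sum_{\varphi}c_\varphi\,J_0(\varphi(2^J\cdot))$, and consequently $w_g=g-s_g$ also has a genuine jet $J_0 w_g=J_0 g-J_0 s_g$. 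The crux is to prove $J_0 w_g\in\mathcal K$; granting this, $J_0 s_g=J_0 g-J_0 w_g\in\mathcal K$ for every admissible $g$, and a short spanning step finishes the proof: as $g$ ranges over the admissible test functions the coefficient vectors $(c_\varphi)$ exhaust the whole (finite) boundary-coefficient space — the pairing functionals $\la\cdot,\tilde\varphi(2^J\cdot)\ra$ are independent of the finitely many jet functionals $L_i$ that define admissibility — so the vectors $J_0 s_g$ span $\operatorname{span}\{J_0(\varphi(2^J\cdot))\}$, forcing each $J_0(\varphi(2^J\cdot))\in\mathcal K$, i.e. \eqref{bc:phi}.

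The main obstacle is precisely the claim $J_0 w_g\in\mathcal K$, that is, that jet-extraction at $0$ commutes with the infinite wavelet series. Every term $2^{j/2}\eta(2^j\cdot)$ has jet in the closed subspace $\mathcal K$ by \eqref{bc:psi}, so it suffices that the series defining $w_g$ converge in $C^{m-1}$ on a one-sided neighbourhood $[0,\gep)$; the difficulty is that dilation by $2^j$ inflates the $C^{m-1}$-size of the boundary wavelets roughly like $2^{j(m-1/2)}$. This is where both hypotheses are essential: the uniform smoothness supplies scale-consistent $C^{m-1}$ control of the (rescaled) boundary functions, while the decay condition $\lim_{j\to\infty}2^{-j}h_{\tilde\psi^L_j}=0$ forces the fine-scale dual boundary wavelets $2^{j/2}\tilde\psi^L_j(2^j\cdot)$ to localise at $0$, so that the coefficients $\la g,2^{j/2}\tilde\psi^L_j(2^j\cdot)\ra$ of the fixed, smooth, compactly supported $g$ decay fast enough to beat the derivative growth and close the $C^{m-1}$ convergence near $0$. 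Establishing this quantitative balance — equivalently, that the germ at $0$ of the wavelet-space part inherits membership in $\mathcal K$ — is the technical heart of the argument; the remaining bookkeeping (identifying which interior shifts touch $0$ and carrying out the spanning step) is routine.
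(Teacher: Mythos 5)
There is a genuine gap at the step you yourself flag as ``the technical heart'': the claim that the jet at $0$ of the wavelet part $w_g$ lies in $\mathcal K$. To get this you need the series $\sum_{j\ge J}\sum_{\eta\in\Psi_j}\la g,2^{j/2}\tilde\eta(2^j\cdot)\ra\,2^{j/2}\eta(2^j\cdot)$ to converge in $C^{m-1}$ on some one-sided neighbourhood of $0$, and this is not available from the hypotheses of the theorem. The smoothness assumption is purely qualitative: each $\eta\in\Psi_j$ is $C^{m-1}$ on $[0,\gep_\eta)$ for \emph{some} $\gep_\eta>0$ depending on $\eta$, with no uniform bound on the $C^{m-1}$ norms of the (nonstationary) boundary wavelets $\psi^L_j$ and no uniform lower bound on $\gep_\eta$. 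Likewise the dual boundary wavelets $\tilde\psi^L_j$ are only assumed to be compactly supported $L_2$ functions with $2^{-j}h_{\tilde\psi^L_j}\to 0$; there is no quantitative control of $\|\tilde\psi^L_j\|$ that would make the coefficients $\la g,2^{j/2}\tilde\psi^L_j(2^j\cdot)\ra$ decay fast enough to beat the $2^{j(m-1/2)}$ growth you correctly identify. So the ``quantitative balance'' you defer to cannot be closed from the stated assumptions, and the proof as proposed does not go through.

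The paper's proof sidesteps this entirely by a different choice of test function: it takes $f\in L_2([2\gep,\infty))$, i.e.\ supported \emph{away} from the origin. Then, by the support condition $2^{-j}h_{\tilde\psi^L_j}\to 0$, the fine-scale dual boundary wavelets are supported in $[0,2\gep]$ and hence orthogonal to $f$, while the fine-scale interior primal wavelets that could have nonzero coefficients are supported in $[\gep,\infty)$. Consequently, on $[0,\gep)$ the biorthogonal expansion of $f$ reduces to a \emph{finite} sum of refinable elements and coarse-scale wavelets (equation \eqref{f:expr:0} in the paper), so one may differentiate term by term with no convergence issue. Since $f\equiv 0$ near $0$, applying $\pp_i(\frac{d}{dx})$ and using \eqref{bc:psi} to kill the wavelet terms yields $\la f,\tilde\phi^L_J(2^J\cdot)\ra\,\pp_i(\frac{d}{dx})\phi^L_J(2^Jx)|_{x=0}=0$ for all such $f$; surjectivity of $f\mapsto\la f,\tilde\phi^L_J(2^J\cdot)\ra$ (a Riesz-sequence property) then forces \eqref{bc:phi}. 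If you replace your admissible test class (smooth, compactly supported, satisfying the boundary conditions) by functions supported in $[2\gep,\infty)$ — which satisfy the boundary conditions trivially — your outline collapses to the paper's argument and the problematic convergence step disappears.
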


\bp
Let $\gep>0$ and consider functions $f\in L_2([2\gep,\infty))$.
Since
\[
\AS_J(\Phi_J; \{\Psi_j\}_{j=J}^\infty)_{\I} \quad \mbox{and}\quad
\AS_J(\tilde{\Phi}_J; \{\tilde{\Psi}_j\}_{j=J}^\infty)_{\I}
\]
form a pair of biorthogonal Riesz bases of $L_2(\I)$, we have
\[
f(x)=\sum_{\varphi\in \Phi}2^J \la f, \tilde{\varphi}(2^J\cdot)\ra \varphi(2^J x)+
\sum_{j=J}^\infty \sum_{\eta\in \Psi_j} 2^j \la f, \tilde{\eta}(2^j\cdot)\ra \eta(2^jx).
\]
Because $\lim_{j\to \infty} 2^{-j}h_{\tilde{\psi}^L_j}=0$ and $\supp(\tilde{\psi}_j^L(2^j\cdot))\subseteq [0, 2^{-j} h_{\tilde{\psi}^L_j}]$, there exists $\tilde{J}_\gep\in \N$ such that
\be \label{supptpsiL}
\supp(\tilde{\psi}^L_j(2^j\cdot))\subseteq [0,2\gep], \qquad \forall\, j\ge \tilde{J}_\gep.
\ee
Since $\phi,\psi,\tilde{\phi}$ and $\tilde{\psi}$ have compact support, we assume that all of them
are supported inside $[-N,N]$ for some $N\in \N$. Since
$\supp(\eta(2^j\cdot-k))\subseteq [2^{-j}(k-N),2^{-j}(k+N)]$ for $\eta\in \phi\cup\psi\cup\tilde{\phi}\cup\tilde{\psi}$,
we observe
\be \label{tpsi:supp}
\supp(\tilde{\psi}(2^j\cdot-k))\subseteq [0,2\gep],\qquad \forall\; n_\phi\le k\le 2^{j+1}\gep-N
\ee
and
\be \label{psi:supp}
\supp(\phi(2^j\cdot-k))\cup\supp(\psi(2^j\cdot-k))\subseteq [\gep,\infty),\qquad \forall\; k\ge 2^j\gep+N.
\ee
Let $J_\gep\in \N$ such that $J_\gep \ge \max(\tilde{J}_\gep, \log_2 \frac{2N}{\gep})$. For $j\ge J_\gep$ and $k\in \Z$, then either $k\le 2^{j+1}\gep-N$ or $k\ge 2^j\gep+N$ must hold. Consequently, one of \eqref{tpsi:supp} and \eqref{psi:supp} must hold for all $k\ge n_\phi$.
Hence, by $\supp(f)\subseteq [2\gep,\infty)$ and $J_\gep\ge \tilde{J}_\gep$,
we deduce from \eqref{tpsi:supp} and \eqref{psi:supp} that
\be \label{vanishing}
\la f, \tilde{\eta}(2^j\cdot)\ra \eta(2^jx)=0 \qquad \forall\; x\in [0,\gep),  \eta\in \Psi_j, j\ge J_\gep.
\ee
From \eqref{psi:supp}, for $x\in [0,\gep)$, we have $\la f, \tilde{\phi}(2^J\cdot-k)\ra \phi(2^Jx-k)=0$ for all $k\ge 2^J\gep+N$ and $\la f,\tilde{\psi}(2^j\cdot-k)\ra\psi(2^jx-k)=0$ for all $k\ge 2^j\gep+N$.
Consequently, by \eqref{psi:supp} and \eqref{vanishing}, we obtain
\be \label{f:expr:0}
\begin{split}
	f(x)=
	&\la f,(\tilde{\phi}^L_J)_{J;0}\ra
	(\phi^L_J)_{J;0}(x)+
	\sum_{k=n_\phi}^{\lfloor 2^j\gep +N\rfloor}
	\la f, \tilde{\phi}_{J;k}\ra
	\phi_{J;k}(x)\\
	&\qquad\qquad +\sum_{j=J}^{J_\gep-1} \Big(\la f, (\tilde{\psi}^L_j)_{j;0}\ra
	(\psi^L_j)_{j;0}(x)+\sum_{k=n_\phi}^{ \lfloor 2^j\gep+N \rfloor} \la f, \tilde{\psi}_{j;k}\ra
	\psi_{j;k}(x)\Big)
\end{split}
\ee
for almost every $x\in [0,\gep)$, where $\psi_{j;k}:=2^{j/2}\psi(2^j\cdot-k)$. By assumption,
each function $\eta\in \AS_J(\Phi_J; \{\Psi_j\}_{j=J}^\infty)_{\I}$
is continuous on $[0,\gep_\eta)$ and has continuous derivatives of all orders less than $m$ on $[0,\gep_\eta)$ for some $\gep_\eta>0$. Because there are only finitely many terms in \eqref{f:expr:0}, there must exist $0<\gep_0<\gep$ such that \eqref{f:expr:0} holds for all $x\in [0,\gep_0)$ and all terms in \eqref{f:expr:0} have continuous derivatives of all orders less than $m$ on $[0,\gep_0)$.
Applying our assumption in
\eqref{bc:psi} and using the fact $\supp(f)\subset [2\gep,\infty)$, we conclude from \eqref{f:expr:0} that $\pp_i(\tfrac{d}{dx}) f(x)|_{x=0}
=0$ and
\be \label{BC:phi}
2^J \la f, \tilde{\phi}^L_J(2^J\cdot)\ra \pp_i(\tfrac{d}{dx})\phi^L_J(2^Jx)|_{x=0}
+\sum_{k=n_\phi}^{\lfloor 2^j\gep +N\rfloor}
2^J \la f, \tilde{\phi}(2^J\cdot-k)\ra \pp_i(\tfrac{d}{dx})\phi(2^Jx-k)|_{x=0}
=0
\ee
for all $i=0,\ldots,\ell$.
By the choice of $n_\phi$ satisfying \eqref{nphi}, $\supp(\phi(\cdot-k))\subseteq [1,\infty)$ for all $k\ge n_\phi+1$ and hence trivially
$\pp_i(\tfrac{d}{dx})\phi(2^Jx-k)|_{x=0}=0$.
For simplicity, we may group $\phi(\cdot-n_\phi)$ into $\phi^L$. Hence, \eqref{BC:phi} becomes
\be \label{fgep}
\la f, \tilde{\phi}^L_J(2^J\cdot)\ra \pp_i(\tfrac{d}{dx})\phi^L_J(2^Jx)|_{x=0}=0,
\qquad \forall\; i=0,\ldots, \ell\quad \mbox{and}\quad f\in L_2([2\gep,\infty)).
\ee
In particular, \eqref{fgep} must hold with $\gep=0$.
Since $\tilde{\phi}^L_J$ must be a Riesz sequence, the mapping $L_2([0,\infty))\rightarrow \C^{\#{\tilde{\phi}^L_J}}$ with $f\mapsto \la f, \tilde{\phi}^L_J(2^J\cdot)\ra$ is onto.
Consequently, we deduce from \eqref{fgep} that \eqref{bc:phi} holds for all $\varphi\in \phi^L_J$, from which we conclude that \eqref{bc:phi} holds for all $\varphi\in \Phi_J$.
\ep

As a direct consequence of \cref{thm:bc}, we now claim that any orthogonal wavelet basis on $[0,\infty)$ satisfying boundary conditions often cannot have high vanishing moments.

\begin{theorem}\label{thm:ow:vm}
	Let $J\in \Z$ and $\I=[0,\infty)$.
	Let $\{\phi;\psi\}$ be a compactly supported orthogonal wavelet in $\Lp{2}$.
	Let
	 $\{\phi_J^L\}\cup\{\psi_j^L\}_{j=J}^\infty\subseteq L_2(\I)$
	have compact support
	and satisfy $\lim_{j\to \infty} 2^{-j} h_{\psi^L_j}=0$, where $[l_{\psi^L_j}, h_{\psi^L_j}]:=\fs(\psi^L_j)$.
	Let $n_\phi\in \Z$ such that
	$\fs(\eta(\cdot-k_0))\subseteq \I$
	for all $k_0\ge n_\phi$ and $\eta\in \phi\cup\psi$.
	Define $\Phi_J$ and $\Psi_j, j\ge J$ as in \eqref{def:PhiPsij}.
	Let $\pp_0,\ldots,\pp_\ell \in \PL_{m-1}$ and define $n_{\bcm}$ to be the largest nonnegative integer such that
	\be \label{nBC}
	\pp_i(\tfrac{d}{dx}) (x^j)|_{x=0}=0\qquad \forall\, i=0,\ldots,\ell\quad \mbox{and}\quad j=0,\ldots,n_{\bcm}-1.
	\ee
	Suppose that each function $\eta\in \AS_J(\Phi_J; \{\Psi_j\}_{j=J}^\infty)_{\I}$
	has continuous derivatives of all orders less than $m$ on $[0,\gep_\eta)$ for some $\gep_\eta>0$.
	If $\AS_J(\Phi_J; \{\Psi_j\}_{j=J}^\infty)_{\I}$ is an orthonormal basis of $L_2(\I)$ such that the boundary conditions in \eqref{bc:psi} are satisfied,
	then for any $j_0\in \N$, there exists  at least one element $\eta_{j_0}\in \cup_{j=j_0}^\infty \Psi_j$ such that $\eta_{j_0}$ has no more than $n_{\bcm}$
	vanishing moments, i.e., $\vmo(\eta_{j_0})\le n_B$.
\end{theorem}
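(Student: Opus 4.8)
The plan is to argue by contradiction. Suppose that every element $\eta\in\bigcup_{j\ge j_0}\Psi_j$ satisfies $\vmo(\eta)\ge n_{\bcm}+1$; I will contradict the maximality of $n_{\bcm}$ in \eqref{nBC}. The whole scheme reduces to showing that, under this assumption, the single monomial $x^{n_{\bcm}}$ must satisfy the prescribed homogeneous boundary conditions at $0$, while by definition it does not. Writing $\pp_i(x)=\sum_k c^{(i)}_k x^k$, only the $k=j$ term survives at the origin, so $\pp_i(\tfrac{d}{dx})(x^j)|_{x=0}=c^{(i)}_j\,j!$; hence \eqref{nBC} says exactly that $x^0,\dots,x^{n_{\bcm}-1}$ satisfy all the conditions \eqref{bc:psi}, whereas $x^{n_{\bcm}}$ fails at least one of them.

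First I would invoke \cref{thm:bc}: since all wavelet functions satisfy \eqref{bc:psi}, the refinable functions $\Phi_J$ satisfy \eqref{bc:phi}. Consequently every generator of the scaling space $V_{j_0}:=\si_J(\Phi_J)\oplus\bigoplus_{J\le j<j_0}\si_j(\Psi_j)$ — namely each $\varphi(2^J\cdot)$ with $\varphi\in\Phi_J$ and each $\eta(2^j\cdot)$ with $\eta\in\Psi_j$, $J\le j<j_0$ — satisfies the conditions prescribed by $\pp_0,\dots,\pp_\ell$. Because these generators have compact support, only finitely many are nonzero on a fixed neighborhood $[0,\delta)$ of the origin; thus any element of $V_{j_0}$ that is smooth near $0$ agrees on $[0,\delta)$ with a finite linear combination of boundary-condition-satisfying functions, and therefore satisfies the same boundary conditions, the latter being linear in the function.

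Next I would reproduce $x^{n_{\bcm}}$ locally inside $V_{j_0}$. Fix a large $N$, set $\pp(x)=x^{n_{\bcm}}$, and expand $\pp\chi_{[0,N)}\in L_2(\I)$ in the orthonormal basis, grouping the scales $j\ge j_0$ into the detail projections $Q_j$ onto $\si_j(\Psi_j)$, so that $\pp\chi_{[0,N)}=P_{j_0}(\pp\chi_{[0,N)})+\sum_{j\ge j_0}Q_j(\pp\chi_{[0,N)})$ with $P_{j_0}$ the orthogonal projection onto $V_{j_0}$. The key claim is that $\sum_{j\ge j_0}Q_j(\pp\chi_{[0,N)})$ vanishes on a right-neighborhood of $0$, which follows by the same support-and-moment localization as in the proof of \cref{thm:bc}: for $x\in[0,\gep)$ only wavelets $\eta_{j;k}$ whose support meets $[0,\gep)$ contribute, and on the support of each such wavelet $\pp\chi_{[0,N)}$ coincides with the polynomial $\pp$, so the coefficient $\la\pp,\eta_{j;k}\ra$ vanishes because the $n_{\bcm}+1$ vanishing moments of $\eta$ (which carry over to $\eta_{j;k}$) annihilate every polynomial of degree $\le n_{\bcm}$; for the boundary wavelets this uses the rescaling identity $\la\pp\chi_{[0,N)},2^{j/2}\psi^L_j(2^j\cdot)\ra=2^{-j/2-jn_{\bcm}}\int t^{n_{\bcm}}\ol{\psi^L_j(t)}\,dt=0$, while $\lim_{j\to\infty}2^{-j}h_{\psi^L_j}=0$ forces the boundary-wavelet supports into $[0,\gep)$ and the finitely many intermediate scales $j_0\le j<J_\gep$ are absorbed by shrinking $\gep$ exactly as in \cref{thm:bc}.

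Combining the two steps, on some $[0,\delta)$ we obtain $x^{n_{\bcm}}=\pp\chi_{[0,N)}=P_{j_0}(\pp\chi_{[0,N)})$, an element of $V_{j_0}$ that is smooth near $0$ by the smoothness hypothesis; the first step then yields $\pp_i(\tfrac{d}{dx})(x^{n_{\bcm}})|_{x=0}=0$ for all $i=0,\dots,\ell$, contradicting the maximality of $n_{\bcm}$. Hence the contradiction hypothesis fails and some $\eta_{j_0}\in\bigcup_{j\ge j_0}\Psi_j$ has $\vmo(\eta_{j_0})\le n_{\bcm}$. I expect the main obstacle to be the rigorous bookkeeping in the localization step — showing that all detail contributions cancel on a \emph{common} neighborhood of $0$ uniformly over the infinitely many scales $j\ge j_0$ — which is precisely the delicate part already handled in \cref{thm:bc} and which I would import with minimal change. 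A secondary point needing care is the identification $V_{j_0}=\si_J(\Phi_J)\oplus\bigoplus_{J\le j<j_0}\si_j(\Psi_j)$, guaranteeing that $V_{j_0}$ is genuinely spanned by the listed boundary-condition-satisfying generators and that passing from level $J$ to level $j_0$ introduces no extra elements; this is immediate from the orthogonal multiresolution structure of $\AS_J(\Phi_J;\{\Psi_j\}_{j=J}^\infty)_{\I}$.
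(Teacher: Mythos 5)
Your proposal is correct and follows essentially the same route as the paper's proof: argue by contradiction, use the support dichotomy together with the assumed $n_{\bcm}+1$ vanishing moments at scales $j\ge j_0$ to reduce the expansion of a function equal to $x^{n_{\bcm}}$ near $0$ to a finite sum of coarse-scale terms on a neighborhood of the origin, invoke \cref{thm:bc} to transfer the boundary conditions to $\Phi_J$, and contradict the maximality of $n_{\bcm}$. The differences (sharp cutoff $\pp\chi_{[0,N)}$ versus the paper's continuous cutoff $x^{n_{\bcm}}h(x)$, and organizing the tail via detail projections $Q_j$ rather than a direct basis expansion) are cosmetic.
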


\bp Suppose not. Then there exists $j_0\in \N$ such that all the elements in $\cup_{j=j_0}^\infty \Psi_j$ have $n_{\bcm}+1$ vanishing moments.
Since $\phi$ and $\psi$ have compact support, we can assume that $\phi$ and $\psi$ are supported inside $[-N,N]$ for some $N\in \N$.
Define $\gep:=2^{1-j_0}N>0$ and let $h: [0,\infty)\rightarrow \R$ be a compactly supported continuous function such that $h(x)=1$ for all $x\in [0,2\gep]$.
Define $f(x):=x^{n_{\bcm}} h(x)$. Then $f\in L_2(\I)$ has compact support and $f(x)=x^{\bcm}$ for all $x\in [0,2\gep]$.
Noting that
$\supp(\eta(2^j\cdot-k))\subseteq [2^{-j}(k-N),2^{-j}(k+N)]$ for all $\eta\in \phi\cup\psi$,
we can easily verify that
\eqref{psi:supp} holds and
\be \label{psi:supp:2}
\supp(\psi(2^j\cdot-k))\subset [0,2\gep],\qquad \forall\; n_\phi\le k\le 2^{j+1}\gep-N.
\ee
By $\lim_{j\to \infty} 2^{-j} h_{\psi^L_j}=0$, there exists an integer $\tilde{J}_\gep\ge j_0$ such that $\supp(\psi_j^L(2^j\cdot))\subseteq [0,2\gep]$ for all $j\ge \tilde{J}_\gep$.
Since $f(x)=x^{\bcm}$ for $x\in [0,2\gep]$ and all elements in $\cup_{j=j_0}^\infty \Psi_j$ have $n_{\bcm}+1$ vanishing moments, we have $\la f, \psi_j^L(2^j\cdot)\ra=0$ for all $j\ge \tilde{J}_\gep$.
Let $J_\gep\in \N$ such that $J_\gep \ge \max(\tilde{J}_\gep, \log_2 \frac{2N}{\gep})$. Note that $J_\gep \ge j_0$ by $\gep=2^{1-j_0} N$.
For all $j\ge J_\gep$,
one of \eqref{psi:supp} and \eqref{psi:supp:2} must hold.
Now it follows from the same argument as in the proof of \cref{thm:bc} that
\be \label{f:expr:1}
\begin{split}
	f(x)=
	&\la f,(\phi^L_J)_{J;0}\ra
	(\phi^L_J)_{J;0}(x)+
	\sum_{k=n_\phi}^{\lfloor 2^j\gep +N\rfloor}
	\la f, \phi_{J;k}\ra
	\phi_{J;k}(x)\\
	&\qquad\qquad +\sum_{j=J}^{J_\gep-1} \Big(\la f, (\psi^L_j)_{j;0}\ra
	(\psi^L_j)_{j;0}(x)+\sum_{k=n_\phi}^{ \lfloor 2^j\gep+N \rfloor} \la f, \psi_{j;k}\ra
	\psi_{j;k}(x)\Big)
\end{split}
\ee
for almost every $x\in [0,\gep)$, and there exists $0<\gep_0<\gep$ such that
\eqref{f:expr:1} holds for all $x\in [0,\gep_0)$ and all terms in \eqref{f:expr:1} have continuous derivatives of all orders less than $m$ on $[0,\gep_0)$.

On the other hand, all the conditions in \cref{thm:bc} are satisfied. Consequently, all the elements in $\AS_J(\Phi_J; \{\Psi_j\}_{j=J}^\infty)_{\I}$ must satisfy the prescribed homogeneous boundary conditions.
Therefore, we deduce from \eqref{f:expr:1} that $\pp_i(\frac{d}{dx})f(x)|_{x=0}=0$ for all $i=0,\ldots,\ell$. Because $f(x)=x^{n_{\bcm}}$ for all $x\in [0,2\gep]$, we conclude that \eqref{nBC} holds with $n_{\bcm}$ being replaced by $n_{\bcm}+1$,
which contradicts the definition of the maximum integer $n_{\bcm}$ in \eqref{nBC}. This proves the claim.
\ep

A popular choice of homogeneous boundary conditions in the literature is
\be \label{bc:standard}
\pp_0(\tfrac{d}{dx})=\frac{d^{j_0}}{d x^{j_0}},\quad \ldots,\quad
\pp_\ell(\tfrac{d}{dx})=\frac{d^{j_\ell}}{d x^{j_\ell}} \quad \mbox{with}\quad 0\le j_0<\ldots<j_\ell<m.
\ee
Moreover, the particular choice $j_0=0,\ldots, j_\ell=\ell$ in \eqref{bc:standard} is commonly used
in the
variational formulation of the boundary value problems in numerical partial differential equations, where the derivatives are in the weak/distributional sense and boundary values at $0$ are interpreted in the trace sense.
Spline scalar wavelets on $[0,1]$ satisfying homogeneous Dirichlet boundary conditions
have been addressed in \cite{cer19,cf12,ds98,ds10,hanbook,hm18,jia09} and references therein.

Let $(\AS_J(\tilde{\Phi};\tilde{\Psi})_
{[0,\infty)}, \AS_J(\Phi;\Psi)_{[0,\infty)})$
be a (stationary) biorthogonal wavelet on $[0,\infty)$.
Let $\pp_0,\ldots,\pp_\ell $ $\in \PL_{m-1}$. It is easy to check that \eqref{bc:psi} holds with $\Psi_j:=\Psi$ for all $j\ge J$ if and only if $\pp(\frac{d}{dx})\eta(x)|_{x=0}=0$ for all $\eta\in \Psi$ and $\pp\in \mathcal{P}:=\mbox{span}\{\pp_0(2^j\cdot),\ldots, \pp_\ell(2^j\cdot)\setsp j\ge J\}$. Note that $\mathcal{P}$ is generated by all the nonzero monomial terms in the polynomials $\pp_0,\ldots,\pp_\ell$. Hence, if $\mbox{span}\{\pp_0,\ldots,\pp_\ell\}$ does not have a basis of monomials as in \eqref{bc:standard}, then the dimension of $\mathcal{P}$ will be greater than $\ell+1$.
To avoid increasing the number of boundary conditions, it is necessary to consider nonstationary wavelets in \eqref{ASPhijPsij}.
To construct a biorthogonal wavelet
$(\AS_J(\tilde{\Phi};\tilde{\Psi})_{[0,\infty)},
\AS_J(\Phi;\Psi)_{[0,\infty)})$
on $[0,\infty)$ such that
\[
\frac{d^{j_0}}{d x^{j_0}} \eta(x)|_{x=0}=\cdots=
\frac{d^{j_\ell}}{d x^{j_\ell}} \eta(x)|_{x=0}=0,\qquad \forall\, \eta\in \Psi,
\]
by \cref{thm:bc} and the refinable structure in \eqref{I:phi} and \eqref{I:psi}, it is necessary and sufficient that
\be \label{bc:phi:standard}
\frac{d^{j_0}}{d x^{j_0}} \varphi(x)|_{x=0}=\cdots=
\frac{d^{j_\ell}}{d x^{j_\ell}} \varphi(x)|_{x=0}=0,\qquad \forall\, \varphi\in \Phi.
\ee
Consequently, \eqref{bc:phi:standard} holds if and only if all the elements
in $\AS_J(\Phi;\Psi)_{[0,\infty)}$ satisfies the same prescribed homogeneous boundary conditions given by \eqref{bc:standard}.
For $\Phi$ satisfying the boundary conditions in \eqref{bc:phi:standard}, to achieve high approximation orders near the endpoint $0$, it is important to have
\be \label{polyprop:bc}
\PL_{m-1}\chi_{[0,\infty)} \subseteq \mbox{span}\{\pp_0\chi_{[0,\infty)},\ldots,
\pp_\ell\chi_{[0,\infty)}\}+\si_0(\Phi)\quad \mbox{with}\quad \pp_0,\ldots, \pp_\ell \; \mbox{in}\; \eqref{bc:standard}.
\ee
For any $\Phi$ satisfying
item (i) of \cref{thm:wbd},
we can easily obtain a new $\Phi^{bc}$ satisfying item (i) of \cref{thm:wbd} and the boundary conditions in \eqref{bc:phi:standard}.

\begin{prop} \label{prop:mod}
	Let $\Phi=\{\phi^L\}\cup\{\phi(\cdot-k) \setsp k\ge n_\phi\}\subseteq L_2([0,\infty))$ satisfy item (i) of \cref{thm:wbd}, where $\phi^L$ and $\phi$ have compact support. Let $\pp(x):=(x^{j_0},\ldots,x^{j_\ell})^\tp$ with $\{j_0,\ldots,j_\ell\}\subseteq \{0,1,\ldots, m-1\}$. Suppose that every element $\eta\in \Phi$ has continuous derivatives of all order less than $m$ on $[0,\gep_\eta)$ for some $\gep_\eta>0$.
	Then there exists an invertible $(\#\phi^L+\#\phi)\times (\#\phi^L+\#\phi)$ matrix $C_{\phi^L}$ such that
	\be \label{mod:phiL}
	M_\pp(\phi^{L,I})=\{0\}\; \mbox{and $M_\pp(\phi^{L,E})$ is a basis of}\; \mbox{span} (M_\pp(\Phi))
	\;\; \mbox{with}\;\;
	\begin{bmatrix}
		\phi^{L,E}\\
		\phi^{L,I}\end{bmatrix}
	:=C_{\phi^L} \begin{bmatrix}
		\phi^L\\
		\phi(\cdot-n_\phi)\end{bmatrix},
	\ee
	where $M_\pp(S):=\{ \pp(\frac{d}{dx})\eta(x)|_{x=0} \setsp \eta\in S\}$ for $S\subseteq \si_0(\Phi)$.
	Then
	\[
	\Phi^{bc}:=\{\phi^{L,I}\}\cup\{
	\phi(\cdot-k) \setsp k\ge n_\phi+1\}
	\]
	satisfies item (i) of \cref{thm:wbd},
	the homogeneous boundary conditions $\pp(\tfrac{d}{dx})\eta(x)|_{x=0}=0$ for all  $\eta\in \Phi^{bc}$, and $\si_0(\Phi^{bc})=\{\eta\in \si_0(\Phi) \setsp \pp(\frac{d}{dx})\eta(x)|_{x=0}=0\}$.
\end{prop}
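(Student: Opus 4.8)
The plan is to reduce everything to a finite-dimensional computation on the boundary data near $0$ and then exploit the interaction between this data and the refinement equation \eqref{I:phi}. First I would observe that only the finitely many generators $\mathring{\phi}^L:=\{\phi^L\}\cup\{\phi(\cdot-n_\phi)\}$ can carry nonzero boundary data: since $n_\phi\ge -l_\phi$, every interior element $\phi(\cdot-k)$ with $k\ge n_\phi+1$ satisfies $\fs(\phi(\cdot-k))\subseteq[1,\infty)$, hence $\pp(\tfrac{d}{dx})\phi(x-k)|_{x=0}=0$. Consequently $\mbox{span}(M_\pp(\Phi))=\mbox{col}(V)$, where $V$ is the $(\ell+1)\times N$ matrix ($N:=\#\phi^L+\#\phi$) whose columns are $\pp(\tfrac{d}{dx})\eta(x)|_{x=0}$ as $\eta$ runs over the entries of $\mathring{\phi}^L$. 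Writing $d:=\mbox{rank}(V)$, a routine column reduction produces an invertible $N\times N$ matrix $P$ with $VP=[B\mid 0]$, where $B$ consists of $d$ linearly independent columns spanning $\mbox{col}(V)$; setting $C_{\phi^L}:=P^\tp$ then yields \eqref{mod:phiL}, because the boundary-data matrix of $C_{\phi^L}\mathring{\phi}^L$ is exactly $VC_{\phi^L}^\tp=VP=[B\mid 0]$, so the first $d$ components $\phi^{L,E}$ realize the basis $B$ while the remaining components $\phi^{L,I}$ have vanishing boundary data. This also immediately gives the homogeneous boundary conditions $\pp(\tfrac{d}{dx})\eta(x)|_{x=0}=0$ for all $\eta\in\Phi^{bc}$, since the interior elements vanish near $0$.

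The heart of the argument is to show that $\Phi^{bc}$ still satisfies item (i) of \cref{thm:wbd}, i.e.\ that $\phi^{L,I}$ refines into itself and the interior elements alone, with no contribution from $\phi^{L,E}$. Using \eqref{I:phi} together with the interior relation \eqref{nphi} at $k_0=n_\phi$, I would record the refinement equation of the augmented vector,
\[
\mathring{\phi}^L=2\hat{A}_L\mathring{\phi}^L(2\cdot)+2\sum_{k=n_\phi+1}^{\infty}\hat{A}(k)\phi(2\cdot-k),\qquad \hat{A}_L=\left[\begin{matrix}A_L & A(n_\phi)\\ 0 & a(-n_\phi)\end{matrix}\right].
\]
The crucial observation is that applying $\pp(\tfrac{d}{dx})(\cdot)|_{x=0}$ to this identity annihilates every interior term (they vanish near $0$), while each dilation contributes a factor $2^{j_i}$, giving the invariance relation $V=2DV\hat{A}_L^\tp$ with $D:=\diag(2^{j_0},\dots,2^{j_\ell})$. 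Conjugating by $C_{\phi^L}$ turns this into $W=2DW\tilde{A}_L^\tp$, where $W:=VC_{\phi^L}^\tp=[B\mid 0]$ and $\tilde{A}_L:=C_{\phi^L}\hat{A}_L C_{\phi^L}^{-1}$ is the refinement matrix of the transformed vector. Splitting $\tilde A_L$ into $E$- and $I$-blocks, the vanishing of the last $N-d$ columns of $W$ forces $DB\,\tilde A_{IE}^\tp=0$; since $D$ is invertible and $B$ has full column rank $d$, we obtain $\tilde A_{IE}=0$. Hence $\phi^{L,I}=2\tilde A_{II}\phi^{L,I}(2\cdot)+2\sum_{k\ge n_\phi+1}(\cdots)\phi(2\cdot-k)$, which is precisely \eqref{I:phi} for $\Phi^{bc}$ with $n_\phi$ replaced by $n_\phi+1$; together with \eqref{nphi}, which holds a fortiori, this establishes item (i).

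For the final identity I would introduce the boundary functional $\mathcal{M}:\si_0(\Phi)\to\C^{\ell+1}$, $\mathcal{M}(\eta):=\pp(\tfrac{d}{dx})\eta(x)|_{x=0}$. It is well defined and $L_2$-continuous: near $0$ every element of $\si_0(\Phi)$ agrees with an element of the finite-dimensional (hence closed) space $\mbox{span}(\mathring{\phi}^L|_{[0,\gep)})$, since all other generators vanish there, and on this space reading off derivatives at $0$ is a continuous linear map. Because $C_{\phi^L}$ is invertible, $\si_0(\Phi)=\mbox{span}(\phi^{L,E})+\si_0(\Phi^{bc})$, and $\mathcal{M}$ annihilates $\si_0(\Phi^{bc})$ (its generators satisfy the boundary conditions and $\ker\mathcal{M}$ is closed), so $\si_0(\Phi^{bc})\subseteq\ker\mathcal{M}$. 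Conversely, for $\eta\in\ker\mathcal{M}$ I would write $\eta=e+g$ with $e\in\mbox{span}(\phi^{L,E})$ and $g\in\si_0(\Phi^{bc})$; then $0=\mathcal{M}(\eta)=\mathcal{M}(e)=Bc$, where $c$ is the coordinate vector of $e$, and the full column rank of $B$ forces $c=0$, whence $\eta=g\in\si_0(\Phi^{bc})$. This yields $\si_0(\Phi^{bc})=\{\eta\in\si_0(\Phi):\pp(\tfrac{d}{dx})\eta(x)|_{x=0}=0\}$.

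I expect the main obstacle to be the decoupling step $\tilde A_{IE}=0$: one must correctly track how the boundary functional transforms under both the dilation (the factor $D$) and the change of basis $C_{\phi^L}$, and then invoke that $D$ is invertible and $B$ has full rank. A secondary technical point, easy to overlook, is justifying that $\mathcal{M}$ descends to a genuinely continuous functional on the closure $\si_0(\Phi)$, which is what makes the right-hand side of the $\si_0$-identity meaningful and closed.
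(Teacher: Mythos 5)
Your proposal is correct and follows essentially the same route as the paper's proof: reduce the boundary data to the finite set $\phi^L\cup\{\phi(\cdot-n_\phi)\}$, split off a part $\phi^{L,E}$ whose boundary data form a basis of $\mbox{span}(M_\pp(\Phi))$, and use the refinement relation together with the invertible dilation factor $\diag(2^{j_0},\ldots,2^{j_\ell})$ and the linear independence of $M_\pp(\phi^{L,E})$ to show that the coefficients coupling the $I$-part to $\phi^{L,E}(2\cdot)$ vanish. Your block identity $\tilde{A}_{IE}=0$ is exactly the paper's conclusion $c_\eta(f)=0$ for all $f\in\phi^{L,E}$ specialized to $\eta\in\phi^{L,I}$, and your explicit continuity argument for the boundary functional on $\si_0(\Phi)$ merely makes precise a point the paper leaves implicit.
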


\bp Note that $\fs(\phi(\cdot-k))\subseteq [1,\infty)$ for all $k\ge n_\phi+1$. Trivially, $\pp(\frac{d}{dx})\phi(x-k)|_{x=0}=0$ for all $k\ge n_\phi+1$.
Thus, $M_\pp(\Phi)=M_\pp(\phi^L\cup \phi(\cdot-n_\phi))$, which is a finite subset of $\R^{\#\pp}$.
Therefore, there exists an invertible matrix $C_{\phi^L}$ such that \eqref{mod:phiL} holds.
Hence, all the elements in $\Phi^{bc}$ satisfy the homogeneous boundary conditions prescribed by $\pp$.
Since $\si_0(\phi^{L,E}\cup \Phi^{bc})=\si_0(\Phi)$ and $C_{\phi^L}$ is invertible, by \eqref{nphi} and \eqref{I:phi}, we have
\be \label{expr:eta}
\eta=\sum_{f\in \phi^{L,E}} c_\eta(f) f(2\cdot)+
\sum_{f\in \phi^{L,I}} c_\eta(f) f(2\cdot)
+\sum_{k=n_\phi+1}^\infty c_\eta(\phi(\cdot-k)) \phi(2\cdot-k),\qquad \eta\in \si_0(\Phi).
\ee
Because $M_\pp(g)=\{0\}$ for all $g\in \Phi^{bc}$, for $\eta\in \si_0(\Phi)$ with $M_{\pp}(\eta)=\{0\}$ in \eqref{expr:eta}, we have
\[
\{0\}=M_\pp(\eta)=
[c_\eta(f)]_{f\in \phi^{L,E}} M_\pp(\phi^{L,E}(2\cdot))
=[c_\eta(f)]_{f\in \phi^{L,E}} \diag(2^{j_0},\ldots,2^{j_\ell}) M_\pp(\phi^{L,E}).
\]
Since $M_\pp(\phi^{L,E})$ is linearly independent, the above identity forces $c_\eta(f)=0$ for all $f\in \phi^{L,E}$.
By \eqref{expr:eta},
$\eta=
\sum_{f\in \phi^{L,I}} c_\eta(f) f(2\cdot)
+\sum_{k=n_\phi+1}^\infty c_\eta(\phi(\cdot-k)) \phi(2\cdot-k)$ for all $\eta\in \Phi^{bc}$. This proves \eqref{I:phi} and \eqref{nphi} for $\Phi^{bc}$ with $n_\phi$ being replaced by $n_{\phi}+1$. Hence, $\Phi^{bc}$ satisfies item (i) of \cref{thm:wbd}. The identity $\si_0(\Phi^{bc})=\{\eta\in \si_0(\Phi) \setsp \pp(\frac{d}{dx})\eta(x)|_{x=0}=0\}$ follows directly from \eqref{mod:phiL} and \eqref{expr:eta}.
\ep

\section{Orthogonal and Biorthogonal Wavelets on Bounded Intervals}
\label{sec:wbd}

In this section we discuss how to construct locally supported biorthogonal wavelets on a bounded interval $[0,N]$ with $N\in \N$ from compactly supported biorthogonal wavelets on $[0,\infty)$.

Recall that $\NN:=\N\cup\{0\}$ and $f_{j;k}:=2^{j/2}f(2^j\cdot-k)$ for $j,k\in \Z$. We remind the reader that a vector function is also used as an ordered set and vice versa throughout the paper.
Using the classical approach in \cref{sec:classical} or the direct approach in \cref{sec:direct} for constructing (bi)orthogonal wavelets on $[0,\infty)$, we now discuss how to construct a locally supported (bi)orthogonal wavelet in $L_2([0,N])$ with $N\in \N$ from a compactly supported biorthogonal wavelet in $\Lp{2}$.
We shall provide a detailed proof in \cref{sec:proof} for
the following result, which is often employed but without a proof in the literature.

\begin{theorem}\label{thm:bw:0N}
	Let $(\{\tilde{\phi}; \tilde{\psi}\},\{\phi;\psi\})$ be a compactly supported biorthogonal wavelet in $\Lp{2}$ with a biorthogonal wavelet filter bank $(\{\tilde{a};\tilde{b}\},\{a;b\})$
	satisfying items (1)--(4) of \cref{thm:bw}. A locally supported biorthogonal wavelet on the interval $[0,N]$ with $N\in \N$ can be constructed as follows:
	\begin{enumerate}
		\item[(S1)] From the biorthogonal wavelet $(\{\tilde{\phi}; \tilde{\psi}\},\{\phi;\psi\})$ in $\Lp{2}$, use either the classical approach in \cref{sec:classical} or the direct approach in \cref{sec:direct} to
		construct compactly supported $\Phi,\Psi,\tilde{\Phi},\tilde{\Psi}$ as in
		\eqref{PhiPsiI} and \eqref{tPhiPsiI}
		such that
		 $(\AS_J(\tilde{\Phi};\tilde{\Psi})_{[0,\infty)},\AS_J(\Phi;\Psi)_{[0,\infty)})$ is a pair of biorthogonal Riesz bases in $L_2([0,\infty))$ for every $J\in \NN$ and satisfies items (i)--(iv) of \cref{thm:wbd}.

		\item[(S2)] Similarly, perform item (S1) to the (flipped) biorthogonal wavelet
		 $(\{\tilde{\mathring{\phi}};\tilde{\mathring{\psi}}\}, \{\mathring{\phi};\mathring{\psi}\})$
		in $\Lp{2}$ to construct compactly supported $\mathring{\Phi},\mathring{\Psi},\tilde{\mathring{\Phi}},\tilde{\mathring{\Psi}}$, where
		\be \label{mphipsi}
		 \mathring{\phi}:=\phi(-\cdot),\quad
		 \mathring{\psi}:=\psi(-\cdot),\quad
		 \tilde{\mathring{\phi}}:=\tilde{\phi}(-\cdot),\quad
		 \tilde{\mathring{\psi}}:=\tilde{\psi}(-\cdot),
		\ee
		such that $(\AS_J(\tilde{\mathring{\Phi}};\tilde{\mathring{\Psi}})_{[0,\infty)},
		 \AS_J(\mathring{\Phi};\mathring{\Psi})_{[0,\infty)})$ is a pair of biorthogonal Riesz bases in $L_2([0,\infty))$ for every $J\in \NN$ and satisfies items (i)--(iv) of \cref{thm:wbd} similarly.
		
		\item[(S3)]
		Let $J_0$ be the smallest nonnegative integer such that
		\be \label{J0}
		\max(h_A+n_{\mathring{\phi}}, h_B+n_{\mathring{\phi}}, h_{\mathring{A}}+n_\phi, h_{\mathring{B}}+n_\phi, 2n_\phi+2n_{\mathring{\phi}}-2,
		 2n_\psi+2n_{\mathring{\psi}}-2)\le 2^{J_0+1}N
		\ee
		and all elements in $\phi^L\cup\psi^L \cup\mathring{\phi}^L\cup \mathring{\psi}^L$ are supported inside $[0, 2^{J_0}N]$, where
		\be \label{lAB}
		[l_A,h_A]:=\fs(A), \quad [l_B, h_B]:=\fs(B),\quad
		 [l_{\tilde{A}},h_{\tilde{A}}]:=\fs(\tilde{A}), \quad
		[l_{\tilde{B}}, h_{\tilde{B}}]:=\fs(\tilde{B})
		\ee
		for the finitely supported filters $A,B,\tilde{A},\tilde{B}$ in \eqref{I:phi}, \eqref{I:psi}, \eqref{I:phi:dual}, and \eqref{I:psi:dual}, respectively. The integers $h_{\mathring{A}}, h_{\mathring{B}}$ and $h_{\tilde{\mathring{A}}}, h_{\tilde{\mathring{B}}}$ are defined similarly.
		
		\item[(S4)] Let $\tilde{J}_0$ be the smallest nonnegative integer such that
		\be \label{tJ0}
		 \max(h_{\tilde{A}}+n_{\tilde{\mathring{\phi}}},
		 h_{\tilde{B}}+n_{\tilde{\mathring{\phi}}},
		 h_{\tilde{\mathring{A}}}+n_{\tilde{\phi}},
		 h_{\tilde{\mathring{B}}}+n_{\tilde{\phi}}, 2n_{\tilde{\phi}}+2n_{\tilde{\mathring{\phi}}}-2,
		 2n_{\tilde{\psi}}+2n_{\tilde{\mathring{\psi}}}-2)
		\le 2^{\tilde{J_0}+1}N,
		\ee
		all elements in $\tilde{\phi}^L\cup \tilde{\psi}^L \cup\tilde{\mathring{\phi}}^L\cup \tilde{\mathring{\psi}}^L$ are supported inside $[0, 2^{\tilde{J}_0}N]$, and for all $j\ge \tilde{J}_0$,
		\be \label{disjointPhiPsi}
		\{\phi^L_{j;0}, \psi^L_{j;0}\} \perp \{\tilde{\phi}^R_{j; 2^j N-N}, \tilde{\psi}^R_{j;2^j N-N}\}
		\quad \mbox{and}\quad
		\{\tilde{\phi}^L_{j;0}, \tilde{\psi}^L_{j;0}\} \perp \{\phi^R_{j; 2^j N-N}, \psi^R_{j;2^j N-N}\},
		\ee
		where the right boundary refinable functions and right boundary wavelets are defined by
		\be \label{reflection}
		 \phi^R:=\mathring{\phi}^L(N-\cdot),\quad
		 \psi^R:=\mathring{\psi}^L(N-\cdot),\quad
		 \tilde{\phi}^R:=\tilde{\mathring{\phi}}^L(N-\cdot),\quad
		 \tilde{\psi}^R:=\tilde{\mathring{\psi}}^L(N-\cdot).
		\ee
	\end{enumerate}
	Without loss of generality, we assume $J_0\le \tilde{J}_0$.
	Then the following statements hold:
	\begin{enumerate}
		\item[(1)] for each $j\ge J_0$,
		there exist matrices $A_j$ and $B_j$ such that
		$\Phi_j=A_j\Phi_{j+1}$ and $\Psi_j=B_j \Phi_{j+1}$
		with $\#\Psi_{j}=\#\Phi_{j+1}-\#\Phi_j=2^j N(\#\phi)$ and $\#\Phi_j=\#\phi^L+\#\mathring{\phi}^L
		 +(2^jN-n_{\mathring{\phi}}-n_\phi+1)(\#\phi)$,
		where
		\begin{align}
		&\Phi_j:=\{\phi^L_{j;0}\} \cup \{\phi_{j;k} \setsp n_\phi\le k\le 2^jN-n_{\mathring{\phi}}\}\cup \{ \phi^R_{j;2^jN-N} \}, \label{Phij}\\
		&\Psi_j:=\{ \psi^L_{j;0} \} \cup \{\psi_{j;k} \setsp n_\psi\le k\le 2^jN-n_{\mathring{\psi}}\}\cup \{ \psi^R_{j;2^jN-N}\}. \label{Psij}
		\end{align}
		\item[(2)] for every $j\ge \tilde{J}_0$, there exist matrices $\tilde{A}_j$ and $\tilde{B}_j$ such that
		 $\tilde{\Phi}_j=\tilde{A}_j\tilde{\Phi}_{j+1}$ and $\tilde{\Psi}_j=\tilde{B}_j \tilde{\Phi}_{j+1}$ hold with
		$\#\tilde{\Psi}_j=\#\Psi_j=2^j N(\#\phi)$ and $\#\tilde{\Phi}_j=\#\Phi_j$, where
		\begin{align}
		 &\tilde{\Phi}_j:=\{\tilde{\phi}^L_{j;0}\} \cup \{\tilde{\phi}_{j;k} \setsp n_{\tilde{\phi}}\le k\le 2^jN-n_{\tilde{\mathring{\phi}}}\}\cup \{ \tilde{\phi}^R_{j;2^jN-N}\}, \label{tPhij}\\
		&\tilde{\Psi}_j:= \{ \tilde{\psi}^L_{j;0}\} \cup \{ \tilde{\psi}_{j;k} \setsp n_{\tilde{\psi}}\le k\le 2^jN-n_{\tilde{\mathring{\psi}}}\}\cup \{ \tilde{\psi}^R_{j;2^jN-N} \}. \label{tPsij}
		\end{align}
		\item[(3)]
		For every $J\ge \tilde{J}_0$,
		$(\tilde{\cB}_J, \cB_J)$ forms a pair of biorthogonal Riesz bases of $L_2([0,N])$ and for all integers $j\ge \tilde{J}_0$, the matrix
		$[\ol{A_j}^\tp, \ol{B_j}^\tp]$ must be an invertible square matrix satisfying
		\be \label{refstr:inv}
		\left[ \begin{matrix} \tilde{A}_j\\ \tilde{B}_j\end{matrix}\right]=[\ol{A_j}^\tp, \ol{B_j}^\tp]^{-1},\quad \mbox{that is},\quad
		\left[ \begin{matrix} \tilde{A}_j\\ \tilde{B}_j\end{matrix}\right]
		[\ol{A_j}^\tp, \ol{B_j}^\tp]=
		\left[ \begin{matrix} A_j\\ B_j\end{matrix}\right]
		[\ol{\tilde{A}_j}^\tp, \ol{\tilde{B}_j}^\tp]=I_{\#\Phi_{j+1}},
		\ee
		where
		$\cB_J:=\Phi_J \cup\{ \Psi_j \setsp\; j\ge J\}$ and
		$\tilde{\cB}_J:=\tilde{\Phi}_J \cup\{ \tilde{\Psi}_j \setsp\; j\ge J\}$.
		
		\item[(4)] $\PL_{m-1}\chi_{[0,N]}\subseteq \mbox{span}(\Phi_j)$ for some (or all) $j\ge \tilde{J}_0$ if and only if $\vmo(\tilde{\psi}^L\cup \tilde{\psi}^R\cup \tilde{\psi})\ge m$. Similarly, $\PL_{\tilde{m}-1}\chi_{[0,N]}\subseteq \mbox{span}(\tilde{\Phi}_j)$ for some (or all) $j\ge \tilde{J}_0$ if and only if $\vmo(\psi^L\cup \psi^R\cup \psi)\ge \tilde{m}$.
		
		\item[(5)] If $[\ol{A_J}^\tp, \ol{B_J}^\tp]$ is invertible for every $J_0\le J<\tilde{J}_0$,
		then $(\tilde{\cB}_J, \cB_J)$ forms a pair of biorthogonal Riesz bases of $L_2([0,N])$ for every $J_0\le J< \tilde{J}_0$,
		where we recursively define
		$\tilde{\Phi}_j:=\tilde{A}_j \tilde{\Phi}_{j+1}$ and
		 $\tilde{\Psi}_j:=\tilde{B}_j\tilde{\Phi}_{j+1}$ for $j$ going from $\tilde{J_0}-1$ to $J_0$ with the matrices $\tilde{A}_j$ and $\tilde{B}_j$ in \eqref{refstr:inv}.
	\end{enumerate}
\end{theorem}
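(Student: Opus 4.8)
The plan is to realize $(\tilde{\cB}_J,\cB_J)$ as the fusion of two half-line biorthogonal wavelets — the given one on $[0,\infty)$ and its reflected counterpart on $(-\infty,N]$ — and to derive every assertion from properties already established for $[0,\infty)$ together with elementary support bookkeeping. First I would prove (1) and (2). Each left boundary element $\phi^L_{j;0}$ and $\psi^L_{j;0}$ inherits its two-scale relation from \eqref{I:phi} and \eqref{I:psi}, each interior element $\phi_{j;k},\psi_{j;k}$ from the refinement structure \eqref{refstr}, and each right boundary element $\phi^R_{j;2^jN-N},\psi^R_{j;2^jN-N}$ from the reflected analogues of \eqref{I:phi} and \eqref{I:psi} for $\mathring{\phi},\mathring{\psi}$. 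The role of the thresholds in \eqref{J0} and \eqref{tJ0} is precisely to guarantee that when these relations are expanded in terms of level $j+1$ functions, the resulting interior indices $k$ stay within the admissible range $n_\phi\le k\le 2^{j+1}N-n_{\mathring{\phi}}$, so that no left relation reaches the right boundary region and conversely; this yields matrices $A_j,B_j$ (and $\tilde A_j,\tilde B_j$) with $\Phi_j=A_j\Phi_{j+1}$ and $\Psi_j=B_j\Phi_{j+1}$. The stated cardinalities are then a direct count of left, interior, and right elements, and the identity $\#\Psi_j=\#\Phi_{j+1}-\#\Phi_j=2^jN(\#\phi)$ is the usual multiresolution dimension increment.

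Next I would establish the biorthogonality of $\cB_J$ and $\tilde{\cB}_J$ for $J\ge\tilde J_0$. Splitting each inner product $\la h,\tilde g\ra$ with $h\in\cB_J$, $\tilde g\in\tilde{\cB}_J$ according to whether the two factors are left-boundary, interior, or right-boundary, every ``same side'' or interior pairing is inherited verbatim from the biorthogonality of the $[0,\infty)$ system and of its reflection on $(-\infty,N]$ (items (i)--(iv) of \cref{thm:wbd} for each half line). The only genuinely new pairings are the \emph{cross-boundary} ones — a left factor against a right factor — and these vanish by essentially disjoint supports once $J\ge\tilde J_0$, which is exactly what the support conditions in (S3), (S4) and \eqref{disjointPhiPsi} encode. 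With biorthogonality in hand, \eqref{refstr:inv} is pure linear algebra: substituting the two-scale relations from (1) and (2) into the block biorthogonality relation between levels $j$ and $j+1$ and using $\la\Phi_{j+1},\tilde\Phi_{j+1}\ra=I$ gives $\big[\begin{smallmatrix}A_j\\ B_j\end{smallmatrix}\big]\,[\ol{\tilde A_j}^\tp,\ol{\tilde B_j}^\tp]=I$; since the cardinality counts make $[\ol{A_j}^\tp,\ol{B_j}^\tp]$ a square matrix, a one-sided inverse is two-sided and \eqref{refstr:inv} follows.

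For the Riesz basis property in (3) I would argue as follows. The Bessel bounds for $\cB_J$ and $\tilde{\cB}_J$ follow from \cref{thm:phi:bessel}: every element of $\cB_J$ lies in either $\AS_J(\Phi;\Psi)_{[0,\infty)}$ or in the reflected Bessel family on $(-\infty,N]$, so for $f\in L_2([0,N])$ (extended by zero) the coefficient energy is dominated by the sum of the two half-line Bessel constants. Biorthogonality together with these Bessel bounds then forces the lower Riesz bound for each system, by the same synthesis/analysis argument used for (4)$\imply$(1) in \cref{thm:rz}, so both are Riesz sequences. Completeness reduces, via the invertibility in \eqref{refstr:inv} (whence $\Phi_{j+1}\in\operatorname{span}(\Phi_j\cup\Psi_j)$) and the resulting telescoping $S_{j+1}=S_j\oplus\operatorname{span}(\Psi_j)$ with $S_j:=\operatorname{span}(\Phi_j)$, to the density of $\bigcup_{j\ge J}S_j$ in $L_2([0,N])$; since $S_j$ contains all interior translates $\phi_{j;k}$ and $\wh\phi(0)\ne0$ by item (1) of \cref{thm:bw}, this is the standard approximation argument for refinable multiresolution spaces, and completeness of $\cB_J$ forces completeness of its biorthogonal partner, giving a pair of biorthogonal Riesz bases. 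Part (4) is the $[0,N]$ analogue of \cref{lem:vm}, proved by the identical computation (a polynomial on $[0,N]$ has vanishing wavelet coefficients against $\tilde\psi^L\cup\tilde\psi^R\cup\tilde\psi$ iff those dual wavelets annihilate $\PL_{m-1}$). For (5) I would define the coarse levels downward by $\tilde\Phi_J:=\tilde A_J\tilde\Phi_{J+1}$ and $\tilde\Psi_J:=\tilde B_J\tilde\Phi_{J+1}$ with $[\begin{smallmatrix}\tilde A_J\\ \tilde B_J\end{smallmatrix}]:=[\ol{A_J}^\tp,\ol{B_J}^\tp]^{-1}$; the assumed invertibility makes these well defined, biorthogonality at level $J$ propagates from level $J+1$ exactly as in \eqref{refstr:inv}, and the Bessel and lower-bound arguments are unchanged, so $(\tilde{\cB}_J,\cB_J)$ remains a pair of biorthogonal Riesz bases for $J_0\le J<\tilde J_0$.

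The hard part will be the fusion step inside (3): verifying that the left-boundary duals are orthogonal not only to the right-boundary primals but to the \emph{entire} right-boundary region, and conversely, so that the two half-line bases splice into a single biorthogonal system on $[0,N]$ without spurious overlap. This is exactly where the scale threshold $J\ge\tilde J_0$ and the full strength of the support conditions \eqref{J0}, \eqref{tJ0} and \eqref{disjointPhiPsi} are needed, and where the longer supports of the dual functions force $\tilde J_0\ge J_0$. The completeness/density of $\bigcup_jS_j$ is the other point requiring genuine (though standard) analysis rather than bookkeeping.
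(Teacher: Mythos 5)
Your overall strategy is the same as the paper's: fuse the half-line system on $[0,\infty)$ with the reflected one on $(-\infty,N]$, get the two-scale relations of item (1) from \eqref{I:phi}, \eqref{I:psi}, \eqref{refstr} and their reflected analogues with the thresholds \eqref{J0}, \eqref{tJ0} keeping the interior indices in range, obtain biorthogonality by splitting into inherited same-side pairings and cross-boundary pairings killed by \eqref{disjointPhiPsi}, and then run the standard Bessel/lower-bound/density argument for the Riesz basis property. Items (4) and (5) are also handled exactly as in the paper.

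There is, however, one genuine gap: the cardinality identity $\#\Psi_j=\#\Phi_{j+1}-\#\Phi_j=2^jN(\#\phi)$, equivalently $\#\psi^L+\#\mathring{\psi}^L=(n_\psi+n_{\mathring{\psi}}-1)(\#\phi)$, is \emph{not} ``a direct count'' or ``the usual multiresolution dimension increment.'' Nothing in the construction of $\psi^L$ and $\mathring{\psi}^L$ fixes their sizes a priori, and your logical chain is circular: you use this count to conclude that $[\ol{A_j}^\tp,\ol{B_j}^\tp]$ is square, hence that a one-sided inverse is two-sided, hence \eqref{refstr:inv}, and only then do you extract $\Phi_{j+1}\subseteq\mbox{span}(\Phi_j\cup\Psi_j)$ for the completeness argument. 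The paper runs the dependency in the opposite order: it first proves $\Phi_{j+1}\subseteq \mbox{span}(\Phi_j\cup\Psi_j)$ directly for sufficiently large $j$ — the interior elements $\phi_{j+1;m}$ with $m_1\le m\le 2^{j+1}N-m_2$ via the dual expansion \eqref{phi2k0}, and the boundary and near-boundary elements via item (3) of \cref{thm:wbd:0} applied to each half line — then combines this with the linear independence of $\Phi_j\cup\Psi_j$ (which follows from biorthogonality against $\tilde{\Phi}_j\cup\tilde{\Psi}_j$) to deduce $\#\Phi_{j+1}=\#\Phi_j+\#\Psi_j$ for large $j$, and finally observes that $\#\psi^L+\#\mathring{\psi}^L$ does not depend on $j$, so the count propagates down to all $j\ge J_0$. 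You need this spanning step as an independent ingredient, not as a corollary of \eqref{refstr:inv}. The same remark applies to $\#\tilde{\Phi}_j=\#\Phi_j$ and $\#\tilde{\Psi}_j=\#\Psi_j$, which the paper derives from the matching conditions \eqref{cardinality} imposed by \cref{thm:wbd:0} rather than by inspection.
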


We now make some remarks on \cref{thm:bw:0N}.
Note that there are no interior elements of $\Phi_j$ in \eqref{Phij} if $2^jN<n_\phi+n_{\mathring{\phi}}$.
Since $J_0$ is often much smaller than $\tilde{J}_0$,
item (5) allows us to have a locally supported Riesz basis $\cB_{J_0}$ with simple structures and the smallest coarse scale level $J_0$.
Suppose that $\phi=(\phi^1,\ldots,\phi^r)^\tp,
\psi=(\psi^1,\ldots,\psi^r)^\tp$, $\tilde{\phi},\tilde{\psi}\in (\Lp{2})^r$ in \cref{thm:bw:0N} have the following symmetry:
\begin{align}
&\phi^\ell(c_\ell^\phi-\cdot)=\eps_\ell^\phi \phi^\ell, \quad
\tilde{\phi}^\ell(c_\ell^\phi-\cdot)=\eps_\ell^\phi \tilde{\phi}^\ell \quad\mbox{with}\quad
c_\ell^\phi\in \Z, \eps_\ell^\phi\in \{-1,1\}, \quad \ell=1,\ldots,r, \label{sym:phi}\\
&\psi^\ell(c_\ell^\psi-\cdot)=\eps_\ell^\psi \psi^\ell, \quad
\tilde{\psi}^\ell(c_\ell^\psi-\cdot)=\eps_\ell^\psi \tilde{\psi}^\ell \quad\mbox{with}\quad
c_\ell^\psi\in \Z, \eps_\ell^\psi\in \{-1,1\}, \quad \ell=1,\ldots,r.\label{sym:tphi}
\end{align}
As a consequence of the above symmetry property, up to a possible sign change of some elements, $\AS_0(\phi(-\cdot);\psi(-\cdot))$ is the same as $\AS_0(\phi;\psi)$, while
$\AS_0(\tilde{\phi}(-\cdot);\tilde{\psi}(-\cdot))$ is the same as
$\AS_0(\tilde{\phi};\tilde{\psi})$. Hence, using the definition in \eqref{mphipsi}, for item (S2) in \cref{thm:bw:0N} we can simply choose
\begin{align}
&\mathring{\Phi}=\{\phi^L\}\cup \{\mathring{\phi}^\ell(\cdot-k) \setsp k\ge n_\phi+c^\phi_\ell\}_{\ell=1}^r,\quad
\mathring{\Psi}=\{\psi^L\}\cup \{\mathring{\psi}^\ell(\cdot-k) \setsp k\ge n_\psi+c^\psi_\ell\}_{\ell=1}^r, \label{mPhiPsi}\\
&\tilde{\mathring{\Phi}}=\{\tilde{\phi}^L\}\cup \{\tilde{\mathring{\phi}}^\ell(\cdot-k) \setsp k\ge n_{\tilde{\phi}}+c^\phi_\ell\}_{\ell=1}^r,\quad
\tilde{\mathring{\Phi}}=\{\tilde{\psi}^L\}\cup \{\tilde{\mathring{\psi}}^\ell(\cdot-k) \setsp k\ge n_{\tilde{\psi}}+c^\psi_\ell\}_{\ell=1}^r. \label{mtPhiPsi}
\end{align}
In other words, up to a possible sign change of some elements,
$\mathring{\Phi}$, $\mathring{\Psi}$, $\tilde{\mathring{\Phi}}$, and $\tilde{\mathring{\Psi}}$ are the same as $\Phi,\Psi, \tilde{\Phi}, \tilde{\Psi}$, respectively.
If a compactly supported biorthogonal wavelet
$(\{\tilde{\phi};\tilde{\psi}\},\{\phi;\psi\})$ in $\Lp{2}$ has the symmetry properties in \eqref{sym:phi} and \eqref{sym:tphi}, then we always take $\mathring{\Phi}$ and
$\mathring{\Psi}$ in \eqref{mPhiPsi} and
$\tilde{\mathring{\Phi}}$ and
$\tilde{\mathring{\Psi}}$
in \eqref{mtPhiPsi} for item (S2) in \cref{thm:bw:0N} for all our examples in the next section.

\section{Examples of Orthogonal and Biorthogonal Wavelets on $[0,1]$}
\label{sec:expl}

In this section we provide a few examples to illustrate our general construction methods and algorithms.
Since the construction of orthogonal wavelets on $[0,1]$ is much simpler than biorthogonal wavelets on $[0,1]$, let us first provide a few examples of orthogonal multiwavelets on $[0,1]$ by \cref{alg:Phi:orth} such that the boundary wavelets have the same order of vanishing moments as the interior wavelets. We shall provide examples of wavelets on $[0,1]$ satisfying homogeneous boundary conditions as well.
All our examples have the polynomial reproduction property in \eqref{polyprop:bc} for $\Phi$ satisfying \eqref{bc:phi:standard}, i.e., for $m:=\sr(a)$, $\PL_{m-1}\subseteq \mbox{span}(\{x^n \setsp 0\le n\le \ell \})+\mbox{span}(\Phi_j)$ holds on $[0,1]$ and
$h^{(n)}(0)=h^{(n)}(1)=0$ for all
$0\le n\le \ell$, $h\in \Phi_j$ and $j\ge J_0$, where $\ell=-1$ (no boundary conditions) or $\ell\in \{0,1\}$.
To avoid possible confusion, we shall use the notations $\phi^{L,bc}, \phi^{L,bc1}$ for $\phi^L$, and $\psi^{L,bc}, \psi^{L,bc1}$ for $\psi^L$ if they satisfy the homogeneous Dirichlet boundary conditions for $\ell=0$ or $\ell=1$, respectively.

Before presenting our examples, let us recall a technical quantity.
For $\tau\in \R$, recall that $\phi\in (\HH{\tau})^r$ if $\int_{\R} \|\wh{\phi}(\xi)\|_{l_2}^{2}(1+|\xi|^{2})^{\tau} d\xi <\infty$. We define the smoothness exponent $\sm(\phi):=\sup\{\tau\in \R \setsp \phi\in (\HH{\tau})^r\}$.
For $a,\tilde{a}\in \lrs{0}{r}{r}$, let $\phi,\tilde{\phi}$ be compactly supported distributions satisfying $\wh{\phi}(2\xi)=\wh{a}(\xi)\wh{\phi}(\xi)$ and
$\wh{\tilde{\phi}}(2\xi)=\wh{\tilde{a}}(\xi)\wh{\tilde{\phi}}(\xi)$
with $\ol{\wh{\phi}(0)}^\tp \wh{\tilde{\phi}}(0)=1$. It is known (e.g., see \cite[Theorem~6.4.5]{hanbook} and \cite{han03jat}) that items (1) and (2) in \cref{thm:bw} can be equivalently replaced by $\sm(a)>0$ and $\sm(\tilde{a})>0$, where the technical quantity $\sm(a)$ is defined in \cite[(5.6.44)]{hanbook} (also see \cite[(4.3)]{han03jat} and \cite[(3.2)]{han06}) and can be computed (see \cite{jj03}, \cite[Theorem~7.1]{han03jat}, and \cite[Theorem~5.8.4]{hanbook}).
The quantity $\sm(a)$ is closely linked to the smoothness of a refinable vector function $\phi$ through
the inequality $\sm(\phi)\ge \sm(a)$. For any refinable vector function $\phi$ in a biorthogonal wavelet, $\{\phi(\cdot-k)\setsp k\in \Z\}$ must be a Riesz sequence in $\Lp{2}$ and hence we always have $\sm(\phi)=\sm(a)$ (e.g., see \cite[Theorem~6.3.3]{hanbook}).
See \cite{cdp97,han01,han03jat,han06,hanbook,jj03,jiang99} for more details on smoothness $\sm(\phi)$ of refinable vector functions and the quantity $\sm(a)$. Recall that $\sr(a)$ is the highest order of sum rules satisfied by the filter $a$ in \eqref{sr}, while $\vmo(\psi)$ stands for the highest order of vanishing moments satisfied by $\psi$. We shall always take $n_\psi$ in \cref{thm:direct} to be the smallest integer such that $\psi(\cdot-k)\in \si_1(\Phi)$ for all $k\ge n_\psi$.

\begin{example} \label{ex:hardin}
	\normalfont
	Consider the compactly supported orthogonal multiwavelet $\{\phi;\psi\}$ in \cite{ghm94}
	with $\phi=(\phi^1,\phi^2)^\tp$ and $\psi=(\psi^1,\psi^2)^\tp$
	satisfying $\wh{\phi}(2\xi)=\wh{a}(\xi)\wh{\phi}(\xi)$ and $\wh{\psi}(2\xi)=\wh{b}(\xi)\wh{\phi}(\xi)$ with $\wh{\phi}(0)=(\sqrt{6}/3,\sqrt{3}/3)^{\tp}$ and an associated finitely supported orthogonal wavelet filter bank $\{a;b\}$ given by
	\begin{align*} 
	&a=\left\{\begin{bmatrix} \tfrac{3}{10} & \tfrac{2\sqrt{2}}{5}\\[0.1em]
	-\tfrac{\sqrt{2}}{40} &-\tfrac{3}{20}\end{bmatrix},
	\begin{bmatrix} \tfrac{3}{10} &0 \\[0.3em]
	\tfrac{9\sqrt{2}}{40} & \frac{1}{2}\end{bmatrix},
	\begin{bmatrix} 0 & 0\\[0.1em]
	\tfrac{9\sqrt{2}}{40} & -\tfrac{3}{20} \end{bmatrix},
	\begin{bmatrix} 0 & 0\\[0.1em]
	-\tfrac{\sqrt{2}}{40} & 0\end{bmatrix}\right\}_{[-1,2]},\\
	&b=\left\{\begin{bmatrix} -\tfrac{\sqrt{2}}{40} & -\tfrac{3}{20}\\[0.1em]
	\tfrac{1}{20} & \tfrac{3\sqrt{2}}{20}\end{bmatrix},
	\begin{bmatrix} \tfrac{9\sqrt{2}}{40} & -\frac{1}{2} \\[0.3em]
	-\tfrac{9}{20} & 0 \end{bmatrix},
	\begin{bmatrix} \tfrac{9\sqrt{2}}{40} & -\tfrac{3}{20}\\[0.1em]
	\tfrac{9}{20} & -\tfrac{3\sqrt{2}}{20} \end{bmatrix},
	\begin{bmatrix} -\tfrac{\sqrt{2}}{40} & 0\\[0.1em]
	-\tfrac{1}{20} & 0\end{bmatrix}\right\}_{[-1,2]}.
	\end{align*}
	Note that $\fs(\phi) = \fs(\psi) =[-1,1]$.
	Then $\sm(a)=1.5$, $\sr(a)=2$ and its matching filter $\vgu\in \lrs{0}{1}{2}$ satisfying $\wh{\vgu}(0)\wh{\phi}(0)=1$ is given by
	$\wh{\vgu}(0)=(\sqrt{6}/3,\sqrt{3}/3)$ and $\wh{\vgu}'(0)=i(-\sqrt{6}/6,0)$.
	Using item (i) of \cref{prop:phicut} with $n_\phi=1$, we have the left boundary refinable function
	$\phi^{L}:= \sqrt{2} \phi^{2}\chi_{[0,\infty)}$  with $\#\phi^L=1$ satisfying $\|\phi^L\|_{\Lp{2}}=1$ and the refinement equation in \eqref{I:phi} below
	\[
	\phi^L= \phi^L(2\cdot)+
	\left[\tfrac{9}{10}, -\tfrac{3\sqrt{2}}{10}\right]\phi(2\cdot-1)+
	\left[-\tfrac{1}{10}, 0\right]\phi(2\cdot-2).
	\]
	Taking $n_\psi=1$ in \cref{alg:Phi:orth}, we obtain the left boundary wavelet $\psi^L$ with $\#\psi^L=1$ defined by
	\[
	\psi^L := \phi^L(2\cdot)+
	\left[-\tfrac{9}{10}, \tfrac{3\sqrt{2}}{10}\right]\phi(2\cdot-1)+
	\left[\tfrac{1}{10}, 0\right]\phi(2\cdot-2).
	\]
	Since  $\phi$ and $\psi$ have symmetry, we obtain through \eqref{mPhiPsi} that
	 $\mathring{\Phi}=\{\phi^L,\mathring{\phi}^1\}\cup\{\mathring{\phi}(\cdot-k) \setsp k\ge n_{\mathring{\phi}}\}$ and $\mathring{\Psi}=\{\psi^L\} \cup\{
	\mathring{\psi}(\cdot-k) \setsp k \ge n_{\mathring{\psi}}\}$ with $n_{\mathring{\phi}}=n_{\mathring{\psi}}=1$.
	According to \cref{alg:Phi:orth} and \cref{thm:bw:0N} with $N=1$, we obtain an orthonormal basis $\cB_J=\Phi_J \cup \{\Psi_j\}_{j=J}^\infty$ of $L_{2}([0,1])$ for every $J \in \NN$, where $\Phi_j$
	and $\Psi_j$ in \eqref{Phij} and \eqref{Psij} with $n_\phi=n_\psi=n_{\mathring{\phi}}=n_{\mathring{\psi}}=1$ are given by
	\begin{align*}
	&\Phi_j=\{\phi_{j;0}^L\}
	\cup \{\phi_{j;k}\setsp 1\le k\le 2^j-1\} \cup
	\{\phi^1_{j;2^j},\phi^R_{j;2^j-1}\},\\
	&\Psi_j=\{\psi_{j;0}^L\}
	\cup \{\psi_{j;k} \setsp 1\le k\le 2^j-1\}\cup
	\{\psi^R_{j;2^j-1}\},
	\end{align*}
	where $\phi^R:=\phi^L(1-\cdot)$ and
	$\psi^R:=\psi^L(1-\cdot)$
	with $\#\phi^L=\#\psi^L=1$, $\#\Phi_j=2^{j+1}+1$ and $\#\Psi_j=2^{j+1}$.
	Note that $\vmo(\psi^L)=\vmo(\psi^R)=\vmo(\psi)=2=\sr(a)$ and $\PL_{1}\chi_{[0,1]}\subset \mbox{span}(\Phi_j)$ for all $j\in \NN$.
	
	Using the classical approach in \cref{sec:classical} and \cref{thm:bw:0N} with $N=1$,
	we obtain a Riesz basis
	$\cB_J^{bc}:=\Phi_J^{bc} \cup \{\Psi_j^{bc} \setsp j\ge J\}$ of $L_{2}([0,1])$ for every $J\ge J_0:=1$ such that
	$h(0)=0$ for all $h\in \cB_J^{bc}$, where $\Phi_j^{bc}$
	and $\Psi_j^{bc}$ in \eqref{Phij} and \eqref{Psij} with  $n_\phi=n_\psi=n_{\mathring{\phi}}=n_{\mathring{\psi}}=1$ are given by
	\begin{align*}
	&\Phi_j^{bc}=\{\phi_{j;1}\}
	\cup \{\phi_{j;k} \setsp 2\le k\le 2^j-2\} \cup \{\phi^1_{j;2^j-1}, \phi^1_{j;2^j}, \phi^2_{j;2^j-1}\},\\
	&\Psi_j^{bc}=\{\psi_{j;0}^{L,bc}, \psi_{j;1}\}
	\cup \{\psi_{j;k} \setsp 2\le k\le 2^j-2\}\cup
	 \{\psi^{R,bc}_{j;2^j-1},\psi_{j;2^j-1}\},
	\end{align*}
	with $\#\psi^{L,bc}=\#\psi^{R,bc}=1$, $\#\Phi^{bc}_j=2^{j+1}-1$ and $\#\Psi_j^{bc}=2^{j+1}$,
	where $\phi^{L,bc}=\emptyset$, $\phi^{R,bc}=\emptyset$ and
	\[
	\psi^{L,bc}:=[1,-2\sqrt{2}] \phi(2\cdot -1) + [1,0]\phi(2\cdot -2)\qquad \mbox{and}\qquad
	\psi^{R,bc}:=\psi^{L,bc}(1-\cdot).
	\]
	Note that $\vmo(\psi^{L,bc})=\vmo(\psi^{R,bc})=\vmo(\psi)=2$ and $\Phi_j^{bc}=\Phi_j\bs\{\phi^L_{j;0},\phi^R_{j;2^j-1}\}$ as in \cref{prop:mod}.
	Moreover, the dual Riesz wavelet basis $\tilde{\cB}_J^{bc}$ of $\cB_J^{bc}$ is given by
	 $\tilde{\cB}_J^{bc}=\tilde{\Phi}_J^{bc}\cup \{\tilde{\Psi}_j^{bc} \setsp j\ge J\}$ for $J\ge \tilde{J}_0:=2$, where
	$\tilde{\Phi}^{bc}_j$ and $\tilde{\Psi}^{bc}_j$
	in \eqref{tPhij} and \eqref{tPsij} with $n_{\tilde{\phi}}=n_{\tilde{\mathring{\phi}}}=
	 n_{\tilde{\psi}}=n_{\tilde{\mathring{\psi}}}=2$ are given by
	\begin{align*}
	&\tilde{\Phi}_j^{bc}=
	\{\tilde{\phi}^{L,bc}_{j;0}\}
	\cup \{\phi_{j;k} \setsp 2\le k\le 2^j-2\} \cup \{\phi^1_{j;2^j-1}, \tilde{\phi}^{R,bc}_{j;2^{j-1}}\}, \quad \mbox{with} \quad \tilde{\phi}^{R,bc}:=\tilde{\phi}^{L,bc}(1-\cdot),\\
	 &\tilde{\Psi}^{bc}_j=\{\tilde{\psi}^{L,bc}_{j;0}\}
	\cup \{\psi_{j;k} \setsp 2\le k\le 2^j-2\} \cup\{\tilde{\psi}^{R,bc}_{j;2^j-1}\}, \quad \mbox{with}
	\quad \tilde{\psi}^{R,bc}:=\text{diag}(1,1,-1)\tilde{\psi}^{L,bc}(1-\cdot)
	\end{align*}
	with $\#\tilde{\phi}^{L,bc}=\#\tilde{\phi}^{R,bc}=2$ and $\#\tilde{\psi}^{L,bc}=\#\tilde{\psi}^{R,bc}=3$, where $\tilde{\phi}^{L,bc} :=[\sqrt{2},-1]^{\tp} \phi^2\chi_{[0,\infty)} + \phi(\cdot -1)$ and
	%
	\[
	\tilde{\psi}^{L,bc}:=
	\begin{bmatrix}
	\tfrac{9}{10} & -\tfrac{3\sqrt{2}}{10}\\
	-\tfrac{1}{\sqrt{2}} & 0\\
	1 & 0\\
	 \end{bmatrix}\tilde{\phi}^{L,bc}(2\cdot)+
	\begin{bmatrix}
	-\tfrac{1}{10} & 0\\
	\tfrac{1}{\sqrt{2}} & -1\\
	-1 & 0\\
	\end{bmatrix}\phi(2\cdot -2)
	+2
	\begin{bmatrix}
	0_{1\times 2}\\
	b(1)
	\end{bmatrix}
	\phi(2\cdot-3)
	+
	2\begin{bmatrix}
	0_{1\times 2}\\
	b(2)
	\end{bmatrix}
	\phi(2\cdot-4).
	\]
	Note that $\vmo(\tilde{\psi}^{L,bc})=\vmo(\tilde{\psi}^{R,bc})=0$ and
	$\tilde{\phi}^{L,bc}$ satisfies the refinement equation in \eqref{I:phi:dual} below
	\[
	\tilde{\phi}^{L,bc}
	=\begin{bmatrix}
	\tfrac{3}{2} & \tfrac{\sqrt{2}}{2}\\
	-\frac{\sqrt{2}}{2} & 0
	\end{bmatrix}
	\tilde{\phi}^{L,bc}(2\cdot) +
	\begin{bmatrix}
	\tfrac{1}{2} & 0\\
	\frac{\sqrt{2}}{2} & 1
	\end{bmatrix}
	\phi(2\cdot-2)
	+2a(1) \phi(2\cdot-3) + 2a(2) \phi(2\cdot-4).
	\]
	We can also directly check that all the conditions in \cref{thm:direct} are satisfied for the Riesz basis $\cB^{bc}_J$ with $J\ge 2$. Indeed, by $\phi^{L,bc}=\emptyset$ and $n_\phi=1$, taking $n_{\psi}=1$ and $m_{\phi}=3$ in \cref{thm:direct}, we see that the above $\psi^{L,bc}$ satisfies both items (i) and (ii) of \cref{thm:direct} with $A_0=[0,0,0,0]^\tp$ and
	\[
	 B_{0}=\left[\tfrac{9}{20},-\tfrac{3\sqrt{2}}{20},-\tfrac{1}{20},0\right]^{\tp},\quad
	C(1)=
	\begin{bmatrix}
	\frac{3}{4} & \frac{\sqrt{2}}{4} & \frac{1}{4} & 0 \\
	-\frac{\sqrt{2}}{4} & 0 & \frac{\sqrt{2}}{4} & \frac{1}{2}
	\end{bmatrix}^{\tp}, \quad
	D(1)=\begin{bmatrix}
	-\frac{\sqrt{2}}{4} & 0 & \frac{\sqrt{2}}{4} & -\frac{1}{2}\\
	\frac{1}{2} & 0 & -\frac{1}{2} & 0
	\end{bmatrix}^{\tp}
	\]
	with $\fs(C)=\fs(D)=[1,1]$.
	Note that \eqref{tAL} is satisfied with
	$\rho(\tilde{A}_{L}^{bc})=1/2$. We reconcile the results obtained from direct and classical approaches. Since $\tilde{\phi}^{L,bc}$ in \eqref{tphi:implicit} of the direct approach contains interior elements, we can rewrite $\tilde{\phi}^{L,bc}$ in \eqref{tphi:implicit} as $\{\tilde{\phi}^{L,bc},\phi(\cdot-2)\}$ with $\tilde{\phi}^{L,bc}$ as in the classical approach with $\# \tilde{\phi}^{L,bc}=2$. On the other hand, $\tilde{\psi}^{L,bc}$ in \eqref{tpsi:implicit} is the same as in the classical approach with $\# \tilde{\psi}^{L,bc}=3$. Note that the dual Riesz basis $\tilde{\cB}_J^{bc}$ for $J=1$ has to be computed via item (5) of \cref{thm:bw:0N}. See \cref{fig:ghmorth} for the graphs of $\phi,\psi$ and all boundary elements.
\end{example}

\begin{figure}[htbp]
	\centering \begin{subfigure}[b]{0.24\textwidth} \includegraphics[width=\textwidth,height=0.6\textwidth]{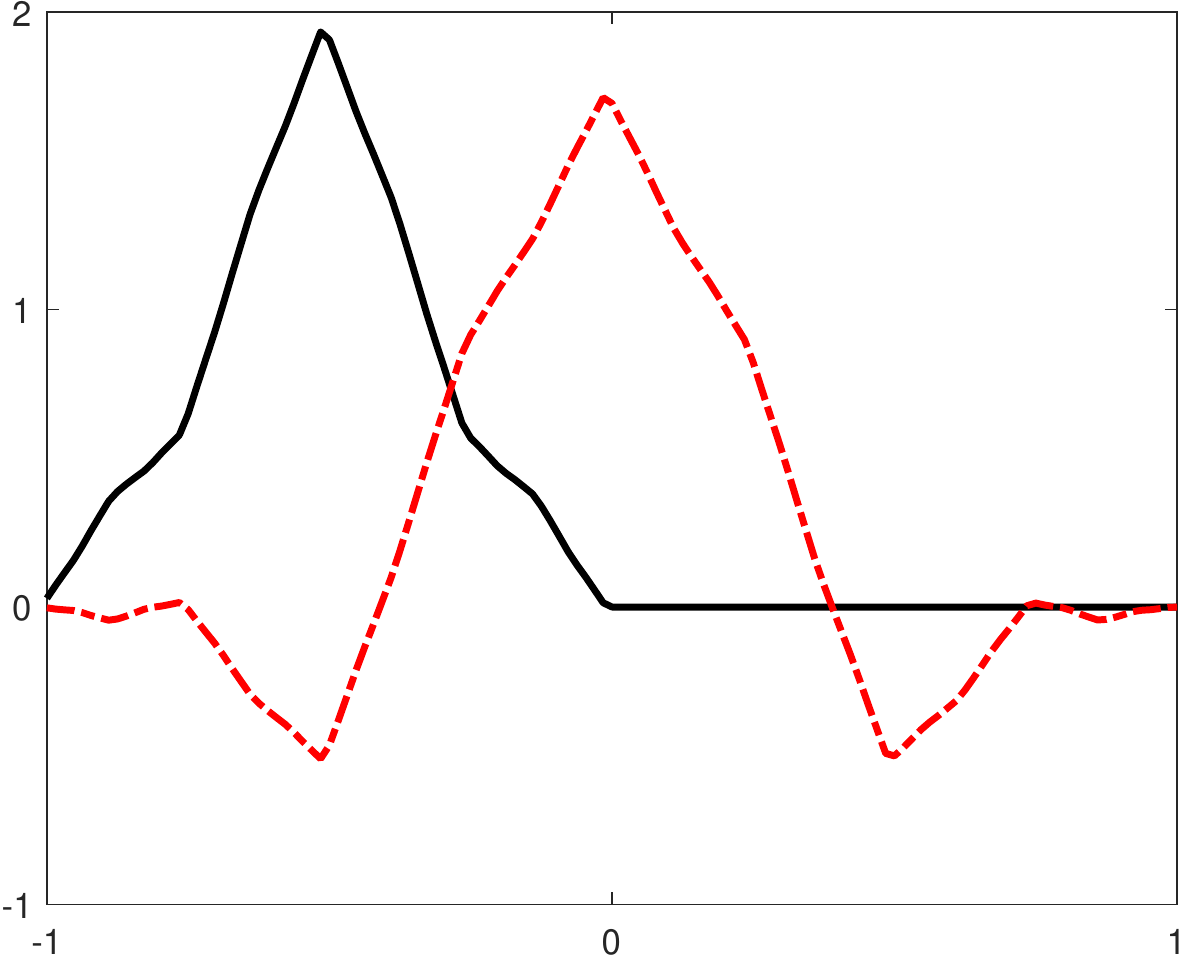} \caption{$\phi=(\phi^1,\phi^2)^\tp$}
	\end{subfigure}
	\begin{subfigure}[b]{0.24\textwidth} \includegraphics[width=\textwidth,height=0.6\textwidth]{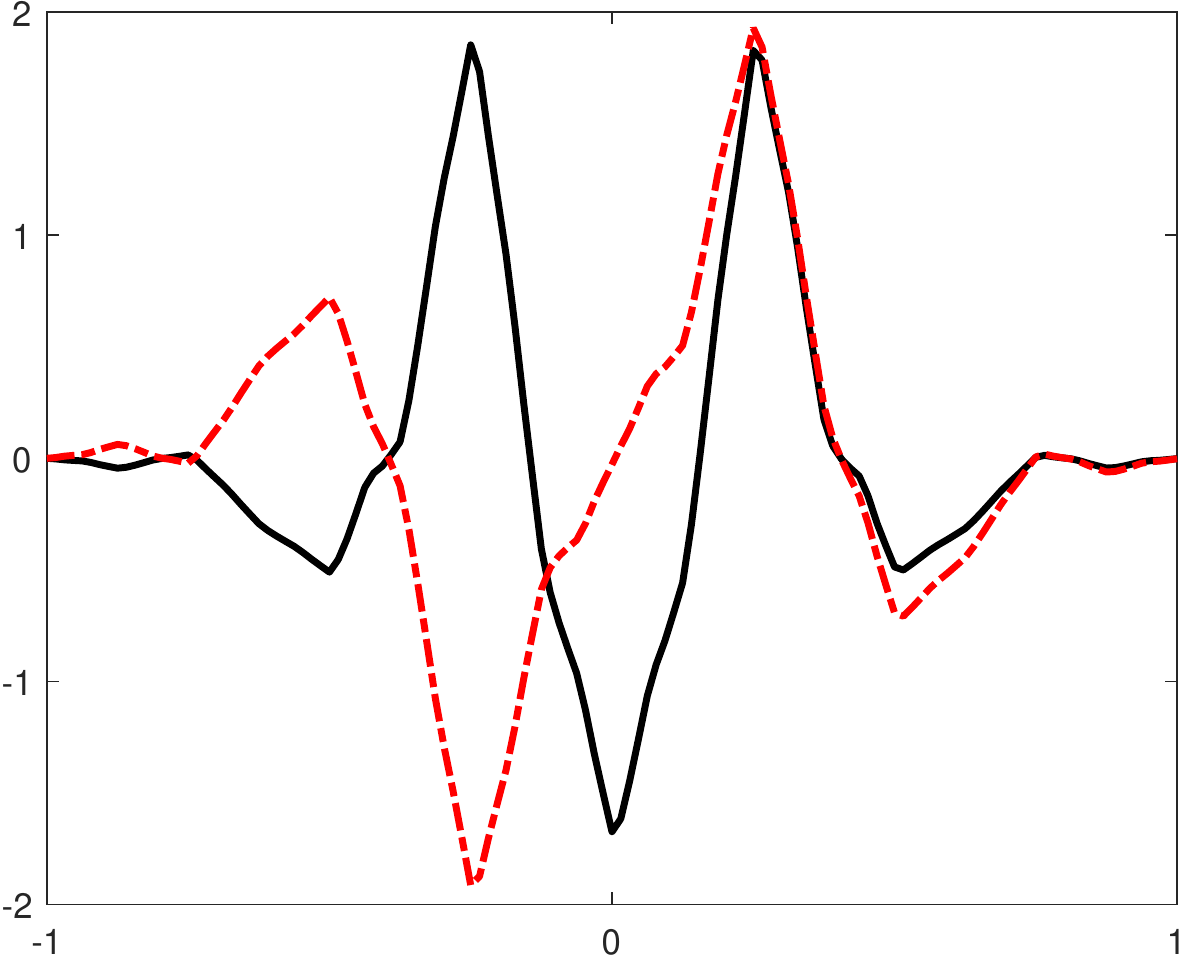} \caption{$\psi=(\psi^1,\psi^2)^\tp$}
	\end{subfigure} \begin{subfigure}[b]{0.24\textwidth}
		 \includegraphics[width=\textwidth,height=0.6\textwidth]{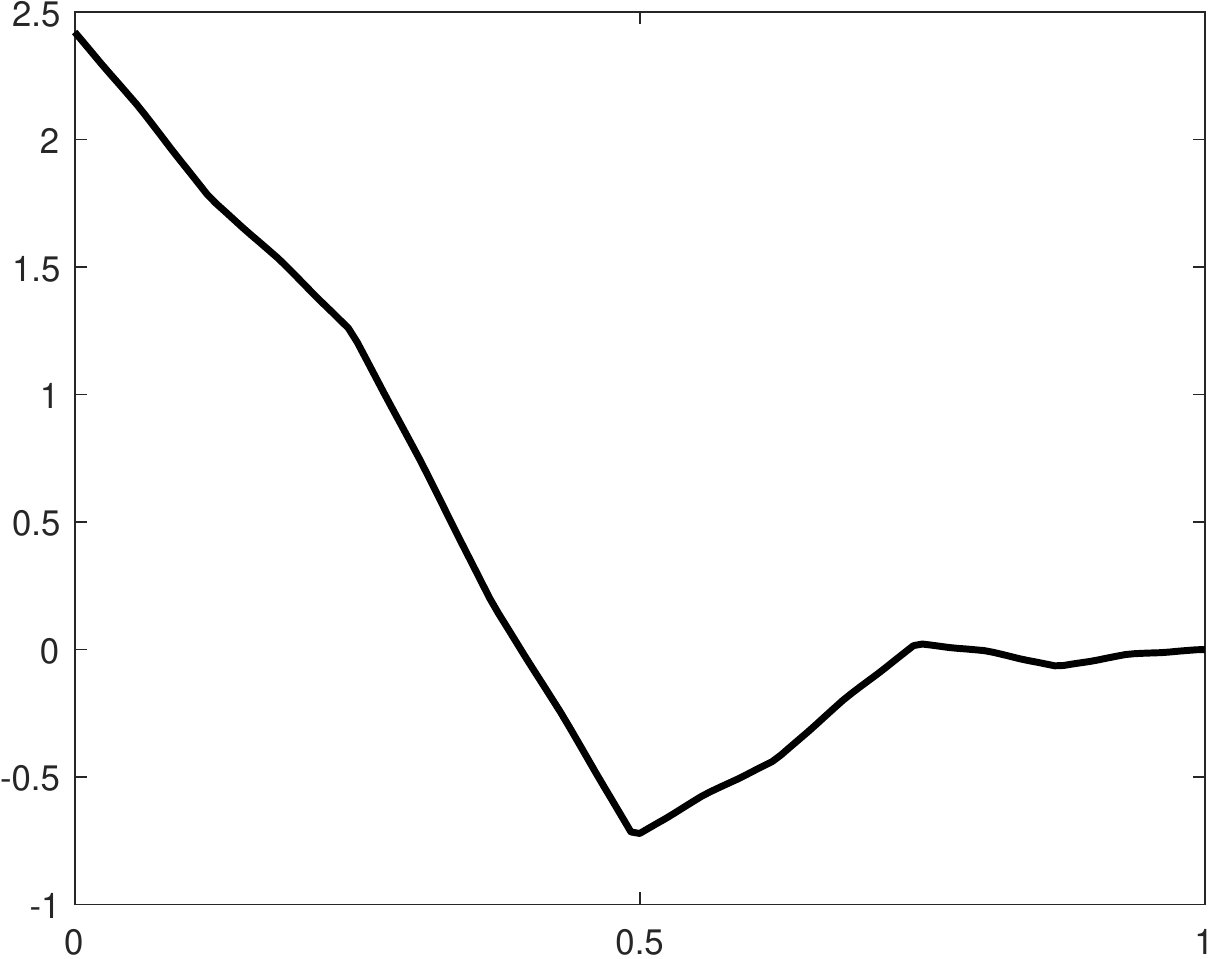}
		\caption{$\phi^{L}$}
	\end{subfigure}
	\begin{subfigure}[b]{0.24\textwidth} \includegraphics[width=\textwidth,height=0.6\textwidth]{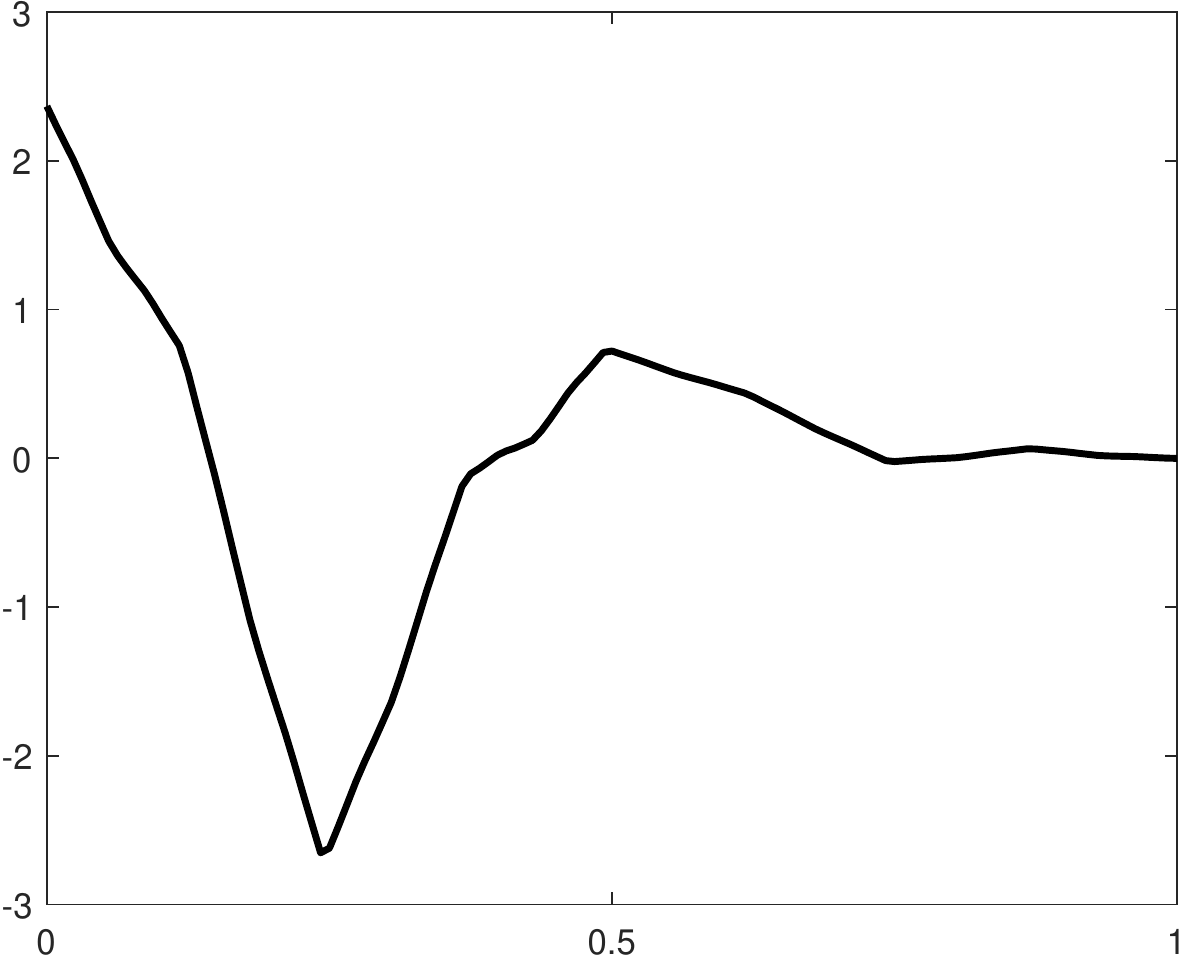}
		\caption{$\psi^{L}$}
	\end{subfigure}\\ \begin{subfigure}[b]{0.24\textwidth} \includegraphics[width=\textwidth,height=0.6\textwidth]{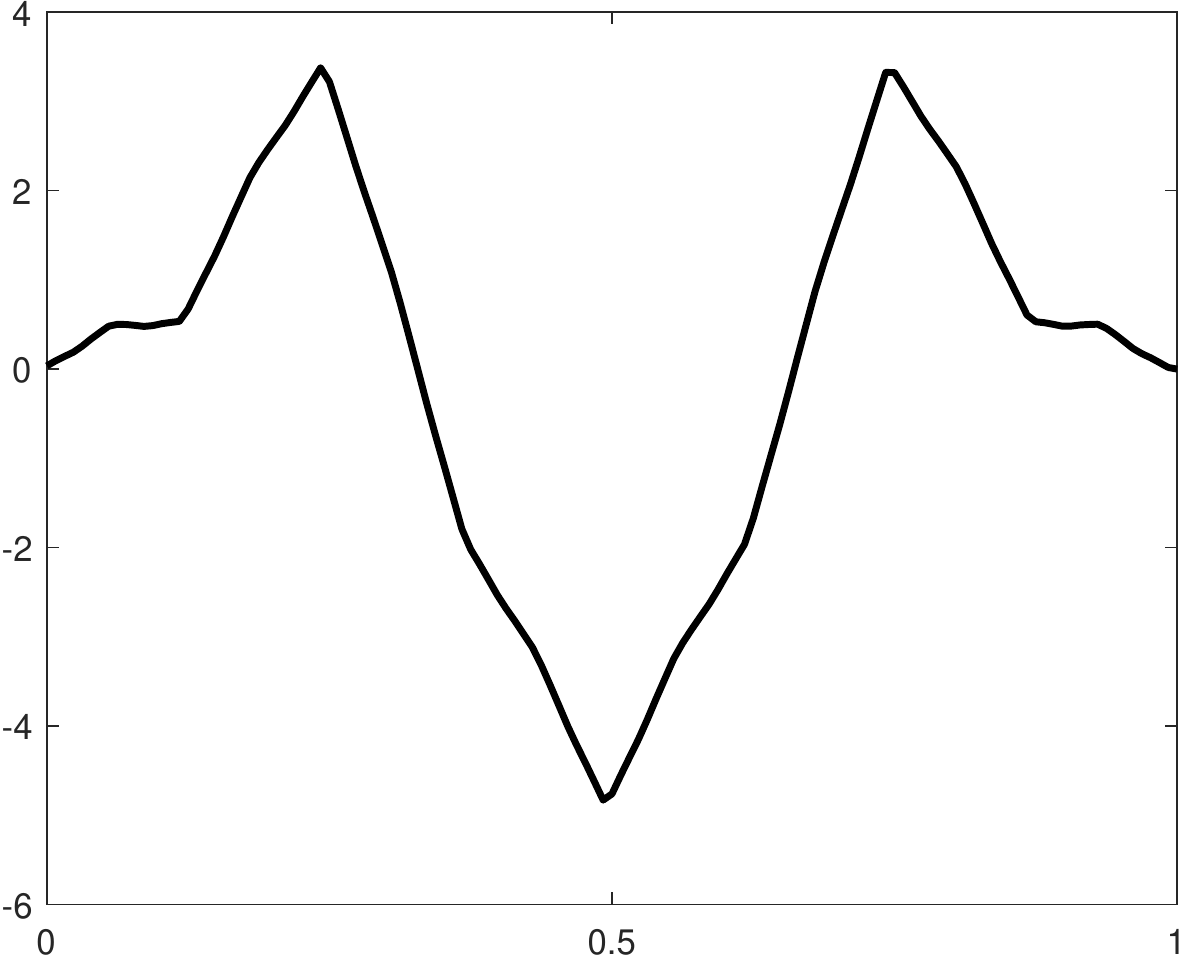}
		\caption{$\psi^{L,bc}$}
	\end{subfigure} \begin{subfigure}[b]{0.24\textwidth} \includegraphics[width=\textwidth,height=0.6\textwidth]{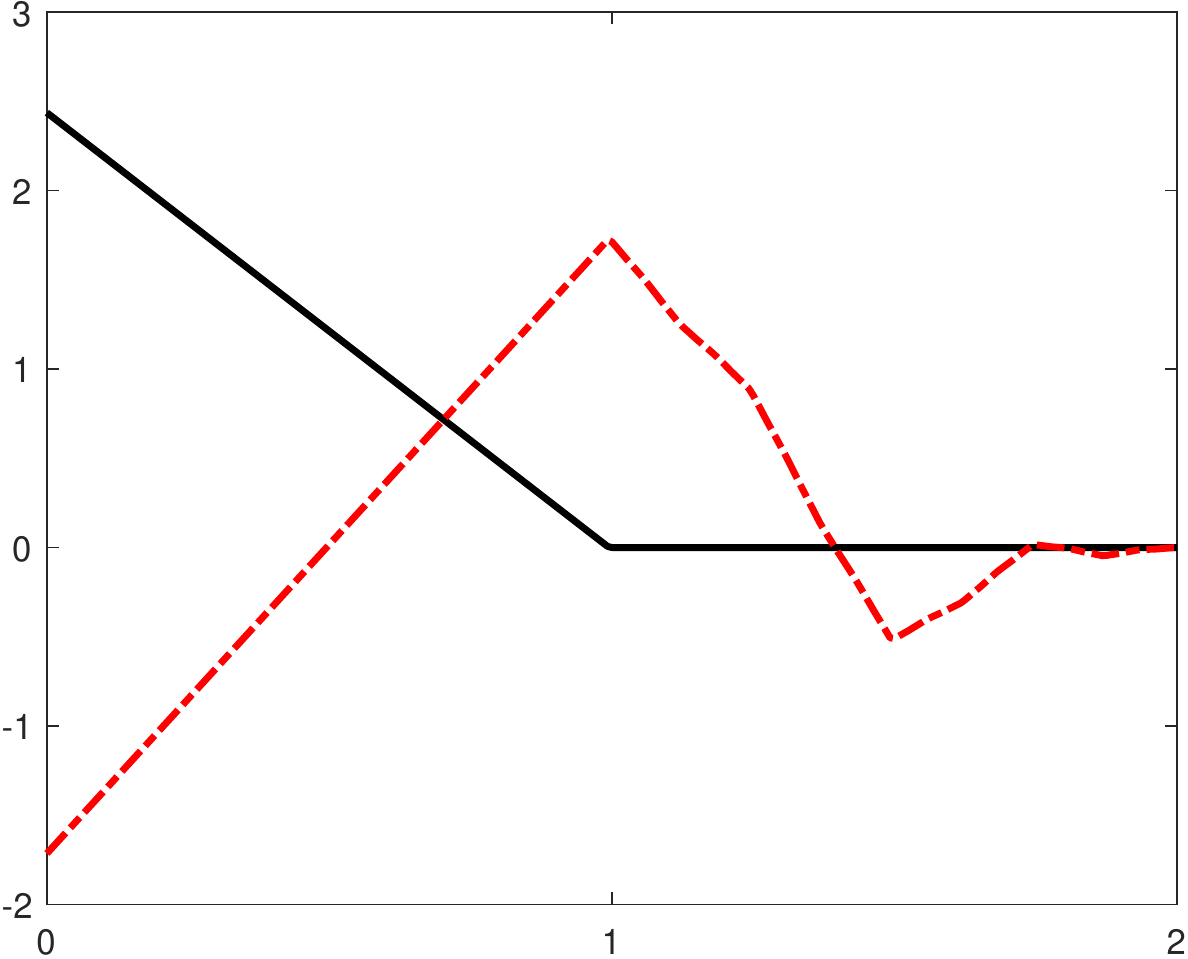}
		\caption{$\tilde{\phi}^{L,bc}$}
	\end{subfigure} \begin{subfigure}[b]{0.24\textwidth} \includegraphics[width=\textwidth,height=0.6\textwidth]{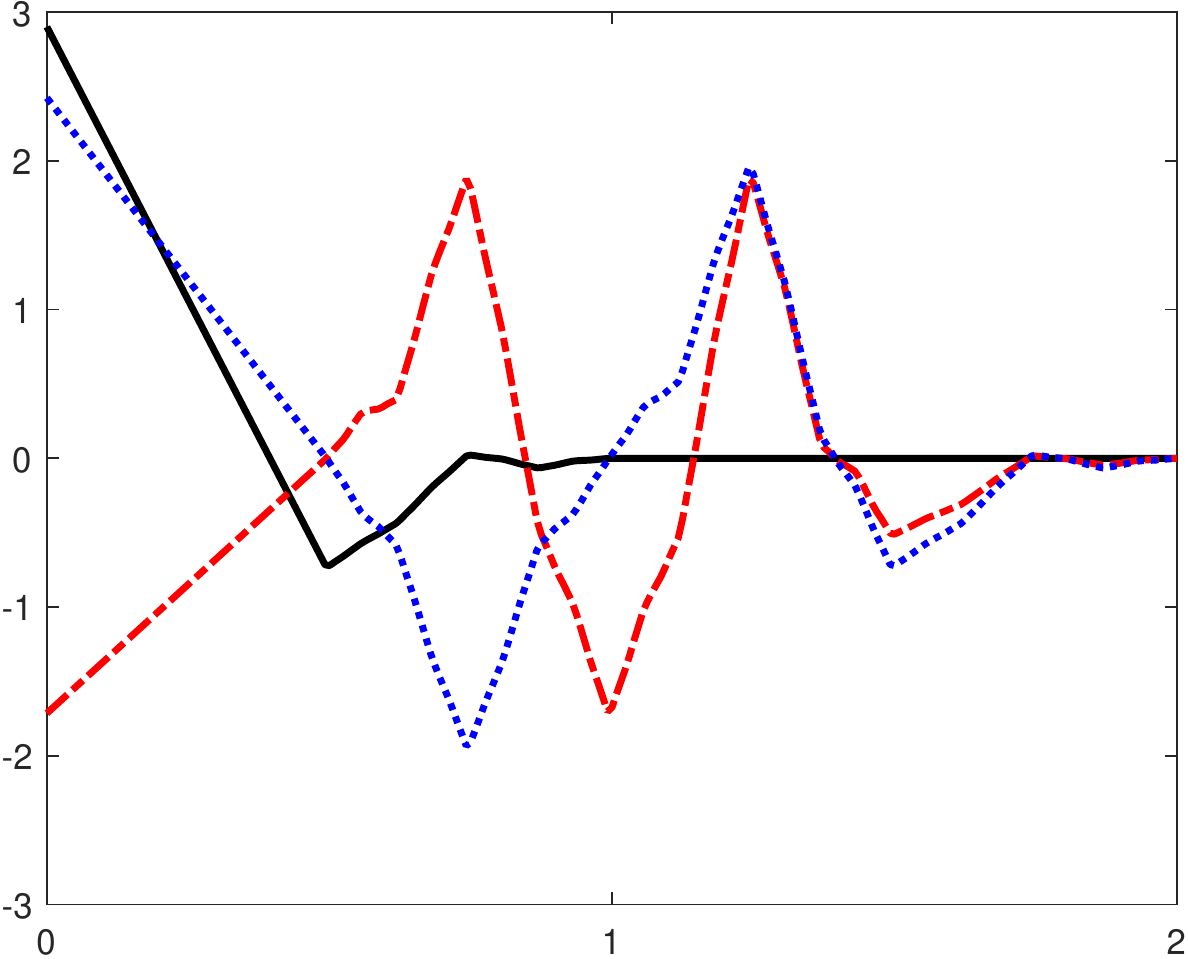}
		\caption{$\tilde{\psi}^{L,bc}$}
	\end{subfigure}
	\begin{subfigure}[b]{0.24\textwidth} \includegraphics[width=\textwidth,height=0.6\textwidth]{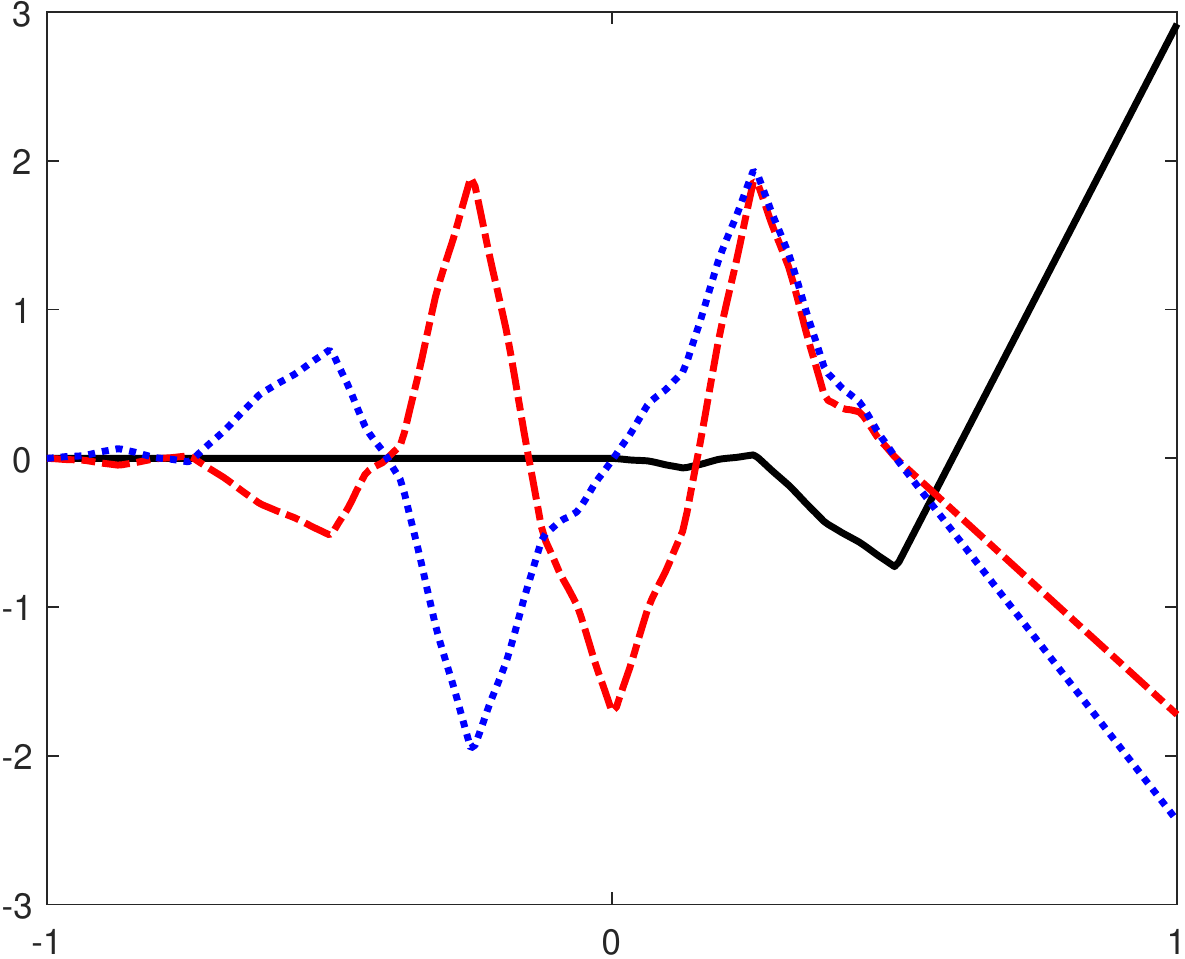} \caption{$\tilde{\psi}^{R,bc}$}
	\end{subfigure}
	\caption{The generators of the orthonormal basis $\cB_J$ and the Riesz basis $\cB_J^{bc}$ of $L_{2}([0,1])$ in \cref{ex:hardin} with $J\ge 1$ such that
		$h(0)=h(1)=0$ for all $h\in \cB_J^{bc}$. The black, red, and blue lines correspond to the first, second, and third components of a vector function. Note that $\phi^{L,bc}=\emptyset$ and
		 $\vmo(\psi^L)=\vmo(\psi^{L,bc})=\vmo(\psi)=2$. 
	}\label{fig:ghmorth}
\end{figure}

\begin{example} \label{ex:hardinr3}
	\normalfont
	Consider the compactly supported orthogonal wavelet $\{\phi;\psi\}$ in \cite{dgh96} satisfying $\wh{\phi}(2\xi)=\wh{a}(\xi)\wh{\phi}(\xi)$ and $\wh{\psi}(2\xi)=\wh{b}(\xi)\wh{\phi}(\xi)$ with $\wh{\phi}(0)=(\sqrt{7/33},\sqrt{3/4},\sqrt{5/132})^{\tp}$ and an associated finitely supported
	orthogonal wavelet filter bank $\{a;b\}$ given by
	\begin{align*}
	a=&
	{\left\{
		\begin{bmatrix} 0 & -{\frac {\sqrt {2}\sqrt {154} \left( 3+2
				\,\sqrt {5} \right) }{7392}} & {\frac {\sqrt {2}\sqrt {14} \left( 2+5\,
				\sqrt {5} \right) }{2464}}\\ 0 & 0 & 0 \\ 0 & 0 & 0 \end{bmatrix}
		\begin{bmatrix} -{\frac{3}{44}}-\frac{\sqrt {5}}{22} & {\frac {\sqrt {2}\sqrt {154} \left( 67+30\,\sqrt {5} \right) }{7392}} & {\frac {\sqrt {2}\sqrt {14} \left( -10+\sqrt {5} \right) }{224}}
		\\ 0 & 0 & 0 \\0 & 0 & 0\end{bmatrix},\right.}\\
	&{\left.
		\;\; \begin{bmatrix} \frac{1}{2} &{\frac {\sqrt {2}\sqrt {154} \left( 67
				-30\,\sqrt {5} \right) }{7392}}&{\frac {\sqrt {2}\sqrt {14} \left( 10+\sqrt {5} \right) }{224}}\\0 & \frac{3}{8} &{\frac {\sqrt {2}
				\sqrt {22} \left( -4+\sqrt {5} \right) }{176}}\\ 0&{\frac {\sqrt {2}\sqrt {22} \left( 32+7\,\sqrt {5} \right) }{528}}&-{
			 \frac{5}{88}}+\frac{\sqrt{5}}{22}\end{bmatrix}, \begin{bmatrix} -{\frac{3}{44}}+\frac{\sqrt {5}}{22}&{\frac {\sqrt {2}\sqrt {154} \left( -3+2\,\sqrt {5} \right) }{7392}}&{\frac {
				\sqrt {2}\sqrt {14} \left( -2+5\,\sqrt {5} \right) }{2464}}
		\\ \frac{\sqrt{2}\sqrt{154}}{44} & \frac{3}{8} &{\frac {\sqrt {2}
				\sqrt {22} \left( 4+\sqrt {5} \right) }{176}}\\-\frac{\sqrt{2}\sqrt {70}}{44} & {\frac {\sqrt {2}\sqrt {22} \left( -32+7\,
				\sqrt {5} \right) }{528}}&-{\frac{5}{88}}-\frac{\sqrt {5}}{22}\end{bmatrix}
		\right\}_{[-2,1]}},\\
	b=&{\left\{
		\begin{bmatrix} 0 & 0 & 0\\ 0&{\frac {\sqrt
				{2}\sqrt {154} \left( 3+2\,\sqrt {5} \right) }{7392}}&-{\frac {\sqrt {
					2}\sqrt {14} \left( 2+5\,\sqrt {5} \right) }{2464}}
		\\ 0 & -{\frac {\sqrt {2}\sqrt {7} \left( 1+\sqrt {5}
				\right) }{672}}&{\frac {\sqrt {2}\sqrt {77} \left( -1+3\,\sqrt {5}
				\right) }{2464}}\end{bmatrix},
		\begin{bmatrix} 0&0&0\\ {\frac{3}{44}}+\frac{\sqrt {5}}{22} & -{\frac {\sqrt {2}\sqrt {154} \left( 67+30\,\sqrt {5}
				\right) }{7392}} & {\frac {\sqrt {2} \left( 10-\sqrt {5} \right) \sqrt
				{14}}{224}}\\-{\frac {\sqrt {2}\sqrt {11} \left( 1+
				\sqrt {5} \right) }{88}} & {\frac {\sqrt {2}\sqrt {7} \left( 29+13\,
				\sqrt {5} \right) }{672}} & {\frac {\sqrt {2}\sqrt {77} \left( -75+17\,
				\sqrt {5} \right) }{2464}}\end{bmatrix}, 	
		\right.}\\
	&{\left.
		\;\;\begin{bmatrix} 0&{\frac {\sqrt {2}\sqrt {77} \left( -2+
				\sqrt {5} \right) }{528}}&{\frac {\sqrt {2}\sqrt {7} \left( 13-6\,
				\sqrt {5} \right) }{176}}\\ \frac{1}{2} & {\frac {\sqrt {2}
				\sqrt {154} \left( -67+30\,\sqrt {5} \right) }{7392}}&-{\frac {\sqrt {
					2}\sqrt {14} \left( 10+\sqrt {5} \right) }{224}}\\ 0
		&{\frac {\sqrt {2}\sqrt {7} \left( -29+13\,\sqrt {5} \right) }{672}}&-
		{\frac {\sqrt {2}\sqrt {77} \left( 75+17\,\sqrt {5} \right) }{2464}}
		\end{bmatrix},
		\begin{bmatrix} {\frac {13\,\sqrt {2}}{44}}&-{\frac {
				\sqrt {2}\sqrt {77} \left( \sqrt {5}+2 \right) }{528}}&-{\frac {\sqrt
				{2}\sqrt {7} \left( 13+6\,\sqrt {5} \right) }{176}}
		\\ {\frac{3}{44}}-\frac{\sqrt{5}}{22}&{\frac {\sqrt {2}
				\sqrt {154} \left( 3-2\,\sqrt {5} \right) }{7392}}&{\frac {\sqrt {2}
				\left( 2-5\,\sqrt {5} \right) \sqrt {14}}{2464}}\\
		{\frac {\sqrt {2}\sqrt {11} \left( 1-\sqrt {5} \right) }{88}}&{\frac {
				\sqrt {2}\sqrt {7} \left( 1-\sqrt {5} \right) }{672}}&-{\frac {\sqrt {
					2}\sqrt {77} \left( 3\,\sqrt {5}+1 \right) }{2464}}\end{bmatrix}
		\right\}_{[-2,1]}.}
	\end{align*}
	Note that
	$\phi=(\phi^1,\phi^2,\phi^3)^\tp$ is a continuous piecewise linear vector function without symmetry and $\fs(\phi)=\fs(\psi)=[-1,1]$.
	Then $\sm(a)=1.5$, $\sr(a)=2$, and its matching filter $\vgu\in \lrs{0}{1}{2}$ with $\wh{\vgu}(0)\wh{\phi}(0)=1$ is given by
	 $\wh{\vgu}(0)=(\sqrt{7/33},\sqrt{3}/2,\sqrt{5/132})$ and $\wh{\vgu}'(0)=i(0,\sqrt{3}/4,\sqrt{165}/132-\sqrt{1/33})$.
	By item (i) of \cref{prop:phicut} with $n_\phi=2$, the left boundary refinable vector functions consisting of  interior elements $\phi^2, \phi^3, \phi(\cdot-1)$
	and a true boundary element
	\[
	 \phi^{L}:=\sqrt{\tfrac{7(7+\sqrt{5})}{22}}
	\phi^1 \chi_{[0,\infty)}
	\quad \mbox{satisfying}\quad
	\phi^L=  \phi^L(2\cdot)+ 2[\sqrt{\tfrac{7(7+\sqrt{5})}{22}},0,0]a(1) \phi(2\cdot-1).
	\]
	Hence, we can reset $n_\phi:=1$ and use only $\{\phi^L,\phi^2,\phi^3\}$ as the left boundary refinable vector function. Let $[a(k)]_{j,:}$ denote the $j$th row of the matrix $a(k)$.
	Using $n_\phi=1$ and $n_\psi=1$ in \cref{alg:Phi:orth},
	we obtain the left boundary wavelet $\{\psi^L,\psi^1\}$ with $\#\psi^L=1$ as follows:
	\[
	\psi^{L} :=
	2 \left([\tfrac{1}{2},0,0] + \lambda_{1} [b(0)]_{3,:}\right)
	\phi^L(2\cdot)+
	2  \lambda_{1}[b(1)]_{3,:} \phi(2\cdot-1)\quad \mbox{with}\quad
	 \gl_1:=\tfrac{1}{2}\sqrt{\tfrac{7(7-\sqrt{5})}{11}}.
	\]
	Note that
	 $\mathring{\phi}=(\mathring{\phi}^1,\mathring{\phi}^2,\mathring{\phi}^3)^\tp:=\phi(-\cdot)$ has no symmetry.
	Using item (ii) of \cref{prop:phicut} with $\pp(x)=(1,x)^{\tp}$, we have $n_{\mathring{\phi}}=1$ and the left boundary refinable vector function
	\[
	 \mathring{\phi}^{L}:=\tfrac{\sqrt{14}}{\sqrt{7+\sqrt{5}}}
	\mathring{\phi}^{1}\chi_{[0,\infty)}
	\quad \mbox{satisfying}\quad
	\mathring{\phi}^{L}=
	\mathring{\phi}^{L}(2\cdot)
	+ \tfrac{2\sqrt{14}}{\sqrt{7+\sqrt{5}}}[a(-1)]_{1,:} \mathring{\phi}(2\cdot-1)+ \tfrac{2\sqrt{14}}{\sqrt{7+\sqrt{5}}}[a(-2)]_{1,:} \mathring{\phi}(2\cdot-2).
	\]
	Using $n_{\mathring{\psi}}=1$ in \cref{alg:Phi:orth}, we obtain the left boundary wavelet $\mathring{\psi}^L$ with $\#\mathring{\psi}^L=1$ as follows:
	\[
	 \mathring{\psi}^{L}:=\mathring{\phi}^{L}(2\cdot) + 2\lambda_{2} [b(-1)]_{2,:}\mathring{\phi}(2\cdot-1) + 2\lambda_{2} [b(-2)]_{2,:} \mathring{\phi}(2\cdot-2) \quad \mbox{with}\quad
	 \gl_2:=\sqrt{\tfrac{7(7-\sqrt{5})}{22}}.
	\]
	By \eqref{reflection}, we have
	$\phi^R:=\mathring{\phi}^{L}(1-\cdot)$ and $\psi^R:=\mathring{\psi}^{L}(1-\cdot)$. According to \cref{alg:Phi:orth} and \cref{thm:bw:0N} with $N=1$, we conclude that
	$\cB_J=\Phi_J \cup \{\Psi_j \setsp j\ge J\}$ is an orthonormal basis of $L_{2}([0,1])$ for every $J \in \NN$, where $\Phi_j$
	and $\Psi_j$ in \eqref{Phij} and \eqref{Psij} with $n_\phi=n_{\psi}=n_{\mathring{\phi}}=n_{\mathring{\psi}}=1$
	are given by
	\begin{align*}
	\Phi_{j}  &= \{\phi^{L}_{j;0}, \phi^2_{j;0},\phi^3_{j;0}\} \cup \{\phi_{j;k}:1\le k \le 2^{j}-1\} \cup \{\phi^{R}_{j;2^{j}-1}\},\\
	\quad \Psi_{j}  &= \{\psi^{L}_{j;0},\psi^1_{j;0}\} \cup \{\psi_{j;k}:1\le k \le 2^{j}-1\} \cup \{\psi^{R}_{j;2^{j}-1}\},
	\end{align*}
	with $\#\phi^L=\#\phi^R=\psi^L=\psi^R=1$,
	$\#\Phi_j=3(2^j)+1$ and $\#\Psi_j=3(2^j)$.
	Note that $\vmo(\psi^L)=\vmo(\psi^R)=\vmo(\psi)=2=\sr(a)$ and $\PL_{1}\chi_{[0,1]}\subset \mbox{span}(\Phi_j)$ for all $j\in \NN$.
	
	Using the classical approach in \cref{sec:classical} and \cref{thm:bw:0N},
	we obtain a Riesz basis
	$\cB_J^{bc}:=\Phi_J^{bc} \cup \{\Psi_j^{bc} \setsp j\ge J\}$ of $L_{2}([0,1])$ for every $J\ge J_0:=1$ such that
	$h(0)=h(1)=0$ for all $h\in \cB_J^{bc}$, where
	\begin{align*}
	&\Phi_j^{bc}=
	 \{\phi^{2}_{j;0},\phi^{3}_{j;0},\phi_{j;1}\}
	\cup \{\phi_{j;k} \setsp 2\le k\le 2^j-2\} \cup \{\phi_{j;2^{j}-1}\},\\
	&\Psi_j^{bc}=\{\psi_{j;0}^{L,bc}, \psi_{j;0}^{1}, \psi_{j;1}\}
	\cup \{\psi_{j;k} \setsp 2\le k\le 2^j-2\}\cup
	 \{\psi_{j;2^{j}-1},\psi^{R,bc}_{j;2^j-1}\}
	\end{align*}
	with $\#\psi^{L,bc}=\#\psi^{R,bc}=1$,
	$\#\Phi^{bc}_j=3(2^j)-1$ and
	$\#\Psi^{bc}_j=3(2^j)$, where $\psi^{R,bc}:=\mathring{\psi}^{L,bc}(1-\cdot)$ and
	\begin{align*}
	\psi^{L,bc} &:=-\tfrac{\sqrt{11}(4+\sqrt{5})}{33} \phi^{2}(2\cdot) + \phi^{3}(2\cdot) + \left[0,\tfrac{\sqrt{11}(4-\sqrt{5})}{33},1\right]\phi(2\cdot-1),\\
	\mathring{\psi}^{L,bc}  &:= \left[\tfrac{2}{\sqrt{7}},\tfrac{\sqrt{11}(\sqrt{5}-4)}{33},-1\right] \mathring{\phi}(2\cdot -1).
	\end{align*}
	Note that $\vmo(\psi^{L,bc})=\vmo(\psi^{R,bc})=\vmo(\psi)=2$ and $\Phi^{bc}_j=\Phi_j\bs \{\phi^L_{j;0},\phi^R_{j;2^j-1}\}$ as in \cref{prop:mod}.
	Moreover, the dual Riesz basis $\tilde{\cB}_J^{bc}$ of $\cB_J^{bc}$ is given by
	 $\tilde{\cB}_J^{bc}=\tilde{\Phi}_j^{bc}\cup \{\tilde{\Psi}_j^{bc} \setsp j\ge J\}$ with $J\ge \tilde{J}_0=2$ and
	\begin{align*}
	&\tilde{\Phi}_j^{bc}=
	\{\tilde{\phi}^{L,bc}_{j;0}\}
	\cup \{\phi_{j;k} \setsp 2\le k\le 2^j-2\} \cup\{\tilde{\phi}^{R,bc}_{j;2^{j-1}}\}, \quad \mbox{with} \quad \tilde{\phi}^{R,bc}:=\tilde{\mathring{\phi}}^{L,bc}(1-\cdot),\\
	 &\tilde{\Psi}^{bc}_j=\{\tilde{\psi}^{L,bc}_{j;0}\}
	\cup \{\psi_{j;k} \setsp 2\le k\le 2^j-2\} \cup\{\tilde{\psi}^{R,bc}_{j;2^j-1}\}, \quad \mbox{with}
	\quad \tilde{\psi}^{R,bc}:=\tilde{\mathring{\psi}}^{L,bc}(1-\cdot),
	\end{align*}
	 $\#\tilde{\phi}^{L,bc}=\#\tilde{\psi}^{L,bc}=5$, $\#\tilde{\phi}^{R,bc}=3$, and $\#\tilde{\psi}^{R,bc}=4$,
	where
	\begin{align*}
	\tilde{\phi}^{L,bc} & :=
	 \sqrt{\tfrac{22}{7(7+\sqrt{5})}}\left[0,\tfrac{4\sqrt{7}(4-\sqrt{5})}{11},\tfrac{21-8\sqrt{5}}{11},0,0\right]^{\tp}\phi^{L}+ [\phi^{2}(\cdot-1),\phi^{3}(\cdot-1),\phi(\cdot-2)]^{\tp},\\
	\tilde{\mathring{\phi}}^{L,bc} & := \lambda_2^{-1}\left[\tfrac{21+8\sqrt{5}}{11},0,-\tfrac{4\sqrt{7}(4+\sqrt{5})}{11}\right]^{\tp}\mathring{\phi}^{L}+ \mathring{\phi}(\cdot -1),\\
	\tilde{\psi}^{L,bc} & :=
	{\begin{bmatrix}
		 \frac{\sqrt{11}(114-51\sqrt{5})}{104} &\frac{315-54\sqrt{5}}{104} &\frac{9\sqrt{7}(-2+\sqrt{5})}{26} & \frac{3\sqrt{11}(-2+\sqrt{5})}{104} & \frac{18\sqrt{5}-27}{104}\\
		 \frac{3\sqrt{154}(31\sqrt{5}-69)}{1144} & \frac{\sqrt{14}(167\sqrt{5}-41)}{1144} & \frac{\sqrt{2}(207-49\sqrt{5})}{143} & -\frac{3\sqrt{154}(3\sqrt{5}-1)}{1144} & -\frac{9\sqrt{14}(1+\sqrt{5})}{104} \\
		0 & 0 & 0 & 0 & 0\\
		\frac{3\sqrt{77}(6\sqrt{5}-13)}{308} & \frac{\sqrt{7}(27\sqrt{5}-86)}{308} & \tfrac{8-2\sqrt{5}}{11} & -\frac{3\sqrt{77}(3+2\sqrt{5})}{308} &
		-\frac{\sqrt{7}(-6+\sqrt{5})}{28}\\
		-\frac{3\sqrt{14}(5\sqrt{5}-11)}{56} & -\frac{\sqrt{154}(25\sqrt{5}-67)}{616} & \frac{\sqrt{22}(\sqrt{5}-3)}{11} & \frac{3\sqrt{14}(1+\sqrt{5})}{56} & \frac{\sqrt{154}(13\sqrt{5}-47)}{616}\\
		\end{bmatrix}
		\tilde{\phi}^{L,bc}(2\cdot)}\\
	& \quad
	+ {\begin{bmatrix}
		0_{2 \times 3}\\
		2b(0)
		\end{bmatrix}
		{\phi}(2 \cdot -2) +
		\begin{bmatrix}
		0_{2 \times 3}\\
		2b(1)
		\end{bmatrix}
		{\phi}(2 \cdot -3)},
	\\
	\tilde{\mathring{\psi}}^{L,bc} & :=
	{\begin{bmatrix}
		 -\tfrac{\sqrt{2}(837+377\sqrt{5})}{11} & \tfrac{3\sqrt{154}(1503\sqrt{5}+3361)}{616} & -\tfrac{\sqrt{14}(8683\sqrt{5}+19321)}{616}\\
		\frac{8+2\sqrt{5}}{11} & -\frac{3\sqrt{77}(13+6\sqrt{5})}{308} & \frac{\sqrt{7}(27\sqrt{5}+86)}{308}\\
		\frac{\sqrt{22}(\sqrt{5}+3)}{11} & -\frac{3\sqrt{14}(11+5\sqrt{5})}{56} & \frac{\sqrt{154}(67+25\sqrt{5})}{616}\\
		\frac{\sqrt{7}(504+225\sqrt{5})}{11} & -\frac{\sqrt{11}(1347\sqrt{5}+3012)}{44} & \frac{2592\sqrt{5}+5715}{44}
		 \end{bmatrix}\tilde{\mathring{\phi}}^{L,bc}(2\cdot)
	}\\
	& \quad +
	{\begin{bmatrix}
		0 & -\frac{3\sqrt{154}(189+83\sqrt{5})}{616} & \frac{9\sqrt{14}(13\sqrt{5}+31)}{56}\\
		1 & \frac{3\sqrt{77}(-3+2\sqrt{5})}{308} & -\frac{\sqrt{7}(6+\sqrt{5})}{28}\\
		0 & \frac{3\sqrt{14}(\sqrt{5}-1)}{56} & -\frac{\sqrt{154}(13\sqrt{5}+47)}{616}\\
		0& \frac{\sqrt{11}(75\sqrt{5}+168)}{44} & -\frac{81+36\sqrt{5}}{4}
		\end{bmatrix}
	} \mathring{\phi}(2\cdot -2)
	+ \begin{bmatrix}
	2b(-1)\\
	0_{1 \times 3}
	\end{bmatrix} \mathring{\phi}(2\cdot -3)
	+ \begin{bmatrix}
	2b(-2)\\
	0_{1 \times 3}
	\end{bmatrix} \mathring{\phi}(2\cdot -4).
	\end{align*}
	Note that $\tilde{\phi}^{L,bc}$ and $\tilde{\mathring{\phi}}^{L,bc}$ satisfy the refinement equation in \eqref{I:phi:dual} as follows:
	\begin{align*}
	\tilde{\phi}^{L,bc} & =
	{\begin{bmatrix}
		\tfrac{3}{4} & \tfrac{\sqrt{11}(-4+\sqrt{5})}{44} & \tfrac{\sqrt{77}}{11} & \tfrac{3}{4} & \tfrac{\sqrt{11}(4+\sqrt{5})}{44} \\
		-\tfrac{9\sqrt{11}(-4+\sqrt{5})}{44} & \frac{65-8\sqrt{5}}{44} & \tfrac{\sqrt{7}(-4+\sqrt{5})}{11} & \tfrac{3\sqrt{11}(-4+ \sqrt{5})}{44} & -\tfrac{1}{4}\\
		 \tfrac{\sqrt{77}(39-18\sqrt{5})}{308} & \tfrac{\sqrt{7}(86-27\sqrt{5})}{308} & \tfrac{2\sqrt{5}-8}{11} & \tfrac{\sqrt{77}(9+6\sqrt{5})}{308} & \tfrac{\sqrt{7}(\sqrt{5}-6)}{28}\\
		\multicolumn{5}{c}{0_{2\times 5}}
		\end{bmatrix}
	}
	\tilde{\phi}^{L,bc}(2\cdot)\\
	&\quad +
	{\begin{bmatrix}
		0_{2\times 3}\\
		2a(0)
		\end{bmatrix}} \phi(2\cdot-2) +
	{\begin{bmatrix}
		0_{2 \times 3}\\
		2a(1)
		\end{bmatrix}} \phi(2\cdot-3),\\
	\tilde{\mathring{\phi}}^{L,bc} & =
	{\begin{bmatrix}
		-\tfrac{8+2\sqrt{5}}{11} & \tfrac{\sqrt{77}(18\sqrt{5}+39)}{308} & -\tfrac{\sqrt{7}(27\sqrt{5}+86)}{308}\\
		\tfrac{\sqrt{77}}{11} & \frac{3}{4} & \tfrac{\sqrt{11}(4+\sqrt{5})}{44}\\
		\tfrac{\sqrt{7}(4+\sqrt{5})}{11} & -\tfrac{9\sqrt{11}(4+\sqrt{5})}{44} & \tfrac{65+8\sqrt{5}}{44}
		\end{bmatrix}} \tilde{\mathring{\phi}}^{L,bc}(2\cdot)
	+{\begin{bmatrix}
		1 & -\tfrac{\sqrt{77}(6\sqrt{5}-9)}{308} & \tfrac{\sqrt{7}(6+\sqrt{5})}{28}\\
		0 & \tfrac{3}{4} & \tfrac{\sqrt{11}(-4+\sqrt{5})}{44}\\
		0 & \tfrac{3\sqrt{11}(4+\sqrt{5})}{44} & -\frac{1}{4}
		\end{bmatrix}}
	\phi(2\cdot-2)\\
	& \quad + 2a(-1)\phi(2\cdot-3) + 2a(-2)\phi(2\cdot-4).
	\end{align*}
	We can also directly check that all the conditions in \cref{thm:direct} are satisfied for the Riesz basis $\cB^{bc}_J$ with $J\ge 2$. To avoid complicated presentation, we only mention that
	the condition in \eqref{tAL} is satisfied with $\rho(\tilde{A}_L^{bc})=1/2$ for both left and right dual boundary elements. Note that the dual Riesz basis $\tilde{\cB}^{bc}_J$ for $J=1$ has to be computed via item (5) of \cref{thm:bw:0N}.
	See \cref{fig:hardinr3} for the graphs of $\phi,\psi$ and all boundary elements.
\end{example}

\begin{figure}[htbp]
	\centering
	\begin{subfigure}[b]{0.24\textwidth} \includegraphics[width=\textwidth,height=0.6\textwidth]{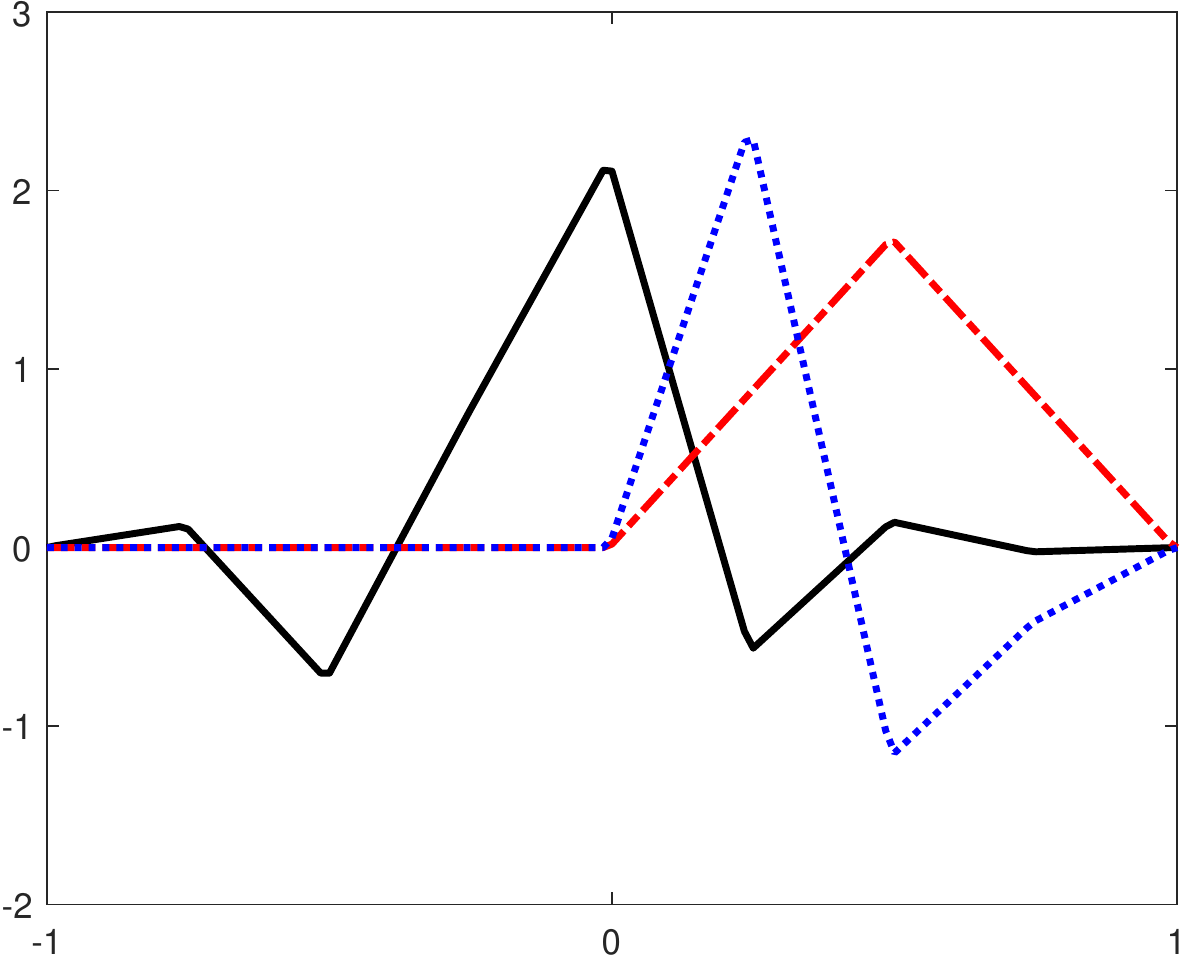} \caption{$\phi=(\phi^1,\phi^2,\phi^3)^\tp$}
	\end{subfigure} \begin{subfigure}[b]{0.24\textwidth} \includegraphics[width=\textwidth,height=0.6\textwidth]{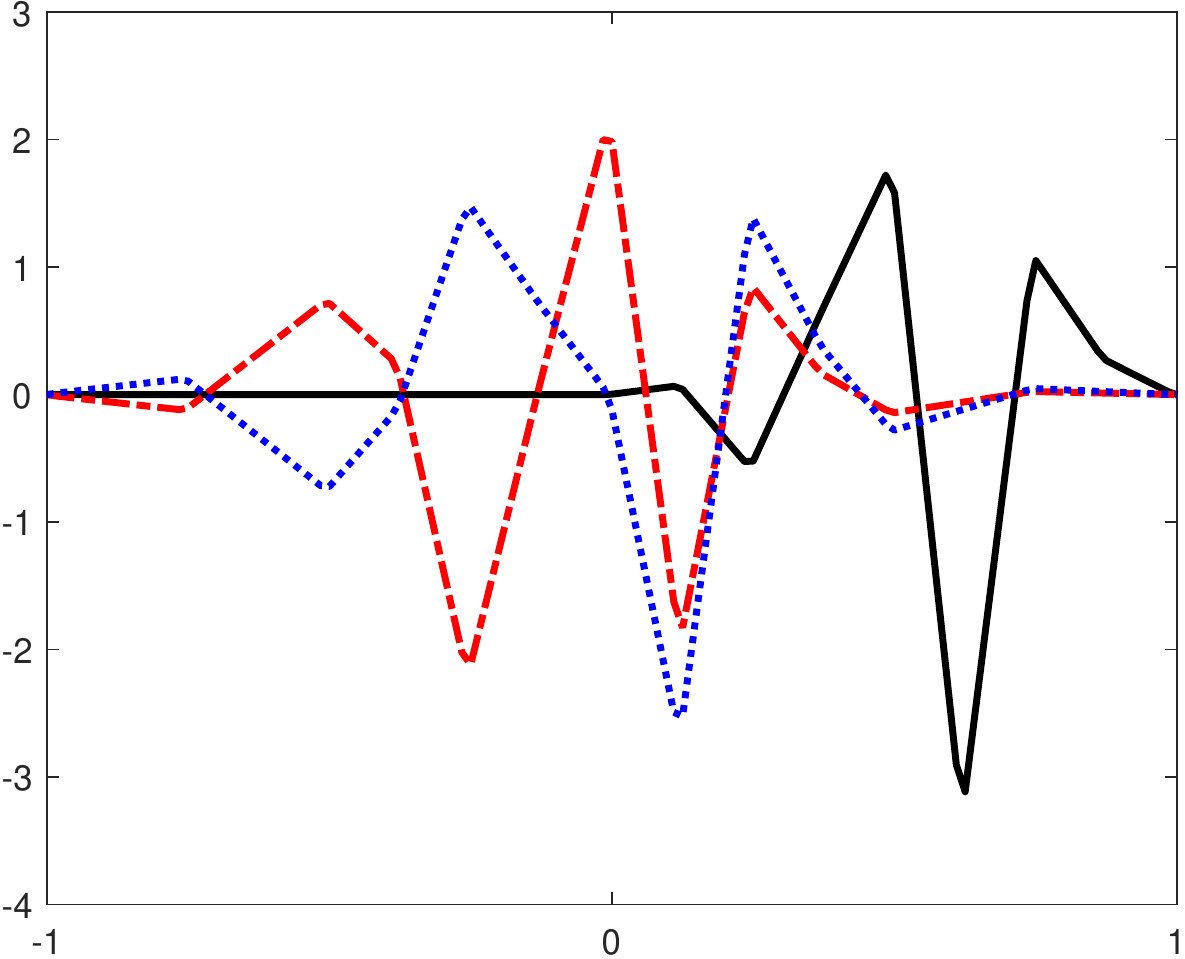} \caption{$\psi=(\psi^1,\psi^2,\psi^3)^\tp$}
	\end{subfigure}
	\begin{subfigure}[b]{0.24\textwidth} \includegraphics[width=\textwidth,height=0.6\textwidth]{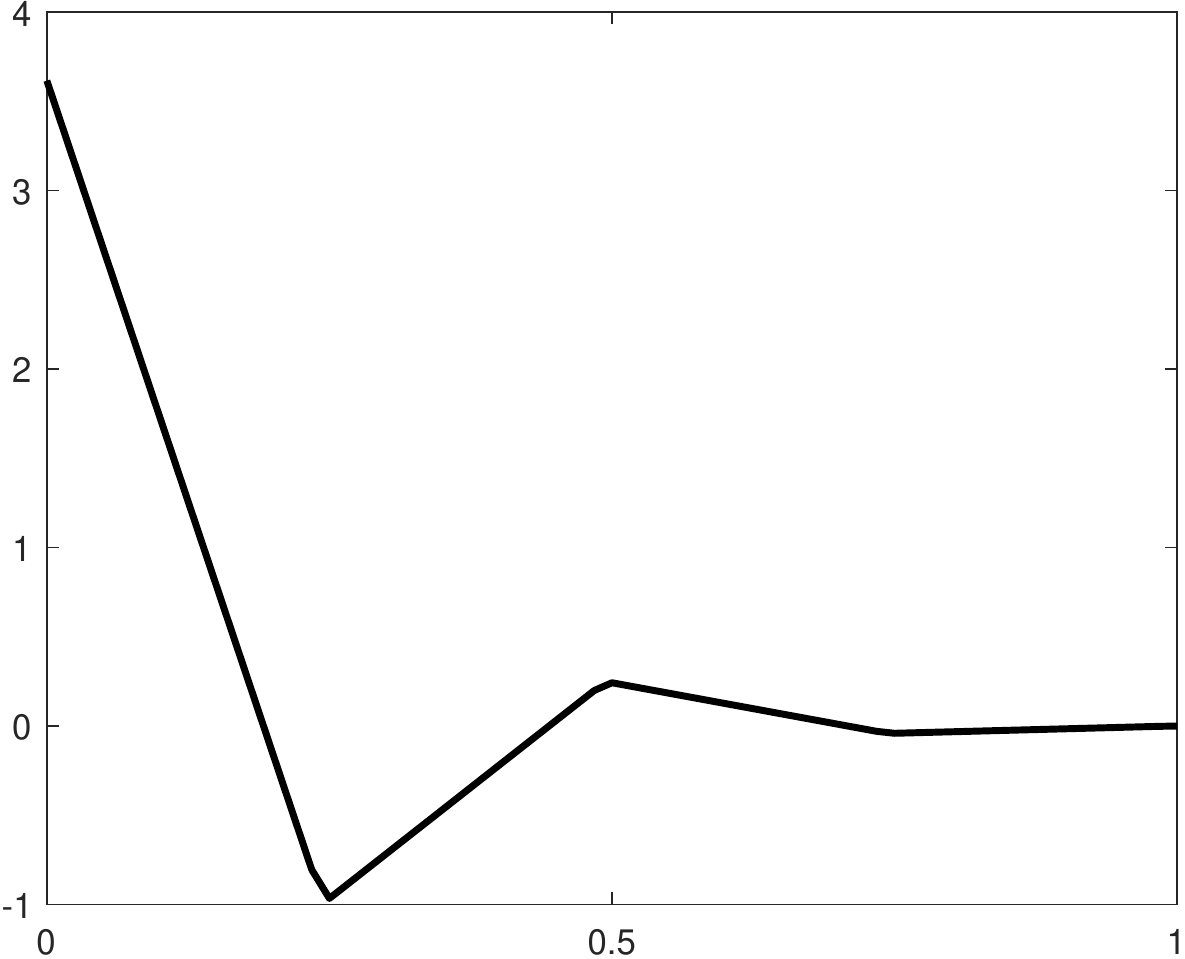}
		\caption{$\phi^{L}$}
	\end{subfigure} \begin{subfigure}[b]{0.24\textwidth} \includegraphics[width=\textwidth,height=0.6\textwidth]{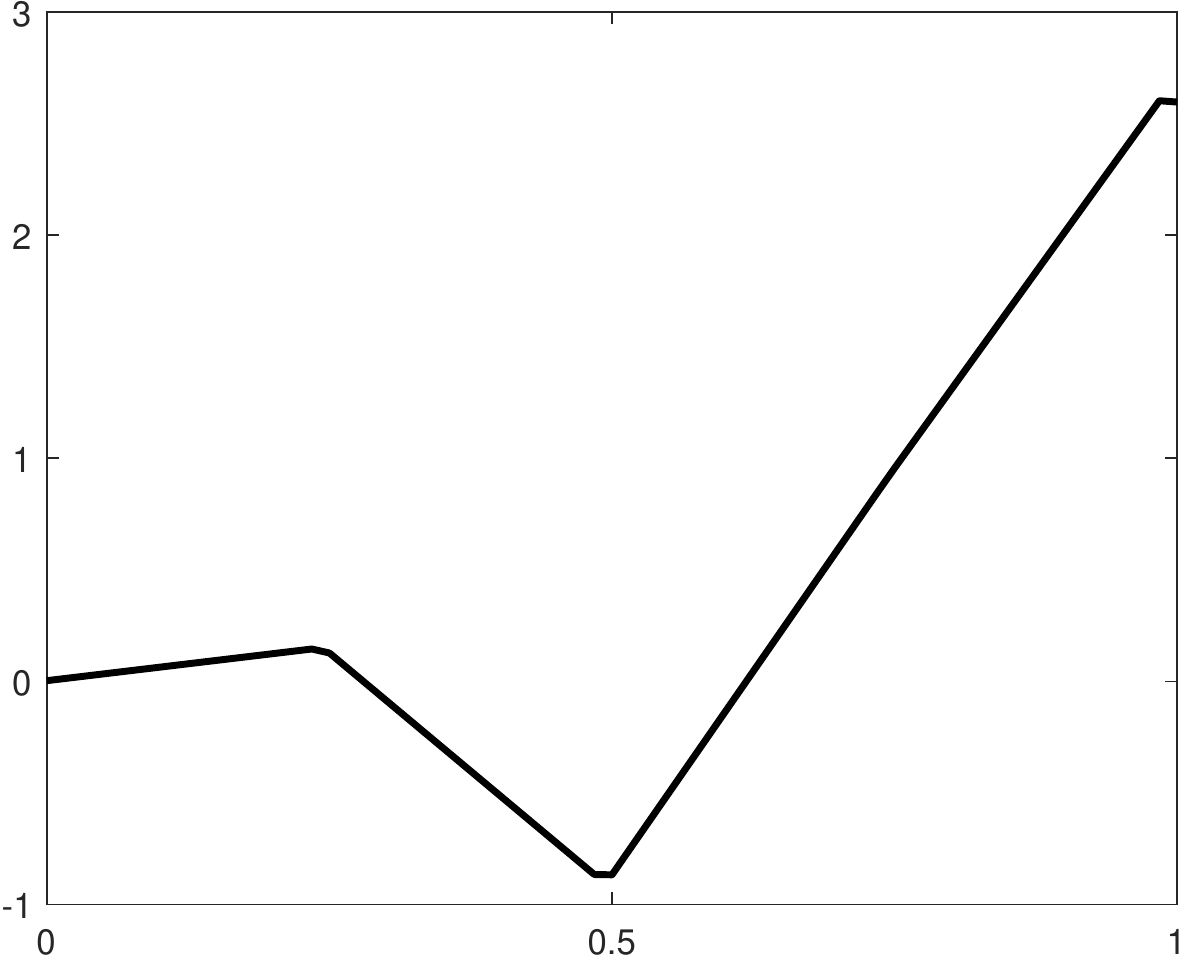}
		\caption{$\phi^{R}$}
	\end{subfigure}\\ \begin{subfigure}[b]{0.24\textwidth} \includegraphics[width=\textwidth,height=0.6\textwidth]{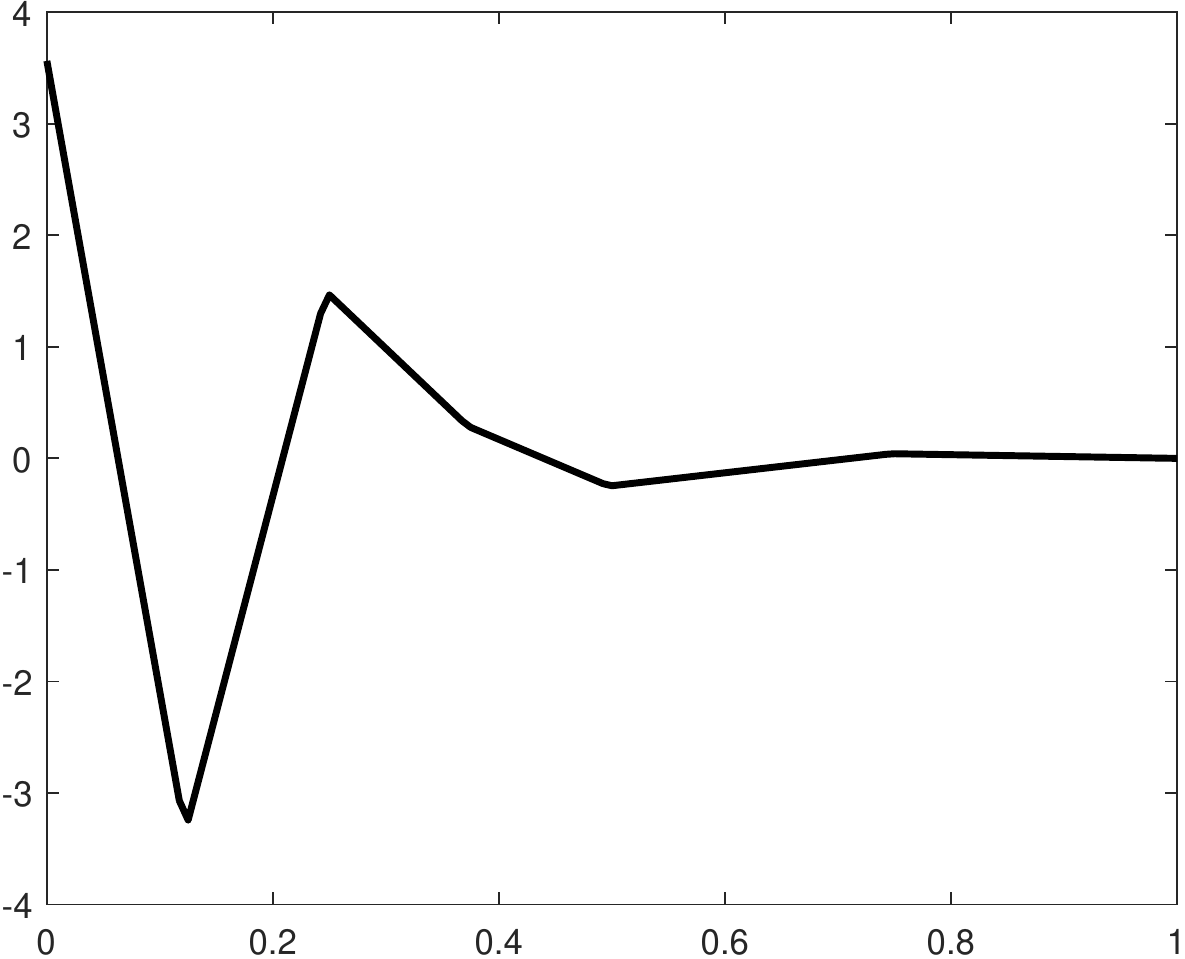}
		\caption{$\psi^{L}$}
	\end{subfigure}
	\begin{subfigure}[b]{0.24\textwidth} \includegraphics[width=\textwidth,height=0.6\textwidth]{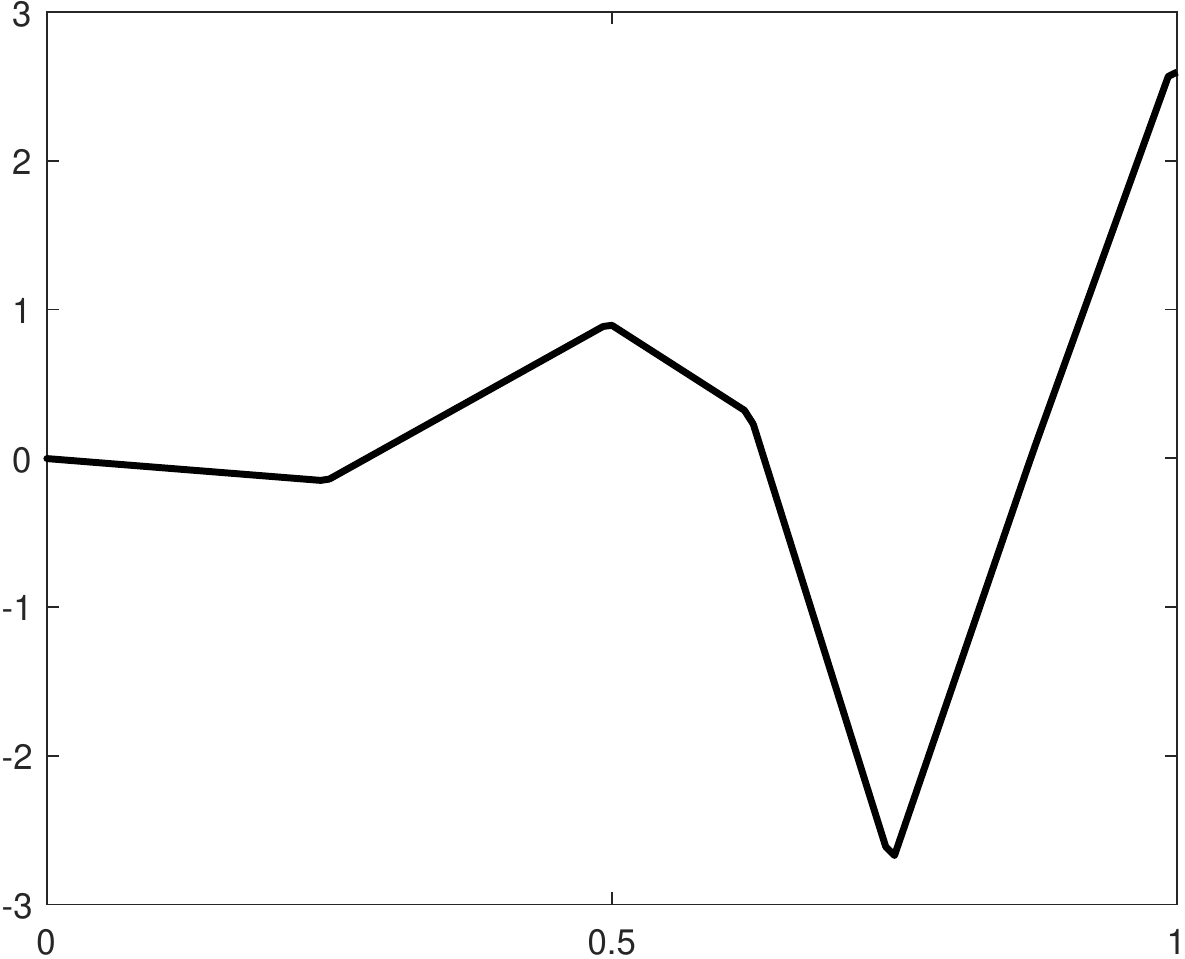}
		\caption{$\psi^{R}$}
	\end{subfigure}
	\begin{subfigure}[b]{0.24\textwidth} \includegraphics[width=\textwidth,height=0.6\textwidth]{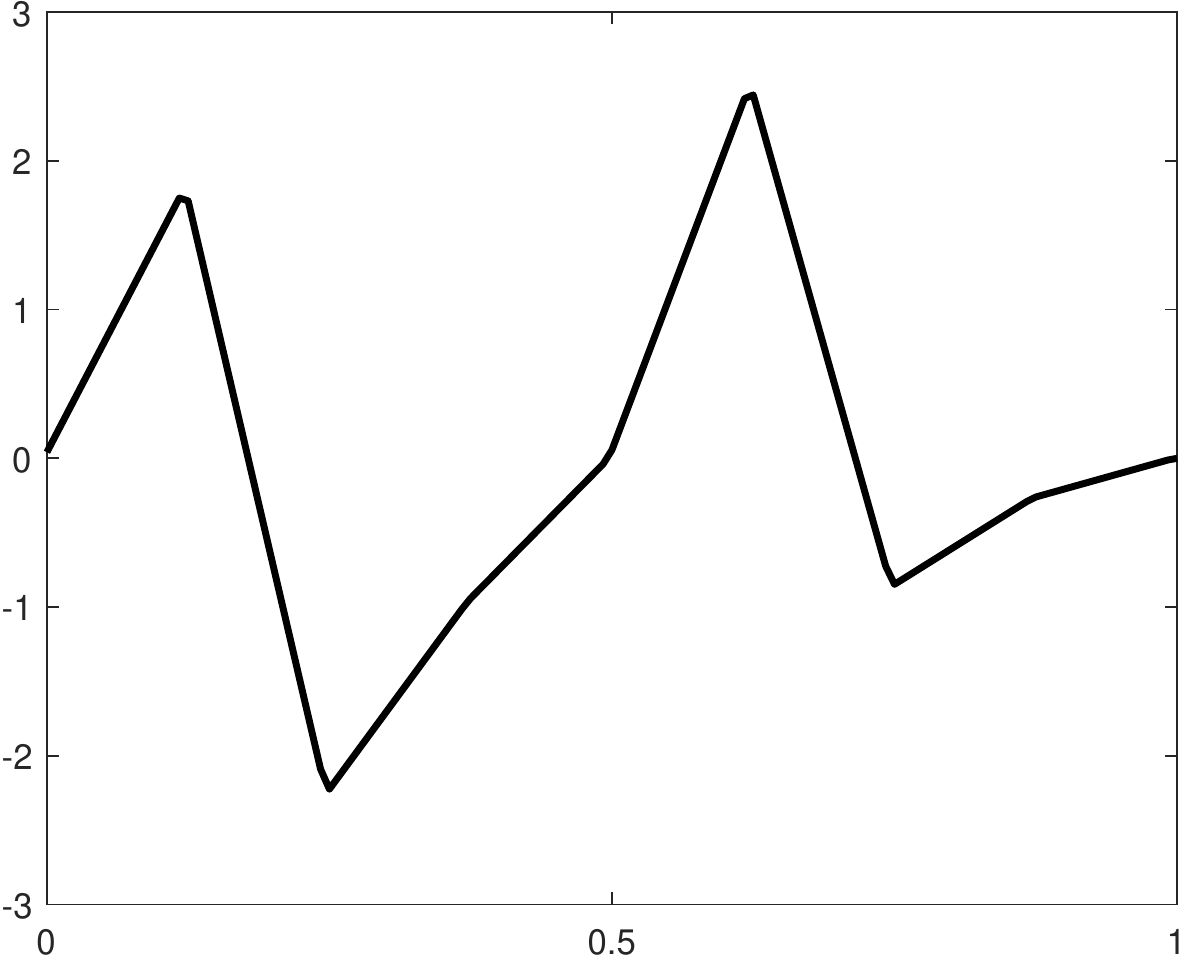}
		\caption{$\psi^{L,bc}$}
	\end{subfigure}
	\begin{subfigure}[b]{0.24\textwidth} \includegraphics[width=\textwidth,height=0.6\textwidth]{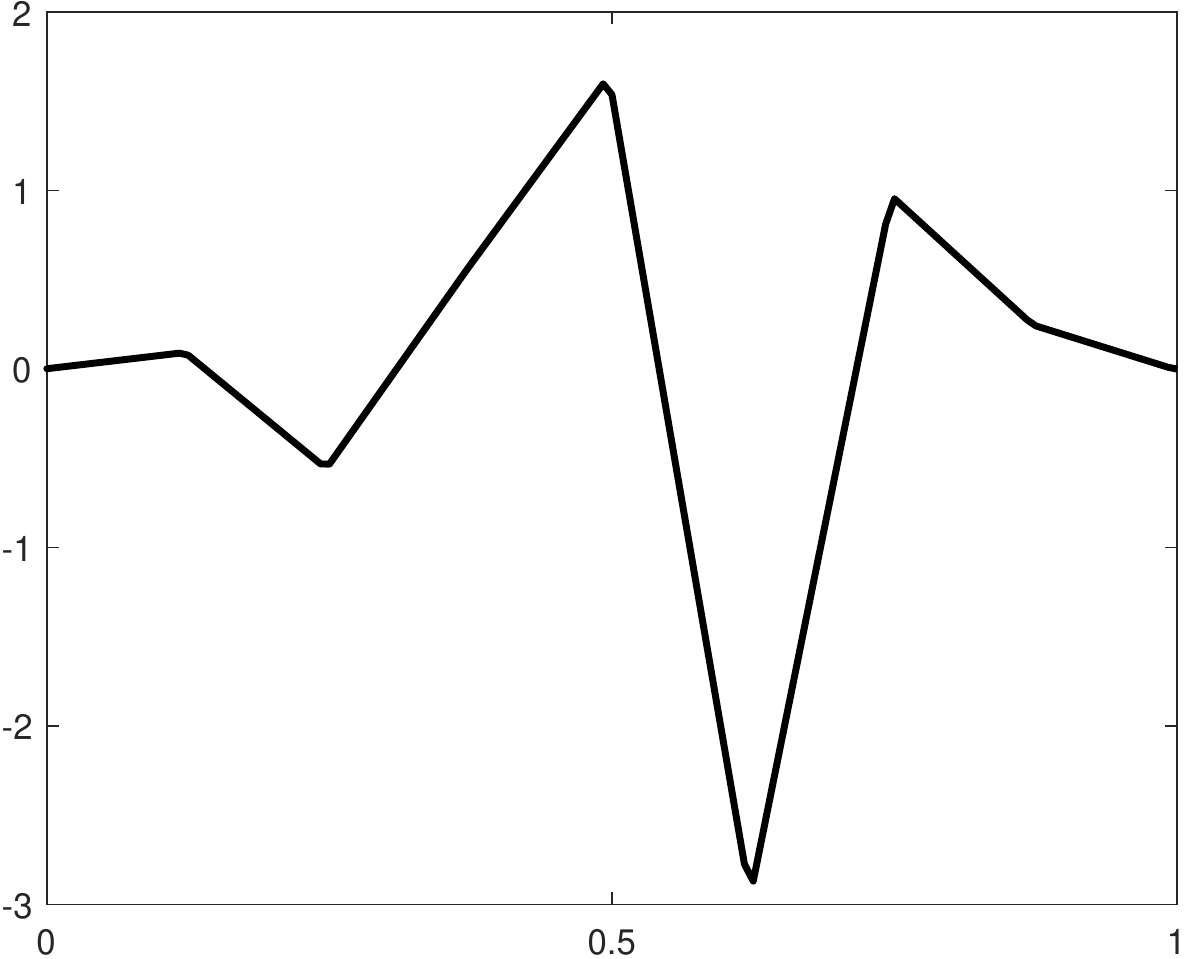}
		\caption{$\psi^{R,bc}$}
	\end{subfigure}
	\caption{The generators of the orthonormal basis $\cB_J$ and the Riesz basis $\cB_J^{bc}$ of $L_{2}([0,1])$ in \cref{ex:hardinr3} with $J \ge 1$ such that $h(0)=h(1)=0$ for all $h\in \cB_J^{bc}$. The black, red, and blue lines correspond to the first, second, and third components of a vector function. Note that $\vmo(\psi^L)=\vmo(\psi^R)=\vmo(\psi^{L,bc})=\vmo(\psi^{R,bc})=\vmo(\psi)=2$.}
	\label{fig:hardinr3}
\end{figure}

\begin{example} \label{ex:hmtquad}
	\normalfont Consider a biorthogonal wavelet $(\{\tilde{\phi};\tilde{\psi}\},\{\phi;\psi\})$ in \cite[Example~6.5.2]{hanbook} with $\wh{\phi}(0)=\wh{\tilde{\phi}}(0)=(1,0)^\tp$ and a biorthogonal wavelet filter bank $(\{\tilde{a};\tilde{b}\},\{a;b\})$ given by
	\begin{align*} 
	a=&{\left\{ \begin{bmatrix} \tfrac{1}{4} &\tfrac{1}{2}\\[0.3em]
		-\tfrac{1}{16} & -\tfrac{1}{8}\end{bmatrix},
		\begin{bmatrix} \tfrac{1}{2} &0 \\[0.3em]
		0 &\tfrac{1}{4}\end{bmatrix},
		\begin{bmatrix} \tfrac{1}{4} &-\tfrac{1}{2}\\[0.3em]
		\tfrac{1}{16} &-\tfrac{1}{8}\end{bmatrix}\right\}_{[-1,1]}}, \quad
	&&b={\left\{
		\begin{bmatrix} -\tfrac{1}{4} & -\tfrac{1}{2}\\[0.3em]
		\tfrac{37}{1456} & \tfrac{37}{728}\end{bmatrix},
		\begin{bmatrix} \tfrac{1}{2} &0 \\[0.3em]
		0 &\tfrac{1}{4}\end{bmatrix},
		\begin{bmatrix} -\tfrac{1}{4} & \frac{1}{2} \\[0.3em]
		-\frac{37}{1456} &\tfrac{37}{728}\end{bmatrix}\right\}_{[-1,1]}},\\
	\tilde{a}=&{\left\{
		\begin{bmatrix} \tfrac{1}{4} &\tfrac{1}{8}\\[0.3em]
		-\tfrac{91}{64} &-\tfrac{91}{128}\end{bmatrix},
		\begin{bmatrix} \tfrac{1}{2} &0 \\[0.3em]
		0 &\tfrac{37}{64}\end{bmatrix},
		\begin{bmatrix} \tfrac{1}{4} &-\tfrac{1}{8}\\[0.3em]
		\tfrac{91}{64} &-\tfrac{91}{128}\end{bmatrix}\right\}_{[-1,1]}}, \quad
	&&\tilde{b}={\left\{
		\begin{bmatrix} -\tfrac{1}{4} & -\tfrac{1}{8}\\[0.3em]
		\tfrac{91}{64} & \tfrac{91}{128}\end{bmatrix},
		\begin{bmatrix} \tfrac{1}{2} &0 \\[0.3em]
		0 &\tfrac{91}{64}\end{bmatrix},
		\begin{bmatrix} -\tfrac{1}{4} & \tfrac{1}{8}\\[0.3em]
		-\tfrac{91}{64} & \tfrac{91}{128}\end{bmatrix}\right\}_{[-1,1]}}.
	\end{align*}
	Note that $\phi= (\phi^{1},\phi^{2})^{\tp}$ is the Hermite quadratic splines (\cite{hm07}).
	Then $\sm(a)=2.5$, $\sm(\tilde{a})\approx 0.184258$, $\sr(a)=3$, $\sr(\tilde{a})=1$, and its matching filters $\vgu,\tilde{\vgu} \in (l_{0}(\Z))^{1 \times 2}$ with $\wh{\vgu}(0)\wh{\phi}(0)=\wh{\tilde{\vgu}}(0)\wh{\tilde{\phi}}(0)=1$ are given by $\wh{\vgu}(0)=\wh{\tilde{\vgu}}(0)=(1,0)$, $\wh{\vgu}'(0)=i(0,1)$ and $\wh{\vgu}''(0)=(0,0)$ (see \cite[Theorem~6.2.3]{hanbook} and \cite{han01,han03jat}).
	We use the direct approach in \cref{sec:direct}.
	By item (i) of \cref{prop:phicut} with $n_\phi=1$, the left boundary refinable vector function is given by $\phi^{L}:=\phi \chi_{[0,\infty)}$ with $\#\phi^L=2$ and satisfies
	\be \label{phiL:ex}
	\phi^{L}=(\phi^L_1,\phi^L_2)^\tp
	:=(\phi^L_1(2\cdot),
	\tfrac{1}{2}\phi^L_2(2\cdot))^\tp
	+ 2a(1)
	\phi(2\cdot-1).	
	\ee
	Taking $n_\psi=1$ and $m_{\phi}=2$ in \cref{thm:direct}, we have  $\psi^{L}:= [\tfrac{1}{6},-1]\phi^{L}(2\cdot)$ with $\#\psi^L=1$ satisfying both items (i) and (ii) of \cref{thm:direct} with $\fs(C)=\fs(D)=[1,1]$, $B_0=[-3, -\tfrac{3}{2}, 3, -\tfrac{3}{2}]^{\tp}$ and
	\[
	A_0={\begin{bmatrix}
		\tfrac{3}{2} &\tfrac{1}{4} &-\tfrac{1}{2} &\tfrac{1}{4}\\
		-6 & -1 & 6 & -3\end{bmatrix}^\tp},\quad
	C(1):=
	{\begin{bmatrix}
		0 & 0 & \tfrac{1}{4} & \tfrac{1}{8} \\[0.2em]
		0 & 0 & -\tfrac{91}{64} & -\frac{91}{128}
		\end{bmatrix}^\tp}, \quad
	D(1)=
	{\begin{bmatrix}
		0 & 0 & -\tfrac{1}{4} & -\tfrac{1}{8}\\[0.2em]
		0 & 0 & {\frac{91}{64}} & {\frac{91}{128}}
		\end{bmatrix}^{\tp}}.
	\]
	Since \eqref{tAL} is satisfied with $\rho(\tilde{A}_{L})=1/2$,
	we conclude from \cref{thm:direct,thm:bw:0N} that
	$\cB_J=\Phi_J \cup \{\Psi_j \setsp j\ge J\}$ is a Riesz basis of $L_2([0,1])$ for every $J\ge J_0:=0$, where $\Phi_j$ and $\Psi_j$ in \eqref{Phij} and \eqref{Psij} with $n_{\phi}=n_{\mathring{\phi}}=n_{\psi}=n_{\mathring{\psi}}=1$ are given by
	\begin{align*}
	\Phi_{j} & = \{\phi^{L}_{j;0}\} \cup \{\phi_{j;k}:1\le k \le 2^{j}-1\} \cup \{\phi^{R}_{j;2^{j}-1}\}, \quad \mbox{with} \quad \phi^{R}:=\phi^{L}(1-\cdot),\\
	\Psi_{j} & = \{\psi^{L}_{j;0}\} \cup \{\psi_{j;k}:1\le k \le 2^{j}-1\} \cup \{\psi^{R}_{j;2^{j}-1}\}, \quad \mbox{with} \quad \psi^{R}:=\psi^{L}(1-\cdot)
	\end{align*}
	with $\#\phi^L=\#\phi^R=2$, $\#\psi^L=\#\psi^R=1$, $\#\Phi_j=2^{j+1}+2$ and $\#\Psi_j=2^{j+1}$.
	Note that $\vmo(\psi^L)=\vmo(\psi^R)=\vmo(\psi)=1=\sr(\tilde{a})$.
	The dual Riesz basis $\tilde{\cB}_J$ of $\cB_J$ for $J\ge \tilde{J}_0=0$
	is given by \cref{thm:direct} through \eqref{tphi:implicit} and \eqref{tpsi:implicit}.
	Since $\tilde{\phi}^L$ in \eqref{tphi:implicit} contains interior elements,
	we rewrite $\tilde{\phi}^{L}$ as $\{\tilde{\phi}^L, \tilde{\phi}(\cdot-1)\}$ with true boundary elements $\tilde{\phi}^L$ and $\#\tilde{\phi}^L=2$.
	Similarly, we rewrite $\tilde{\psi}^{L}$
	in \eqref{tpsi:implicit} as $\{\tilde{\psi}^L, \psi(\cdot-1)\}$ with true boundary elements $\tilde{\psi}^L$ and $\#\tilde{\psi}^L=1$.
	Hence,
	$\tilde{\cB}_J=\tilde{\Phi}_J \cup \{\tilde{\Psi}_j \setsp j\ge J\}$ is given by
	\begin{align*}
	\tilde{\Phi}_{j} & = \{\tilde{\phi}^{L}_{j;0}\} \cup \{\tilde{\phi}_{j;k}:1\le k \le 2^{j}-1\} \cup \{\tilde{\phi}^{R}_{j;2^{j}-1}\}, \quad \mbox{with} \quad \tilde{\phi}^{R}:=\tilde{\phi}^{L}(1-\cdot),\\
	\tilde{\Psi}_{j} & = \{\tilde{\psi}^{L}_{j;0}\} \cup \{\tilde{\psi}_{j;k}:1\le k \le 2^{j}-1\} \cup \{\tilde{\psi}^{R}_{j;2^{j}-1}\}, \quad \mbox{with} \quad \tilde{\psi}^{R}:=\tilde{\psi}^{L}(1-\cdot).
	\end{align*}
	Note that $\vmo(\tilde{\psi}^L)=\vmo(\tilde{\psi}^R)=\vmo(\tilde{\psi})=3$
	and hence $\PL_{2}\chi_{[0,1]}\subset\mbox{span}(\Phi_j)$ for all $j\in \NN$. According to Theorem \ref{thm:bw:0N} with $N=1$, $(\tilde{\cB}_J,\cB_J)$ forms a biorthogonal Riesz basis of $L_{2}([0,1])$ for every $J \in \NN$.
	
	By item (ii) of \cref{prop:phicut} with $n_\phi=1$ and $\textsf{p}(x)=(x,x^{2})^\tp$, the left boundary refinable vector function is given by $\phi^{L,bc}:=\phi^{2} \chi_{[0,\infty)}=\phi^L_2$ (the second entry of $\phi^L$) with $\#\phi^{L,bc}=1$
	and satisfies $\phi^{L,bc} = \frac{1}{2}\phi^{L,bc}(2\cdot) + [\tfrac{1}{8},-\tfrac{1}{4}] \phi(2\cdot-1)$ by \eqref{phiL:ex}.
	Taking $n_\psi=1$ and $m_{\phi}=2$ in \cref{thm:direct},
	we have
	\[
	\psi^{L,bc}:= -\phi^L_2(2\cdot) + [\tfrac{1}{12},-\tfrac{1}{6}]\phi(2\cdot-1)
	=\psi^L+[-\tfrac{1}{6}, 0]\phi^L(2\cdot)+ [\tfrac{1}{12}, -\tfrac{1}{6}]\phi(2\cdot-1)
	\]
	with $\#\psi^{L,bc}=1$ satisfying
	both items (i) and (ii) of \cref{thm:direct}, where $A_0^{bc}$, $B_0^{bc}$, $C^{bc}$ and $D^{bc}$ can be easily derived from $A_0, B_0, C$ and $D$. More precisely,
	$A_0^{bc}$ is obtained from $U^{-1} A_0$ by taking out its first row and first column, and $B_0^{bc}, C^{bc}, D^{bc}$ are obtained from $U^{-1}B_0, U^{-1}C, U^{-1}D$, respectively by removing their first rows, where the invertible matrix $U$ is given by
	\[
	U:=I_4+B_0[-\tfrac{1}{6}, 0, \tfrac{1}{12}, -\tfrac{1}{6}]
	=I_4+
	[-3, -\tfrac{3}{2}, 3, -\tfrac{3}{2}]^{\tp}[-\tfrac{1}{6}, 0, \tfrac{1}{12}, -\tfrac{1}{6}].
	\]
	Explicitly, we have
	$A_0^{bc} =[\tfrac{1}{2}, 3, -\tfrac{3}{2}]^\tp$,
	$B_0^{bc}=[-\tfrac{3}{4}, \tfrac{3}{2}, -\tfrac{3}{4}]^\tp$ and
	$\fs(C^{bc})=\fs(D^{bc})=[1,1]$, where
	\[
	C^{bc}(1)=
	{\begin{bmatrix}
		0 & \tfrac{1}{4} & \tfrac{1}{8}\\[0.2em]	 0&-{\frac{91}{64}}&-{\frac{91}{128}}
		\end{bmatrix}^{\tp}}, \qquad
	D^{bc}(1)=
	{\begin{bmatrix}
		0 & -\tfrac{1}{4} & -\tfrac{1}{8}\\[0.2em]
		0&{\frac{91}{64}}&{\frac{91}{128}}
		\end{bmatrix}^{\tp}}.
	\]
	Since \eqref{tAL} is satisfied with
	$\rho(\tilde{A}_{L}^{bc})=1/2$, we conclude from \cref{thm:direct,thm:bw:0N}
	that $\cB_J^{bc}=\Phi_J^{bc} \cup \{\Psi_j^{bc} \setsp j\ge J\}$ is a Riesz basis of $L_2([0,1])$ for every $J\ge J_0:=0$ such that $h(0)=h(1)$ for all $h\in \cB^{bc}_J$, where $\Phi_j^{bc}$ and $\Psi_j^{bc}$ in \eqref{Phij} and \eqref{Psij} with $n_{\phi}=n_{\mathring{\phi}}=n_{\psi}=n_{\mathring{\psi}}=1$ are given by
	\begin{align*}
	\Phi_{j}^{bc} & = \{\phi^{L,bc}_{j;0}\} \cup \{\phi_{j;k}:1\le k \le 2^{j}-1\} \cup \{\phi^{R,bc}_{j;2^{j}-1}\}, \quad \mbox{with} \quad \phi^{R,bc}:=\phi^{L,bc}(1-\cdot),\\
	\Psi_{j}^{bc} & = \{\psi^{L,bc}_{j;0}\} \cup \{\psi_{j;k}:1\le k \le 2^{j}-1\} \cup \{\psi^{R,bc}_{j;2^{j}-1}\}, \quad \mbox{with} \quad \psi^{R,bc}:=\psi^{L,bc}(1-\cdot)
	\end{align*}
	with $\#\phi^{L,bc}=\#\psi^{L,bc}=1$ and
	$\#\Phi^{bc}_j=\#\Psi^{bc}_j=2^{j+1}$.
	Note that $\vmo(\psi^{L,bc})=\vmo(\psi^{R,bc})=\vmo(\psi)=1$
	and $\Phi^{bc}_j=\Phi_j\bs \{\phi^{L,1}_{j;0}, \phi^{R,1}_{j;2^j-1}\}$ as in \cref{prop:mod}.
	The dual Riesz basis $\tilde{\cB}_J^{bc}$ of $\cB_J^{bc}$ with $j\ge \tilde{J}_0:=0$ is given by \cref{thm:direct} through
	\eqref{tphi:implicit} and \eqref{tpsi:implicit}.
	We rewrite $\tilde{\phi}^{L,bc}$ in \eqref{tphi:implicit} as $\{\tilde{\phi}^{L,bc}, \tilde{\phi}(\cdot-1)\}$ with true boundary elements $\tilde{\phi}^{L,bc}$ and $\#\tilde{\phi}^{L,bc}=1$.
	Similarly, we rewrite $\tilde{\psi}^{L,bc}$
	in \eqref{tpsi:implicit} as $\{\tilde{\psi}^{L,bc}, \tilde{\psi}(\cdot-1)\}$ with true boundary elements $\tilde{\psi}^{L,bc}$ and $\#\tilde{\psi}^{L,bc}=1$.
	Hence,
	$\tilde{\cB}_J^{bc}=\tilde{\Phi}_J^{bc} \cup \{\tilde{\Psi}_j^{bc} \setsp j\ge J\}$ is given by
	\begin{align*}
	\tilde{\Phi}_{j}^{bc} & = \{\tilde{\phi}^{L,bc}_{j;0}\} \cup \{\tilde{\phi}_{j;k}:1\le k \le 2^{j}-1\} \cup \{\tilde{\phi}^{R,bc}_{j;2^{j}-1}\}, \quad \mbox{with} \quad \tilde{\phi}^{R}:=\tilde{\phi}^{L,bc}(1-\cdot),\\
	\tilde{\Psi}_{j}^{bc} & = \{\tilde{\psi}^{L,bc}_{j;0}\} \cup \{\tilde{\psi}_{j;k}:1\le k \le 2^{j}-1\} \cup \{\tilde{\psi}^{R,bc}_{j;2^{j}-1}\}, \quad \mbox{with} \quad \tilde{\psi}^{R}:=\tilde{\psi}^{L,bc}(1-\cdot).
	\end{align*}
	By Theorem \ref{thm:bw:0N} with $N=1$, $(\tilde{\cB}_J^{bc},\cB_J^{bc})$ forms a biorthogonal Riesz basis of $L_{2}([0,1])$ with $h(0)=h(1)=0$ for all $h\in \cB_J^{bc}$
	for $J \in \NN$. See \cref{fig:hmtquad} for the graphs of $\phi$, $\psi$, $\tilde{\phi}$, $\tilde{\psi}$, and all boundary elements.
\end{example}

\begin{figure}[htbp]
	\centering \begin{subfigure}[b]{0.24\textwidth} \includegraphics[width=\textwidth,height=0.6\textwidth]{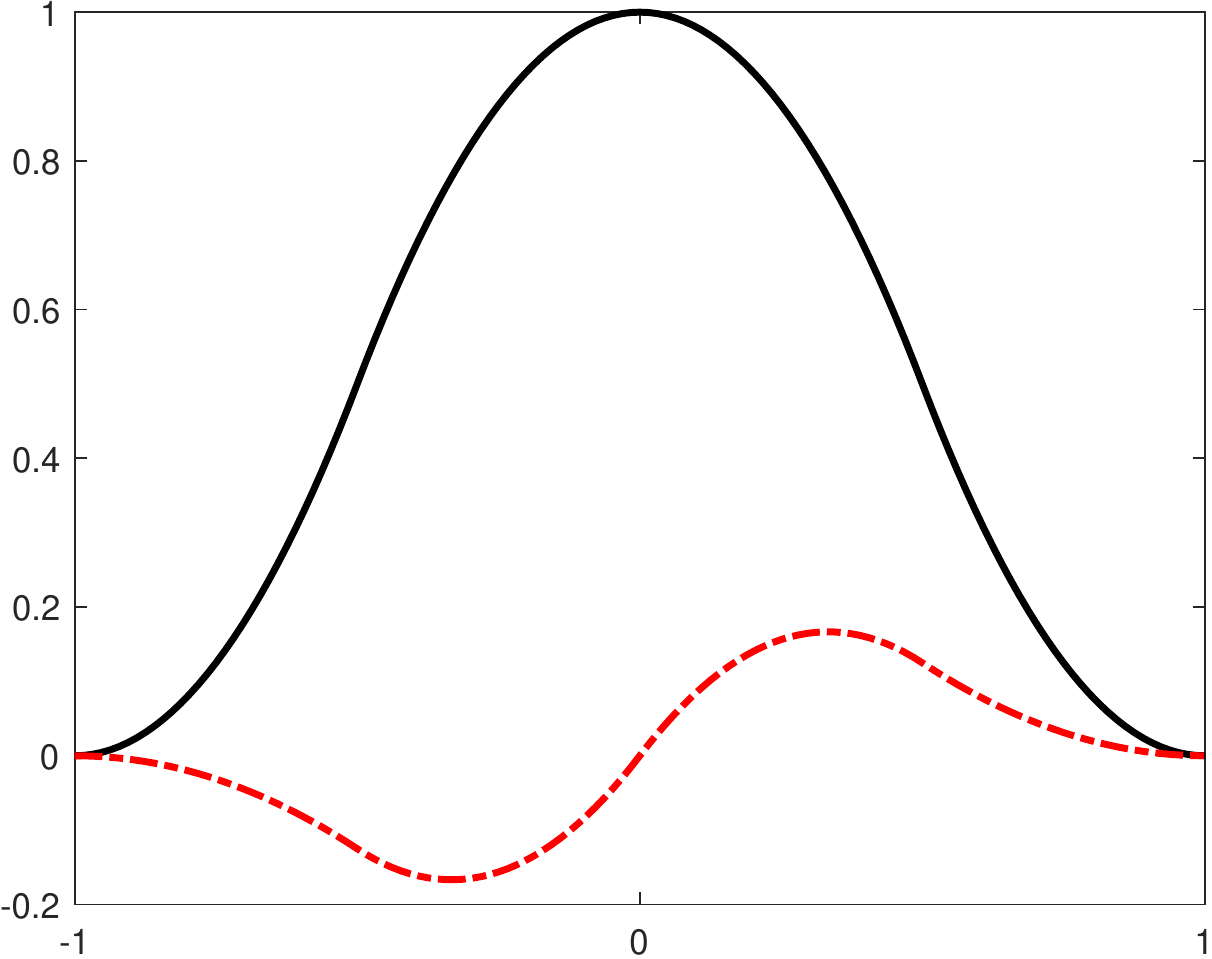}
		 \caption{$\phi=(\phi^{1},\phi^{2})^{\tp}$}
	\end{subfigure} \begin{subfigure}[b]{0.24\textwidth} \includegraphics[width=\textwidth,height=0.6\textwidth]{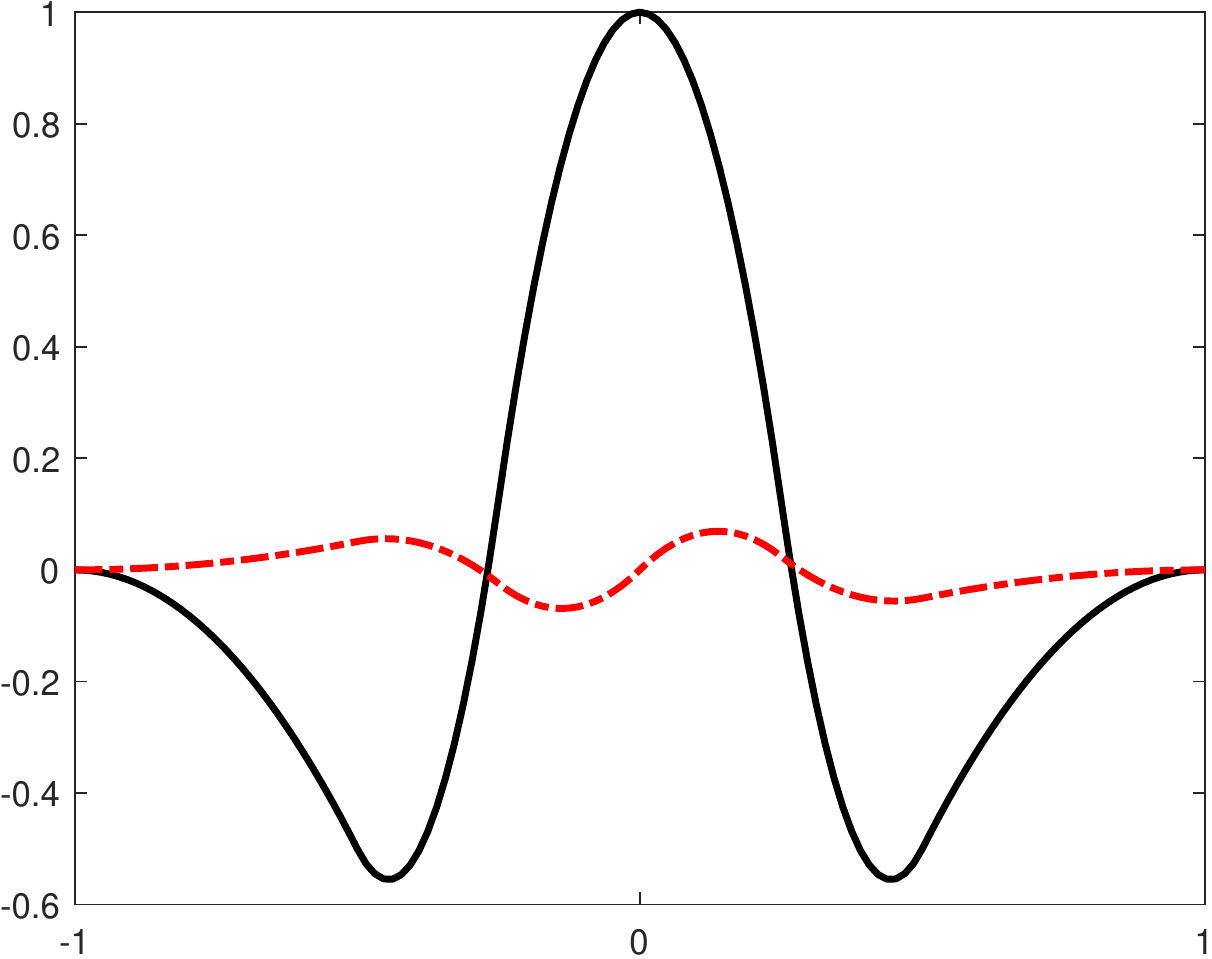}
		 \caption{$\psi=(\psi^{1},\psi^{2})^{\tp}$}
	\end{subfigure} \begin{subfigure}[b]{0.24\textwidth} \includegraphics[width=\textwidth,height=0.6\textwidth]{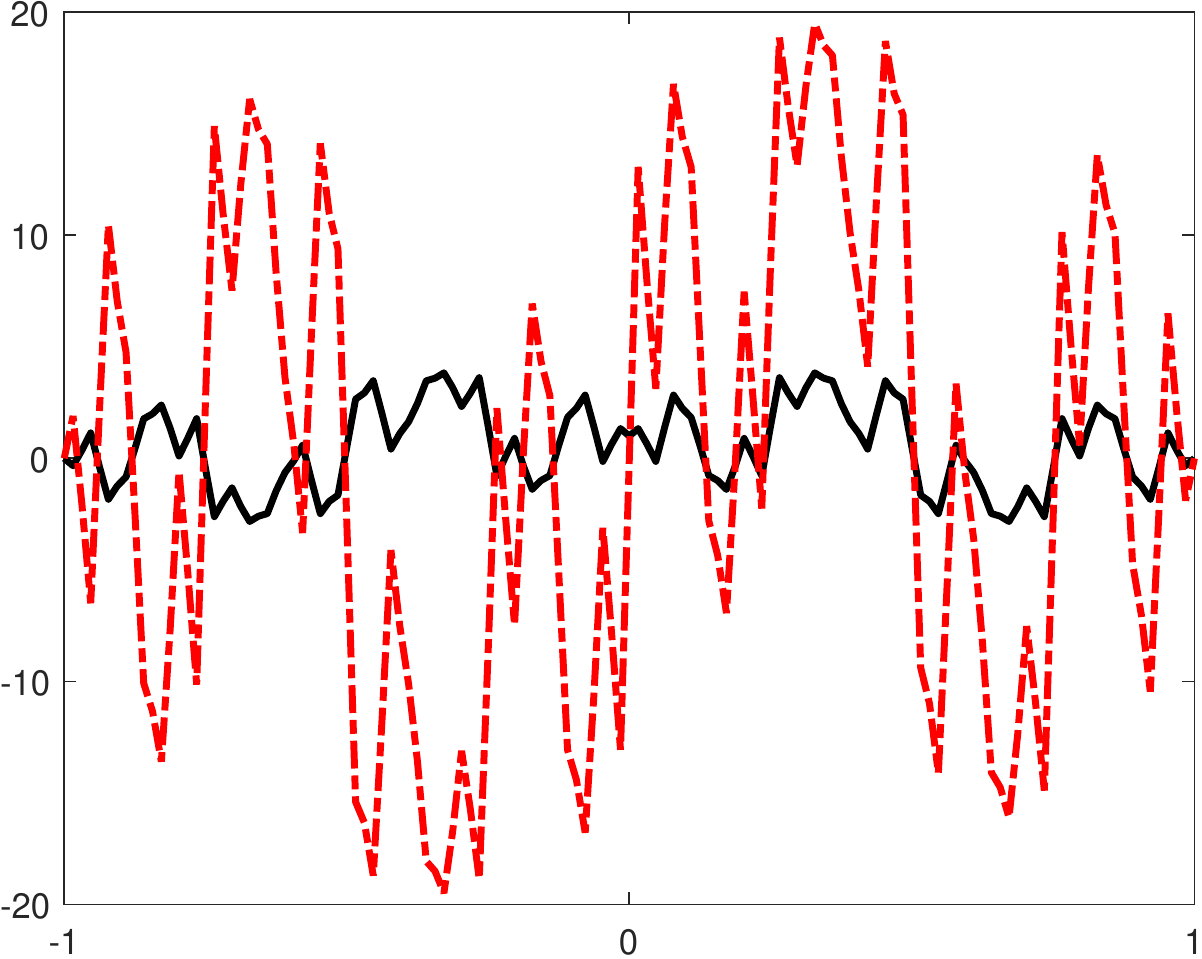}
		\caption{$\tilde{\phi} = (\tilde{\phi}^{1},\tilde{\phi}^{2})^{\tp}$}
	\end{subfigure} \begin{subfigure}[b]{0.24\textwidth} \includegraphics[width=\textwidth,height=0.6\textwidth]{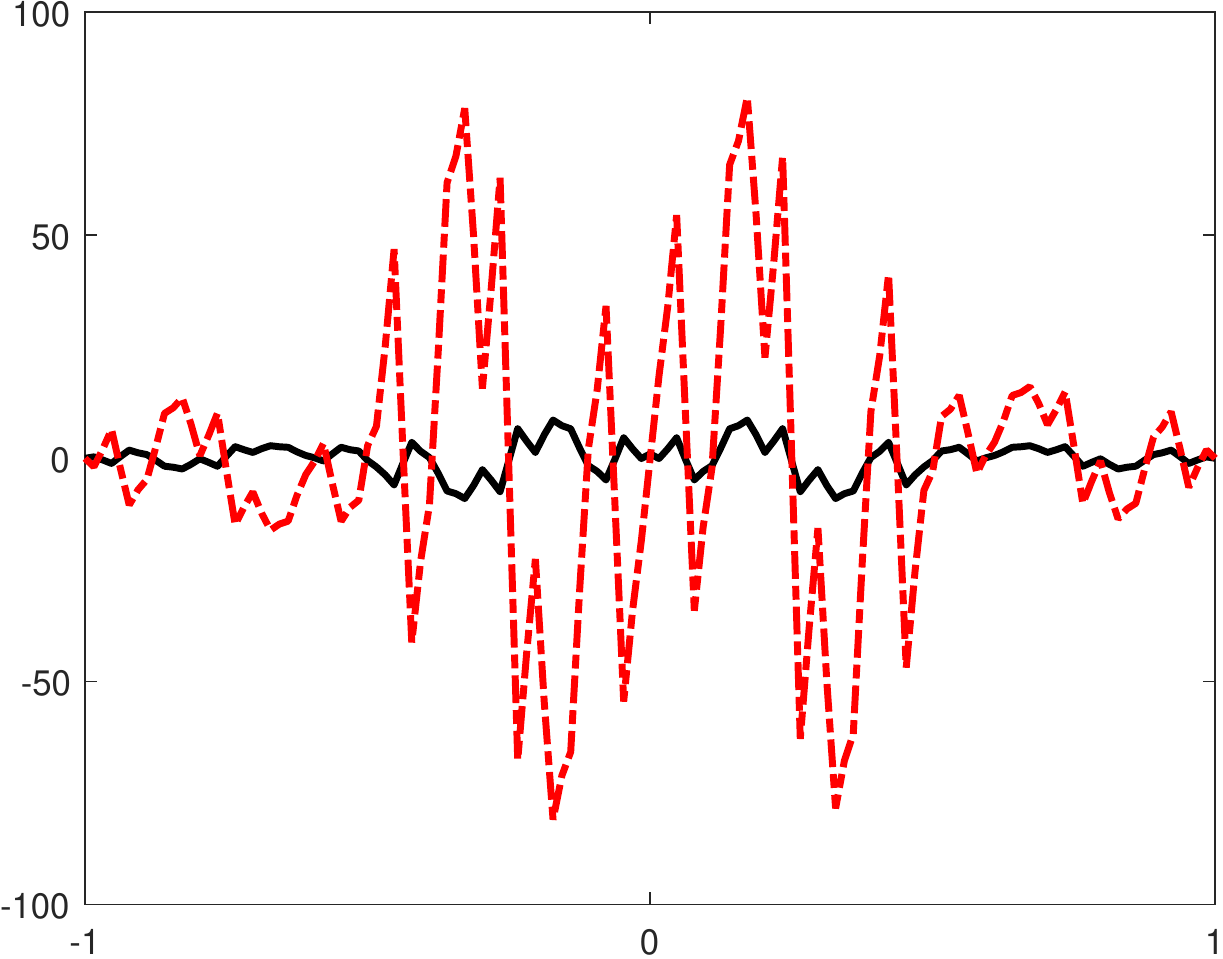}
		\caption{$\tilde{\psi}= (\tilde{\psi}^{1},\tilde{\psi}^{2})^{\tp}$}
	\end{subfigure} \begin{subfigure}[b]{0.24\textwidth} \includegraphics[width=\textwidth,height=0.6\textwidth]{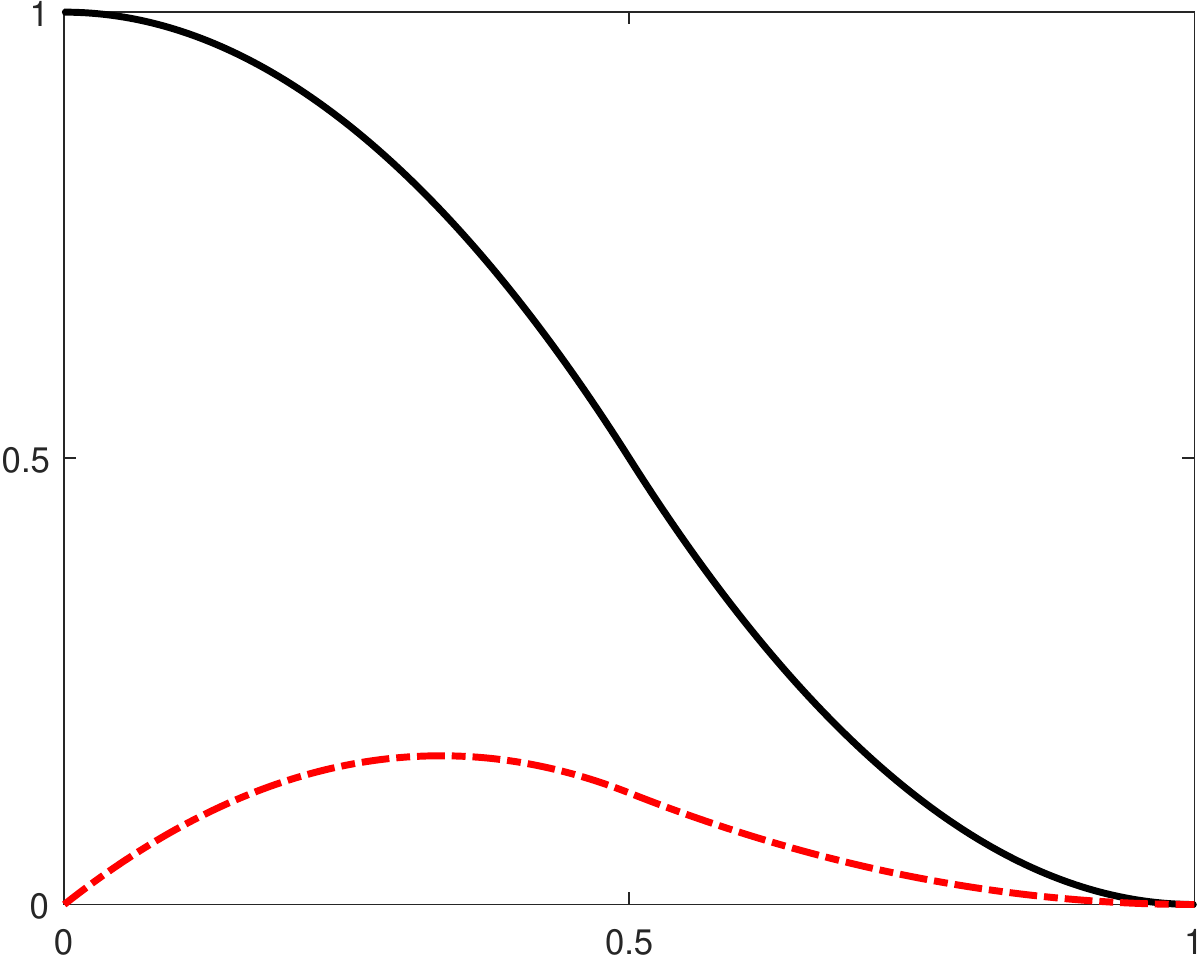}
		\caption{$\phi^{L}$}
	\end{subfigure} \begin{subfigure}[b]{0.24\textwidth} \includegraphics[width=\textwidth,height=0.6\textwidth]{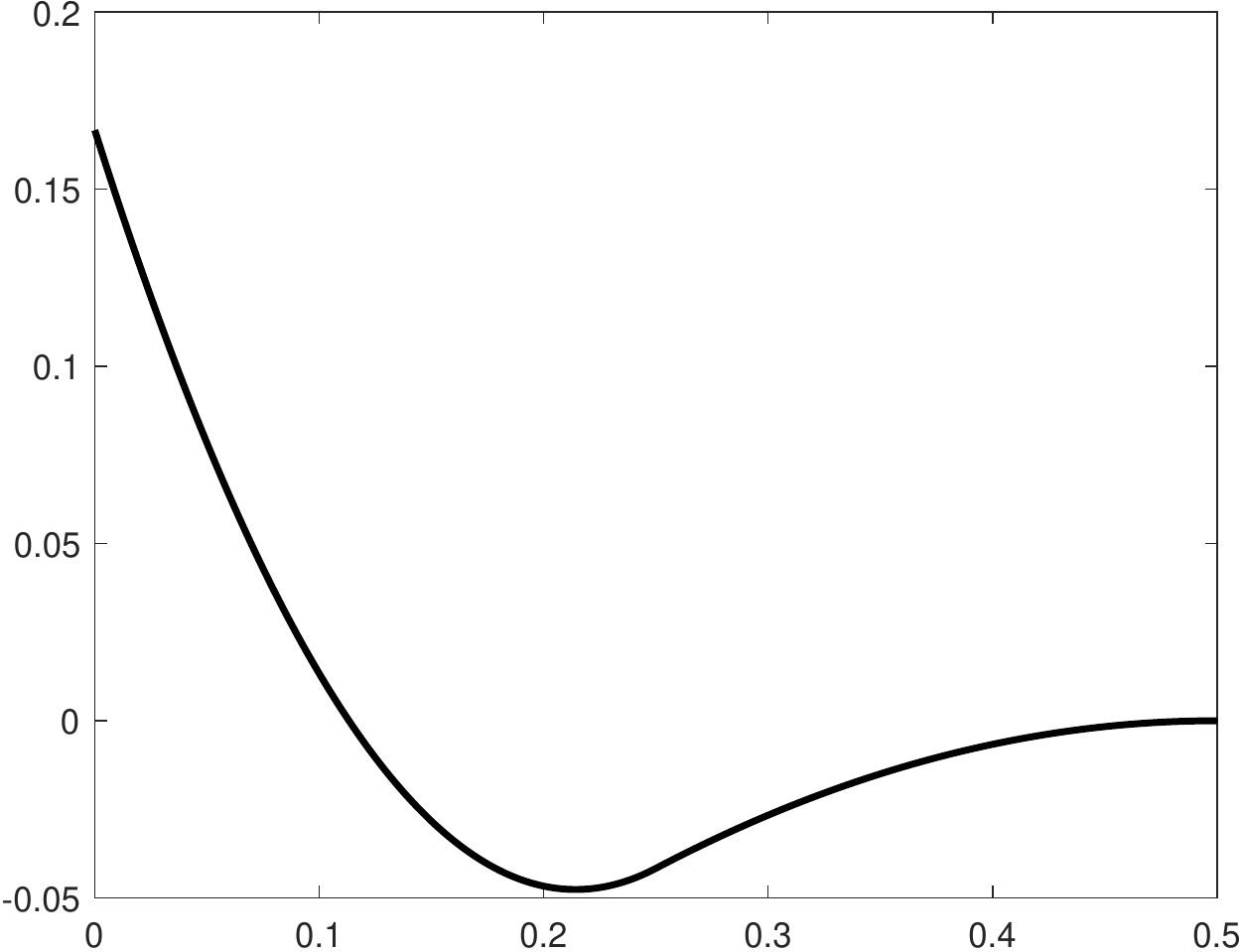}
		\caption{$\psi^{L}$}
	\end{subfigure} \begin{subfigure}[b]{0.24\textwidth} \includegraphics[width=\textwidth,height=0.6\textwidth]{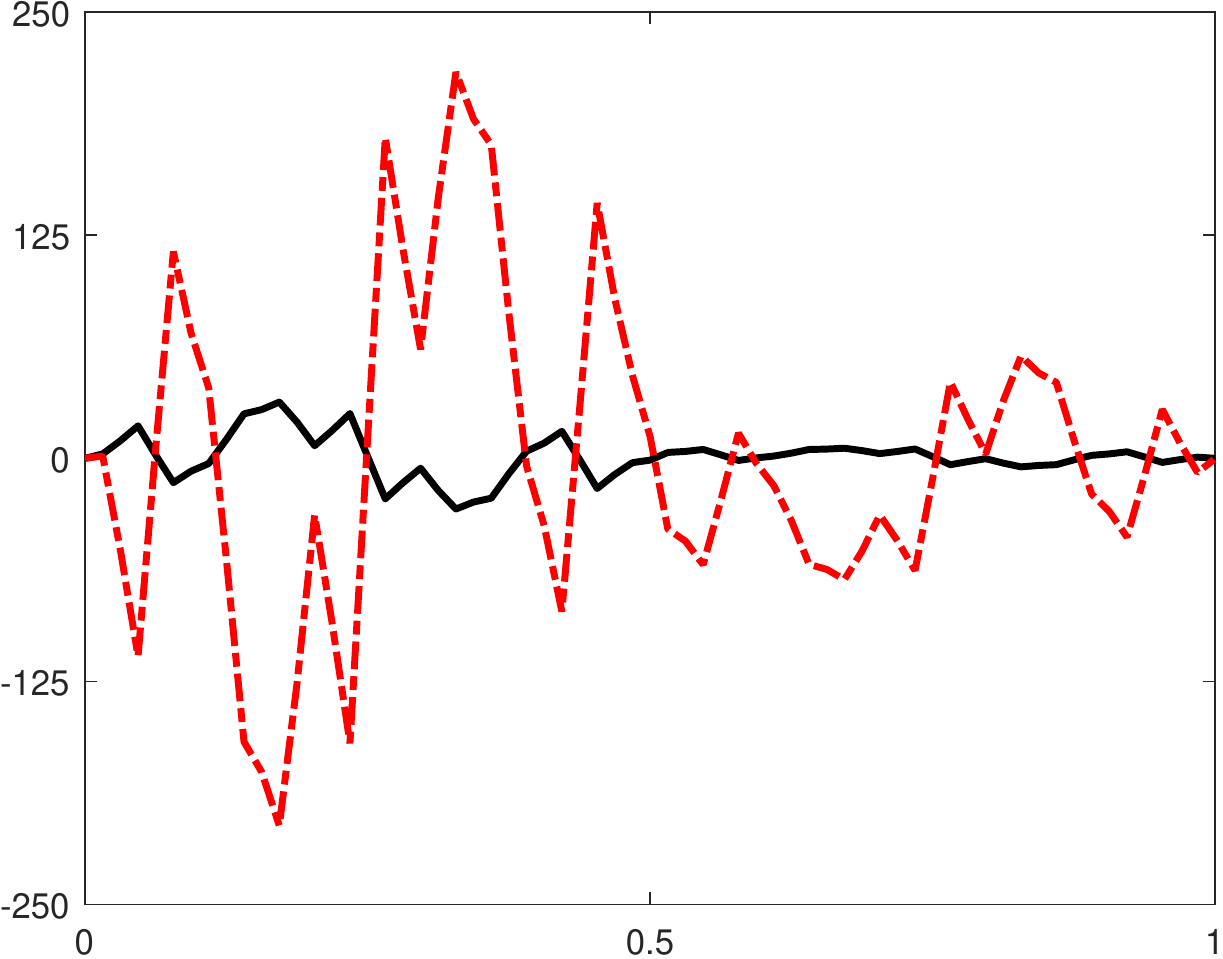}
		\caption{$\tilde{\phi}^{L}$}
	\end{subfigure} \begin{subfigure}[b]{0.24\textwidth} \includegraphics[width=\textwidth,height=0.6\textwidth]{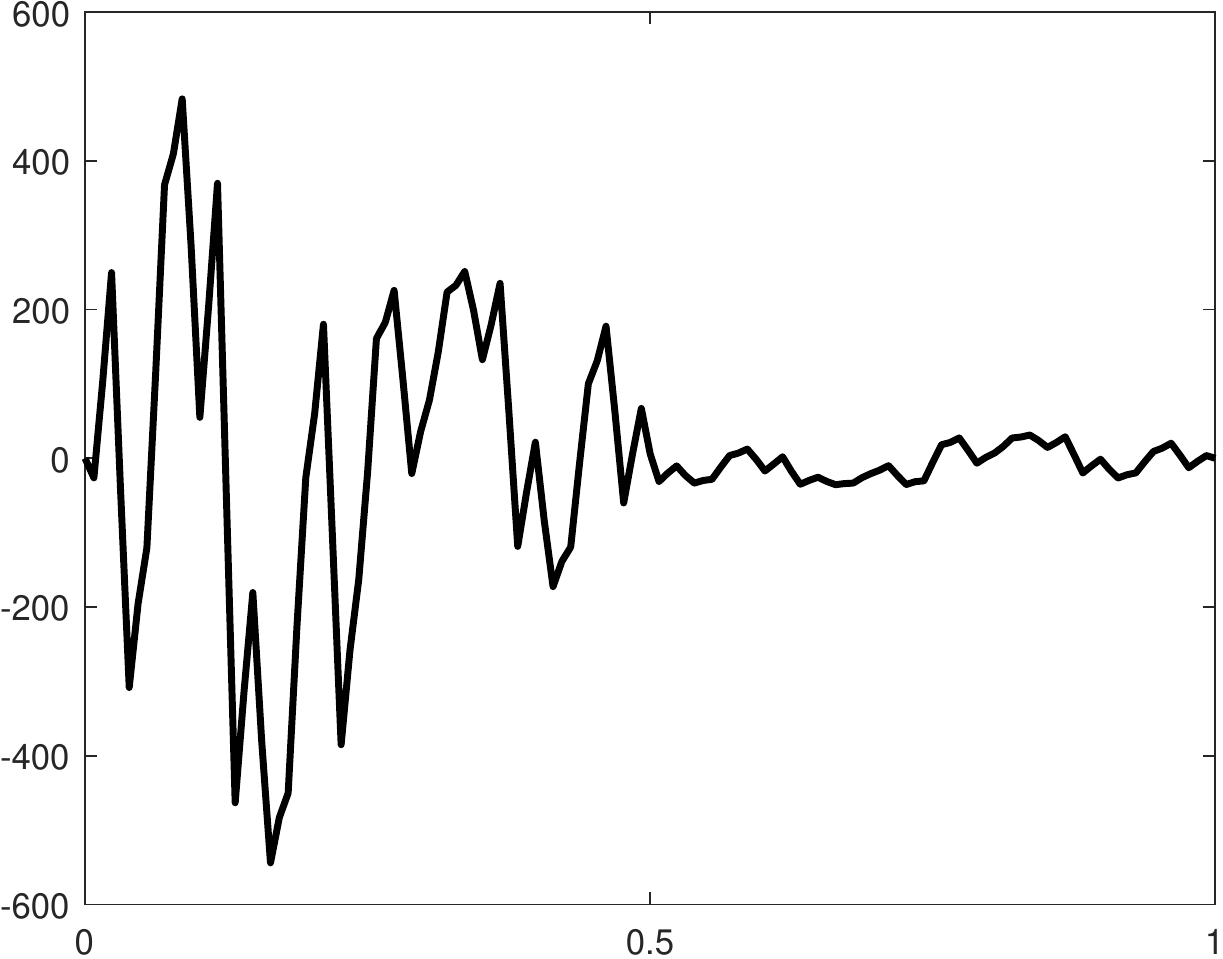}
		\caption{$\tilde{\psi}^{L}$}
	\end{subfigure} \begin{subfigure}[b]{0.24\textwidth} \includegraphics[width=\textwidth,height=0.6\textwidth]{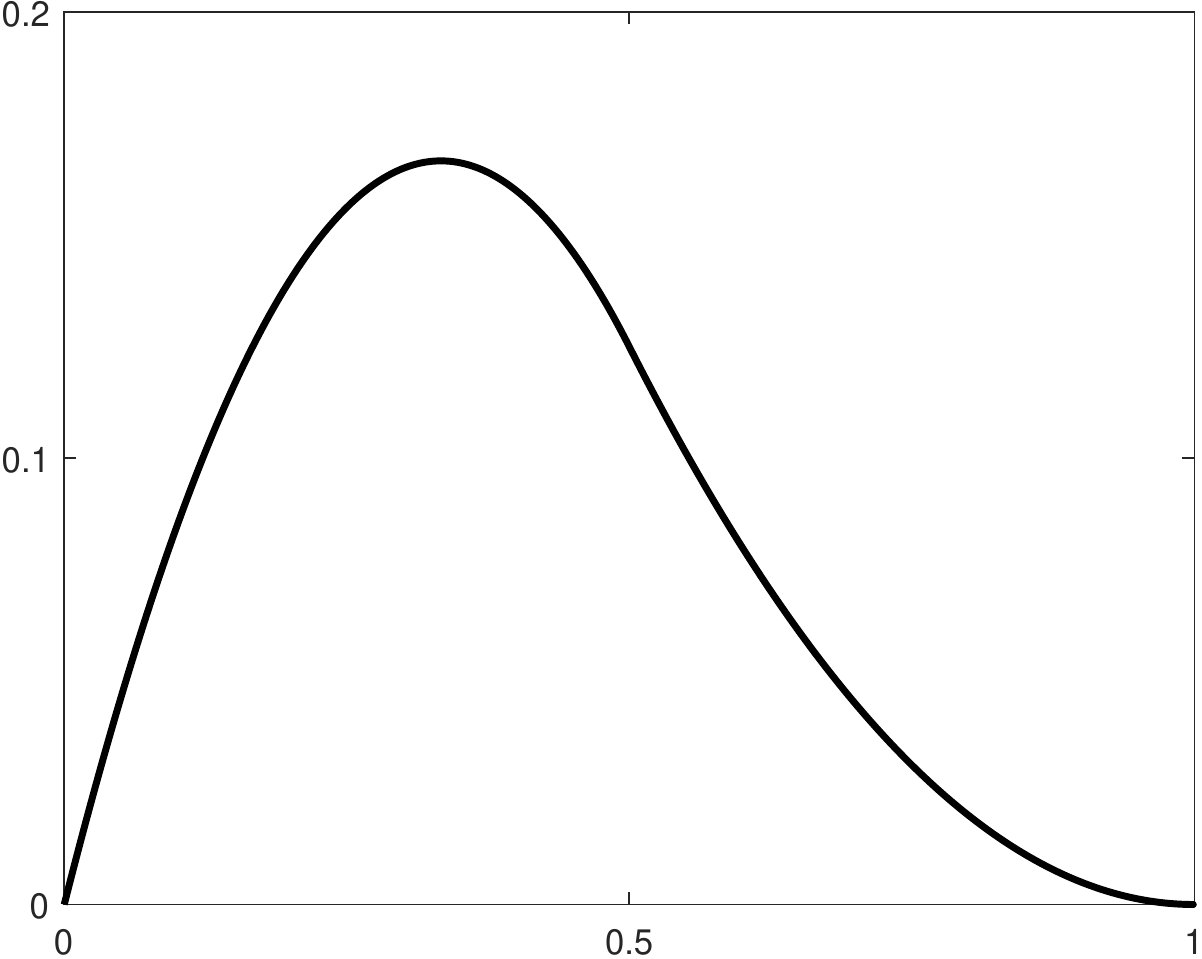}
		\caption{$\phi^{L,bc}$}
	\end{subfigure} \begin{subfigure}[b]{0.24\textwidth} \includegraphics[width=\textwidth,height=0.6\textwidth]{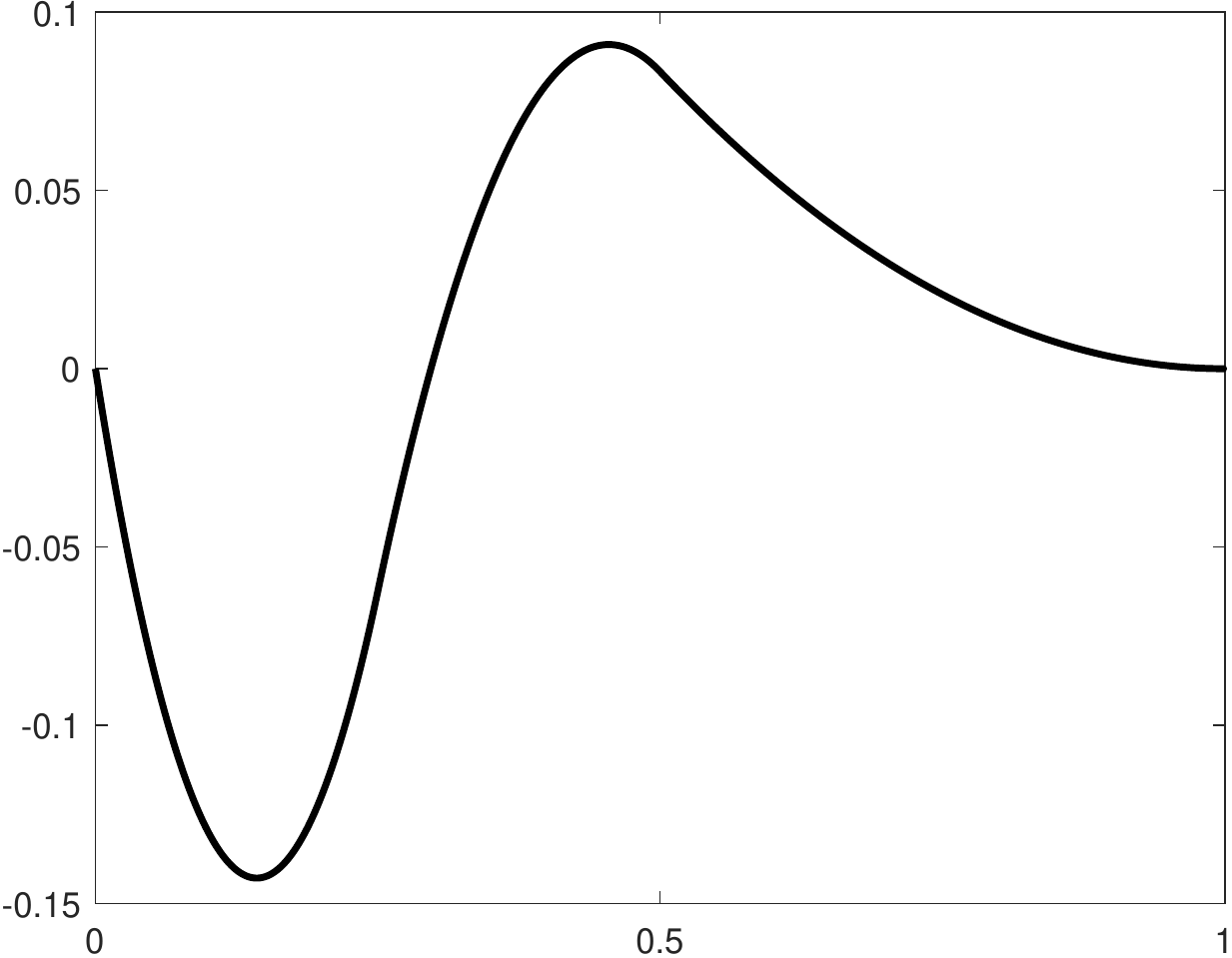}
		\caption{$\psi^{L,bc}$}
	\end{subfigure} \begin{subfigure}[b]{0.24\textwidth} \includegraphics[width=\textwidth,height=0.6\textwidth]{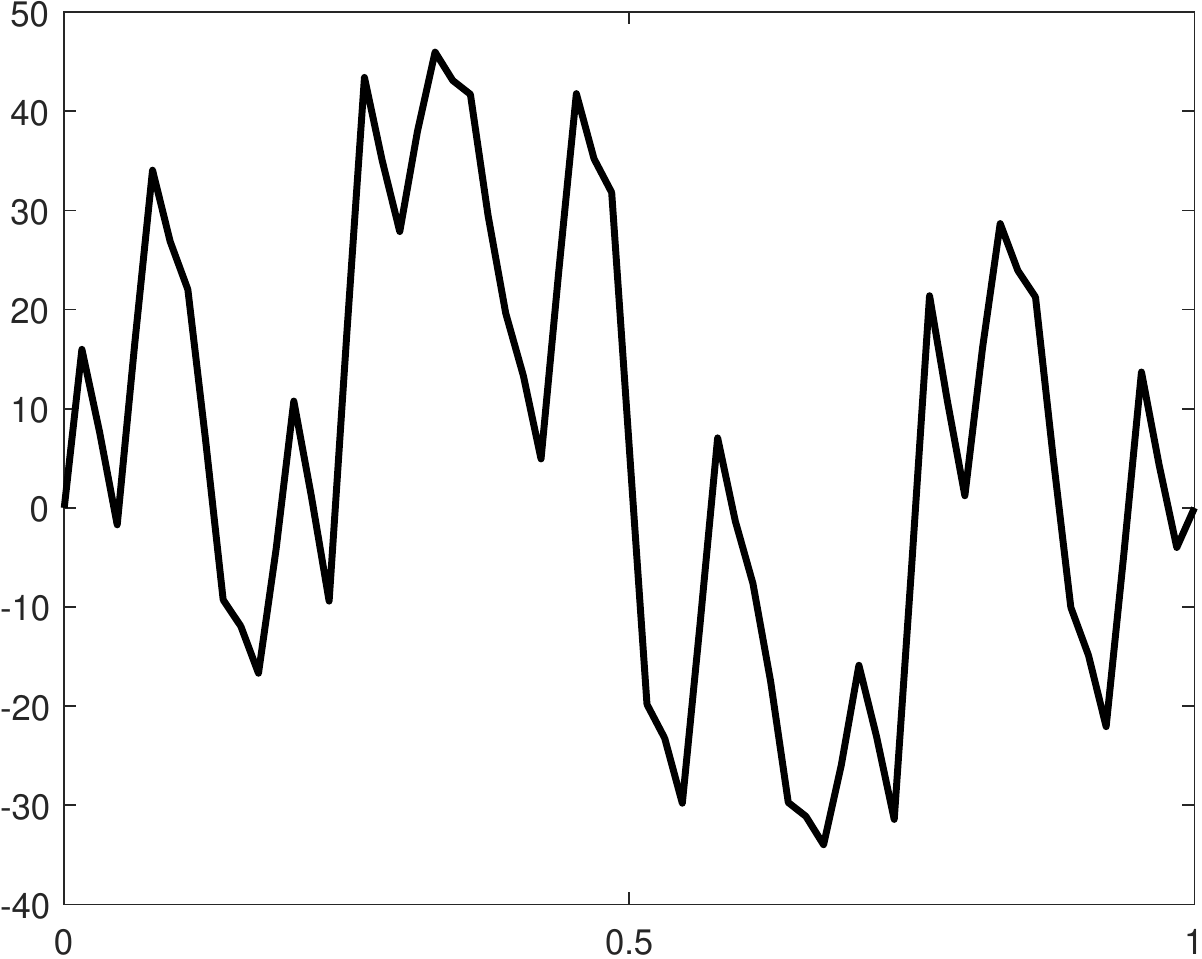} \caption{$\tilde{\phi}^{L,bc}$}
	\end{subfigure} \begin{subfigure}[b]{0.24\textwidth} \includegraphics[width=\textwidth,height=0.6\textwidth]{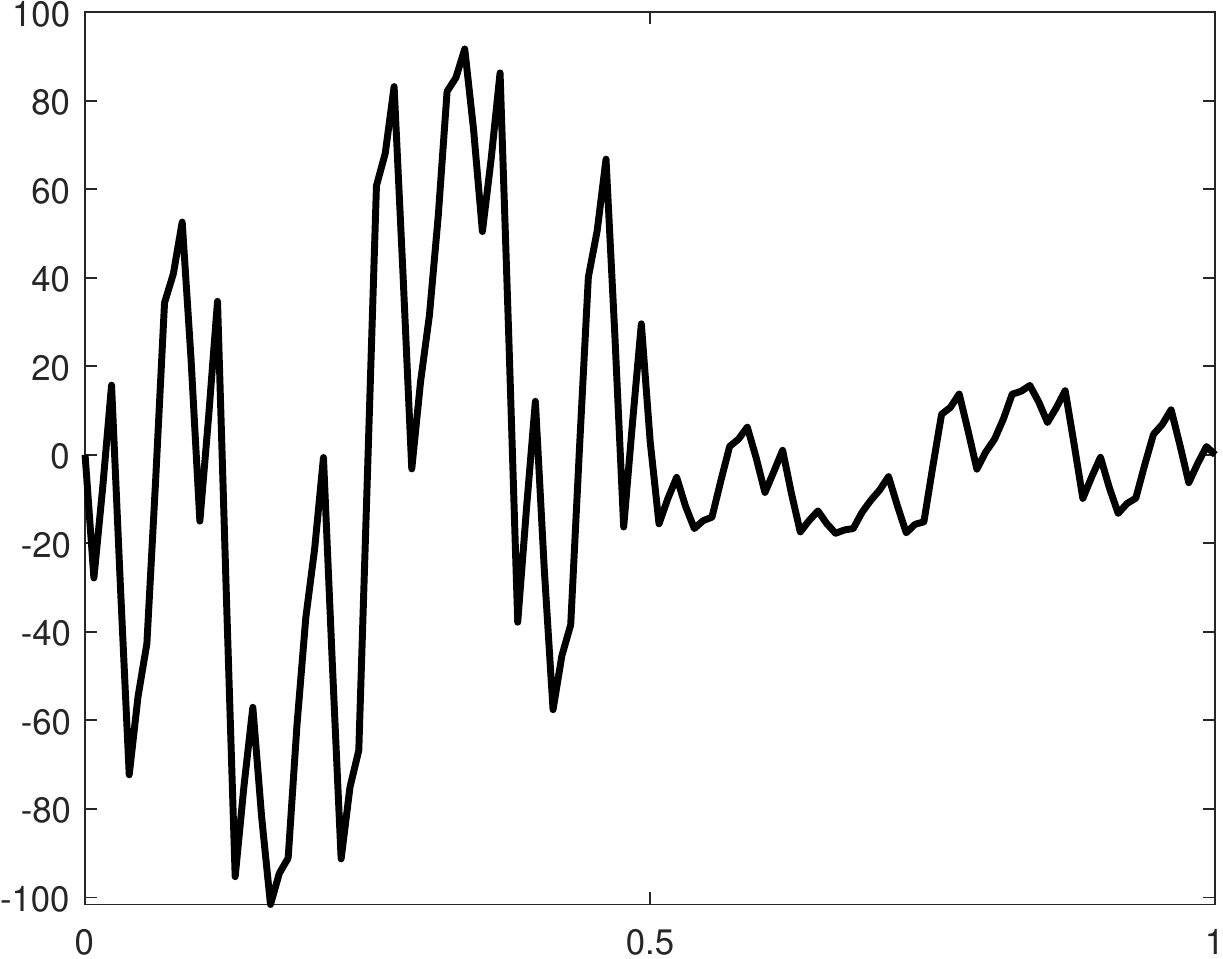}
		\caption{$\tilde{\psi}^{L,bc}$}
	\end{subfigure}
	\caption{The generators of the biorthogonal wavelet bases $(\tilde{\cB}_J,\cB_J)$ and  $(\tilde{\cB}_J^{bc},\cB_J^{bc})$ of $L_{2}([0,1])$ for $J\in \NN$ in \cref{ex:hmtquad} with $h(0)=h(1)=0$ for all $h\in \cB_J^{bc}$. The black and red lines correspond to the first, second, and third components of a vector function.
	}\label{fig:hmtquad}
\end{figure}

\begin{example} \label{ex:hmtvm4}
	\normalfont
	Using the CBC algorithm in \cite{han01,hanbook}, we construct
	a  biorthogonal wavelet $(\{\tilde{\phi};\tilde{\psi}\},\{\phi;\psi\})$ with $\wh{\phi}(0)=\wh{\tilde{\phi}}(0)=(1,0)^\tp$ and a biorthogonal wavelet filter bank $(\{\tilde{a};\tilde{b}\},\{a;b\})$ given by
	\begin{align*} 
	a=&{\left\{ \begin{bmatrix} \tfrac{1}{4} &\tfrac{3}{8}\\[0.3em]
		-\tfrac{1}{16} &-\tfrac{1}{16}\end{bmatrix},
		\begin{bmatrix} \tfrac{1}{2} &0 \\[0.3em]
		0 &\tfrac{1}{4}\end{bmatrix},
		\begin{bmatrix} \tfrac{1}{4} &-\tfrac{3}{8}\\[0.3em]
		\tfrac{1}{16} &-\tfrac{1}{16}\end{bmatrix}\right\}_{[-1,1]}},\\
	b=&{\left\{ \begin{bmatrix} 0 & 0\\[0.3em]
		\tfrac{2}{97} & \tfrac{24}{679}\end{bmatrix},
		\begin{bmatrix} -\tfrac{1}{2} & -\tfrac{15}{4}\\[0.3em]
		\tfrac{77}{1164} & \tfrac{2921}{2761}\end{bmatrix},
		\begin{bmatrix} 1 & 0 \\[0.3em]
		0 & 1 \end{bmatrix},
		\begin{bmatrix} -\tfrac{1}{2} & \tfrac{15}{4}\\[0.3em]
		-\tfrac{77}{1164} & \tfrac{2921}{2761}\end{bmatrix},
		\begin{bmatrix} 0 & 0\\[0.3em]
		-\tfrac{2}{97} & \tfrac{24}{679}\end{bmatrix}\right\}_{[-2,2]}},\\ 
	\tilde{a}=
	&{\left\{\begin{bmatrix} -\tfrac{13}{2432} & -\tfrac{91}{29184}\\[0.3em]
		\tfrac{3}{152} & \tfrac{7}{608}\end{bmatrix},
		\begin{bmatrix} \tfrac{39}{2432} &\tfrac{13}{3648}\\[0.3em]
		-\tfrac{9}{152} &-\tfrac{1}{76}\end{bmatrix},
		\begin{bmatrix} -\tfrac{1}{12} & -\tfrac{1699}{43776} \\[0.3em]
		\tfrac{679}{1216} & \tfrac{4225}{14592}\end{bmatrix},
		\begin{bmatrix} \tfrac{569}{2432} & \tfrac{647}{10944}\\[0.3em]
		-\tfrac{1965}{1216} & -\tfrac{37}{96}\end{bmatrix},
		\begin{bmatrix} \tfrac{2471}{3648} & 0\\[0.3em]
		0 &\tfrac{7291}{7296}\end{bmatrix},\right.}
	\\
	&\;\; {\left.\begin{bmatrix} \tfrac{569}{2432} & -\tfrac{647}{10944}\\[0.3em]
		\tfrac{1965}{1216} & -\tfrac{37}{96}\end{bmatrix},
		\begin{bmatrix} -\tfrac{1}{12} & \tfrac{1699}{43776} \\[0.3em]
		-\tfrac{679}{1216} & \tfrac{4225}{14592}\end{bmatrix},
		\begin{bmatrix} \tfrac{39}{2432} &-\tfrac{13}{3648}\\[0.3em]
		\tfrac{9}{152} &-\tfrac{1}{76}\end{bmatrix},
		\begin{bmatrix} -\tfrac{13}{2432} & \tfrac{91}{29184}\\[0.3em]
		-\tfrac{3}{152} & \tfrac{7}{608}\end{bmatrix}\right\}_{[-4,4]}},\\
	\tilde{b}=&{\left\{\begin{bmatrix} -\tfrac{1}{4864} & -\tfrac{7}{58368}\\[0.3em]
		0 & 0\end{bmatrix},
		\begin{bmatrix} \tfrac{3}{4864} & \tfrac{1}{7296}\\[0.3em]
		0 & 0\end{bmatrix},
		\begin{bmatrix} \tfrac{1}{24} & \tfrac{2161}{87552}\\[0.3em]
		-\tfrac{679}{4864} & -\tfrac{4753}{58368}\end{bmatrix},
		\begin{bmatrix} -\tfrac{611}{4864} & -\tfrac{605}{21888}\\[0.3em]
		\tfrac{2037}{4864} & \tfrac{679}{7296}\end{bmatrix},
		\begin{bmatrix} \tfrac{1219}{7296} & 0\\[0.3em]
		0 & \tfrac{7469}{29814}\end{bmatrix},\right.}\\
	& \;\;{\left. \begin{bmatrix} -\tfrac{611}{4864} & \tfrac{605}{21888}\\[0.3em]
		-\tfrac{2037}{4864} & \tfrac{679}{7296}\end{bmatrix},
		\begin{bmatrix} \tfrac{1}{24} & -\tfrac{2161}{87552}\\[0.3em]
		\tfrac{679}{4864} & -\tfrac{4753}{58368}\end{bmatrix},
		\begin{bmatrix} \tfrac{3}{4864} & -\tfrac{1}{7296}\\[0.3em]
		0 & 0\end{bmatrix},
		\begin{bmatrix} -\tfrac{1}{4864} & \tfrac{7}{58368}\\[0.3em]
		0 & 0\end{bmatrix}\right\}_{[-4,4]}}.
	\end{align*}
	Note that $\phi$ is the well-known Hermite cubic splines with $\fs(\phi)=[-1,1]$. Note that $\fs(\psi)=[-2,2]$ and $\fs(\tilde{\phi})=\fs(\tilde{\psi})=[-4,4]$.
	Then $\sm(a)=2.5$, $\sm(\tilde{a})=0.281008$,
	$\sr(a)=\sr(\tilde{a})=4$,
	and the matching filters $\vgu, \tilde{\vgu} \in \lrs{0}{1}{2}$ with $\wh{\vgu}(0)\wh{\phi}(0)=\wh{\tilde{\vgu}}(0)\wh{\tilde{\phi}}(0)=1$ are given (see \cite{han01,han03jat}) by $\wh{\vgu}(0,0)=(1,0)$, $\wh{\vgu}'(0)=(0,i)$, $\wh{\vgu}''(0)=\wh{\vgu}'''(0)=(0,0)$ and
	\[
	\wh{\tilde{\vgu}}(0)=(1,0),\quad \wh{\vgu}'(0)=i(0,\tfrac{1}{15}),\quad \wh{\tilde{\vgu}}''(0)=(-\tfrac{2}{15},0),\quad
	\wh{\vgu}'''(0)=i(0,-\tfrac{2}{105}).
	\]
	We use the direct approach as discussed in \cref{sec:direct}.
	By item (i) of \cref{prop:phicut} with $n_\phi=1$, the left boundary refinable vector function is $\phi^{L}:=\phi \chi_{[0,\infty)}$ with $\#\phi^L=2$ and satisfies
	\be \label{phiLhmcubic}
	\phi^{L}=(\phi^{L}_1,\phi^{L}_2)^\tp:=
	 (\phi^L_1(2\cdot),\tfrac{1}{2}\phi^L_2(2\cdot))^\tp
	+ 2a(1) \phi(2\cdot-1).
	\ee
	Taking $n_{\psi}=2$ and $m_{\phi}=7$ in \cref{thm:direct}, we have
	\begin{align*}
	\psi^{L} &=
	{\begin{bmatrix}
		\psi^L_1\\
		\psi^L_2\\
		\psi^L_3\end{bmatrix}}
	:=
	{\begin{bmatrix}
		\phi^L_1(2\cdot)\\
		\phi^L_2(2\cdot)\\
		0 \end{bmatrix}} +
	{\begin{bmatrix}
		\tfrac{31}{54} & \tfrac{533}{36}\\
		\tfrac{1}{36} & \tfrac{7}{4}\\
		1 & \tfrac{390}{61}\\
		\end{bmatrix}}\phi(2\cdot-1) +
	{\begin{bmatrix}
		-\tfrac{29}{27} & \tfrac{59}{9}\\
		-\tfrac{1}{9} & \tfrac{2}{3}\\
		-\tfrac{52}{61} & \tfrac{660}{61}\\
		\end{bmatrix}}\phi(2\cdot-2) +
	{\begin{bmatrix}
		0 & 0\\
		0 & 0\\
		-\tfrac{9}{61} & 0\\
		\end{bmatrix}}\phi(2\cdot-3),
	\end{align*}
	which satisfies items (i) and (ii) of \cref{thm:direct} with $\fs(C)=[1,5]$, $\fs(D)=[2,5]$ and
	{\footnotesize{\begin{align*}
			& A_{0} =	
			\left[\begin{smallmatrix}	 {\frac{79}{64}}&{\frac{49043}{394624}}&-{\frac{54639}{98656}}&-{\frac{1399}{24664}}&{\frac{48111}{49328}}&{\frac{3653}{24664}}&-{\frac{97227}{98656}}&{\frac{1341}{6166}}&{\frac{64593}{197312}}&-{\frac{152367}{789248}}&{\frac{675}{197312}}&-{\frac{75}{98656}}&-{\frac{225}{197312}}&{\frac{525}{789248}}\\
			 -{\frac{279}{64}}&-{\frac{58639}{394624}}&{\frac{823347}{98656}}&-{\frac{6135}{24664}}&-{\frac{561771}{49328}}&-{\frac{33789}{24664}}&{\frac{1097199}{98656}}&-{\frac{30279}{12332}}&-{\frac{729081}{197312}}&{\frac{1718679}{789248}}&-{\frac{7155}{197312}}&{\frac{795}{98656}}&{\frac{2385}{197312}}&-{\frac{5565}{789248}}
			 \end{smallmatrix}\right]^{\tp},\\
			& B_{0} =
			\left[\begin{smallmatrix}
			 -{\frac{15}{64}}&-{\frac{49043}{394624}}&{\frac{54639}{98656}}&{\frac{1399}{24664}}&-{\frac{48111}{49328}}&-{\frac{3653}{24664}}&{\frac{97227}{98656}}&-{\frac{1341}{6166}}&-{\frac{64593}{197312}}&{\frac{152367}{789248}}&-{\frac{675}{197312}}&{\frac{75}{98656}}&{\frac{225}{197312}}&-{\frac{525}{789248}}\\
			 {\frac{279}{128}}&{\frac{847887}{789248}}&-{\frac{823347}{197312}}&{\frac{6135}{49328}}&{\frac{561771}{98656}}&{\frac{
					 33789}{49328}}&-{\frac{1097199}{197312}}&{\frac{30279}{24664}}&{\frac{729081}{394624}}&-{\frac{1718679}{1578496}}&{\frac{7155}{394624}}&-{\frac{795}{197312}}&-{\frac{2385}{394624}}&{\frac{5565}{1578496}}\\ {\frac{61}{9216}}&{\frac{1222501}{170477568}}&-{\frac{282125}{4735488}}&-{\frac{82289}{2663712}}&{\frac{2913787}{
					 7103232}}&{\frac{1504321}{10654848}}&-{\frac{847107}{1578496}}&{\frac{46787}{394624}}&{\frac{563091}{3156992}}&-{\frac{1325957}{12627968}}&{\frac{4941}{3156992}}&-{\frac{549}{1578496}}&-{\frac{1647}{3156992}}&{\frac{3843}{12627968}}
			 \end{smallmatrix}\right]^{\tp},\\
			& C(1) =
			\left[\begin{smallmatrix}
			 -{\frac{1}{256}}&-{\frac{113627}{14206464}}&{\frac{95129}{1183872}}&{\frac{10735}{221976}}&{\frac{552251}{591936}}&{\frac{33025}{887904}}&-{\frac{8905}{394624}}&-{\frac{247}{98656}}&{\frac{4459}{2367744}}&-{\frac{324935}{28412928}}&{\frac{13351}{789248}}&-{\frac{13351}{3551616}}&-{\frac{13351}{2367744}}&{\frac{93457}{28412928}}\\ {\frac{3}{128}}&{\frac{99131}{2367744}}&-{\frac{80633}{197312}}&-{\frac{8923}{36996}}&-{\frac{363803}{98656}}&-{\frac{4033}{147984}}&{\frac{1200891}{197312}}&-{\frac{67723}{49328}}&-{\frac{806635}{394624}}&{\frac{5513543}{4735488}
			 }&{\frac{18123}{394624}}&-{\frac{6041}{591936}}&-{\frac{6041}{394624}}&{\frac{42287}{4735488}}
			 \end{smallmatrix}\right]^{\tp},\\
			& C(2) =	
			\left[\begin{smallmatrix}
			 0&-{\frac{19}{221976}}&{\frac{19}{18498}}&{\frac{19}{27747}}&{\frac{247}{9249}}&{\frac{76}{27747}}&{\frac{513}{6166}}&{\frac{285}{3083}}&{\frac{16363565}{22493568}}&-{\frac{7958533}{269922816}}&{\frac{1757267}{7497856}}&-{\frac{1997741}{33740352}}&-{\frac{772}{9249}}&{\frac{5248657}{134961408}}\\
			 0&{\frac{679}{1183872}}&-{\frac{679}{98656}}&-{\frac{679}{147984}}&-{\frac{8827}{49328}}&-{\frac{679}{36996}}&-{\frac
				 {54999}{98656}}&-{\frac{30555}{49328}}&-{\frac{329829}{937232}}&{\frac{565273}{468616}}&{\frac{3023955}{1874464}}&-{\frac{360659}{937232}}&-{\frac{54999}{98656}}&{\frac{2166985}{7497856}}
			 \end{smallmatrix}\right]^{\tp},\\
			& C(3) =
			\left[\begin{smallmatrix}
			 0&-{\frac{13}{2367744}}&{\frac{13}{197312}}&{\frac{13}{295968}}&{\frac{169}{98656}}&{\frac{13}{73992}}&{\frac{1053}{197312}}&{\frac{585}{98656}}&-{\frac{23611}{295968}}&-{\frac{72635}{1775808}}&{\frac{46169}{197312}}&{\frac{52487}{887904}}&{\frac{15235991}{22493568}}&{\frac{455}{89974272}}\\
			 0&{\frac{1}{49328}}&-{\frac{3}{12332}}&-{\frac{1}{6166}}&-{\frac{39}{6166}}&-{\frac{2}{3083}}&-{\frac{243}{12332}}&-{\frac{135}{6166}}&{\frac{2044109}{3748928}}&{\frac{13373051}{44987136}}&-{\frac{6058455}{3748928}}&-{\frac{2167229}{5623392}}&{\frac{15}{468616}}&{\frac{22477733}{22493568}}
			 \end{smallmatrix}\right]^{\tp},\\
			& C(4) =
			\left[\begin{smallmatrix}
			0_{1\times 8}
			 &-{\frac{13}{2432}}&-{\frac{91}{29184}}&{\frac{39}{2432}}&{\frac{13}{3648}}&-\frac{1}{12}&-{\frac{1699}{43776}}\\
			0_{1\times 8}
			 &{\frac{3}{152}}&{\frac{7}{608}}&-{\frac{9}{152}}&-{\frac{1}{76}}&{\frac{679}{1216}}
			&{\frac{4225}{14592}}	
			 \end{smallmatrix}\right]^{\tp},\qquad\quad
			C(5) =
			\left[\begin{smallmatrix}	 0_{1\times 12} &-{\frac{13}{2432}}&-{\frac{91}{29184}}\\
			0_{1\times 12} &{\frac{3}{152}}&{\frac{7}{608}}
			 \end{smallmatrix}\right]^{\tp},\\
			& D(2) =
			\left[\begin{smallmatrix}
			 0&{\frac{19}{443952}}&-{\frac{19}{36996}}&-{\frac{19}{55494}}&-{\frac{247}{18498}}&-{\frac{38}{27747
			 }}&-{\frac{513}{12332}}&-{\frac{285}{6166}}&{\frac{6259489}{44987136}}
			 &{\frac{8864935}{539845632}}&-{\frac{1886753}{14995712}}&{\frac{1868255}{67480704}}&{\frac{386}{9249}}&-{\frac{6673003}{269922816}}\\
			 0&-{\frac{679}{4735488}}&{\frac{679}{394624}}&{\frac{679}{591936}}&{\frac{8827}{197312}}&{\frac{679}{147984}}&{\frac{
					 54999}{394624}}&{\frac{30555}{197312}}&{\frac{18333}{197312}}&{\frac{79443}{394624}}&-{\frac{164997}{394624}}&{\frac{18333}{197312}}&{\frac{54999}{394624}}&-{\frac{128331}{1578496}}
			 \end{smallmatrix}\right]^{\tp},\\
			& D(3) =
			\left[\begin{smallmatrix}
			 0&-{\frac{1}{4735488}}&{\frac{1}{394624}}&{\frac{1}{591936}}&{\frac{13}{197312}}&{\frac{1}{147984}}&{\frac{81}{394624}}&{\frac{45}{197312}}&{\frac{24745}{591936}}&{\frac{87377}{3551616}}&-{\frac{49571}{394624}}&-{\frac{49085}{1775808}}&{\frac{7516339}{44987136}}&{\frac{35}{179948544}}\\
			 0&0&0&0&0&0&0&0&-{\frac{679}{4864}}&-{\frac{4753}{58368}}&{\frac{2037}{4864}}&{\frac{679}{7296}}&0&{\frac{7469}{29184}}
			 \end{smallmatrix}\right]^{\tp},\\
			& D(4) =
			\left[\begin{smallmatrix}
			0_{1\times 8}&-{\frac{1}{4864}}&-{\frac{7}{58368}}&{\frac{3}{4864}}&{\frac{1}{7296}}&\frac{1}{24}&{\frac{2161}{87552}}\\
			0_{1\times 8}&0 &0 &0 &0
			 &-{\frac{679}{4864}}&-{\frac{4753}{58368}}
			 \end{smallmatrix}\right]^{\tp},\qquad\quad
			D(5) =
			\left[\begin{smallmatrix}
			0_{1\times 12}&-{\frac{1}{4864}}&-{\frac{7}{58368}}\\
			0_{1\times 12} &0&0
			\end{smallmatrix}\right]^{\tp}.
			\end{align*}
	}}

	Since  \eqref{tAL} is satisfied with $\rho(\tilde{A}_{L})=1/2$, we conclude from \cref{thm:direct,thm:bw:0N} with $N=1$ that  $\cB_J=\Phi_J \cup \{\Psi_j \setsp j\ge J\}$ is a Riesz basis of $L_{2}([0,1])$ for all $J\ge J_{0}:=2$, where $\Phi_j$ and $\Psi_j$ in \eqref{Phij} and \eqref{Psij} with $n_{\phi}=n_{\mathring{\phi}}=1$ and $n_{\psi}=n_{\mathring{\psi}}=2$ are given by $\phi^{R}:=\phi^{L}(1-\cdot)$, $\psi^{R}:=\psi^{L}(1-\cdot)$ and
	\begin{align*}
	\Phi_{j} & := \{\phi^{L}_{j;0},\phi_{j;1},\phi_{j;2},\phi_{j;3}\} \cup \{\phi_{j;k}:4\le k \le 2^{j}-4\} \cup \{\phi^{R}_{j;2^{j}-1},\phi_{j;2^{j}-1},\phi_{j;2^{j}-2},\phi_{j;2^{j}-3}\},\\
	\Psi_{j} & := \{\psi^{L}_{j;0},\psi_{j;2},\psi_{j;3}\} \cup \{\psi_{j;k}:4\le k \le 2^{j}-4\} \cup \{\psi^{R}_{j;2^{j}-1},\psi_{j;2^{j}-2},\psi_{j;2^{j}-3}\},
	\end{align*}
	with $\#\phi^{L} = \# \phi^{R}=2$, $\#\psi^{L} = \# \psi^{R}=3$, $\#\Phi_j=2^{j+1}+2$ and $\#\Psi_j=2^{j+1}$.
	Note that $\vmo(\psi^{L})=\vmo(\psi^{R})=\vmo(\psi)=4=\sr(\tilde{a})$.
	The dual Riesz basis $\tilde{\cB}_j$ of $\cB_{j}$ with $j\ge \tilde{J}_{0} := 3$ is given by \cref{thm:direct} through \eqref{tphi:implicit} and \eqref{tpsi:implicit}. We rewrite $\tilde{\phi}^{L}$ in \eqref{tphi:implicit} as $\{\tilde{\phi}^{L},\tilde{\phi}(\cdot-4),\tilde{\phi}(\cdot-5),\tilde{\phi}(\cdot-6)\}$ with true boundary elements $\tilde{\phi}^{L}$ and $\# \tilde{\phi}^{L}=8$, and rewrite $\tilde{\psi}^{L}$ in \eqref{tpsi:implicit} as $\{\tilde{\psi}^{L},\tilde{\psi}(\cdot-4),\tilde{\psi}(\cdot-5)\}$ with true boundary elements $\tilde{\psi}^{L}$ and $\# \tilde{\psi}^{L}=7$. Hence, $\tilde{\cB}_J=\tilde{\Phi}_J \cup \{\tilde{\Psi}_j \setsp j\ge J\}$ for $J\ge 3$ is given by
	\begin{align*}
	\tilde{\Phi}_{j} & := \{\tilde{\phi}^{L}_{j;0}\} \cup \{\tilde{\phi}_{j;k}:4\le k \le 2^{j}-4\} \cup \{\tilde{\phi}^{R}_{j;2^{j}-1}\}, \quad \mbox{with} \quad \tilde{\phi}^{R}:=\tilde{\phi}^{L}(1-\cdot),\\
	\tilde{\Psi}_{j} & := \{\tilde{\psi}^{L}_{j;0}\} \cup \{\tilde{\psi}_{j;k}:4\le k \le 2^{j}-4\} \cup \{\tilde{\psi}^{R}_{j;2^{j}-1}\}, \quad \mbox{with} \quad \tilde{\psi}^{R}:=\tilde{\psi}^{L}(1-\cdot).
	\end{align*}
	Note that $\vmo(\tilde{\psi}^{L})=\vmo(\tilde{\psi}^{R})=\vmo(\tilde{\psi})=4=\sr(a)$ and $\PL_{3} \chi_{[0,1]} \subset\mbox{span}(\Phi_{j})$ for all $j \ge 2$. According to Theorem \ref{thm:bw:0N} with $N=1$, $(\tilde{\cB}_J,\cB_J)$ forms a biorthogonal Riesz basis of $L_{2}([0,1])$ for every $J \ge 3$.
	
	By item (ii) of \cref{prop:phicut} with $n_\phi=1$ and $\textsf{p}(x)=(x,x^{2},x^{3})^\tp$, the left boundary refinable vector function is $\phi^{L,bc}:=\phi^L_2$ (the second entry of $\phi^L$) and satisfies $\phi^{L,bc} = \tfrac{1}{2}\phi^{L,bc}(2\cdot) + [\tfrac{1}{8},-\tfrac{1}{8}] \phi(2\cdot-1)$ by \eqref{phiLhmcubic}.
	Taking $n_{\psi}=2$ and $m_{\phi}=7$ in \cref{thm:direct}, we have $\psi^{L,bc}:=(\psi^{L,bc}_1, \psi^L_2,\psi^L_3)^\tp$ with
	\[
	\psi^{L,bc}_1:=\psi^L_1
	-\phi^L_1(2\cdot)
	-[\tfrac{1121}{2376}, \tfrac{533}{36}]\phi(2\cdot-1)+
	[\tfrac{989}{594}, \tfrac{17}{18}]\phi(2\cdot-2)
	+[-\tfrac{61}{88}, \tfrac{195}{44}]\phi(2\cdot-3)
	\]
	with $\# \psi^{L,bc}=3$ satisfying both items (i) and (ii) of \cref{thm:direct}, where $A^{bc}_0$, $B^{bc}_0$, $C^{bc}$, and $D^{bc}$ can be easily derived from $A_{0}$, $B_{0}$, $C$, and $D$. More precisely, $A_0^{bc}$ is obtained from $U^{-1} A_0$ by taking out its first row and first column, and $B_0^{bc}, C^{bc}, D^{bc}$ are obtained from $U^{-1}B_0, U^{-1}C, U^{-1}D$, respectively by removing their first rows, where the invertible matrix $U$ is given by
	\[
	U:=I_{14}+B_0
	{\begin{bmatrix}
		 -1&0&-{\frac{1121}{2376}}&-{\frac{533}{36}}
		&{\frac{989}{594}}&{\frac{17}{18}}
		&-{\frac{61}{88}}&{\frac{195}{44}}& {0_{1 \times 6}}\\
		\multicolumn{9}{c}{0_{2\times 14}}
		\end{bmatrix}}.
	\]
	Since \eqref{tAL} is satisfied with $\rho(\tilde{A}_{L}^{bc})=1/2$, we conclude from \cref{thm:direct,thm:bw:0N} that $\cB_J^{bc}=\Phi_J^{bc} \cup \{\Psi_j^{bc} \setsp j\ge J\}$ is a Riesz basis of $L_{2}([0,1])$ for every  $J\ge J_{0}:=2$ such that $h(0)=h(1)=0$ for all $h\in \cB_J^{bc}$, where $\Phi_j^{bc}$ and $\Psi_j^{bc}$ in \eqref{Phij} and \eqref{Psij} with $n_{\phi}=n_{\mathring{\phi}}=1$ and $n_{\psi}=n_{\mathring{\psi}}=2$ are given by
	\begin{align*}
	\Phi_{j}^{bc} & := \{\phi^{L,bc}_{j;0},\phi_{j;1},\phi_{j;2},\phi_{j;3}\} \cup \{\phi_{j;k}:4\le k \le 2^{j}-4\} \cup \{\phi^{R,bc}_{j;2^{j}-1},\phi_{j;2^{j}-1},\phi_{j;2^{j}-2},\phi_{j;2^{j}-3}\},\\
	\Psi_{j}^{bc} & := \{\psi^{L,bc}_{j;0},\psi_{j;2},\psi_{j;3}\} \cup \{\psi_{j;k}:4\le k \le 2^{j}-4\} \cup \{\psi^{R,bc}_{j;2^{j}-1},\psi_{j;2^{j}-2},\psi_{j;2^{j}-3}\},
	\end{align*}
	where $\phi^{R,bc}:=\phi^{L,bc}(1-\cdot)$ and $\psi^{R,bc}:=\psi^{L,bc}(1-\cdot)$ with  $\#\phi^{L,bc}=1$, $\#\psi^{L,bc}=3$,
	and $\#\Phi^{bc}_j=\#\Psi^{bc}_j=2^{j+1}$.
	For the case $j=2$,
	$\Phi^{bc}_2=\{\phi^{L,bc}_{2;0},
	\phi_{2;1}, \phi_{2;2}, \phi_{2;3},\phi^{R,bc}_{2;3}\}$ and
	$\Psi^{bc}_2=\{\psi^{L,bc}_{2;0}, \psi_{2;2},\psi^{R,bc}_{2;3}\}$
	after removing repeated elements.
	Note $\vmo(\psi^{L,bc})=\vmo(\psi^{R,bc})=\vmo(\psi)=4$ and $\Phi^{bc}_j=\Phi_j\bs\{(\phi^L_1)_{j;0},(\phi^R_1)_{j;2^j-1}\}$ as in \cref{prop:mod}. The dual Riesz basis $\tilde{\cB}_j^{bc}$ of $\cB_{j}^{bc}$ with $j\ge \tilde{J}_{0} := 3$ is given by \cref{thm:direct} through \eqref{tphi:implicit} and \eqref{tpsi:implicit}. We rewrite $\tilde{\phi}^{L,bc}$ in \eqref{tphi:implicit} as $\{\tilde{\phi}^{L,bc},\tilde{\phi}(\cdot-4),\tilde{\phi}(\cdot-5),\tilde{\phi}(\cdot-6)\}$ with true boundary elements $\tilde{\phi}^{L,bc}$ and $\# \tilde{\phi}^{L,bc}=7$, and $\tilde{\psi}^{L,bc}$ in \eqref{tpsi:implicit} as $\{\tilde{\psi}^{L,bc},\tilde{\psi}(\cdot-4),\tilde{\psi}(\cdot-5)\}$ with true boundary elements $\tilde{\psi}^{L,bc}$ and $\# \tilde{\psi}^{L,bc}=7$. Hence, $\tilde{\cB}_J^{bc}=\tilde{\Phi}_J^{bc} \cup \{\tilde{\Psi}_j^{bc} \setsp j\ge J\}$ is given by
	\begin{align*}
	\tilde{\Phi}_{j}^{bc} & := \{\tilde{\phi}^{L,bc}_{j;0}\} \cup \{\tilde{\phi}_{j;k}:4\le k \le 2^{j}-4\} \cup \{\tilde{\phi}^{R,bc}_{j;2^{j}-1}\}, \quad \mbox{with} \quad \tilde{\phi}^{R,bc}:=\tilde{\phi}^{L,bc}(1-\cdot),\\
	\tilde{\Psi}_{j}^{bc} & := \{\tilde{\psi}^{L,bc}_{j;0}\} \cup \{\tilde{\psi}_{j;k}:4\le k \le 2^{j}-4\} \cup \{\tilde{\psi}^{R,bc}_{j;2^{j}-1}\}, \quad \mbox{with} \quad \tilde{\psi}^{R,bc}:=\tilde{\psi}^{L,bc}(1-\cdot).
	\end{align*}
	Note that $\vmo(\tilde{\psi}^{L,bc})=\vmo(\tilde{\psi}^{R,bc})=0$ and $\{x\chi_{[0,1]},x^2\chi_{[0,1]},x^3\chi_{[0,1]}\} \subset\mbox{span}(\Phi_{j}^{bc})$ for all $j \ge 2$. By Theorem \ref{thm:bw:0N} with $N=1$, $(\tilde{\cB}_J^{bc},\cB_J^{bc})$ forms a biorthogonal Riesz basis of $L_{2}([0,1])$ for every $J \ge 3$.
	
	By item (ii) of \cref{prop:phicut} with $n_\phi=1$ and $\textsf{p}(x)=(x^{2},x^{3})^\tp$, we have $\phi^{L,bc1}:=\emptyset$. Taking $n_{\psi}=2$ and $m_{\phi}=7$ in \cref{thm:direct}, we have
	 $\psi^{L,bc1}:=(\psi^{L,bc}_1,\psi^{L,bc1}_2,\psi^L_3)$ with
	\[
	\psi^{L,bc1}_2:=
	 \psi^L_2-\phi^L_2(2\cdot)-[\tfrac{1}{36},\tfrac{7}{4}]\phi(2\cdot-1)
	+[\tfrac{10}{9}, \tfrac{1048}{183}]\phi(2\cdot-2)
	 -[\tfrac{52}{61},-\tfrac{660}{61}]\phi(2\cdot-3)
	-[\tfrac{9}{61},0]\phi(2\cdot-4)
	\]
	%
	with $\# \psi^{L,bc1}=3$ satisfying both items (i) and (ii) of \cref{thm:direct}, where
	$A_0^{bc1}$ is obtained from $V^{-1} A_0$ by taking out its first two rows and the first two columns, and $B_0^{bc1}, C^{bc1}, D^{bc1}$ are obtained from $V^{-1}B_0$, $V^{-1}C$, $V^{-1}D$, respectively by removing their first two rows, where the invertible matrix $V$ is given by
	\[
	V:=I_{14}+B_0
	{\begin{bmatrix}
		-1&0&-{\frac{1121}{2376}}
		 &-{\frac{533}{36}}&{\frac{989}{594}}
		&{\frac{17}{18}}&-{\frac{61}{88}}
		&{\frac{195}{44}}&0&{0_{1 \times 5}}\\[0.2em]
		0 &-1&-{\frac{1}{36}}&-{\frac{7}{4}}
		&\frac{10}{9}&{\frac{1048}{183}}
		&-{\frac{52}{61}}&{\frac{660}{61}}
		&-{\frac{9}{61}} &{0_{1 \times 5}}\\
		\multicolumn{10}{c}{0_{1\times 14}}
		\end{bmatrix}}.
	\]
	Since \eqref{tAL} is satisfied with $\rho(\tilde{A}_{L}^{bc1})=1/2$, we conclude from \cref{thm:direct,thm:bw:0N} with $N=1$ that $\cB_J^{bc1}=\Phi_J^{bc1} \cup \{\Psi_j^{bc1} \setsp j\ge J\}$ is a Riesz basis of $L_{2}([0,1])$ for every  $J\ge J_{0}:=2$ such that $h(0)=h'(0)=h(1)=h'(1)=0$ for all $h\in \cB^{bc1}_J$, where $\Phi_j^{bc1}$ and $\Psi_j^{bc1}$ in \eqref{Phij} and \eqref{Psij} with $n_{\phi}=n_{\mathring{\phi}}=1$ and $n_{\psi}=n_{\mathring{\psi}}=2$ are given by: for $j=2,3$,
	\begin{align*}
	\Phi_{j}^{bc1} := \{\phi_{j;k}:1\le k \le 2^{j}-1\}, \qquad
	\Psi_{j}^{bc1} := \{\psi^{L,bc1}_{j;0}\} \cup \{\psi_{j;k}:2\le k \le 2^{j}-2\} \cup \{\psi^{R,bc1}_{j;2^{j}-1}\},
	\end{align*}
	where $\psi^{R,bc1}:=\psi^{L,bc1}(1-\cdot)$ with $\#\psi^{L,bc1}=\#\psi^{R,bc1}=3$,
	and for $j\ge 4$,
	\begin{align*}
	\Phi_{j}^{bc1} & := \{\phi_{j;k}:1\le k \le 4\} \cup \{\phi_{j;k}:5\le k \le 2^{j}-5\} \cup \{\phi_{j;2^{j}-k}:1\le k \le 4\},\\
	\Psi_{j}^{bc1} & := \{\psi^{L,bc1}_{j;0},\psi_{j;2},\psi_{j;3},\psi_{j;4}\} \cup \{\psi_{j;k}:5\le k \le 2^{j}-5\} \cup \{\psi^{R,bc1}_{j;2^{j}-1},\psi_{j;2^{j}-2},\psi_{j;2^{j}-3},\psi_{j;2^{j}-4}\},
	\end{align*}
	with $\#\Phi^{bc1}_j=2^{j+1}-2$ and $\#\Psi^{bc1}_j=2^{j+1}$.
	Note that $\vmo(\psi^{L,bc1})=\vmo(\psi^{R,bc1})=\vmo(\psi)=4$ and $\Phi^{bc1}_j=\Phi^{bc}_j\bs\{\phi^{L,bc}_{j;0},\phi^{R,bc}_{j,2^j-1}\}
	=\Phi_j\bs\{\phi^L_{j;0},
	\phi^R_{j;2^j-1}\}$ as in \cref{prop:mod}.
	We rewrite $\tilde{\phi}^{L,bc1}$ in \eqref{tphi:implicit} as $\{\tilde{\phi}^{L,bc1},\tilde{\phi}(\cdot-5),\tilde{\phi}(\cdot-6)\}$ with true boundary elements $\tilde{\phi}^{L,bc1}$ and $\# \tilde{\phi}^{L,bc1}=8$. Note that $\# \tilde{\psi}^{L,bc1}=9$.
	The dual Riesz basis $\tilde{\cB}_J^{bc1}:=\tilde{\Phi}_J^{bc1} \cup \{\tilde{\Psi}_j^{bc1} \setsp j\ge J\}$ of $\cB_{j}^{bc1}$ with $j\ge \tilde{J}_{0} := 4$ is given by %
	\begin{align*}
	\tilde{\Phi}_{j}^{bc1} & := \{\tilde{\phi}^{L,bc1}_{j;0}\} \cup \{\tilde{\phi}_{j;k}:5\le k \le 2^{j}-5\} \cup \{\tilde{\phi}^{R,bc1}_{j;2^{j}-1}\}, \quad \mbox{with} \quad \tilde{\phi}^{R,bc1}:=\tilde{\phi}^{L,bc1}(1-\cdot),\\
	\tilde{\Psi}_{j}^{bc1} & := \{\tilde{\psi}^{L,bc1}_{j;0}\} \cup \{\tilde{\psi}_{j;k}:5\le k \le 2^{j}-5\} \cup \{\tilde{\psi}^{R,bc1}_{j;2^{j}-1}\}, \quad \mbox{with} \quad \tilde{\psi}^{R,bc1}:=\tilde{\psi}^{L,bc1}(1-\cdot).
	\end{align*}
	Note that $\vmo(\tilde{\psi}^{L,bc1})=\vmo(\tilde{\psi}^{R,bc1})=0$ and $\{x^2\chi_{[0,1]},x^3\chi_{[0,1]}\} \subset\mbox{span}(\Phi_{j}^{bc1})$ for all $j \ge 2$. According to Theorem \ref{thm:bw:0N} with $N=1$, $(\tilde{\cB}_J^{bc1},\cB_J^{bc1})$ forms a biorthogonal Riesz basis of $L_{2}([0,1])$ for every $J \ge 4$.
	See \cref{fig:hmtvm4} for the graphs of $\phi, \psi$ and their associated boundary elements.
\end{example}

\begin{figure}[htbp]
	\centering
	\begin{subfigure}[b]{0.24\textwidth} \includegraphics[width=\textwidth,height=0.6\textwidth]{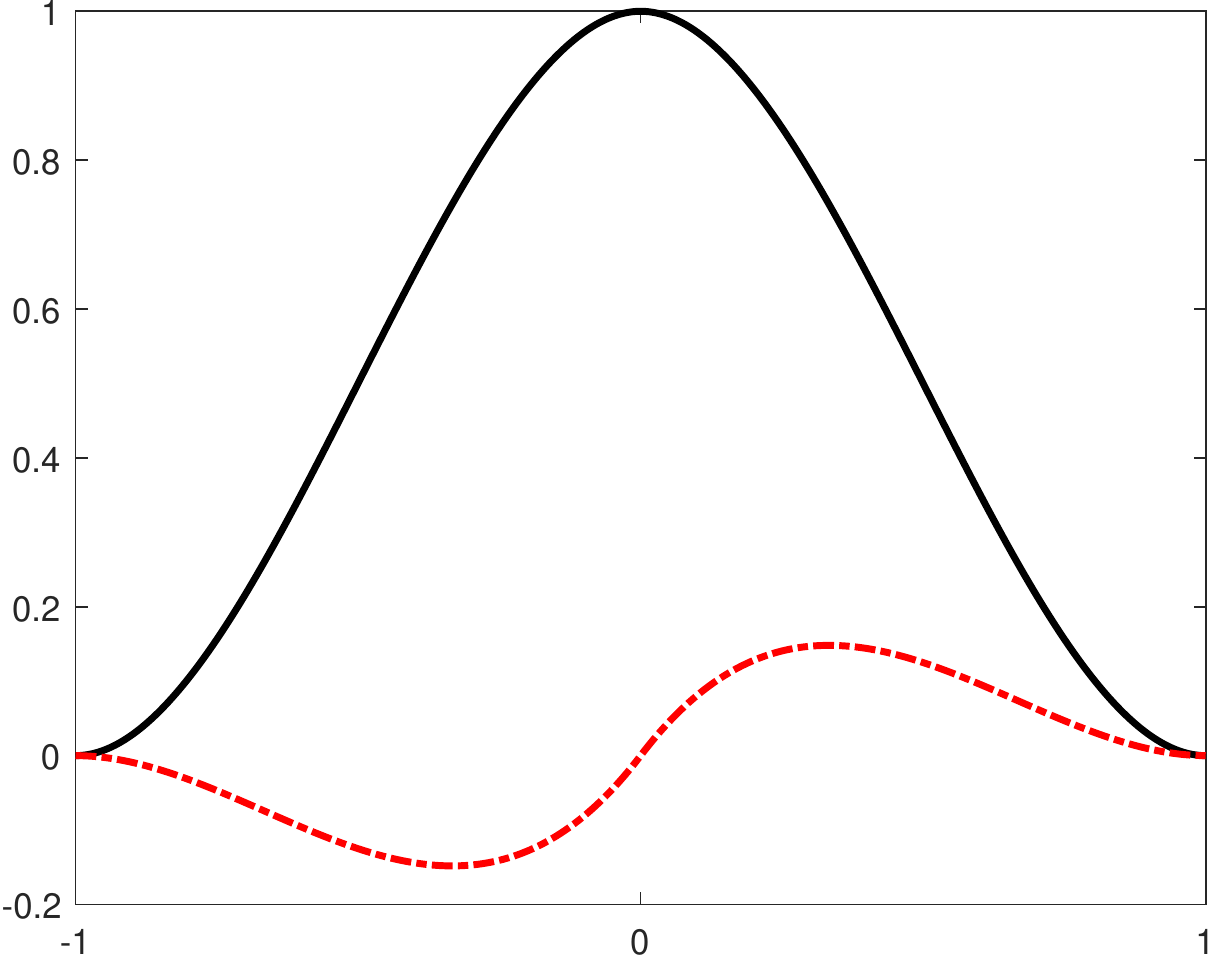}
		\caption{$\phi =(\phi^{1},\phi^{2})^{\tp}$}
	\end{subfigure}	
	\begin{subfigure}[b]{0.24\textwidth} \includegraphics[width=\textwidth,height=0.6\textwidth]{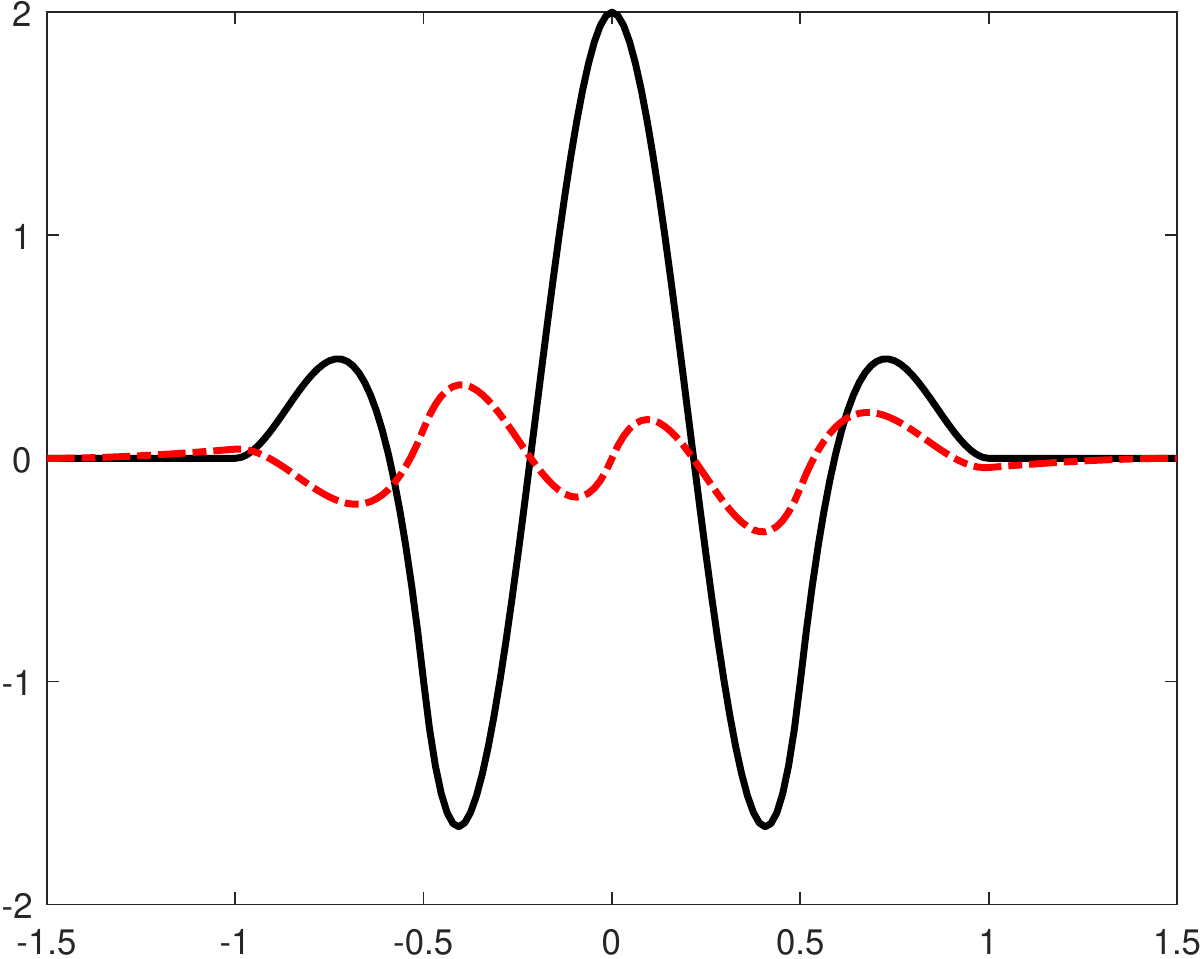}
		\caption{$\psi =(\psi^{1},\psi^{2})^{\tp}$}
	\end{subfigure}
	\begin{subfigure}[b]{0.24\textwidth} \includegraphics[width=\textwidth,height=0.6\textwidth]{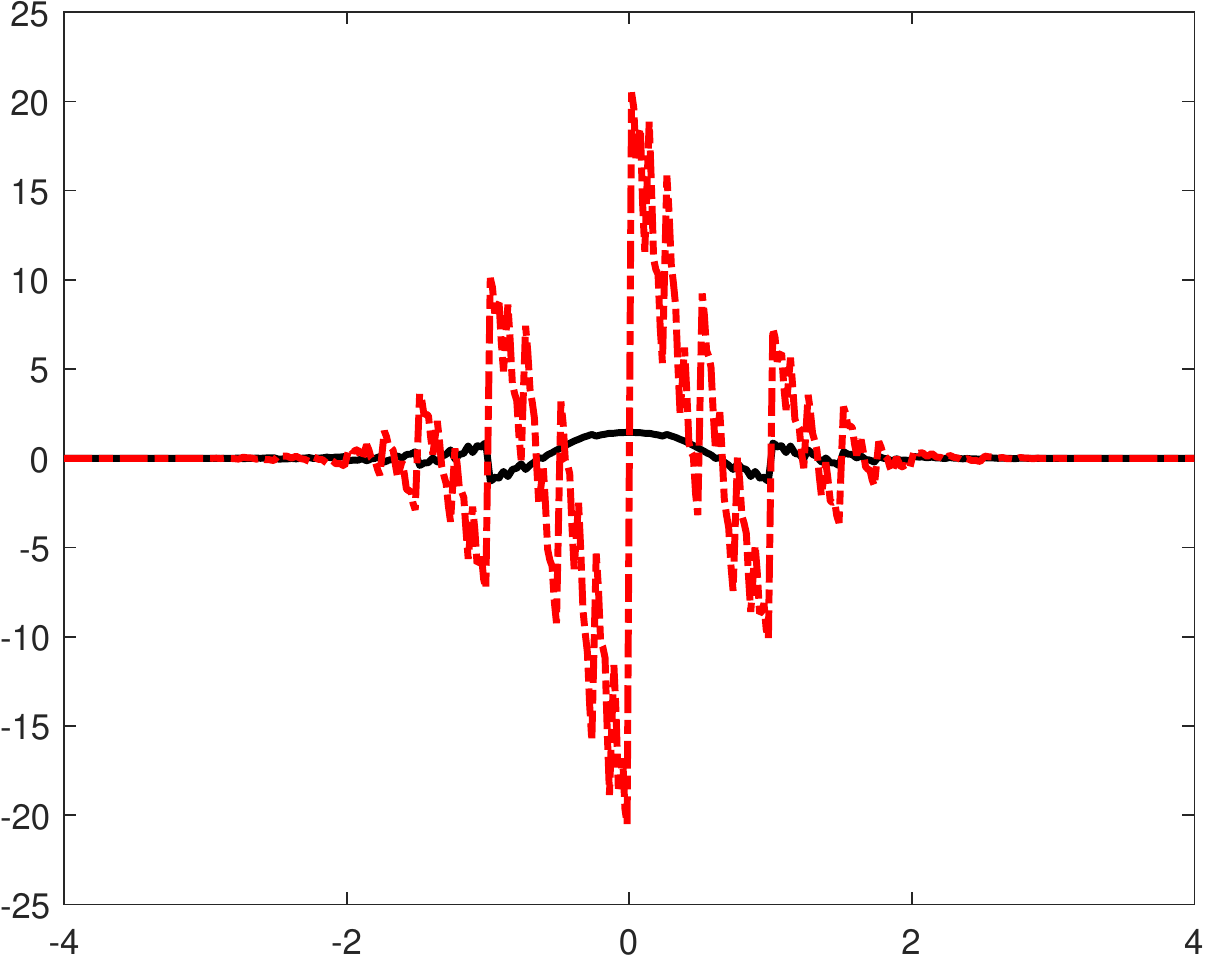}
		\caption{$\tilde{\phi} =(\tilde{\phi}^{1},\tilde{\phi}^{2})^{\tp}$}
	\end{subfigure}	
	\begin{subfigure}[b]{0.24\textwidth} \includegraphics[width=\textwidth,height=0.6\textwidth]{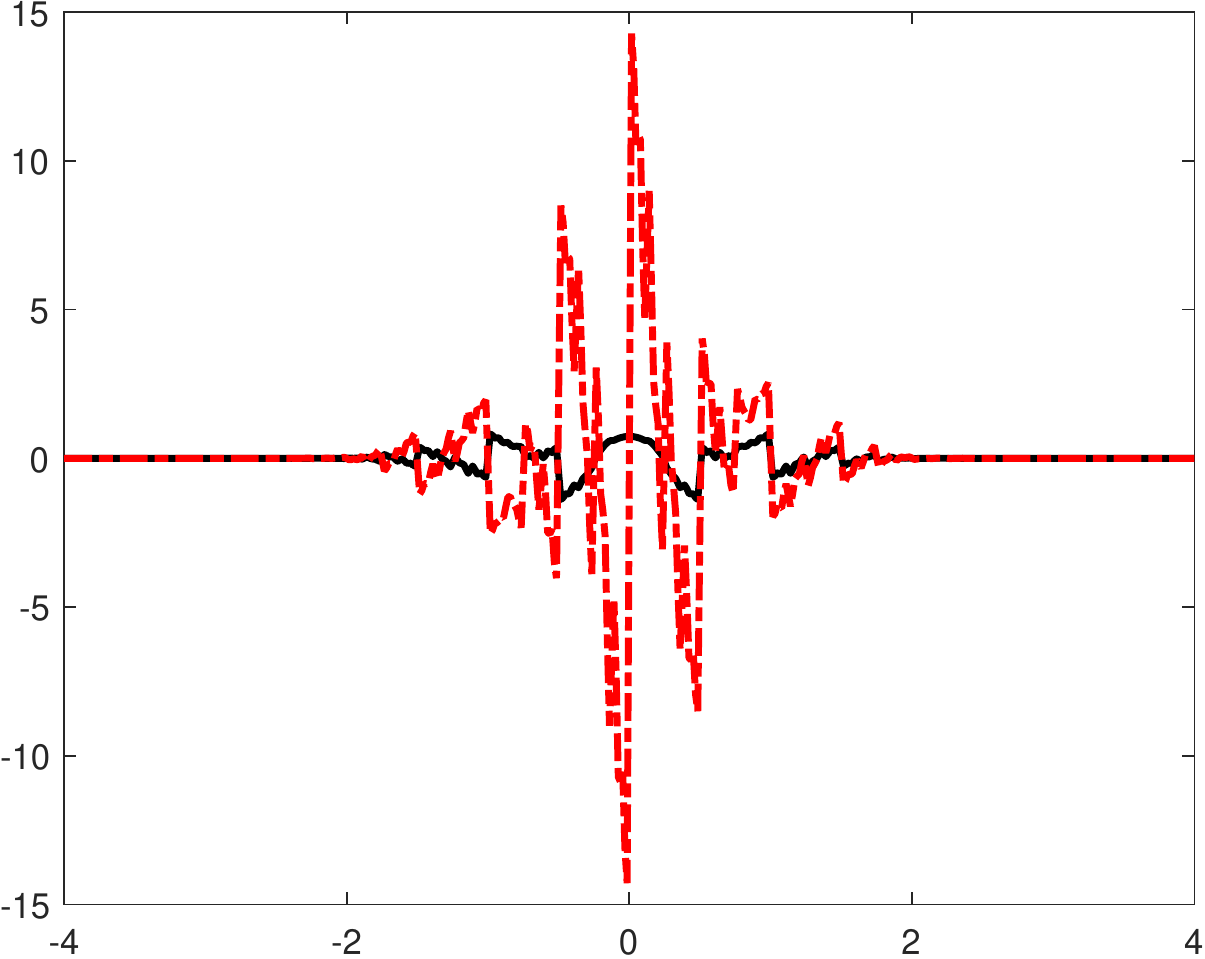}
		\caption{$\tilde{\psi} =(\tilde{\psi}^{1},\tilde{\psi}^{2})^{\tp}$}
	\end{subfigure}
	\begin{subfigure}[b]{0.24\textwidth} \includegraphics[width=\textwidth,height=0.6\textwidth]{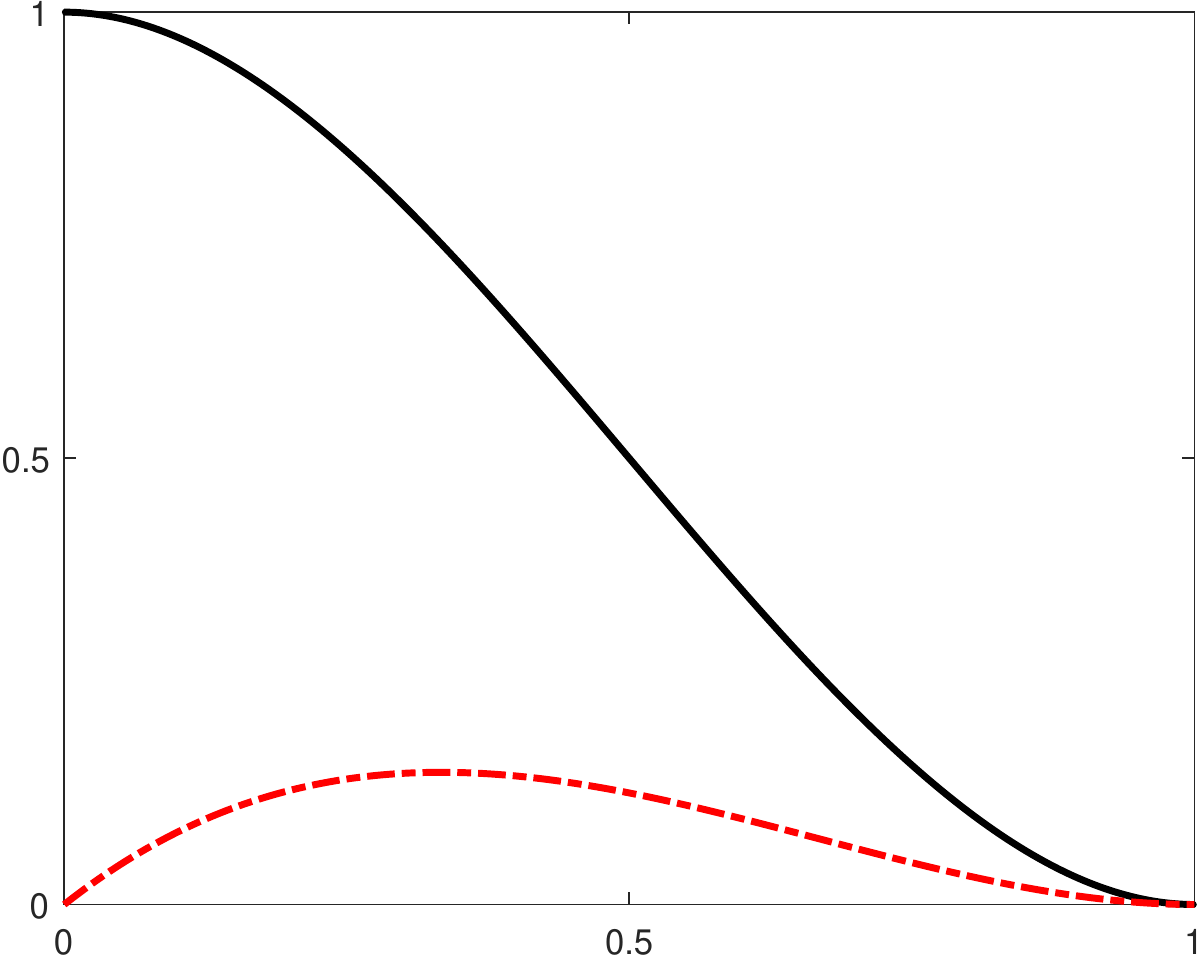}
		\caption{$\phi^{L}$}
	\end{subfigure}	
	\begin{subfigure}[b]{0.24\textwidth} \includegraphics[width=\textwidth,height=0.6\textwidth]{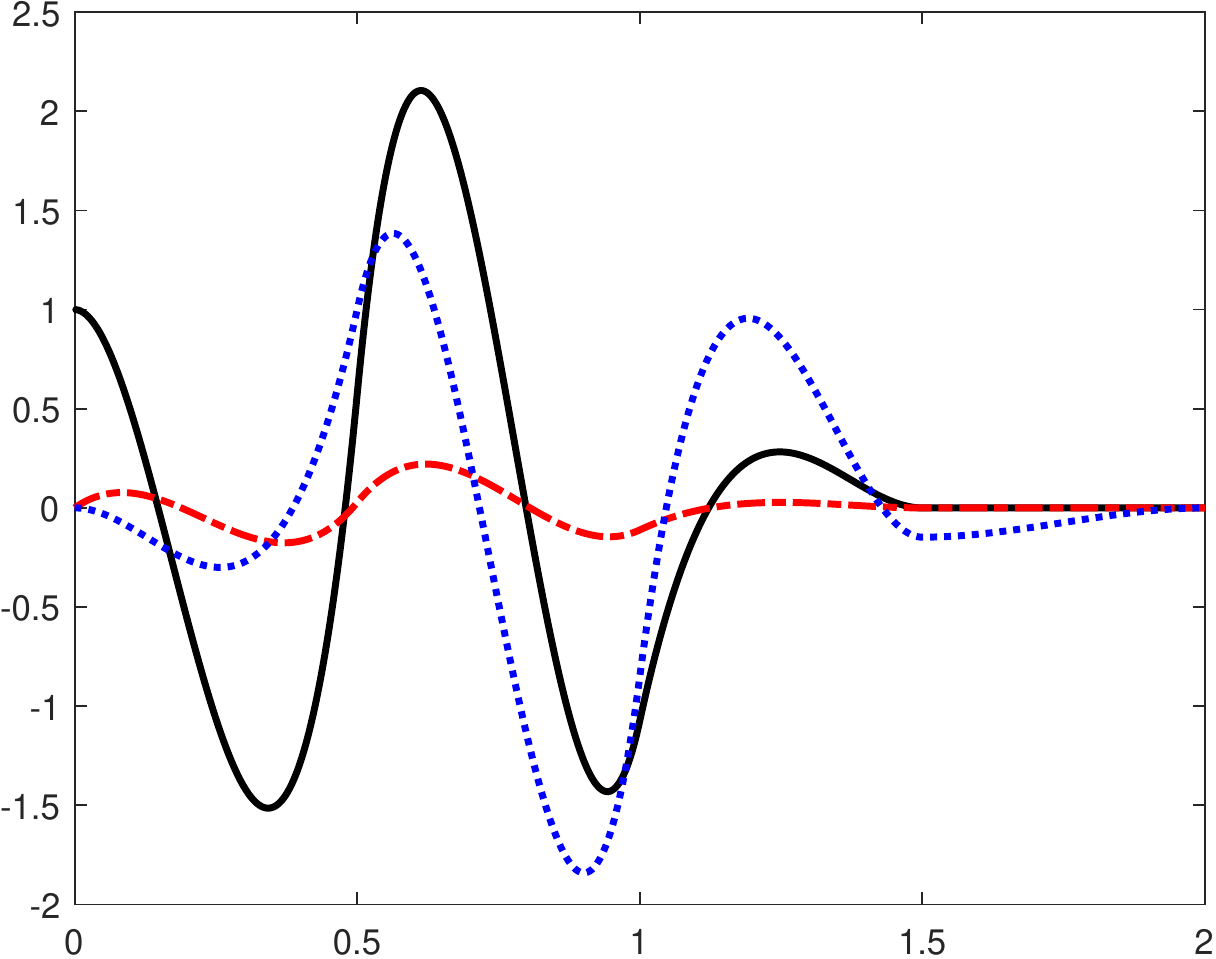}
		\caption{$\psi^{L}$}
	\end{subfigure}
	\begin{subfigure}[b]{0.24\textwidth} \includegraphics[width=\textwidth,height=0.6\textwidth]{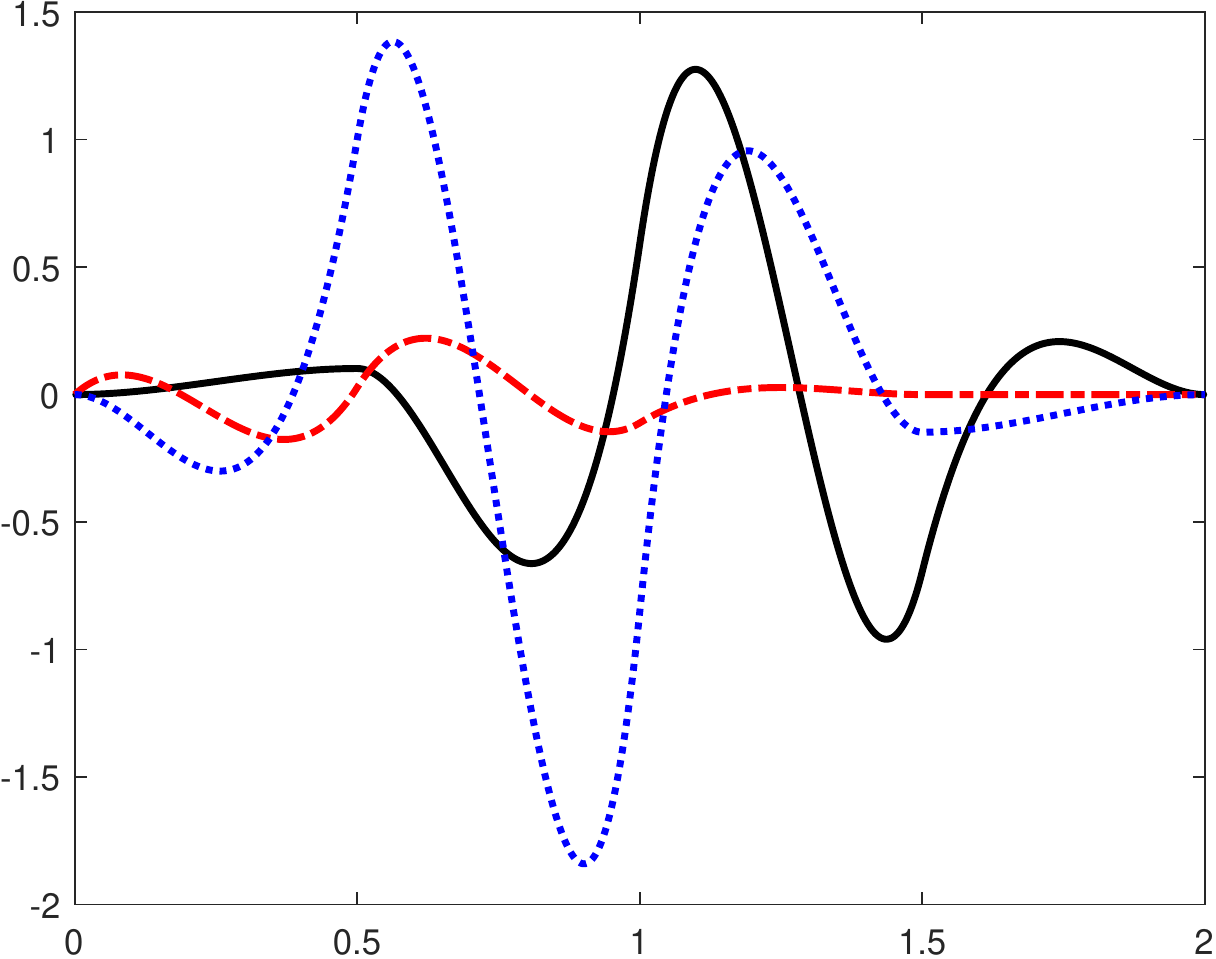}
		\caption{$\psi^{L,bc}$}
	\end{subfigure}
	\begin{subfigure}[b]{0.24\textwidth} \includegraphics[width=\textwidth,height=0.6\textwidth]{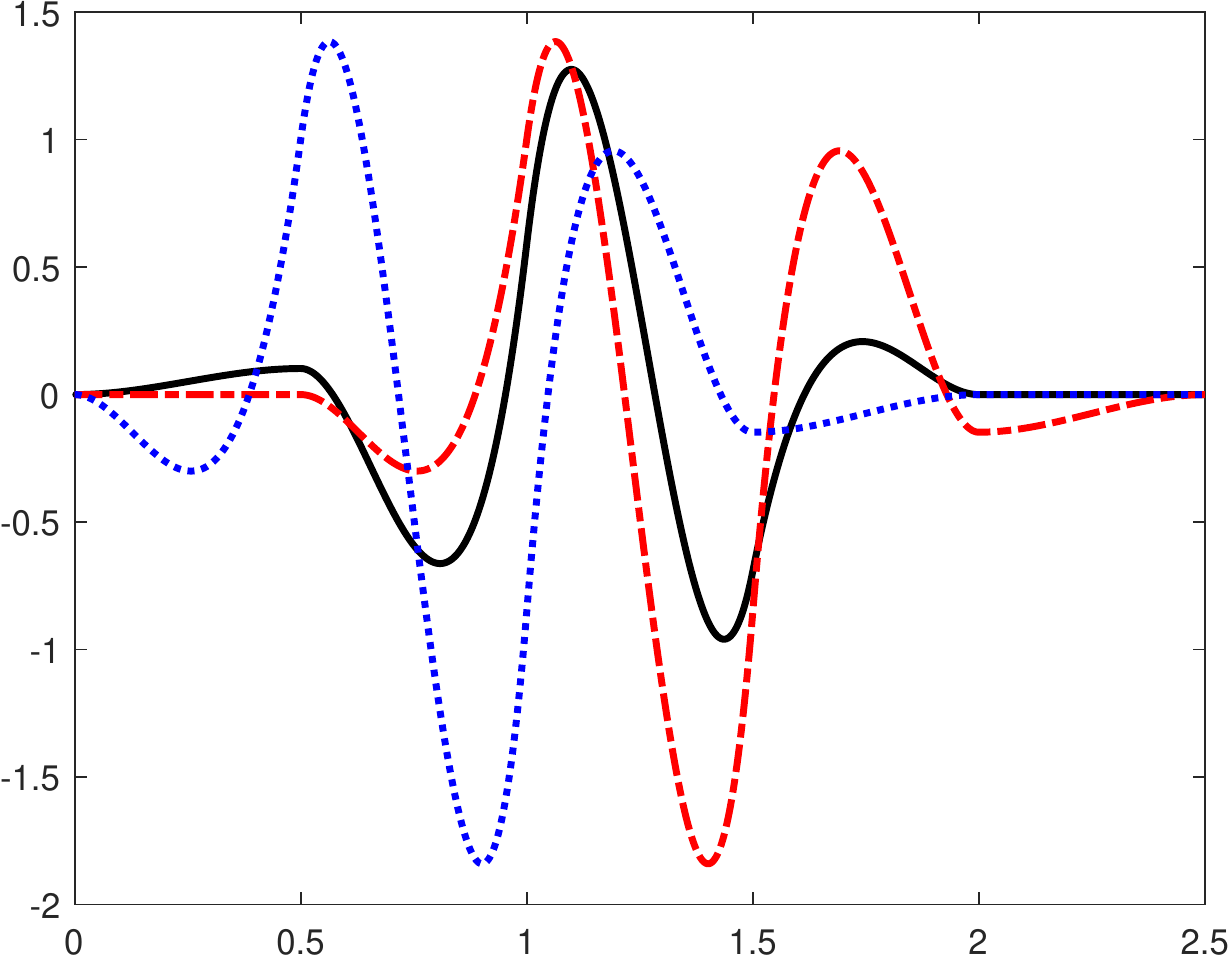}
		\caption{$\psi^{L,bc1}$}
	\end{subfigure}
	\caption{The generators of the Riesz bases $\cB_J$, $\cB_J^{bc}$, $\cB_J^{bc1}$ for $L_{2}([0,1])$ with $J \ge 2$ in \cref{ex:hmtvm4} such that $\eta(0)=\eta(1)$ for all $\eta\in \cB^{bc}_J$ and $h(0)=h'(0)=h(1)=h'(1)=0$ for all $h\in \cB^{bc1}_J$.
		The black, red, and blue lines correspond to the first, second, and third components of a vector function. Note that $\phi^{L,bc}$ in $\cB^{bc}_J$ is the second entry of $\phi^L$, $\phi^{L,bc1}=\emptyset$ in $\cB^{bc1}_J$, and $\vmo(\psi^L)=\vmo(\psi^{L,bc})=\vmo(\psi^{L,bc1})=\vmo(\psi)=4$.}
	\label{fig:hmtvm4}
\end{figure}

\begin{example} \label{ex:legall}
	\normalfont Consider the scalar biorthogonal wavelet $(\{\tilde{\phi};\tilde{\psi}\},\{\phi;\psi\})$ in \cite{cdf92}
	with $\wh{\phi}(0)=\wh{\tilde{\phi}}(0)=1$ and a biorthogonal wavelet filter bank $(\{\tilde{a};\tilde{b}\},\{a;b\})$  given by
	\begin{align*}
	 a=&\left\{\tfrac{1}{4},\tfrac{1}{2},\tfrac{1}{4}\right\}_{[-1,1]}, \quad b=\left\{-\tfrac{1}{8},-\tfrac{1}{4},\tfrac{3}{4},-\tfrac{1}{4},-\tfrac{1}{8}\right\}_{[-1,3]},\\
	\tilde{a}=&\left\{-\tfrac{1}{8}, \tfrac{1}{4}, \tfrac{3}{4}, \tfrac{1}{4}, -\tfrac{1}{8} \right\}_{[-2,2]}, \quad \tilde{b}=\left\{-\tfrac{1}{4}, \tfrac{1}{2}, -\tfrac{1}{4}\right\}_{[0,2]}.
	\end{align*}
	Then, $\sm(a)=1.5$, $\sm(\tilde{a}) \approx 0.440765$, and $\sr(a)=\sr(\tilde{a})=2$. Note that $\phi$ is a piecewise linear function. By item (i) of \cref{prop:phicut} with $n_\phi=1$, we have the left boundary refinable function $\phi^{L}:=\phi\chi_{[0,\infty)}=\phi^{L}(2\cdot) + \tfrac{1}{2} \phi(2\cdot-1)$.
	We use the direct approach in \cref{sec:direct}. Taking $n_{\psi}=1$ and $m_{\phi}=4$ in \cref{thm:direct}, we have $\psi^{L} = \phi^{L}(2\cdot) - \tfrac{5}{6} \phi(2\cdot-1) + \tfrac{1}{3} \phi(2\cdot -2)$, which satisfies items (i) and (ii) of \cref{thm:direct} with $\fs(C)=[1,2]$, $\fs(D)=[1,1]$, and
	{\begin{align*}
		& A_{0}=[\tfrac{2}{3},\tfrac{2}{3},-\tfrac{1}{3},0]^{\tp}, \quad
		 B_{0}=[\tfrac{1}{3},-\tfrac{2}{3},\tfrac{1}{3},0]^{\tp}, \quad
		 C(1)=[-\tfrac{7}{72},\tfrac{7}{36},\tfrac{7}{9},\tfrac{1}{4}]^{\tp},\\
		& C(2)=[\tfrac{1}{72},-\tfrac{1}{36},-\tfrac{1}{9},\tfrac{1}{4}]^{\tp},\quad
		 D(1)=[\tfrac{1}{36},-\tfrac{1}{18},-\tfrac{2}{9},\tfrac{1}{2}]^{\tp}.
	\end{align*}}
	Since \eqref{tAL} is satisfied with $\rho(\tilde{A}_{L})=1/2$, we conclude from \cref{thm:direct,thm:bw:0N} with $N=1$
	that  $\cB_J=\Phi_J \cup \{\Psi_j \setsp j\ge J\}$ is a Riesz basis of $L_{2}([0,1])$ for all $J\ge J_{0}:=1$, where $\Phi_j$ and $\Psi_j$ in \eqref{Phij} and \eqref{Psij} with $n_{\phi}=n_{\mathring{\phi}}=n_{\psi}=1$ and $n_{\mathring{\psi}}=2$ are given by
	\begin{align*}
	\Phi_{j} & := \{\phi^{L}_{j;0},\phi_{j;1},\phi_{j;2}\} \cup \{\phi_{j;k}:3\le k \le 2^{j}-3\} \cup \{\phi^{R}_{j;2^{j}-1},\phi_{j;2^{j}-1},\phi_{j;2^{j}-2}\},\\
	\Psi_{j} & := \{\psi^{L}_{j;0},\psi_{j;1}\} \cup \{\psi_{j;k}:2\le k \le 2^{j}-3\} \cup \{\psi^{R}_{j;2^{j}-1},\psi_{j;2^{j}-2}\},
	\end{align*}
	where $\phi^{R}:=\phi^{L}(1-\cdot)$ and $\psi^{R}:=\psi^{L}(1-\cdot)$ with $\#\phi^{L} = \# \phi^{R}= \#\psi^{L} = \# \psi^{R}=1$, $\#\Phi_j=2^j+1$ and $\#\Psi_j=2^j$. For the cases $j=1$ and $j=2$,
	$\Phi_1=\{\phi^L_{1;0}, \phi_{1;1},\phi^R_{1;1}\}$, $\Psi_{1}=\{\psi^{L}_{1;0},\psi^{R}_{1;1}\}$,
	and $\Phi_2=\{\phi^L_{2,0}, \phi_{2;1},\phi_{2;2},\phi_{2;3},\phi^R_{2;3}\}$
	after
	removing repeated elements.
	Note that $\vmo(\psi^{L})=\vmo(\psi^{R})=\vmo(\psi)=2=\sr(\tilde{a})$. The dual Riesz basis $\tilde{\cB}_j$ of $\cB_{j}$ with $j\ge \tilde{J}_{0} := 3$ is given by \cref{thm:direct} through \eqref{tphi:implicit} and \eqref{tpsi:implicit}. We rewrite $\tilde{\phi}^{L}$ in \eqref{tphi:implicit} as $\{\tilde{\phi}^{L},\tilde{\phi}(\cdot-3)\}$ with true boundary elements $\tilde{\phi}^{L}$ and $\# \tilde{\phi}^{L}=3$, and rewrite $\tilde{\psi}^{L}$ in \eqref{tpsi:implicit} as $\{\tilde{\psi}^{L},\tilde{\psi}(\cdot-2),\tilde{\psi}(\cdot-3)\}$ with true boundary elements $\tilde{\psi}^{L}$ and $\# \tilde{\psi}^{L}=2$. Hence, $\tilde{\cB}_J=\tilde{\Phi}_J \cup \{\tilde{\Psi}_j \setsp j\ge J\}$ for $J\ge 3$ is given by
	\begin{align*}
	\tilde{\Phi}_{j} & := \{\tilde{\phi}^{L}_{j;0}\} \cup \{\tilde{\phi}_{j;k}:3\le k \le 2^{j}-3\} \cup \{\tilde{\phi}^{R}_{j;2^{j}-1}\}, \quad \mbox{with} \quad \tilde{\phi}^{R}:=\tilde{\phi}^{L}(1-\cdot),\\
	\tilde{\Psi}_{j} & := \{\tilde{\psi}^{L}_{j;0}\} \cup \{\tilde{\psi}_{j;k}:2\le k \le 2^{j}-3\} \cup \{\tilde{\psi}^{R}_{j;2^{j}-1}\}, \quad \mbox{with} \quad \tilde{\psi}^{R}:=\tilde{\psi}^{L}(1-\cdot).
	\end{align*}
	Note that $\vmo(\tilde{\psi}^{L})=\vmo(\tilde{\psi}^{R})=\vmo(\tilde{\psi})=2=\sr(a)$ and $\PL_{1} \chi_{[0,1]} \subset\mbox{span}(\Phi_{j})$ for all $j \ge 3$. According to Theorem \ref{thm:bw:0N} with $N=1$, $(\tilde{\cB}_J,\cB_J)$ forms a biorthogonal Riesz basis of $L_{2}([0,1])$ for every $J \ge 3$.
	
	By item (ii) of \cref{prop:phicut} with $n_\phi=1$ and $\textsf{p}(x)=x$, the left boundary refinable vector function is $\phi^{L,bc}:=\emptyset$. Taking $n_{\psi}=1$ and $m_{\phi}=4$ in \cref{thm:direct}, we have $\# \psi^{L,bc}=1$ and
	\[
	 \psi^{L,bc}:=\psi^L-\phi^{L}(2\cdot)+\tfrac{4}{3}\phi(2\cdot -1)-\tfrac{4}{3}\phi(2\cdot -2) + \tfrac{1}{2} \phi(2\cdot-3)
	\]
	satisfying both items (i) and (ii) of \cref{thm:direct}, where $A^{bc}_0$, $B^{bc}_0$, $C^{bc}$, and $D^{bc}$ can be easily derived from $A_{0}$, $B_{0}$, $C$, and $D$. More precisely, $A_0^{bc}$ is obtained from $U^{-1} A_0$ by taking out its first row and first column, and $B_0^{bc}, C^{bc}, D^{bc}$ are obtained from $U^{-1}B_0, U^{-1}C, U^{-1}D$, respectively by removing their first rows, where the invertible matrix $U$ is given by
	\[
	 U:=I_{4}+B_0[-1,\tfrac{4}{3},-\tfrac{4}{3},\tfrac{1}{2}].
	\]
	Since \eqref{tAL} is satisfied with $\rho(\tilde{A}_{L}^{bc})=1/2$, we conclude from \cref{thm:direct,thm:bw:0N} with $N=1$ that $\cB_J^{bc}=\Phi_J^{bc} \cup \{\Psi_j^{bc} \setsp j\ge J\}$ is a Riesz basis of $L_{2}([0,1])$ for every  $J\ge J_{0}:=2$ such that $h(0)=h(1)=0$ for all $h\in \cB_J^{bc}$, where $\Phi_j^{bc}$ and $\Psi_j^{bc}$  in \eqref{Phij} and \eqref{Psij} with $n_{\phi}=n_{\mathring{\phi}}=n_{\psi}=1$ and $n_{\mathring{\psi}}=2$ are given by
	\begin{align*}
	\Phi_{j}^{bc} & := \{\phi_{j;1},\phi_{j;2}\} \cup \{\phi_{j;k}:3\le k \le 2^{j}-3\} \cup \{\phi_{j;2^{j}-1}, \phi_{j;2^{j}-2}\},\\
	\Psi_{j}^{bc} & := \{\psi^{L,bc}_{j;0},\psi_{j;1}\} \cup \{\psi_{j;k}:2\le k \le 2^{j}-3\} \cup \{\psi^{R,bc}_{j;2^{j}-1},\psi_{j;2^j-2}\},
	\end{align*}
	where $\phi^{L,bc}=\phi^{R,bc}=\emptyset$ and $\psi^{R,bc}:=\psi^{L,bc}(1-\cdot)$
	with $\#\psi^{L,bc}=\#\psi^{R,bc}=1$, $\#\Phi^{bc}_j=2^j-1$, and $\#\Psi^{bc}_j=2^j$. For the case $j=2$, $\Phi^{bc}_2=\{\phi_{2;1},\phi_{2;2},\phi_{2;3}\}$ after removing repeated elements.
	Note that $\vmo(\psi^{L,bc})=\vmo(\psi^{R,bc})=\vmo(\psi)=2$ and
	 $\Phi^{bc}_j=\Phi_j\bs\{\phi^L_{j;0},\phi^R_{j;2^j-1}\}$ as in \cref{prop:mod}. We rewrite $\tilde{\phi}^{L,bc}$ in \eqref{tphi:implicit} as $\{\tilde{\phi}^{L,bc},\tilde{\phi}(\cdot-3)\}$ with true boundary elements $\tilde{\phi}^{L,bc}$ and $\# \tilde{\phi}^{L,bc}=2$, and $\tilde{\psi}^{L,bc}$ in \eqref{tpsi:implicit} as $\{\tilde{\psi}^{L,bc},\tilde{\psi}(\cdot-2),\tilde{\psi}(\cdot-3)\}$ with true boundary elements $\tilde{\psi}^{L,bc}$ and $\# \tilde{\psi}^{L,bc}=2$.
	The dual Riesz basis $\tilde{\cB}_J^{bc}:=\tilde{\Phi}_J^{bc} \cup \{\tilde{\Psi}_j^{bc} \setsp j\ge J\}$ of $\cB_{J}^{bc}$ with $J\ge \tilde{J}_{0} := 3$ is given by
	\begin{align*}
	\tilde{\Phi}_{j}^{bc} & := \{\tilde{\phi}^{L,bc}_{j;0}\} \cup \{\tilde{\phi}_{j;k}:3\le k \le 2^{j}-3\} \cup \{\tilde{\phi}^{R,bc}_{j;2^{j}-1}\}, \quad \mbox{with} \quad \tilde{\phi}^{R,bc}:=\tilde{\phi}^{L,bc}(1-\cdot),\\
	\tilde{\Psi}_{j}^{bc} & := \{\tilde{\psi}^{L,bc}_{j;0}\} \cup \{\tilde{\psi}_{j;k}:2\le k \le 2^{j}-3\} \cup \{\tilde{\psi}^{R,bc}_{j;2^{j}-1}\}, \quad \mbox{with} \quad \tilde{\psi}^{R,bc}:=\tilde{\psi}^{L,bc}(1-\cdot).
	\end{align*}
	Note 
	 $x\chi_{[0,1]}\subset\mbox{span}(\Phi_{j}^{bc})$ for all $j \ge 2$. By Theorem \ref{thm:bw:0N} with $N=1$, $(\tilde{\cB}_J^{bc},\cB_J^{bc})$ forms a biorthogonal Riesz basis of $L_{2}([0,1])$ for $J \ge 3$.
	See \cref{fig:legall} for the graphs of $\phi,\psi,\tilde{\phi},\tilde{\psi}$ and all boundary elements.
\end{example}

\begin{figure}
	\begin{subfigure}[b]{0.24\textwidth}		
		 \includegraphics[width=\textwidth,height=0.6\textwidth]{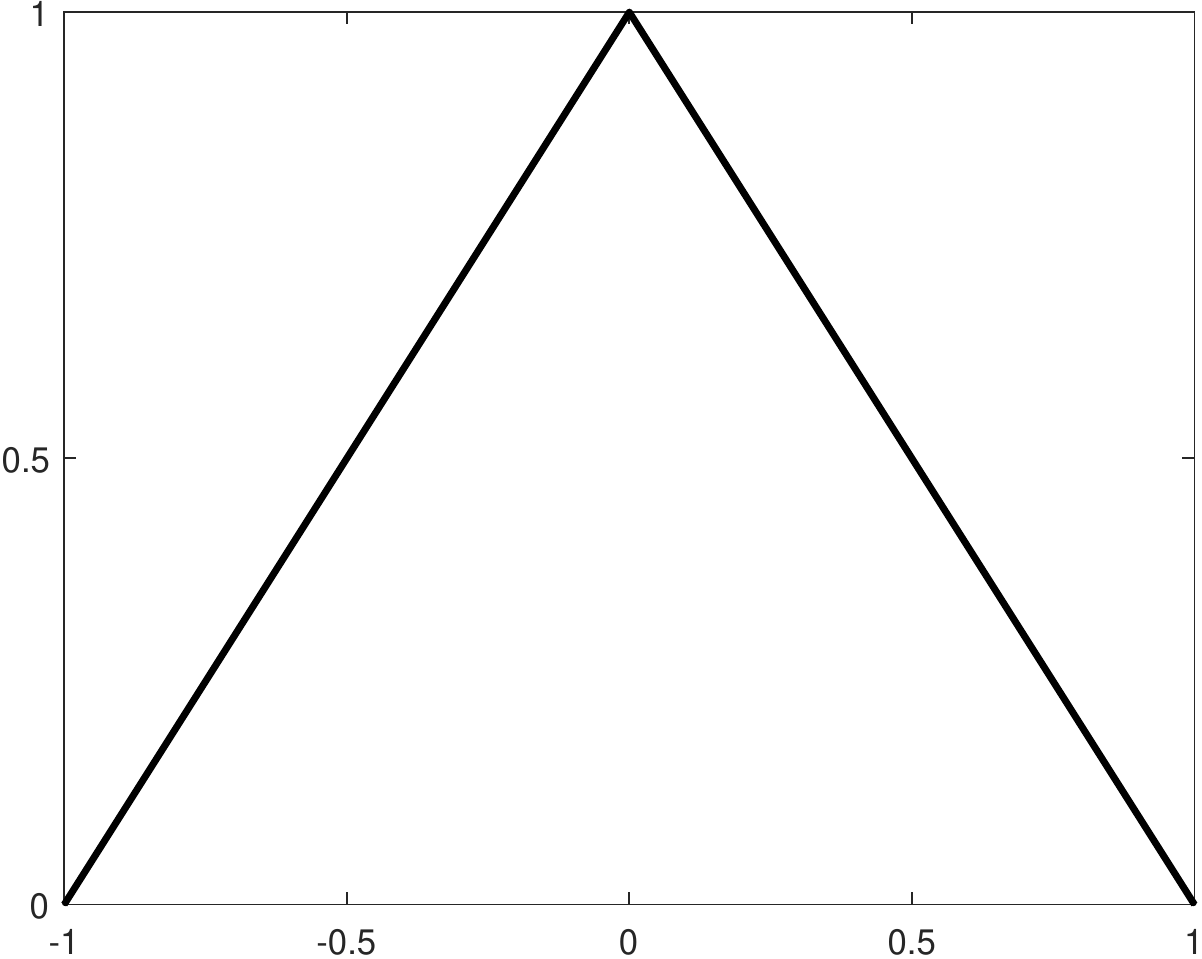}
		\caption{$\phi$}
	\end{subfigure}
	\begin{subfigure}[b]{0.24\textwidth}		
		 \includegraphics[width=\textwidth,height=0.6\textwidth]{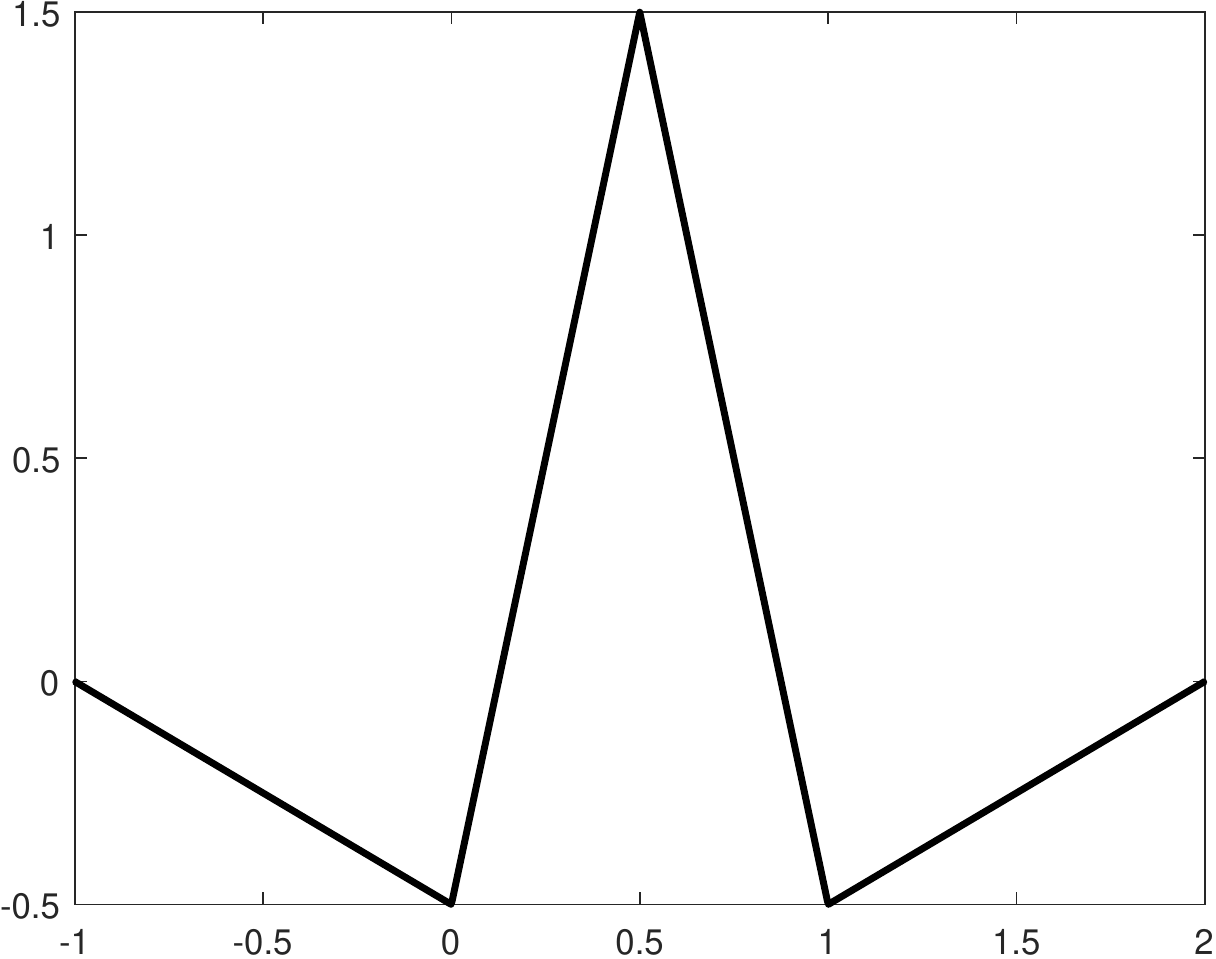}
		\caption{$\psi$}
	\end{subfigure}
	\begin{subfigure}[b]{0.24\textwidth}		
		 \includegraphics[width=\textwidth,height=0.6\textwidth]{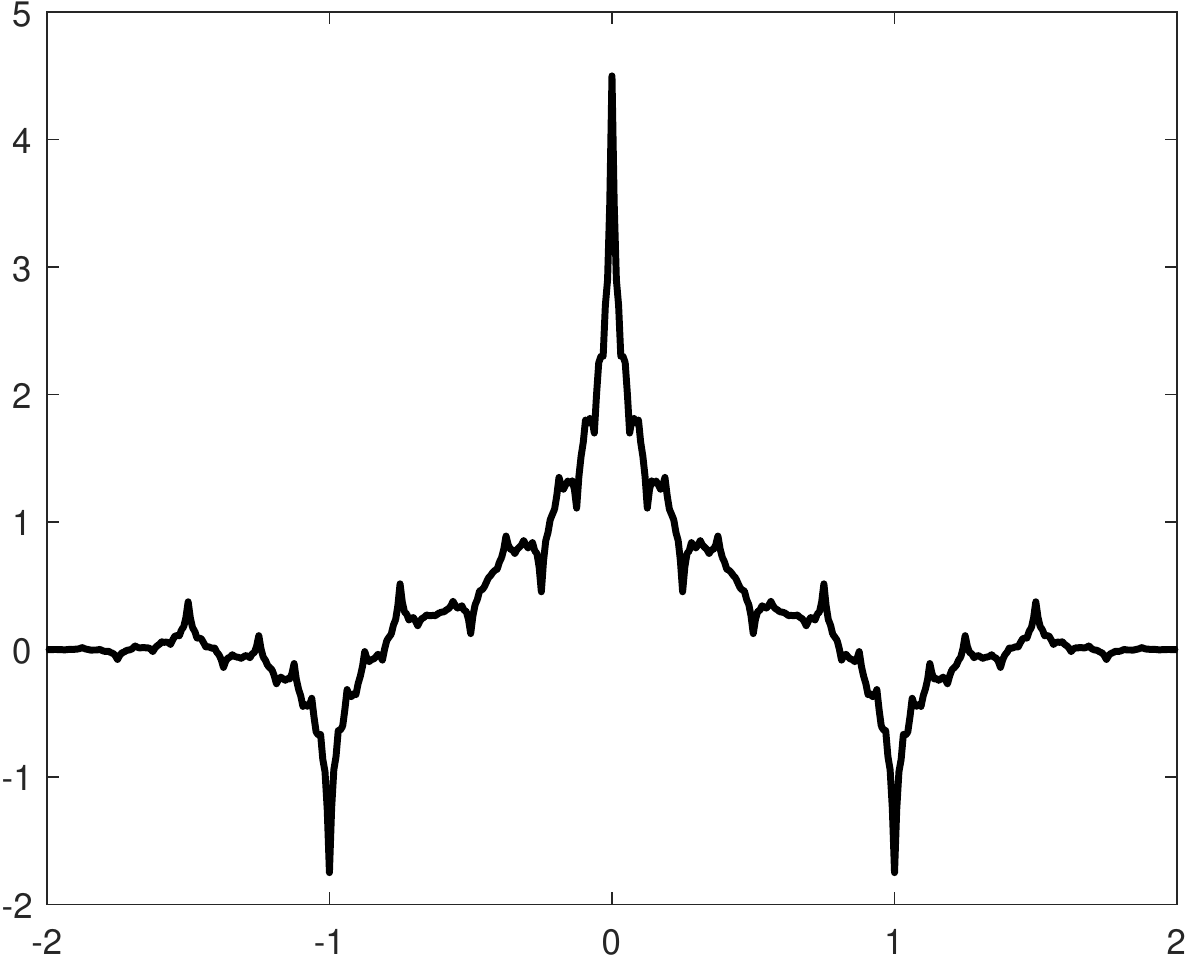}
		\caption{$\tilde{\phi}$}
	\end{subfigure}
	\begin{subfigure}[b]{0.24\textwidth}		
		 \includegraphics[width=\textwidth,height=0.6\textwidth]{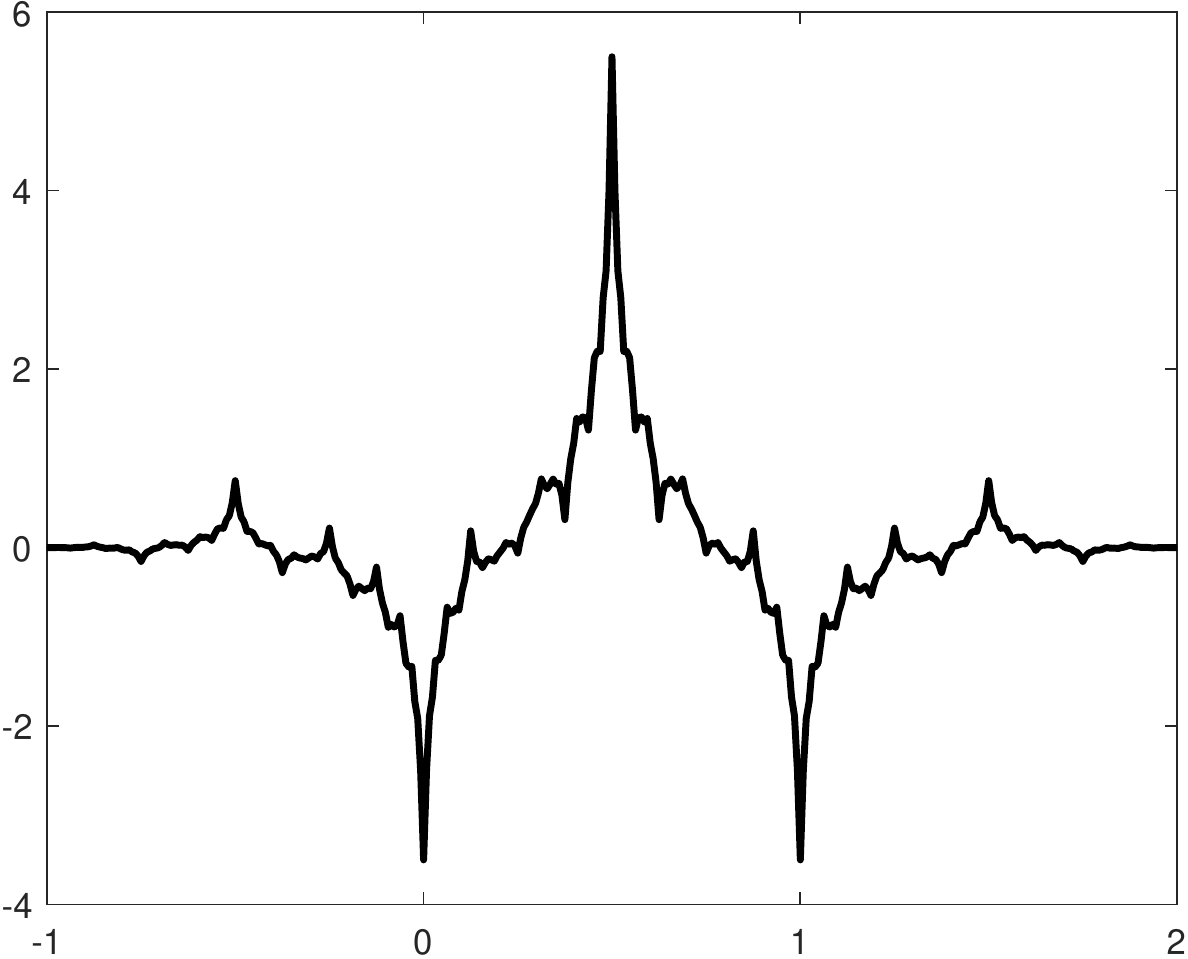}
		\caption{$\tilde{\psi}$}
	\end{subfigure}
	\begin{subfigure}[b]{0.24\textwidth}		
		 \includegraphics[width=\textwidth,height=0.6\textwidth]{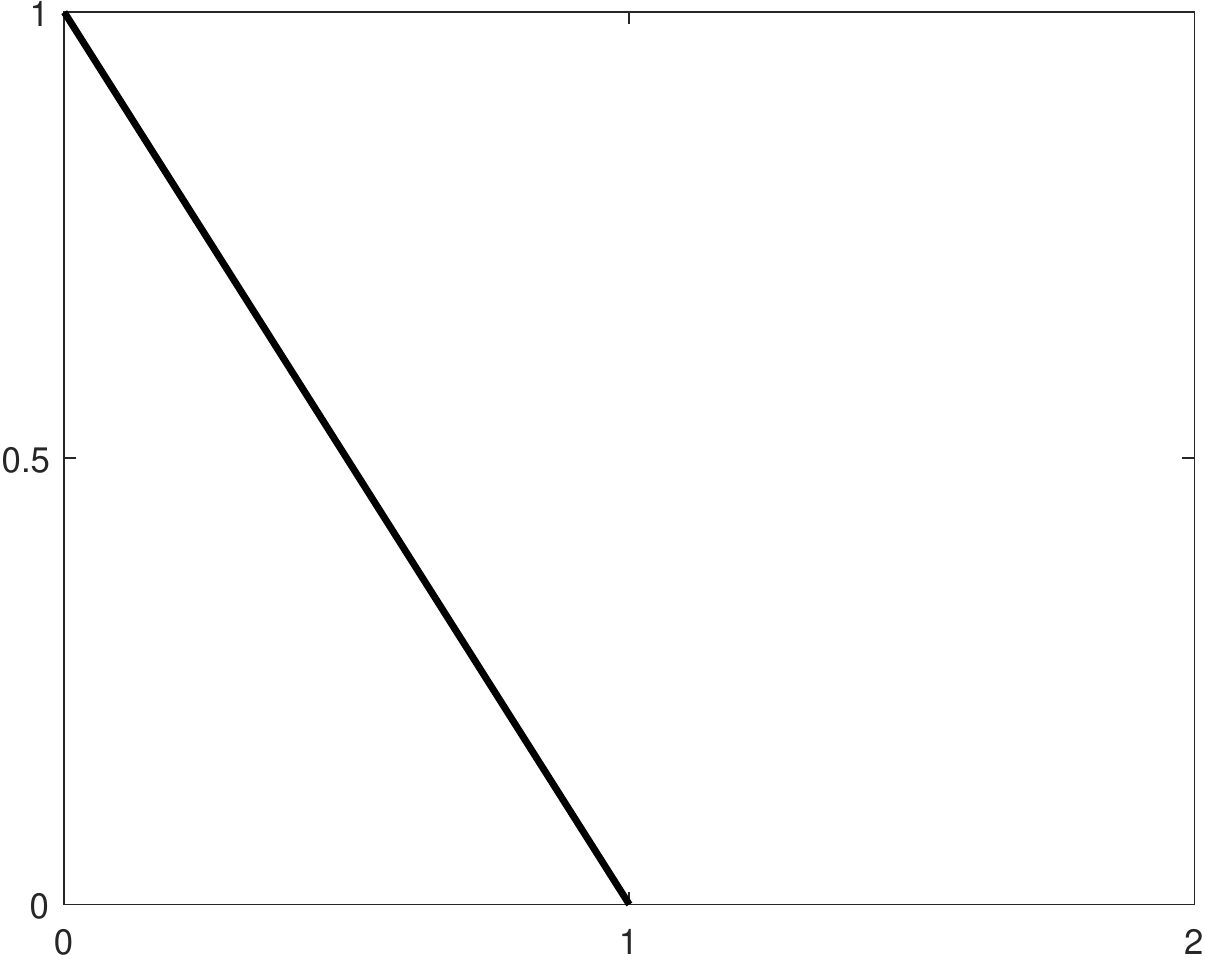}
		\caption{$\phi^{L}$}
	\end{subfigure}
	\begin{subfigure}[b]{0.24\textwidth}	
		 \includegraphics[width=\textwidth,height=0.6\textwidth]{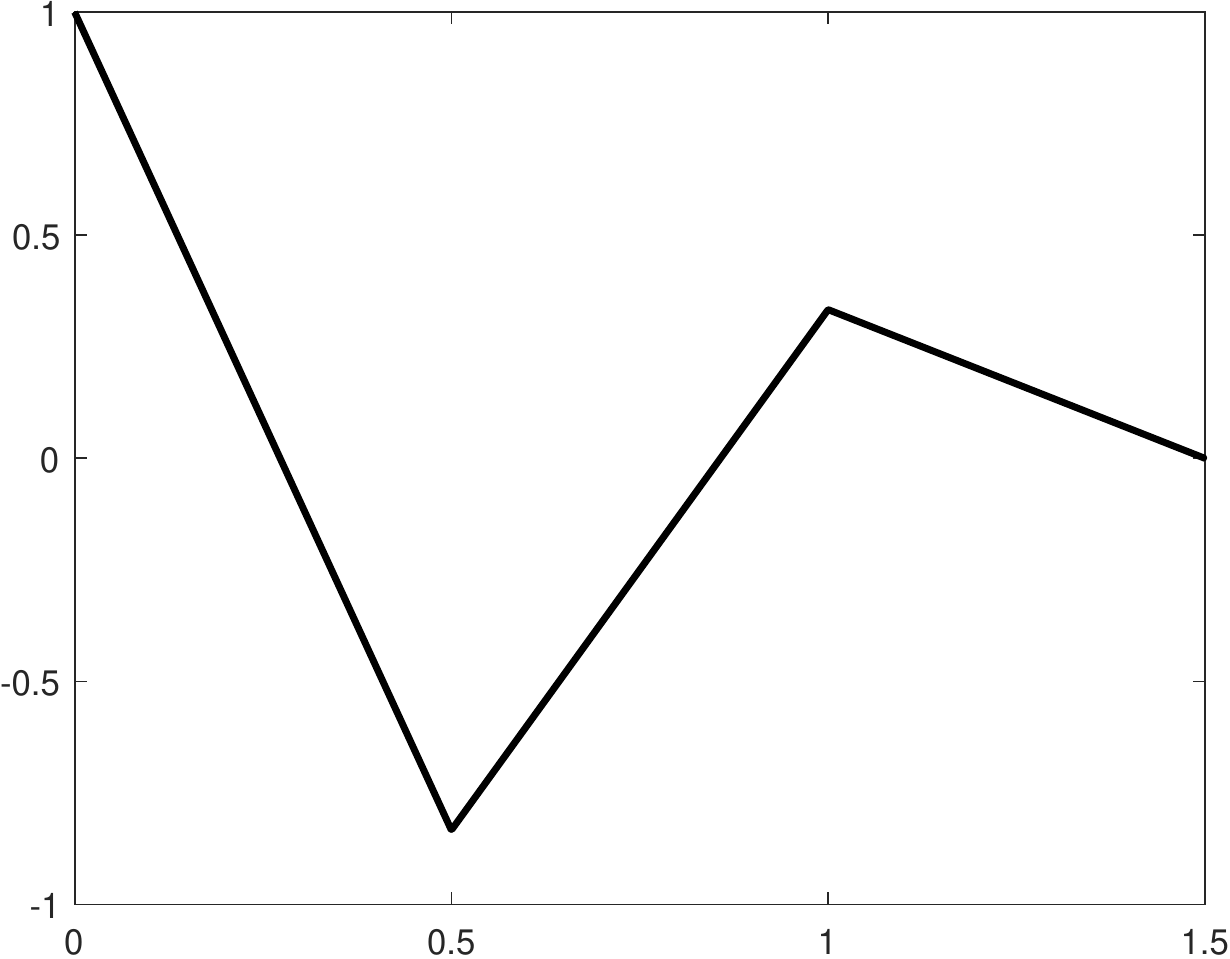}
		\caption{$\psi^{L}$}
	\end{subfigure}
	\begin{subfigure}[b]{0.24\textwidth}	
		 \includegraphics[width=\textwidth,height=0.6\textwidth]{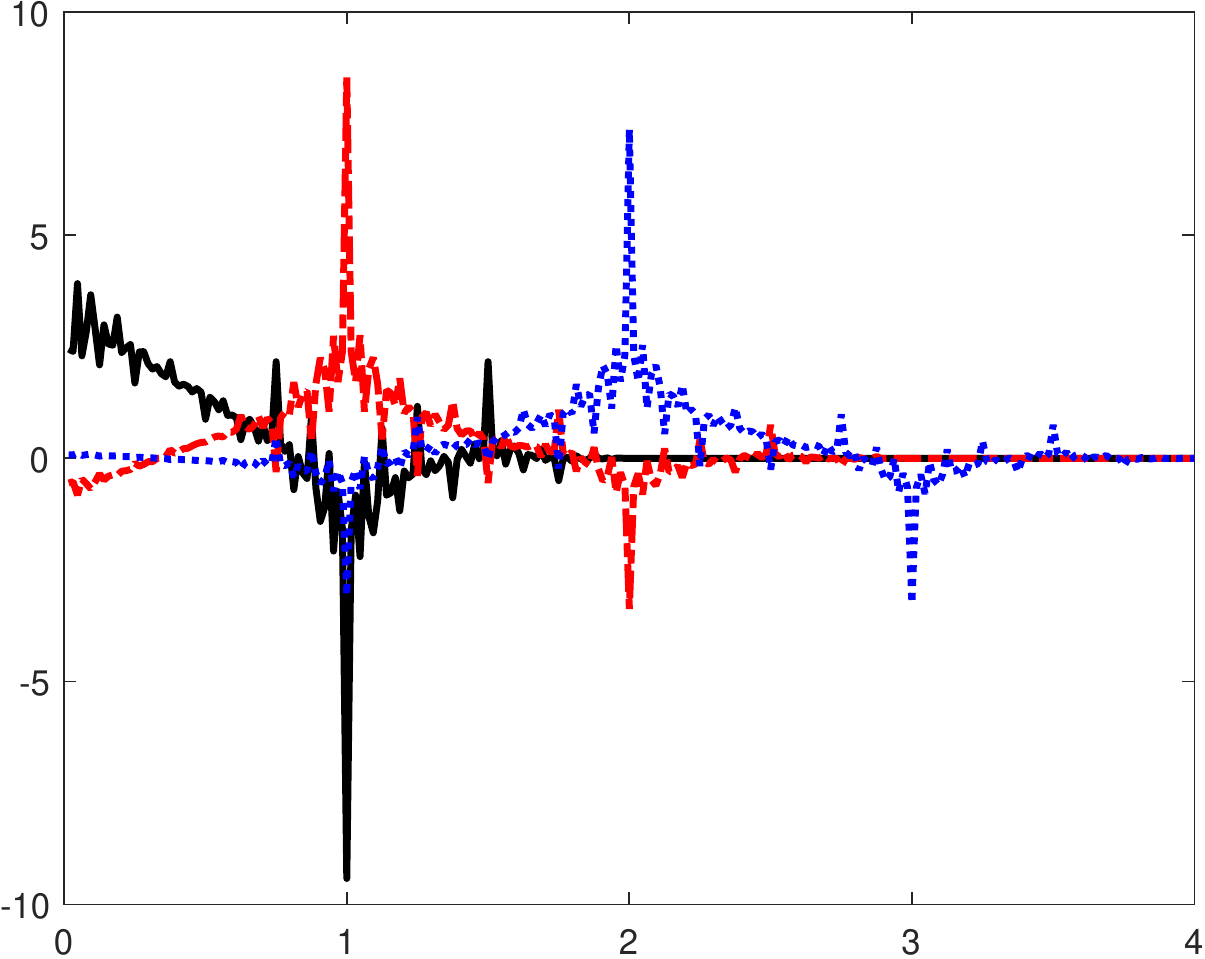}
		\caption{$\tilde{\phi}^{L}$}
	\end{subfigure}
	\begin{subfigure}[b]{0.24\textwidth}
		 \includegraphics[width=\textwidth,height=0.6\textwidth]{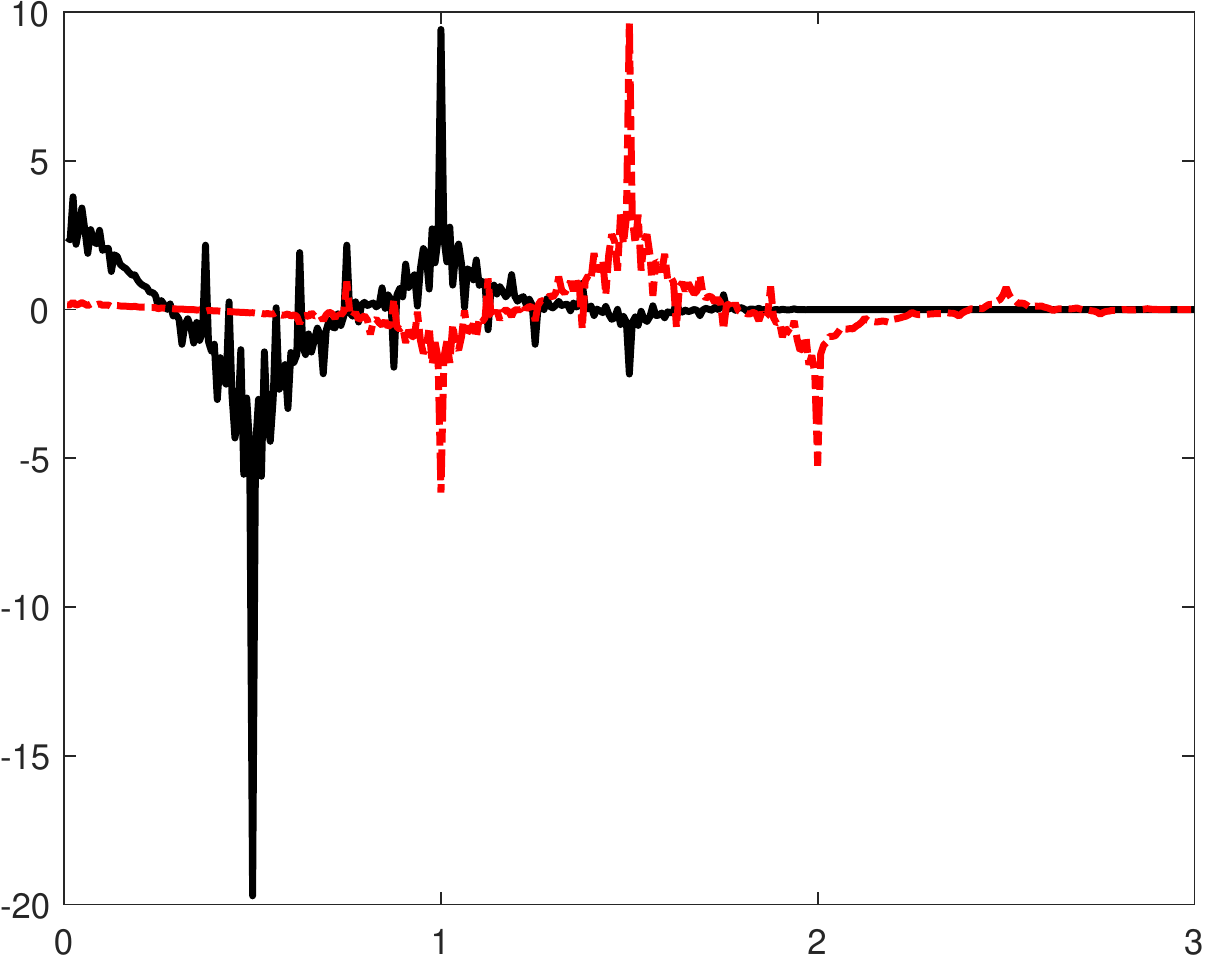}
		\caption{$\tilde{\psi}^{L}$}
	\end{subfigure}
	\begin{subfigure}[b]{0.24\textwidth}	
		 \includegraphics[width=\textwidth,height=0.6\textwidth]{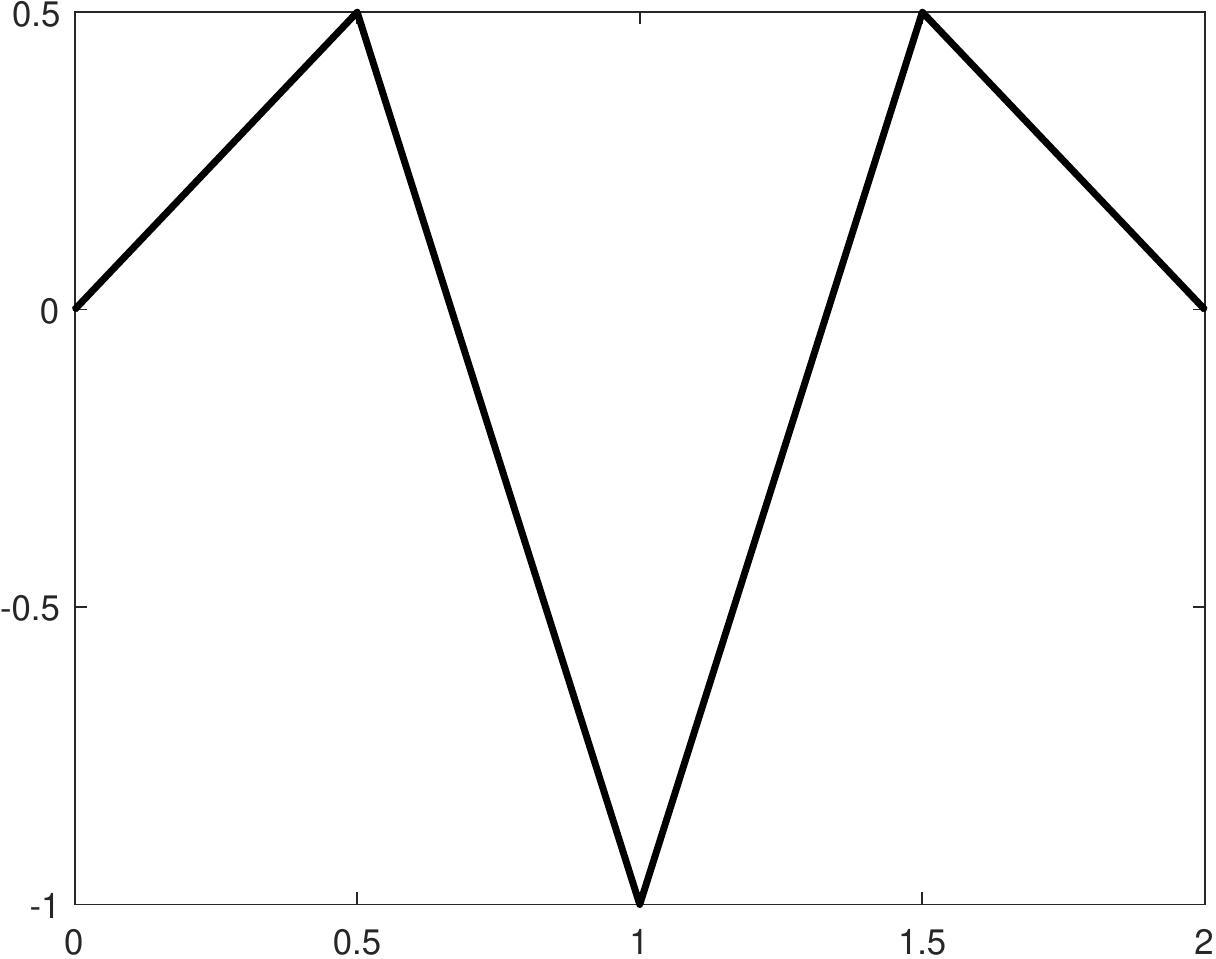}
		\caption{$\psi^{L,bc}$}
	\end{subfigure}
	\begin{subfigure}[b]{0.24\textwidth}	
		 \includegraphics[width=\textwidth,height=0.6\textwidth]{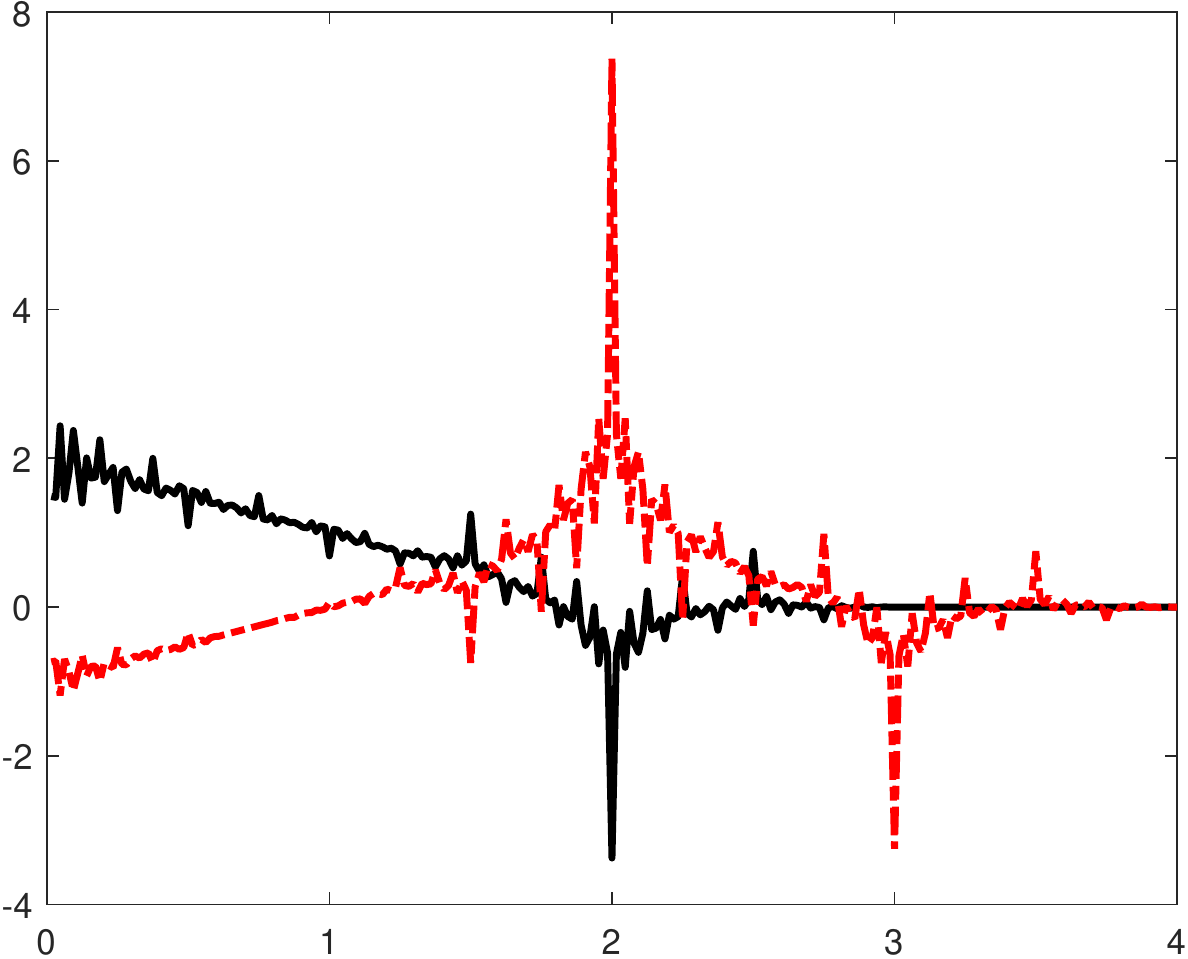}
		\caption{$\tilde{\phi}^{L,bc}$}
	\end{subfigure}
	\begin{subfigure}[b]{0.24\textwidth}
		 \includegraphics[width=\textwidth,height=0.6\textwidth]{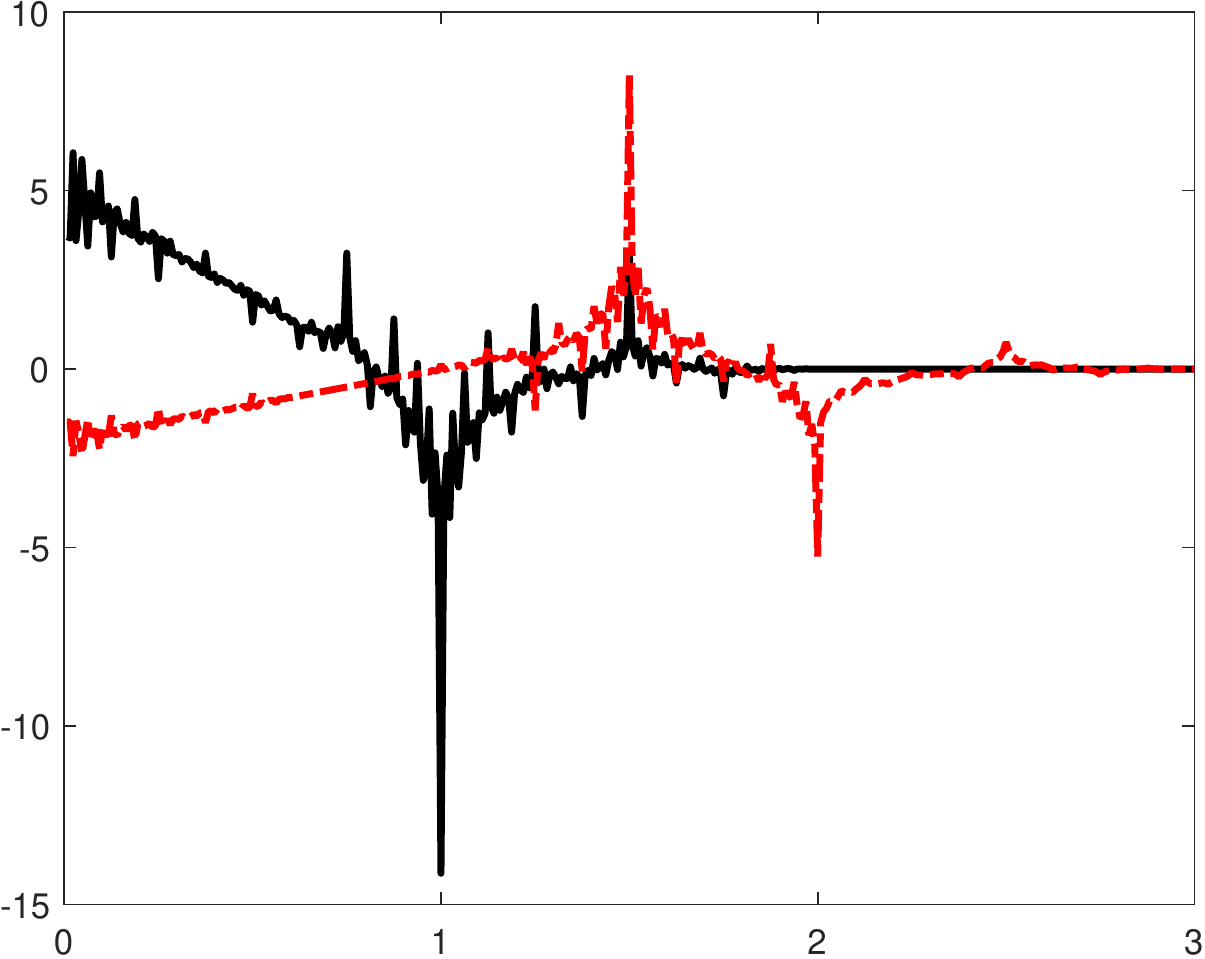}
		\caption{$\tilde{\psi}^{L,bc}$}
	\end{subfigure}
	\begin{subfigure}[b]{0.24\textwidth}
		 \includegraphics[width=\textwidth,height=0.6\textwidth]{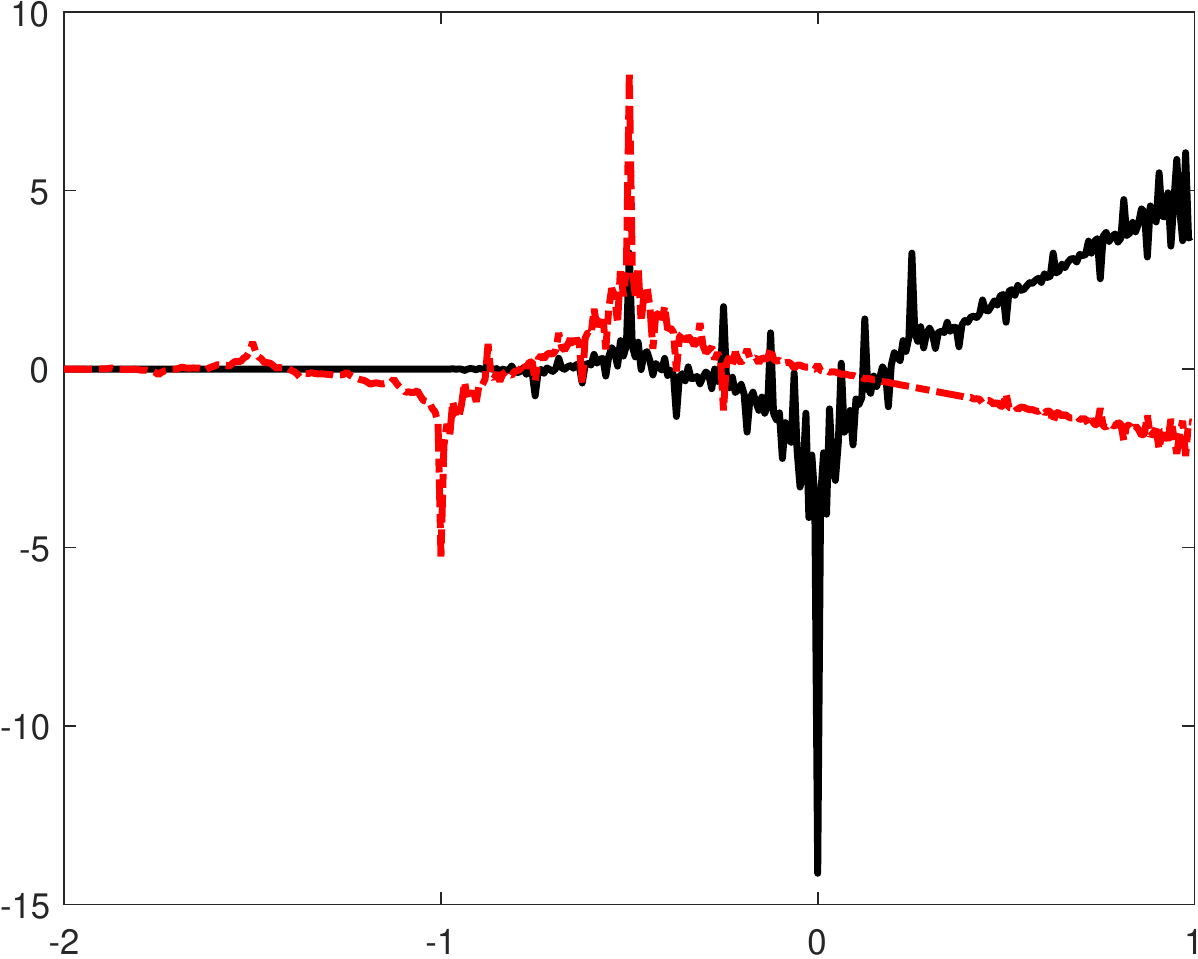}
		\caption{$\tilde{\psi}^{R,bc}$}
	\end{subfigure}
	\caption{The generators of the biorthogonal wavelet bases $(\tilde{\cB}_J,\cB_J)$ and $(\tilde{\cB}_J^{bc},\cB_J^{bc})$ of $L_{2}([0,1])$ for $J \ge 3$ in Example \ref{ex:legall} with $h(0)=h(1)=0$ for all $h \in \cB_J^{bc}$. The black, red, and blue lines correspond to the first, second, and third components of a vector function. Note that $\vmo(\psi^{L})=\vmo(\psi^{L,bc})=\vmo(\psi)=2$.}
	\label{fig:legall}
\end{figure}

\begin{example} \label{ex:interpol}
	\normalfont Consider a scalar biorthogonal wavelet $(\{\tilde{\phi};\tilde{\psi}\},\{\phi;\psi\})$ in \cite[Example~2.6.9]{hanbook} with $\wh{\phi}(0)=\wh{\tilde{\phi}}(0)=1$ and a biorthogonal wavelet filter bank $(\{\tilde{a};\tilde{b}\},\{a;b\})$ given by
	\begin{align*}
	& a=\{-\tfrac{1}{32},0,\tfrac{9}{32},\tfrac{1}{2},\tfrac{9}{32},0,-\tfrac{1}{32}\}_{[-3,3]}, \quad\quad\;\;\,
	 b=\{\tfrac{1}{64},0,-\tfrac{1}{8},-\tfrac{1}{4},\tfrac{23}{32},-\tfrac{1}{4}.-\tfrac{1}{8},0,\tfrac{1}{64}\}_{[-3,5]},\\
	& \tilde{a}=\{\tfrac{1}{64},0,-\tfrac{1}{8},\tfrac{1}{4},\tfrac{23}{32},\tfrac{1}{4},-\tfrac{1}{8},0,\tfrac{1}{64}\}_{[-4,4]}, \quad \tilde{b}=\{\tfrac{1}{32},0,-\tfrac{9}{32},\tfrac{1}{2},-\tfrac{9}{32},0,\tfrac{1}{32}\}_{[-2,4]}.
	\end{align*}
	Then $\sm(a)=2.440765$, $\sm(\tilde{a})=0.593223$, $\sr(a)=4$, and $\sr(\tilde{a})=2$.
	By item (ii) of \cref{prop:phicut} with $n_{\phi}=3$ and $\textsf{p}(x)=(1,x,x^2,x^3)^\tp$, the left boundary refinable function satisfies
	\begin{align} \label{phiL:interpol}
	\phi^{L} & =(\phi^{L}_{1},\phi^{L}_{2},\phi^{L}_{3},\phi^{L}_{4})^{\tp} :=(\phi^{L}_{1}(2\cdot),\tfrac{1}{2}\phi^{L}_{2}(2\cdot),\tfrac{1}{4}\phi^{L}_{3}(2\cdot),\tfrac{1}{8}\phi^{L}_{4}(2\cdot))^{\tp} + [\tfrac{17}{16},\tfrac{27}{16},\tfrac{45}{16},\tfrac{81}{16}]^{\tp}\phi(2\cdot -3)\\ \nonumber
	& \qquad \quad + [1,2,4,8]^{\tp}\phi(2\cdot -4) + [\tfrac{1}{2},\tfrac{17}{16},\tfrac{35}{16},\tfrac{71}{16}]^{\tp}\phi(2\cdot -5) - [\tfrac{1}{16},\tfrac{1}{8},\tfrac{1}{4},\tfrac{1}{2}]^{\tp}\phi(2\cdot-7).
	\end{align}
	We use the direct approach in \cref{sec:direct}.
	Taking $n_{\psi}=3$ and $m_{\phi}=9$ in \cref{thm:direct}, we have
	\[
	\psi^{L}=\begin{bmatrix}
	\psi^{L}_{1}\\
	\psi^{L}_{2}\\
	\psi^{L}_{3}\\
	\end{bmatrix}:=
	\begin{bmatrix}
	1 & 0 & -\tfrac{55}{14} & \tfrac{225}{119}\\
	0 & 1 & -\tfrac{375}{196} & \tfrac{4765}{6664}\\
	0 & 0 & 1 & -\tfrac{181}{306}	
	\end{bmatrix}
	{{\begin{bmatrix}
			\phi^{L}_{1}(2\cdot)\\
			\phi^{L}_{2}(2\cdot)\\
			\phi^{L}_{3}(2\cdot)\\
			\phi^{L}_{4}(2\cdot)\\
			\end{bmatrix}}}
	+ \begin{bmatrix}
	0_{2 \times 1}\\
	\tfrac{49}{135}
	\end{bmatrix}\phi(2\cdot -3),
	\]
	which satisfies items (i) and (ii) of \cref{thm:direct} with $\fs(C)=[3,6]$, $\fs(D)=[3,5]$, and
	{{\begin{align*}
			& A_0 =
			\begin{bmatrix}
			 -\frac{5}{2}&-{\frac{495}{79}}&-{\frac{983}{79}}&-{\frac{1938}{79}}&-{\frac{450}{79}}&{\frac{38189}{632}}&-{\frac{8038}{79}}&{\frac{17973}{316}}&0&-{\frac{4019}{632}}\\
			 {\frac{45}{2}}&{\frac{6755}{158}}&{\frac{6327}{79}}&{\frac{12206}{79}}&{\frac{2460}{79}}&-{\frac{114253}{316}}&{\frac{
					 48344}{79}}&-{\frac{108159}{316}}&0&{\frac{6043}{158}}\\
			 -{\frac{235}{8}}&-{\frac{4200}{79}}&-{\frac{31081}{316}}&-{\frac{29835}{158}}&-{\frac{5805}{158}}&{\frac{1148585}{2528}}&-{\frac{121955}{158}}&{\frac{545895}{1264}}&0&-{\frac{121955}{2528}}\\ {\frac{75}{8}}&{\frac{2635}{158}}&{\frac{9705}{316}}&{\frac{9299}{158}}&{\frac{1785}{158}}&-{\frac{364565}{2528}}&{
				 \frac{38815}{158}}&-{\frac{173775}{1264}}&0&{\frac{38815}{2528}}
			\end{bmatrix}^{\tp},\\
			& B_0 =
			\begin{bmatrix}
			 \frac{7}{2}&{\frac{495}{79}}&{\frac{983}{79}}&{\frac{1938}{79}}&{\frac{450}{79}}&-{\frac{38189}{632}}&{\frac{8038}{79}}&-{\frac{17973}{316}}&0&{\frac{4019}{632}}\\
			 -{\frac{45}{4}}&-{\frac{6439}{316}}&-{\frac{6327}{158}}&-{\frac{6103}{79}}&-{\frac{1230}{79}}&{\frac{114253}{632}}&-{\frac{24172}{79}}&{\frac{108159}{632}}&0&-{\frac{6043}{316}}\\
			 -{\frac{675}{1568}}&-{\frac{66825}{61936}}&-{\frac{132705}{61936}}&-{\frac{130815}{30968}}&{\frac{54945}{30968}}&-{\frac{2523285}{495488}}&{\frac{225315}{30968}}&-{\frac{986445}{247744}}&0&{\frac{225315}{495488}}
			\end{bmatrix}^{\tp},\\
			& C(3) =
			\begin{bmatrix}
			0_{1\times 5}&{\frac{1}{64}}&0&{\frac{55}{64}}&\frac{1}{4}&-{\frac{9}{64}}
			\end{bmatrix}^{\tp}, \quad
			C(4) =
			\begin{bmatrix}
			0_{1\times 5}&{\tfrac{1}{64}}&0&-\tfrac{1}{8}&\tfrac{1}{4}&{\frac{23}{32}}
			\end{bmatrix}^{\tp},\\
			& C(5) =
			\begin{bmatrix}
			0_{1\times 7}&{\tfrac{1}{64}}&0&-\tfrac{1}{8}
			\end{bmatrix}^{\tp},\quad
			C(6) =
			\begin{bmatrix}
			0_{1\times 9}&{\tfrac{1}{64}}
			\end{bmatrix}^{\tp},\quad
			D(3) =
			\begin{bmatrix}
			0_{1\times 5}&\tfrac{1}{32}&0&-{\frac{9}{32}}&\tfrac{1}{2}&-{\frac{9}{32}}
			\end{bmatrix}^{\tp},\\
			& D(4) =
			\begin{bmatrix}
			0_{1\times 7}&\tfrac{1}{32}&0&-{\frac{9}{32}}
			\end{bmatrix}^{\tp}, \quad
			D(5) =
			\begin{bmatrix}
			0_{1\times 9}&\tfrac{1}{32}
			\end{bmatrix}^{\tp}.
			\end{align*}
	}}
	Since  \eqref{tAL} is satisfied with $\rho(\tilde{A}_{L})=1/2$, we conclude from \cref{thm:direct,thm:bw:0N} with $N=1$ that  $\cB_J=\Phi_J \cup \{\Psi_j \setsp j\ge J\}$ is a Riesz basis of $L_{2}([0,1])$ for all $J\ge J_{0}:=3$, where $\Phi_j$ and $\Psi_j$ in \eqref{Phij} and \eqref{Psij} with $n_{\phi}=n_{\mathring{\phi}}=n_{\psi}=3$ and $n_{\mathring{\psi}}=4$ are given by
	\begin{align*}
	\Phi_{j} & := \{\phi^{L}_{j;0},\phi_{j;3}\} \cup \{\phi_{j;k}:4\le k \le 2^{j}-4\} \cup \{\phi^{R}_{j;2^{j}-1},\phi_{j;2^{j}-3}\},\\
	\Psi_{j} & := \{\psi^{L}_{j;0}\} \cup \{\psi_{j;k}:3\le k \le 2^{j}-4\} \cup \{\psi^{R}_{j;2^{j}-1}\},
	\end{align*}
	where $\phi^{R}:=\phi^{L}(1-\cdot)$ and $\psi^{R}:=\psi^{L}(1-\cdot)$ with $\#\phi^{L} = \# \phi^{R}=4$, $\#\psi^{L} = \# \psi^{R}=3$, $\#\Phi_j=2^j+3$ and $\#\Psi_j=2^j$. Note that $\vmo(\psi^{L})=\vmo(\psi^{R})=\vmo(\psi)=2=\sr(\tilde{a})$. The dual Riesz basis $\tilde{\cB}_j$ of $\cB_{j}$ with $j\ge \tilde{J}_{0} := 3$ is given by \cref{thm:direct} through \eqref{tphi:implicit} and \eqref{tpsi:implicit}. We rewrite $\tilde{\phi}^{L}$ in \eqref{tphi:implicit} as $\{\tilde{\phi}^{L},\tilde{\phi}(\cdot-4),\tilde{\phi}(\cdot-5),\tilde{\phi}(\cdot-6),\tilde{\phi}(\cdot-7),\tilde{\phi}(\cdot-8)\}$ with true boundary elements $\tilde{\phi}^{L}$ and $\# \tilde{\phi}^{L}=5$, and rewrite $\tilde{\psi}^{L}$ in \eqref{tpsi:implicit} as $\{\tilde{\psi}^{L},\tilde{\psi}(\cdot-3),\tilde{\psi}(\cdot-4),\tilde{\psi}(\cdot-5)\}$ with true boundary elements $\tilde{\psi}^{L}$ and $\# \tilde{\psi}^{L}=3$. Hence, $\tilde{\cB}_J=\tilde{\Phi}_J \cup \{\tilde{\Psi}_j \setsp j\ge J\}$ for $J\ge 3$ is given by
	\begin{align*}
	\tilde{\Phi}_{j} & := \{\tilde{\phi}^{L}_{j;0}\} \cup \{\tilde{\phi}_{j;k}:4\le k \le 2^{j}-4\} \cup \{\tilde{\phi}^{R}_{j;2^{j}-1}\}, \quad \mbox{with} \quad \tilde{\phi}^{R}:=\tilde{\phi}^{L}(1-\cdot),\\
	\tilde{\Psi}_{j} & := \{\tilde{\psi}^{L}_{j;0}\} \cup \{\tilde{\psi}_{j;k}:3\le k \le 2^{j}-4\} \cup \{\tilde{\psi}^{R}_{j;2^{j}-1}\}, \quad \mbox{with} \quad \tilde{\psi}^{R}:=\tilde{\psi}^{L}(1-\cdot).
	\end{align*}
	Note that $\vmo(\tilde{\psi}^{L})=\vmo(\tilde{\psi}^{R})=\vmo(\tilde{\psi})=4=\sr(a)$ and $\PL_{3} \chi_{[0,1]} \subset\mbox{span}(\Phi_{j})$ for all $j \ge 3$. According to Theorem \ref{thm:bw:0N} with $N=1$, $(\tilde{\cB}_J,\cB_J)$ forms a biorthogonal Riesz basis of $L_{2}([0,1])$ for every $J \ge 3$.
	
	By item (ii) of \cref{prop:phicut} with $n_\phi=3$ and $\textsf{p}(x)=(x,x^{2},x^{3})^\tp$, the left boundary refinable vector function is $\phi^{L,bc}:=(\phi^{L}_{2},\phi^{L}_{3},\phi^{L}_{4})^{\tp}$ and satisfies \eqref{I:phi}, which can be easily deduced from \eqref{phiL:interpol}.
	Taking $n_{\psi}=3$ and $m_{\phi}=9$ in \cref{thm:direct}, we have $\psi^{L,bc}:=(\psi^{L,bc}_1, \psi^L_2,\psi^L_3)^\tp$ with $\# \psi^{L,bc}=3$ and
	\[
	\psi^{L,bc}_1:=\psi^L_1
	 -[1,0,-\tfrac{55}{14},\tfrac{2131}{1190}]\phi^{L}(2\cdot) - \tfrac{289}{150}\phi(2\cdot-3) + \tfrac{51}{50}\phi(2\cdot-4)
	\]
	satisfying both items (i) and (ii) of \cref{thm:direct}, where $A^{bc}_0$, $B^{bc}_0$, $C^{bc}$, and $D^{bc}$ can be easily derived from $A_{0}$, $B_{0}$, $C$, and $D$. More precisely, $A_0^{bc}$ is obtained from $U^{-1} A_0$ by taking out its first row and first column, and $B_0^{bc}, C^{bc}, D^{bc}$ are obtained from $U^{-1}B_0, U^{-1}C, U^{-1}D$, respectively by removing their first rows, where the invertible matrix $U$ is given by
	\[
	U:=I_{10}+B_0
	{{\begin{bmatrix} -1&0&{\frac{55}{14}}&-{\frac{2131}{1190}}&-{\frac{289}{150}}&{\frac{51}{50}}& {0_{1 \times 4}}\\
			\multicolumn{7}{c}{0_{2\times 10}}
			\end{bmatrix}}}.
	\]
	Since \eqref{tAL} is satisfied with $\rho(\tilde{A}_{L}^{bc})=1/2$, we conclude from \cref{thm:direct,thm:bw:0N} with $N=1$ that $\cB_J^{bc}=\Phi_J^{bc} \cup \{\Psi_j^{bc} \setsp j\ge J\}$ is a Riesz basis of $L_{2}([0,1])$ for every  $J\ge J_{0}:=3$ such that $h(0)=h(1)=0$ for all $h\in \cB_J^{bc}$, where $\Phi_j^{bc}$ and $\Psi_j^{bc}$  in \eqref{Phij} and \eqref{Psij} with $n_{\phi}=n_{\mathring{\phi}}=n_{\psi}=3$ and $n_{\mathring{\psi}}=4$ are given by
	\begin{align*}
	\Phi_{j}^{bc} & := \{\phi^{L,bc}_{j;0},\phi_{j;3},\phi_{j;4}\} \cup \{\phi_{j;k}:5\le k \le 2^{j}-5\} \cup \{\phi^{R,bc}_{j;2^{j}-1},\phi_{j;2^{j}-3}, \phi_{j;2^{j}-4}\},\\
	\Psi_{j}^{bc} & := \{\psi^{L,bc}_{j;0},\psi_{j;3}\} \cup \{\psi_{j;k}:4\le k \le 2^{j}-5\} \cup \{\psi^{R,bc}_{j;2^{j}-1},\psi_{j;2^j-4}\},
	\end{align*}
	where $\phi^{R,bc}:=\phi^{L,bc}(1-\cdot)$ and $\psi^{R,bc}:=\psi^{L,bc}(1-\cdot)$ with $\#\phi^{L,bc}=\#\psi^{L,bc}=3$, $\#\Phi^{bc}_j=2^j+1$, and $\#\Psi^{bc}_j=2^j$.
	For $j=3$, $\Phi^{bc}_3=\{\phi^{L;bc}_{3;0}, \phi_{3;3}, \phi_{3;4}, \phi_{3;5}, \phi^{R,bc}_{3;7}\}$ after removing repeated elements.
	Note that $\vmo(\psi^{L,bc})=\vmo(\psi^{R,bc})=\vmo(\psi)=2$ and $\Phi^{bc}_j=\Phi_j\bs\{(\phi^L_1)_{j;0},(\phi^R_1)_{j;2^j-1}\}$ as in \cref{prop:mod}.
	We rewrite $\tilde{\phi}^{L,bc}$ in \eqref{tphi:implicit} as $\{\tilde{\phi}^{L,bc},\tilde{\phi}(\cdot-5),\tilde{\phi}(\cdot-6),\tilde{\phi}(\cdot-7),\tilde{\phi}(\cdot-8)\}$ with true boundary elements $\tilde{\phi}^{L,bc}$ and $\# \tilde{\phi}^{L,bc}=5$, and $\tilde{\psi}^{L,bc}$ in \eqref{tpsi:implicit} as $\{\tilde{\psi}^{L,bc},\tilde{\psi}(\cdot-4),\tilde{\psi}(\cdot-5)\}$ with true boundary elements $\tilde{\psi}^{L,bc}$ and $\# \tilde{\psi}^{L,bc}=4$.
	The dual Riesz basis $\tilde{\cB}_J^{bc}:=\tilde{\Phi}_J^{bc} \cup \{\tilde{\Psi}_j^{bc} \setsp j\ge J\}$ of $\cB_{J}^{bc}$ with $J\ge \tilde{J}_{0} := 4$ is given by
	\begin{align*}
	\tilde{\Phi}_{j}^{bc} & := \{\tilde{\phi}^{L,bc}_{j;0}\} \cup \{\tilde{\phi}_{j;k}:5\le k \le 2^{j}-5\} \cup \{\tilde{\phi}^{R,bc}_{j;2^{j}-1}\}, \quad \mbox{with} \quad \tilde{\phi}^{R,bc}:=\tilde{\phi}^{L,bc}(1-\cdot),\\
	\tilde{\Psi}_{j}^{bc} & := \{\tilde{\psi}^{L,bc}_{j;0}\} \cup \{\tilde{\psi}_{j;k}:4\le k \le 2^{j}-5\} \cup \{\tilde{\psi}^{R,bc}_{j;2^{j}-1}\}, \quad \mbox{with} \quad \tilde{\psi}^{R,bc}:=\tilde{\psi}^{L,bc}(1-\cdot).
	\end{align*}
	Note that $\vmo(\tilde{\psi}^{L,bc})=\vmo(\tilde{\psi}^{R,bc})=0$ and $\{x\chi_{[0,1]},x^2\chi_{[0,1]},x^3\chi_{[0,1]}\} \subset\mbox{span}(\Phi_{j}^{bc})$ for all $j \ge 3$. By Theorem \ref{thm:bw:0N} with $N=1$, $(\tilde{\cB}_J^{bc},\cB_J^{bc})$ forms a biorthogonal Riesz basis of $L_{2}([0,1])$ for every $J \ge 4$.
	
	By item (ii) of \cref{prop:phicut} with $n_\phi=3$ and $\textsf{p}(x)=(x^{2},x^{3})^\tp$, the left boundary refinable vector function is $\phi^{L,bc1}:=(\phi^{L}_{3},\phi^{L}_{4})^{\tp}$ and satisfies \eqref{I:phi}, which can easily deduce from from \eqref{phiL:interpol}.
	Taking $n_{\psi}=3$ and $m_{\phi}=9$ in \cref{thm:direct}, we have $\psi^{L,bc1}:=(\psi^{L,bc}_1,\psi^{L,bc1}_2,\psi^L_3)$ with $\# \psi^{L,bc1}=3$ and
	\[
	\psi^{L,bc1}_2:=\psi^{L}_{2} -
	[0, 1, -\tfrac{375}{196}, \tfrac{4765}{6664}]\phi^{L}(2\cdot)-\tfrac{1}{2}\phi(2\cdot-3)+\phi(2\cdot-4)-\tfrac{1}{2}\phi(2\cdot-5)
	\]
	satisfying both items (i) and (ii) of \cref{thm:direct}, where $A^{bc1}_0$, $B^{bc1}_0$, $C^{bc1}$, and $D^{bc1}$ can be easily derived from $A_{0}$, $B_{0}$, $C$, and $D$. More precisely, $A_0^{bc1}$ is obtained from $V^{-1} A_0$ by taking out its first two rows and the first two columns, and $B_0^{bc1}, C^{bc1}, D^{bc1}$ are obtained from $V^{-1}B_0$, $V^{-1}C$, $V^{-1}D$, respectively by removing their first two rows, where the invertible matrix $V$ is given by
	\[
	V:=I_{10}+B_0
	{{\begin{bmatrix}
			 -1&0&{\frac{55}{14}}&-{\frac{2131}{1190}}&-{\frac{289}{150}}&{\frac{51}{50}}& 0 & {0_{1 \times 3}}\\[0.2em]
			0 & -1 & \tfrac{375}{196} & -\tfrac{4765}{6664} & -\tfrac{1}{2} & 1 & -\tfrac{1}{2} &  {0_{1 \times 3}}\\
			 \multicolumn{8}{c}{0_{1\times10}}
			\end{bmatrix}}}
	\]
	Since \eqref{tAL} is satisfied with $\rho(\tilde{A}_{L}^{bc1})=1/2$, we conclude from \cref{thm:direct,thm:bw:0N} with $N=1$ that $\cB_J^{bc1}=\Phi_J^{bc1} \cup \{\Psi_j^{bc1} \setsp j\ge J\}$ is a Riesz basis of $L_{2}([0,1])$ for every  $J\ge J_{0}:=3$ such that $h(0)=h'(0)=h(1)=h'(1)=0$ for all $h\in \cB^{bc1}_J$, where $\Phi_j^{bc1}$ and $\Psi_j^{bc1}$ in \eqref{Phij} and \eqref{Psij} with $n_{\phi}=n_{\mathring{\phi}}=n_{\psi}=3$ and $n_{\mathring{\psi}}=4$ are given by
	\begin{align*}
	\Phi_{j}^{bc1} & :=  \{\phi^{L,bc1}_{j;0},\phi_{j;3},\phi_{j;4}\} \cup \{\phi_{j;k}:5\le k \le 2^{j}-5\} \cup \{\phi^{R,bc1}_{j;2^{j}-1},\phi_{j;2^{j}-3}, \phi_{j;2^{j}-4}\},\\
	\Psi_{j}^{bc1} & := \{\psi^{L,bc1}_{j;0},\psi_{j;3}\} \cup \{\psi_{j;k}:4\le k \le 2^{j}-5\} \cup \{\psi^{R,bc1}_{j;2^{j}-1},\psi_{j;2^{j}-4}\},
	\end{align*}
	where
	$\phi^{R,bc1}:=\phi^{L,bc1}(1-\cdot)$
	and
	$\psi^{R,bc1}:=\psi^{L,bc1}(1-\cdot)$ with
	$\#\phi^{L,bc1}=\#\phi^{R,bc1}=2$,
	$\#\psi^{L,bc1}=\#\psi^{R,bc1}=3$, $\#\Phi^{bc1}_j=2^j-1$ and $\#\Psi^{bc1}_j=2^j$.
	For the case $j=3$, $\Phi^{bc1}_3=\{\phi^{L,bc1}_{3;0}, \phi_{3;3},\phi_{3;4},\phi_{3;5}, \phi^{R,bc1}_{3;7}\}$ after removing repeated elements.
	Note that $\vmo(\psi^{L,bc1})=\vmo(\psi^{R,bc1})=\vmo(\psi)=2$ and $\Phi^{bc1}_j=\Phi_j\bs \{(\phi^L_1)_{j;0},(\phi^L_2)_{j;0},
	 (\phi^R_1)_{j;2^j-1},(\phi^R_2)_{j;2^j-1}\}$ as in \cref{prop:mod}.
	The dual Riesz basis $\tilde{\cB}_j^{bc1}$ of $\cB_{j}^{bc1}$ with $j\ge \tilde{J}_{0} := 4$ is given by \cref{thm:direct} through \eqref{tphi:implicit} and \eqref{tpsi:implicit}. We rewrite $\tilde{\phi}^{L,bc1}$ in \eqref{tphi:implicit} as $\{\tilde{\phi}^{L,bc1},\tilde{\phi}(\cdot-5),\tilde{\phi}(\cdot-6),\tilde{\phi}(\cdot-7),\tilde{\phi}(\cdot-8)\}$ with true boundary elements $\tilde{\phi}^{L,bc1}$ and $\# \tilde{\phi}^{L,bc1}=4$, and
	$\tilde{\psi}^{L,bc1}$ in \eqref{tpsi:implicit} as $\{\tilde{\psi}^{L,bc1}, \tilde{\psi}(\cdot-4),\tilde{\psi}(\cdot-5)\}$ with true boundary elements $\tilde{\psi}^{L,bc1}$ and
	$\# \tilde{\psi}^{L,bc1}=4$.
	Hence, $\tilde{\cB}_J^{bc1}=\tilde{\Phi}_J^{bc1} \cup \{\tilde{\Psi}_j^{bc1} \setsp j\ge J\}$ for $J \ge 4$ is given by
	\begin{align*}
	\tilde{\Phi}_{j}^{bc1} & := \{\tilde{\phi}^{L,bc1}_{j;0}\} \cup \{\tilde{\phi}_{j;k}:5\le k \le 2^{j}-5\} \cup \{\tilde{\phi}^{R,bc1}_{j;2^{j}-1}\}, \quad \mbox{with} \quad \tilde{\phi}^{R,bc1}:=\tilde{\phi}^{L,bc1}(1-\cdot),\\
	\tilde{\Psi}_{j}^{bc1} & :=\{\tilde{\psi}^{L,bc1}_{j;0}\} \cup \{\tilde{\psi}_{j;k}:4\le k \le 2^{j}-5\} \cup \{\tilde{\psi}^{R,bc1}_{j;2^{j}-1}\}, \quad \mbox{with} \quad \tilde{\psi}^{R,bc1}:=\tilde{\psi}^{L,bc1}(1-\cdot).
	\end{align*}
	Note that $\vmo(\tilde{\psi}^{L,bc1})=\vmo(\tilde{\psi}^{R,bc1})=0$ and $\{x^2\chi_{[0,1]},x^3\chi_{[0,1]}\} \subset\mbox{span}(\Phi_{j}^{bc1})$ for all $j \ge 3$. According to Theorem \ref{thm:bw:0N} with $N=1$, $(\tilde{\cB}_J^{bc1},\cB_J^{bc1})$ forms a biorthogonal Riesz basis of $L_{2}([0,1])$ for every $J \ge 4$.
	See \cref{fig:interpol} for the graphs of $\phi, \psi$ and their associated boundary elements.
\end{example}

\begin{figure}[htbp]
	\centering
	\begin{subfigure}[b]{0.24\textwidth} \includegraphics[width=\textwidth,height=0.6\textwidth]{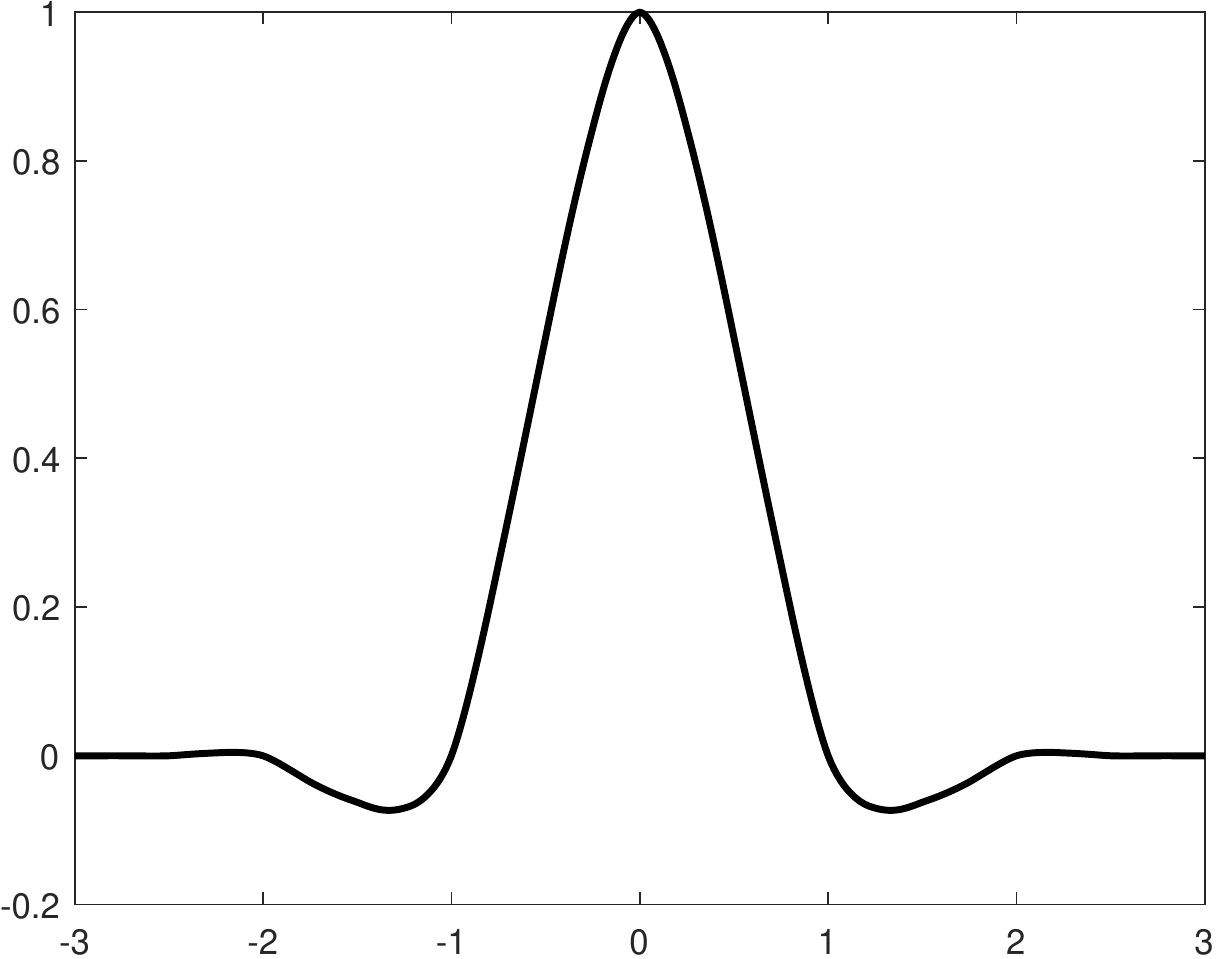}
		\caption{$\phi$}
	\end{subfigure}	
	\begin{subfigure}[b]{0.24\textwidth} \includegraphics[width=\textwidth,height=0.6\textwidth]{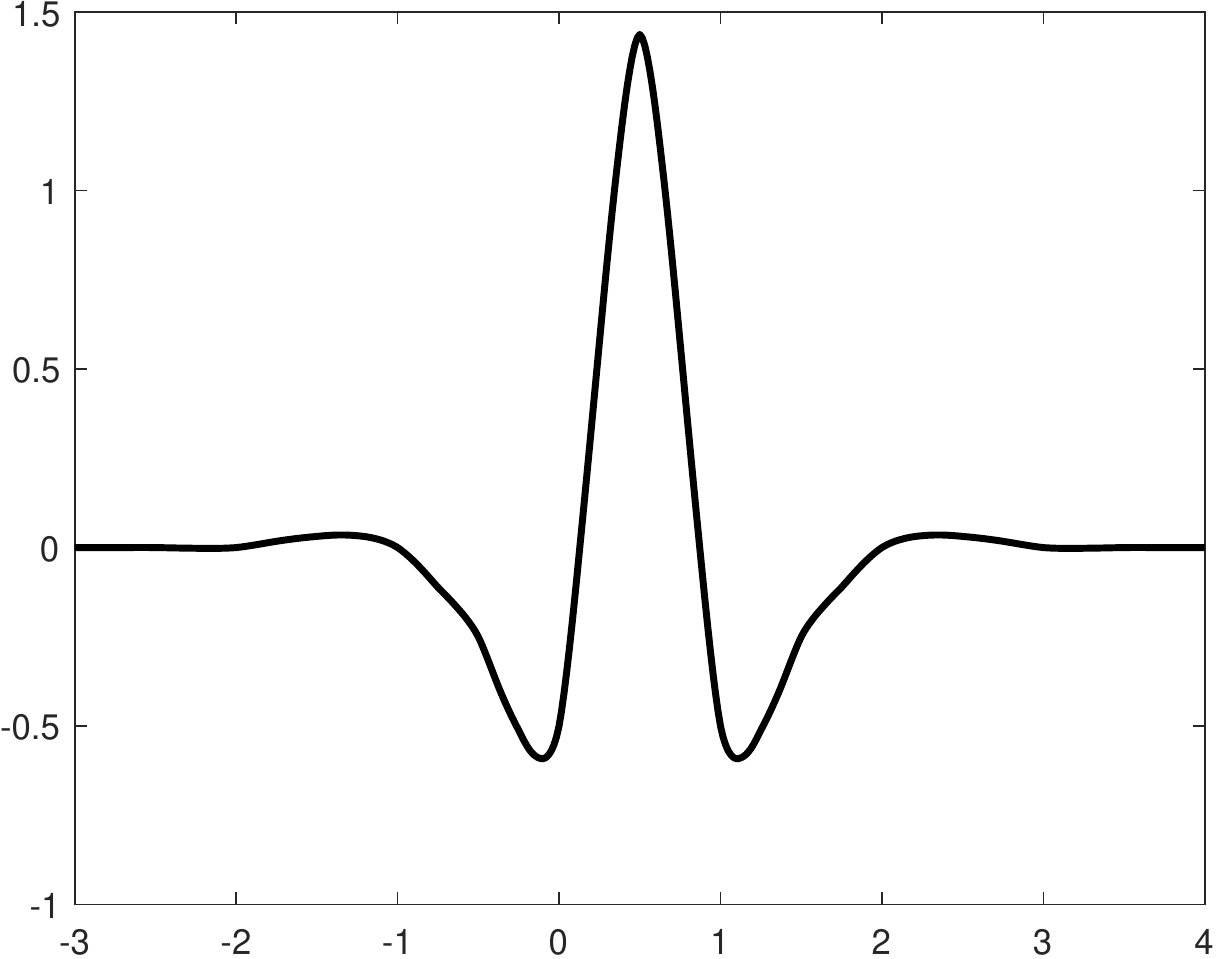}
		\caption{$\psi$}
	\end{subfigure}
	\begin{subfigure}[b]{0.24\textwidth} \includegraphics[width=\textwidth,height=0.6\textwidth]{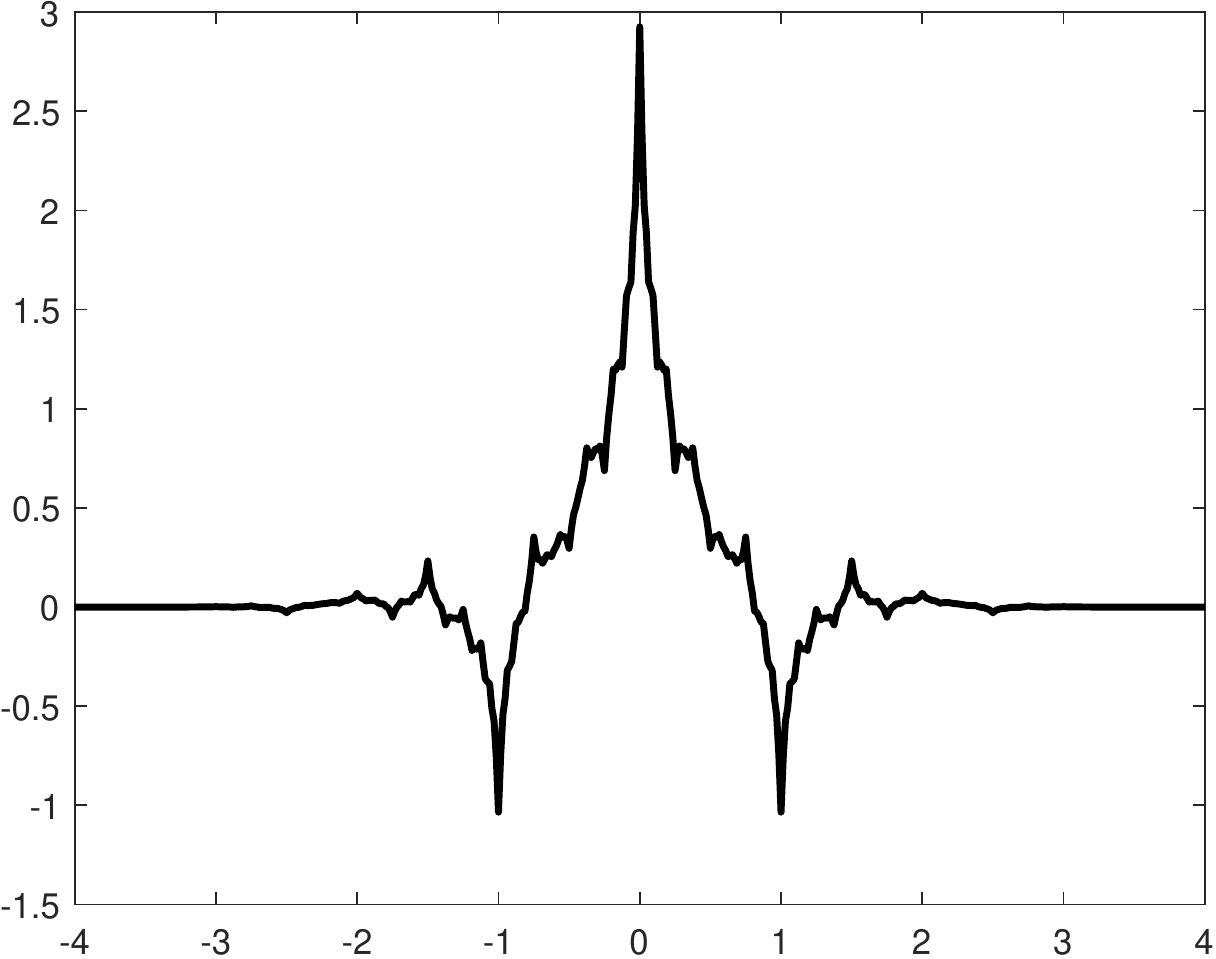}
		\caption{$\tilde{\phi}$}
	\end{subfigure}	
	\begin{subfigure}[b]{0.24\textwidth} \includegraphics[width=\textwidth,height=0.6\textwidth]{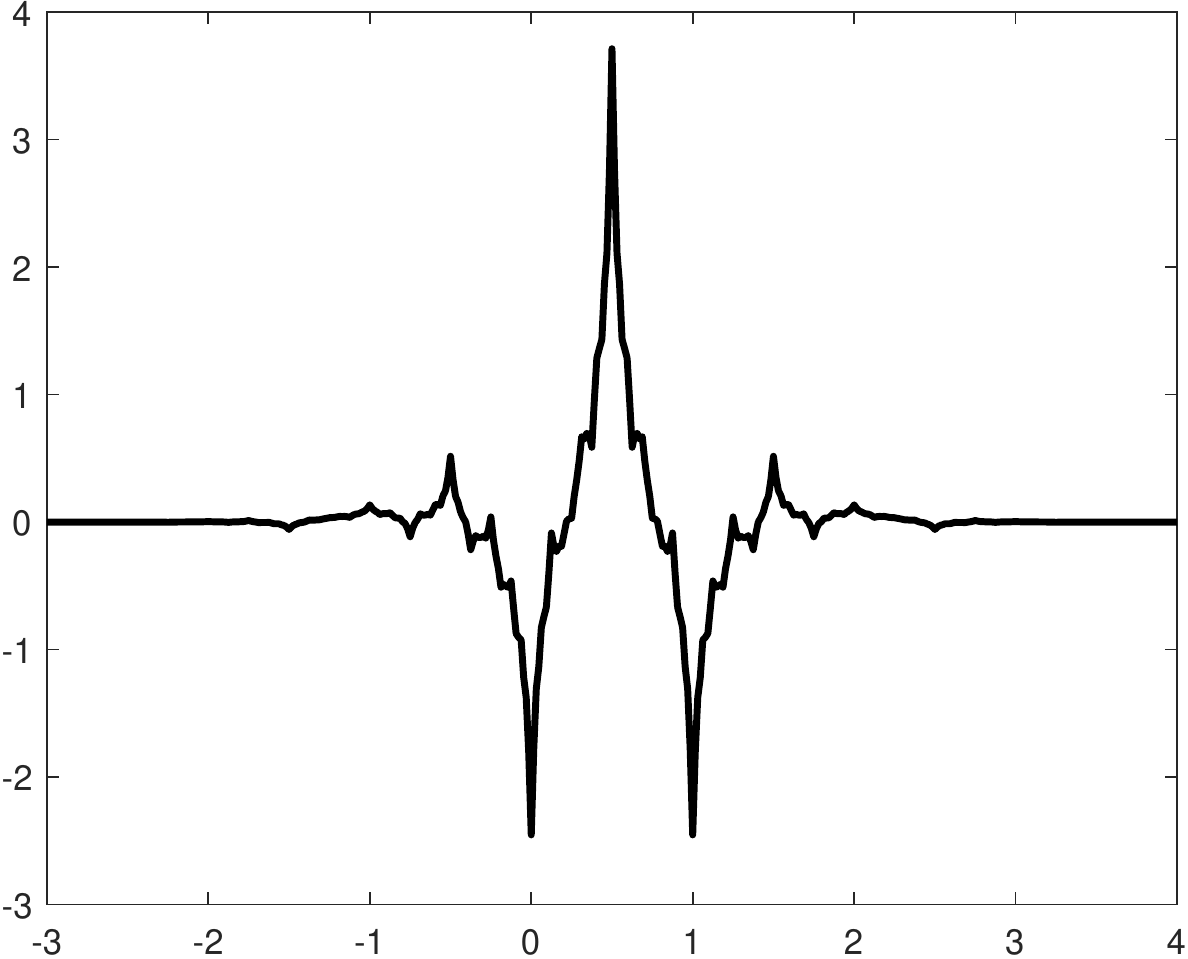}
		\caption{$\tilde{\psi}$}
	\end{subfigure}
	\begin{subfigure}[b]{0.24\textwidth} \includegraphics[width=\textwidth,height=0.6\textwidth]{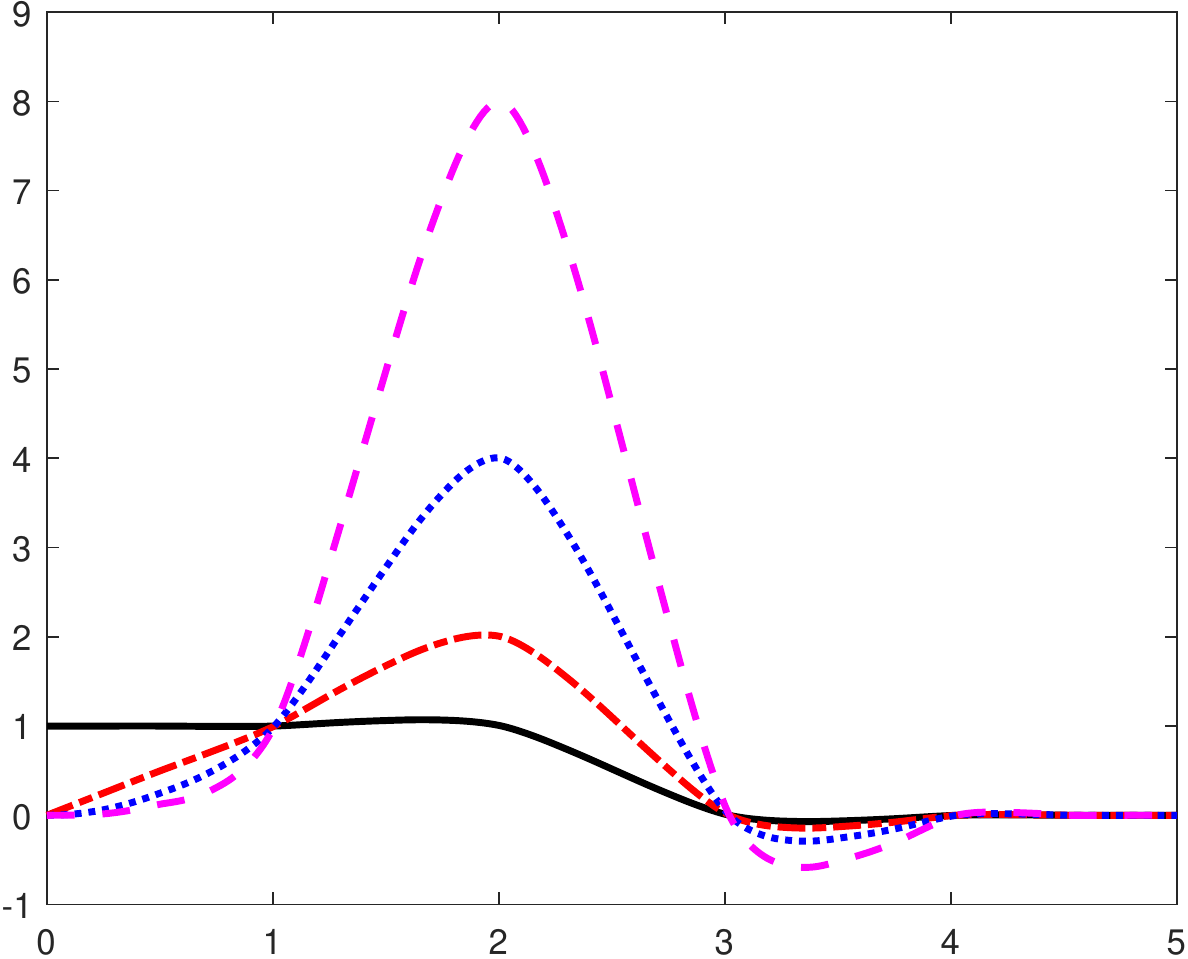}
		\caption{$\phi^{L}$}
	\end{subfigure}	
	\begin{subfigure}[b]{0.24\textwidth} \includegraphics[width=\textwidth,height=0.6\textwidth]{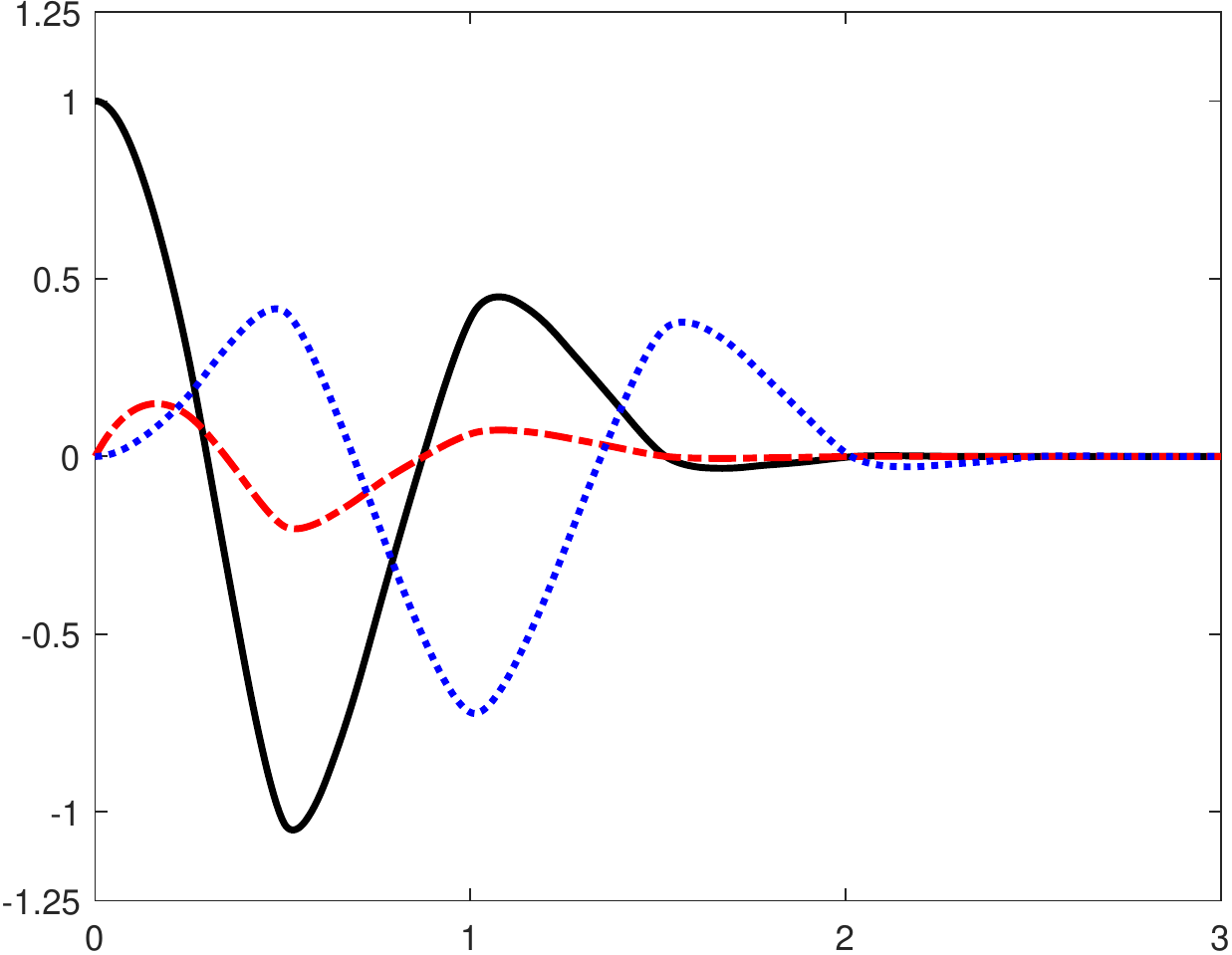}
		\caption{$\psi^{L}$}
	\end{subfigure}
	\begin{subfigure}[b]{0.24\textwidth} \includegraphics[width=\textwidth,height=0.6\textwidth]{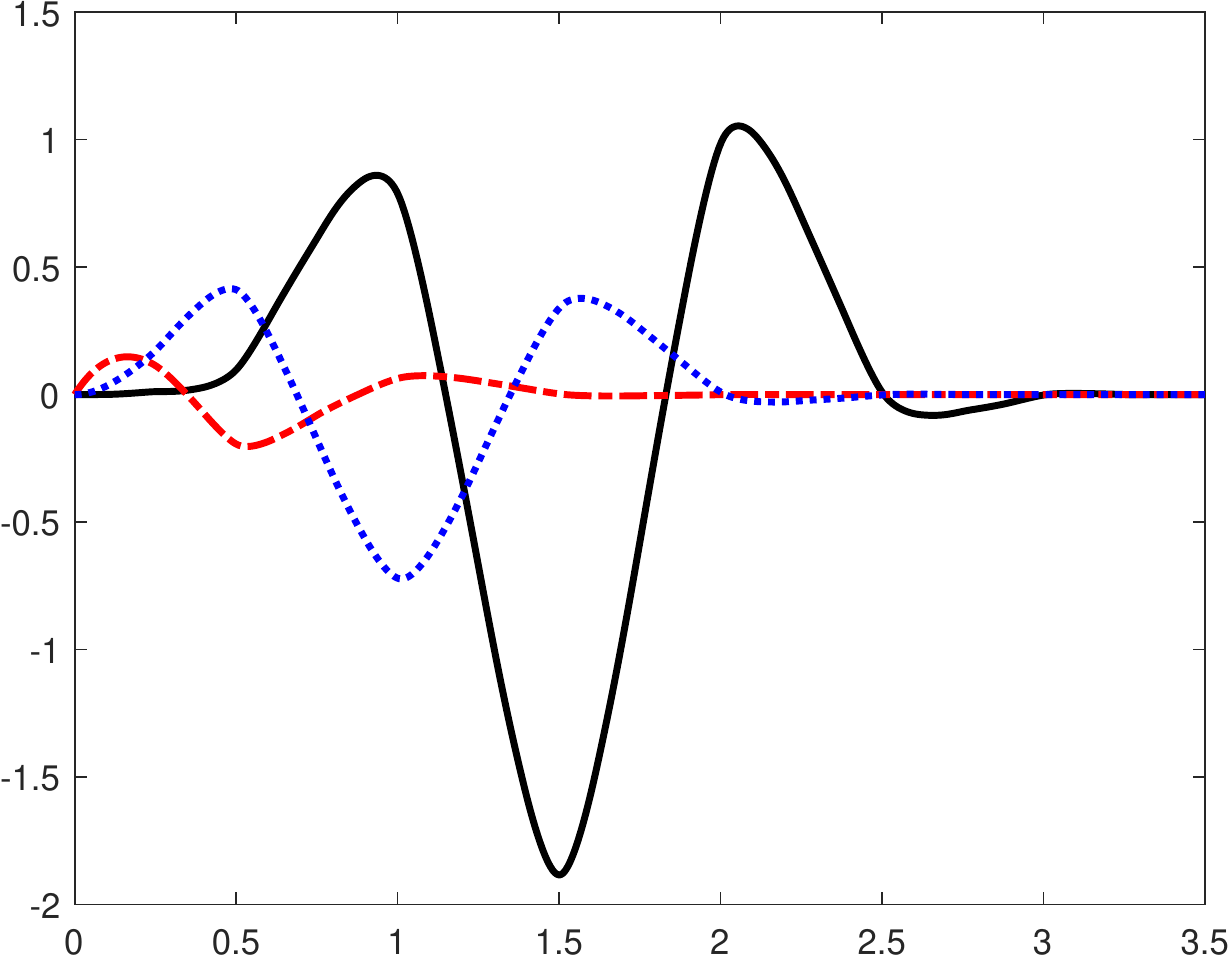}
		\caption{$\psi^{L,bc}$}
	\end{subfigure}
	\begin{subfigure}[b]{0.24\textwidth} \includegraphics[width=\textwidth,height=0.6\textwidth]{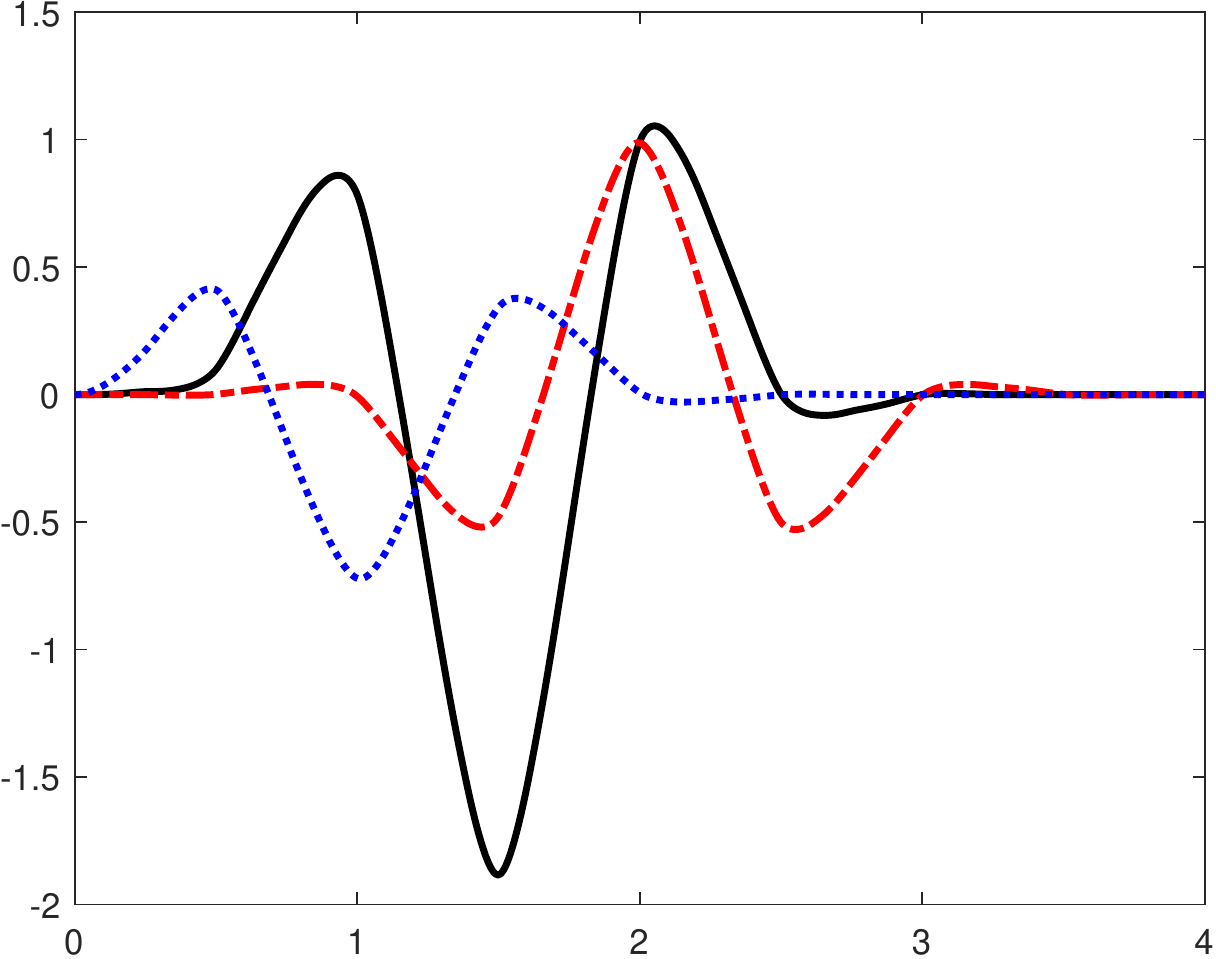}
		\caption{$\psi^{L,bc1}$}
	\end{subfigure}
	\caption{The generators of the Riesz bases $\cB_J$, $\cB_J^{bc}$, $\cB_J^{bc1}$ for $L_{2}([0,1])$ with $J \ge 3$ in \cref{ex:interpol} such that $\eta(0)=\eta(1)$ for all $\eta\in \cB^{bc}_J$ and $h(0)=h'(0)=h(1)=h'(1)=0$ for all $h\in \cB^{bc1}_J$. The black, red, blue, and purple lines correspond to the first, second, third, and fourth components of a vector function. Note that $\phi^{L,bc}=(\phi^L_2,\phi^L_3,\phi^L_4)^\tp$ in $\cB_J^{bc}$,
		 $\phi^{L,bc1}=(\phi^L_3,\phi^L_4)^\tp$ in $\cB^{bc1}_J$, and
		 $\vmo(\psi^L)=\vmo(\psi^{L,bc})=\vmo(\psi^{L,bc1})=\vmo(\psi)=2$.}
	\label{fig:interpol}
\end{figure}

\section{Proofs of
	 \cref{thm:wbd:0,thm:wbd,thm:integral,thm:bw:0N,thm:direct,thm:direct:tPhi}}
\label{sec:proof}

In this section, we provide the detailed proofs for
\cref{thm:wbd:0,thm:wbd,thm:integral,thm:bw:0N,thm:direct,thm:direct:tPhi}.

\begin{proof}[Proof of \cref{thm:wbd:0}]
	Since $n_\phi\ge \max(-l_\phi,-l_a)$ and $n_\psi\ge \max(-l_\psi,\frac{n_\phi-l_b}{2})$, the relations in \eqref{nphi} and \eqref{npsi} must hold, see \Cref{subsec:dual}.
	Hence, $\AS_0(\Phi;\Psi)_{[0,\infty)}\subseteq L_2([0,\infty))$.
	By our assumption that $\AS_0(\Phi;\Psi)_{[0,\infty)}$ is a Riesz basis of $L_2([0,\infty))$, it must have a unique dual Riesz basis, denoted by $\tilde{\cB}$ here, in $L_2([0,\infty))$.
	We first prove that
	 $\tilde{\cB}=\AS_0(\tilde{\Phi};\tilde{\Psi})_{[0,\infty)}$ for some $\tilde{\Phi}, \tilde{\Psi}$ in \eqref{tPhiPsiI}.
	Since both $\phi^L$ and $\psi^L$ have compact support, we have $\fs(\phi^L)\cup \fs(\psi^L)\subseteq [0,N]$ for some $N\in \N$. Consequently, we have
\[
\supp(\phi^L(2^j\cdot))\cup \supp(\psi^L(2^j\cdot))\subseteq
	[0, 2^{-j}N]\subseteq
	[0,N]\quad \mbox{for all}\quad j\in \NN.
\]
	We take $n_{\tilde{\phi}}\ge \max(-l_{\tilde{\phi}}, -l_{\tilde{a}}, n_\phi)$ such that
	the supports of all $\tilde{\phi}(\cdot-k),k\ge n_{\tilde{\phi}}$ do not essentially overlap with $[0,N]$. Similarly, we take
	$n_{\tilde{\psi}}\ge \max(-l_{\tilde{\psi}}, \frac{n_{\tilde{\phi}}-l_{\tilde{b}}}{2},n_\psi)$ such that the supports of all $\tilde{\psi}(\cdot-k),k\ge n_{\tilde{\psi}}$ do not essentially overlap with $[0,N]$.
	Consequently, \eqref{I:tphi} and \eqref{I:tpsi} must hold and we trivially have
	\be \label{dual:phi:I}
	\la \tilde{\phi}(\cdot-k_0), \phi^L(2^j\cdot)\ra=0,
	\quad \la \tilde{\phi}(\cdot-k_0), \psi^L(2^j\cdot)\ra=0, \qquad \forall\, k_0\ge n_{\tilde{\phi}}, j\in \NN,
	\ee
	and
	\be \label{dual:psi:I}
	\la \tilde{\psi}(\cdot-k_0), \phi^L(2^j\cdot)\ra=0,
	\quad \la \tilde{\psi}(\cdot-k_0), \psi^L(2^j\cdot)\ra=0, \qquad \forall\, k_0\ge n_{\tilde{\psi}},j\in \NN.
	\ee

	Let $k_0\ge n_{\tilde{\phi}}$ be arbitrarily fixed.
	Since $(\{\tilde{\phi};\tilde{\psi}\},\{\phi;\psi\})$ is a biorthogonal wavelet in $\Lp{2}$, it is trivial that
	\[
	\la \tilde{\phi}(\cdot-k_0), \phi(\cdot-k_0)\ra=I_r \quad \mbox{and}\quad \la \tilde{\phi}(\cdot-k_0), h\ra=0 \qquad \forall h \in \AS_0(\phi;\psi)\bs\{\phi(\cdot-k_0)\}.
	\]
	In particular, we have $\la \tilde{\phi}(\cdot-k_0), \phi(\cdot-k)\ra=0$ for all $k \ge n_\phi$ but $k\ne k_0$ and $\la \tilde{\phi}(\cdot-k_0), \psi_{j;k}\ra=0$ for all $j\in \NN$ and $k\in \Z$.
	Now it follows from \eqref{dual:phi:I} that $\tilde{\phi}(\cdot-k_0)$ must be the unique biorthogonal element/vector in $L_2([0,\infty))$ corresponding to the element $\phi(\cdot-k_0)\in \AS_0(\Phi;\Psi)_{[0,\infty)}$; more precisely, $\la \tilde{\phi}(\cdot-k_0), \phi(\cdot-k_0)\ra=I_r$ and $\la \tilde{\phi}(\cdot-k_0),h\ra=0$ for all $h\in \AS_0(\Phi;\Psi)_{[0,\infty)}\bs\{\phi(\cdot-k_0)\}$.
	
	Let $j_0\in \NN:=\N\cup\{0\}$ and $k_0\ge n_{\tilde{\psi}}$ be arbitrarily fixed.
	We now show that $\tilde{\psi}_{j_0;k_0}$ is the unique biorthogonal element in $L_2([0,\infty))$ corresponding to the element $\psi_{j_0;k_0}\in \AS_0(\Phi;\Psi)_{[0,\infty)}$.
	Indeed, it follows from the biorthogonality relation between $\AS_0(\tilde{\phi};\tilde{\psi})$ and $\AS_0(\phi;\psi)$ that
	\be \label{bio:phi:psi:I}
	\la \tilde{\psi}_{j_0;k_0}, \phi_{j_0;k}\ra=0\quad \mbox{and}\quad
	\la \tilde{\psi}_{j_0;k_0}, \psi_{j;k}\ra=\td(j-j_0)\td(k-k_0),\qquad \forall\; j\in \NN, k\in \Z.
	\ee
	By \eqref{dual:psi:I},  we have
	\be \label{phi:eq1}
	\la \tilde{\psi}_{j_0;k_0}, \phi^L(2^{j_0}\cdot)\ra=0,\quad
	\la \tilde{\psi}_{j_0;k_0}, \psi^L(2^j\cdot)\ra=
	2^{-j_0/2} \la \tilde{\psi}(\cdot-k_0), \psi^L(2^{j-j_0}\cdot)\ra=0,\qquad \forall\; j\ge j_0.
	\ee
	By the identities in \eqref{I:phi}, \eqref{I:psi}, \eqref{I:tphi} and \eqref{I:tpsi}, we see that every element in
	\be \label{Sj0}
	S_{j_0}:=\Phi \cup\{2^{j/2} \eta(2^j\cdot) \setsp j=0,\ldots, j_0-1, \eta\in \Psi\}
	\ee
	is a finite linear combination of $\{2^{j_0/2}\phi^L(2^{j_0}\cdot)\}\cup\{ \phi_{j_0;k} \setsp k\ge n_\phi\}$.
	Consequently, it follows from
	\eqref{bio:phi:psi:I} and \eqref{phi:eq1} that $\la \tilde{\psi}_{j_0;k_0}, h\ra=0$ for all $h\in S_{j_0}$.
	Hence, by \eqref{bio:phi:psi:I}, we proved that $\tilde{\psi}_{j_0;k_0}$ is the unique biorthogonal element in $L_2([0,\infty))$ corresponding to the element $\psi_{j_0;k_0}\in \AS_0(\Phi;\Psi)_{[0,\infty)}$.
	
	Since $n_{\tilde{\phi}}\ge n_\phi$, we add the elements $\phi(\cdot-k), n_\phi\le k<n_{\tilde{\phi}}$ to the vector function $\phi^L$ to form a new vector function $\mathring{\phi}^L$. We define $\mathring{\psi}^L$ similarly by adding $\psi(\cdot-k), n_\psi\le k<n_{\tilde{\psi}}$ to $\psi^L$.
	Let $\tilde{\phi}^L$ be the unique biorthogonal element/vector in $L_2([0,\infty))$ corresponding to $\mathring{\phi}^L$, and
	$\tilde{\psi}^L$ be the unique biorthogonal element/vector in $L_2([0,\infty))$ corresponding to $\mathring{\psi}^L$ for the Riesz basis $\AS_0(\Phi;\Psi)_{[0,\infty)}$ of $L_2([0,\infty))$.
	Let $j_0\in \NN$ be arbitrarily fixed. We now prove that $2^{j_0/2}\tilde{\psi}^L(2^{j_0}\cdot)$ is the unique biorthogonal element in $L_2([0,\infty))$ corresponding to $2^{j_0/2}\mathring{\psi}^L(2^{j_0}\cdot)\in \AS_0(\Phi;\Psi)_{[0,\infty)}$.
	By the definition of $\tilde{\psi}^L$, we have
	\[
	\la 2^{j_0/2}\tilde{\psi}^L(2^{j_0}\cdot), \psi_{j;k}\ra=\la \tilde{\psi}^L, \psi_{j-j_0;k}\ra=0,\qquad \forall\; j\ge j_0, k\ge n_{\tilde{\phi}}
	\]
	and
	\[
	\la 2^{j_0/2}\tilde{\psi}^L(2^{j_0}\cdot), 2^{j/2}\mathring{\psi}^L(2^j\cdot)\ra=
	\la \tilde{\psi}^L, 2^{(j-j_0)/2}\mathring{\psi}^L(2^{j-j_0}\cdot)\ra=\td(j-j_0)I_{\#\tilde{\psi}^L},\qquad \forall\, j\ge j_0.
	\]
	Define the set $S_{j_0}$ as in \eqref{Sj0}.  As we proved before, every element in $S_{j_0}$ must be a finite linear combination of
	 $\{2^{j_0/2}\mathring{\phi}^L(2^{j_0}\cdot)\}
	\cup\{ \phi_{j_0;k} \setsp k\ge n_{\tilde{\phi}}\}$.
	By the definition of $\tilde{\psi}^L$, we must have
	\[
	\la 2^{j_0/2}\tilde{\psi}^L(2^{j_0}\cdot),
	 2^{j_0/2}\mathring{\phi}^L(2^{j_0}\cdot)\ra
	=\la \tilde{\psi}^L, \mathring{\phi}^L\ra=0
	\]
	and
	\[
	\la 2^{j_0/2}\tilde{\psi}^L(2^{j_0}\cdot),
	\phi_{j_0;k}\ra=
	\la \tilde{\psi}^L, \phi(\cdot-k)\ra=0, \qquad \forall\, k\ge n_{\tilde{\phi}}.
	\]
	Consequently, we proved that $\la 2^{j_0/2}\tilde{\psi}^L(2^{j_0}\cdot),h\ra=0$ for all $h\in S_{j_0}$.
	This shows that
	$2^{j_0/2}\tilde{\psi}^L(2^{j_0}\cdot)$ must be the unique biorthogonal element in $L_2([0,\infty))$ corresponding to $2^{j_0/2}\mathring{\psi}^L(2^{j_0}\cdot)\in \AS_0(\Phi;\Psi)_{[0,\infty)}$.
	In summary, we proved $\tilde{\cB}=\AS_0(\tilde{\Phi};\tilde{\Psi})_{[0,\infty)}$.
	Note that $\#\tilde{\phi}^L=\#\mathring{\phi}^L
	 =\#\phi^L+(n_{\tilde{\phi}}-n_\phi)(\#\phi)$ and $\#\tilde{\psi}^L=\#\psi^L
	 =\#\psi^L+(n_{\tilde{\psi}}-n_\psi)(\#\psi)$. Therefore, \eqref{cardinality} holds. To complete the proof of item (1) in \cref{thm:wbd:0},
	next we prove that both $\tilde{\phi}^L$ and $\tilde{\psi}^L$ must have compact support. Since $(\{\tilde{\phi};\tilde{\psi}\},\{\phi;\psi\})$ is a biorthogonal wavelet in $\Lp{2}$, we can expand $\tilde{\phi}^L$ in \eqref{expr} with $J=0$ as follows:
	\[
	\tilde{\phi}^L=\sum_{k\in \Z}
	\la \tilde{\phi}^L, \phi(\cdot-k)\ra \tilde{\phi}(\cdot-k)+\sum_{j=0}^\infty \sum_{k\in \Z} \la \tilde{\phi}^L,\psi_{j;k}\ra\tilde{\psi}_{j;k}.
	\]
	Since $\tilde{\phi}^L$ is perpendicular to all elements in $\AS_0(\Phi;\Psi)_{[0,\infty)}\bs \{\mathring{\phi}^L\}$, we see from the above identity that
	\be \label{phiL:refstr}
	 \tilde{\phi}^L=\sum_{k=-\infty}^{n_{\tilde{\phi}}-1}
	\la \tilde{\phi}^L, \phi(\cdot-k)\ra \tilde{\phi}(\cdot-k)+\sum_{j=0}^\infty \sum_{k=-\infty}^{n_{\tilde{\psi}}-1} \la \tilde{\phi}^L,\psi_{j;k}\ra\tilde{\psi}_{j;k},
	\ee
	from which we deduce that $\tilde{\phi}^L$ must be supported inside $(-\infty,M]$ with $M:=\max(n_{\tilde{\phi}}+h_{\tilde{\phi}}, n_{\tilde{\psi}}+h_{\tilde{\psi}})$. Because $\tilde{\phi}^L$ lies in $L_2([0,\infty))$ and hence is supported inside $[0,\infty)$, we conclude that $\tilde{\phi}^L$ must have compact support with $\fs(\tilde{\phi}^L)\subseteq [0,M]$. By the same argument, we can prove that \eqref{phiL:refstr} holds with $\tilde{\phi}^L$ being replaced by $\tilde{\psi}^L$ and hence
	$\tilde{\psi}^L$ also has compact support. This proves item (1).
	
	We now prove item (2).
	Since $\AS_0(\Phi;\Psi)_{[0,\infty)}$ and $\AS_0(\tilde{\Phi};\tilde{\Psi})_{[0,\infty)}$ form a pair of biorthogonal Riesz bases in $L_2([0,\infty))$, by a simple scaling argument (e.g., see \cite[Proposition~4 and (2.6)]{han12} and \cite[Theorem 4.3.3]{hanbook}), it is straightforward to verify that $\AS_J(\Phi;\Psi)_{[0,\infty)}$ and $\AS_J(\tilde{\Phi};\tilde{\Psi})_{[0,\infty)}$ form a pair of biorthogonal Riesz bases in $L_2([0,\infty))$ for every $J\in \Z$. Expanding $\tilde{\phi}^L$ under the biorthogonal basis formed by $\AS_1(\tilde{\Phi};\tilde{\Psi})_{[0,\infty)}$ and $\AS_1(\Phi;\Psi)_{[0,\infty)}$,
	we have
	\[
	\tilde{\phi}^L=\sum_{h\in \AS_1(\Phi;\Psi)_{[0,\infty)}} \la \tilde{\phi}^L, h\ra \tilde{h}=
	2 \la \tilde{\phi}^L, \mathring{\phi}^L(2\cdot)\ra  \tilde{\phi}^L(2\cdot)+
	2\sum_{k=n_{\tilde{\phi}}}^\infty \la \tilde{\phi}^L, \phi(2\cdot-k)\ra \tilde{\phi}(2\cdot-k),
	\]
	since $\la \tilde{\phi}^{L}, h\ra=0$ for all $h \in \Psi(2^j\cdot)$ with $j\ge 0$. Hence, \eqref{I:phi:dual} holds with $\tilde{A}_L:=\la \tilde{\phi}^L, \mathring{\phi}^L(2\cdot)\ra$ and $\tilde{A}(k):=\la \tilde{\phi}^L, \phi(2\cdot-k)\ra$ for $k\ge n_{\tilde{\phi}}$. Since both $\tilde{\phi}^L$ and $\phi$ have compact support, the sequence $A$ must be finitely supported.
	The identity
	in \eqref{I:psi:dual} can be proved similarly by expanding $\tilde{\psi}^L$ instead of $\tilde{\phi}^L$, under the biorthogonal basis formed by $\AS_1(\tilde{\Phi};\tilde{\Psi})_{[0,\infty)}$ and $\AS_1(\Phi;\Psi)_{[0,\infty)}$.
	Using the same argument as in the proof of \eqref{nphi},
	we see that
	the identities in \eqref{I:tphi} and \eqref{I:tpsi} follow directly from
	\eqref{refstr:dual} and the assumption that $n_{\tilde{\phi}}\ge \max(-l_{\tilde{\phi}},-l_a)$ and $n_{\tilde{\psi}}\ge \max(-l_{\tilde{\psi}}, \frac{n_{\tilde{\phi}}-l_{\tilde{b}}}{2})$.
	This proves item (2).
	
	To prove item (3), since $(\AS_0(\tilde{\Phi};\tilde{\Psi})_{[0,\infty)},
	\AS_0(\Phi;\Psi)_{[0,\infty)})$ is a pair of biorthogonal Riesz bases in $L_2([0,\infty))$, noting that
	$\Phi(2\cdot) \perp \tilde{\Psi}(2^j\cdot)$ for all $j\ge 1$, for $\eta\in \Phi(2\cdot)$ we have
	\[
	\eta=\la \eta, \tilde{\phi}^L\ra \mathring{\phi}^L+\sum_{k=n_{\tilde{\phi}}}^\infty
	\la \eta, \tilde{\phi}(\cdot-k)\ra \phi(\cdot-k)+
	\la \eta, \tilde{\psi}^L\ra \mathring{\psi}^L+\sum_{k=n_{\tilde{\psi}}}^\infty
	\la \eta, \tilde{\psi}(\cdot-k)\ra \psi(\cdot-k).
	\]
	Since all functions in $\Phi\cup \Psi\cup\tilde{\Phi}\cup\tilde{\Psi}$ have compact support and $\Phi\cup \Psi$ is a Riesz sequence,
	we conclude from the above identity that item~(3) holds.
\end{proof}

\begin{proof}[Proof of Theorem \ref{thm:wbd}]
	By assumption $\phi^L\cup \psi^L\subseteq \HH{\tau}$ for some $\tau>0$, since $\AS_0(\phi;\psi)$ is a Bessel sequence in $\Lp{2}$,  $\psi$ must have at least one vanishing moment and we conclude from \cref{thm:phi:bessel} that $\AS_J(\Phi;\Psi)_{[0,\infty)}$ is a Bessel sequence in $L_2([0,\infty))$. Similarly, by $\tilde{\phi}^L\cup \tilde{\psi}^L\subseteq \HH{\tau}$ for some $\tau>0$, we conclude from \cref{thm:phi:bessel} that $\AS_J(\tilde{\Phi};\tilde{\Psi})_{[0,\infty)}$ is a Bessel sequence in $L_2([0,\infty))$.
	Now the rest of the argument is quite standard for proving that $\AS_J(\tilde{\Phi};\tilde{\Psi})_{[0,\infty)}$ and $\AS_J(\Phi;\Psi)_{[0,\infty)}$
	form a pair of biorthogonal Riesz bases in $L_2([0,\infty))$.
	By scaling, it suffices to prove the claim for $J=0$. Define $S_{j_0}$ as in \eqref{Sj0} for $j_0\in \N$. Using items (i)--(iv) and the same argument as in the proof of \cref{thm:wbd:0}, we see that every element in $S_{j_0}$ is a finite linear combination of $\Phi(2^{j_0}\cdot):=\{ \phi^L(2^{j_0}\cdot)\}\cup \{ \phi(2^{j_0}\cdot-k) \setsp k\ge n_\phi\}$.
	Now by the biorthogonality between $\Phi\cup \Psi$ and $\tilde{\Phi}\cup\tilde{\Psi}$,
	it follows from
	the same argument as in the proof of \cref{thm:wbd:0} that $\AS_J(\Phi;\Psi)_{[0,\infty)}$ and $\AS_J(\tilde{\Phi};\tilde{\Psi})_{[0,\infty)}$ must be biorthogonal to each other in $L_2([0,\infty))$.
	Consequently, by the standard argument (e.g., see the proof of (4)$\imply$(1) in \cref{thm:rz}), we conclude that both $\AS_J(\Phi;\Psi)_{[0,\infty)}$ and $\AS_J(\tilde{\Phi};\tilde{\Psi})_{[0,\infty)}$
	are Riesz sequences in $L_2([0,\infty))$.
	By item (iii), we have $\si_0(\Psi)=\si_1(\Phi)\cap (\si_0(\tilde{\Phi}))^\perp$. For any $f\in \si_1(\Phi)$, define $g:=\sum_{\eta\in \Phi} \la f,\tilde{\eta}\ra \eta$. Then $f-g \perp \tilde{\Phi}$ and we conclude that $f=g+(f-g)$ such that $g\in \si_0(\Phi)$ and $f-g\in \si_1(\Phi)\cap (\si_0(\tilde{\Phi}))^\perp=\si_0(\Psi)$. This proves $\si_1(\Phi)\subseteq \si_0(\Phi\cup \Psi)$. Because $\si_0(\Phi\cup \Psi)\subseteq \si_1(\Phi)$ is trivial, we conclude that $\si_0(\Phi\cup \Psi)=\si_1(\Phi)$.
	By the scaling argument, we must have \[
\si_0(\Phi\cup \Psi\cup\Psi(2\cdot)\cup \cdots \cup \Psi(2^{j-1}\cdot))=
	\si_1(\Phi\cup \Psi\cup \cdots \cup \Psi(2^{j-2}\cdot))=\cdots= \si_j(\Phi).
\]
	Hence, $\si_0(\AS_0(\Phi;\Psi)_{[0,\infty)})$ contains $\cup_{j=1}^\infty \si_j(\Phi)$, which includes the subset $\cup_{j=1}^\infty \{ \phi(2^j\cdot-k) \setsp k\ge n_\phi\}$, whose linear span is dense in $L_2([0,\infty))$ due to $\lim_{j_0\to \infty}
	2^{-j_0} n_\phi=0$. Therefore, the linear span of $\AS_0(\Phi;\Psi)_{[0,\infty)}$ is dense in $L_2([0,\infty))$. This proves that $\AS_0(\Phi;\Psi)_{[0,\infty)}$ is a Riesz basis of $L_2([0,\infty))$. Similarly, $\AS_0(\tilde{\Phi};\tilde{\Psi})_{[0,\infty)}$ is also a Riesz basis of $L_2([0,\infty))$.
	This completes the proof of \cref{thm:wbd}.
\end{proof}

\begin{proof}[Proof of \cref{thm:integral}]
	Define $\phi_j:=\phi(\cdot-j)\chi_{[0,1]}$ for $j=1-h_\phi,\ldots,-l_\phi$ (for other $j\in \Z$, $\phi_j$ is identically zero).
	By $\phi=2\sum_{k\in \Z} a(k) \phi(2\cdot-k)$,
	for $j\in \Z$, we have $\phi(\cdot-j)=
	2\sum_{k\in \Z} a(k-2j) \phi(2\cdot-k)$.
	Multiplying $\chi_{[0,1]}$ on both sides of this identity,
	we particularly have
	\[
	\phi_j(x)=\phi(x-j)\chi_{[0,1]}(x)=
	2\sum_{k\in \Z} a(k-2j) \phi(2x-k)\chi_{[0,1]}(x).
	\]
	Note that
	\begin{align*}
	 &\phi(2x-k)\chi_{[0,1/2]}(x)=\phi(2x-k)\chi_{[0,1]}(2x)=
	 \Big[\phi(\cdot-k)\chi_{[0,1]}(\cdot)\Big](2x)
	=\phi_k(2x),\\
	 &\phi(2x-k)\chi_{[1/2,1]}(x)=\phi(2x-k)\chi_{[0,1]}(2x-1)=
	\Big[ \phi(\cdot+1-k)\chi_{[0,1]}(\cdot)\Big](2x-1)
	=\phi_{k-1}(2x-1).
	\end{align*}
	Hence, we have
	\begin{align*}
	\phi_j(x)&=
	2\sum_{k\in \Z} a(k-2j) \phi(2x-k)\chi_{[0,1]}(x)
	=2\sum_{k\in \Z} a(k-2j) [\phi_k(2x)+\phi_{k-1}(2x-1)]\\
	&=2\sum_{k=1-h_\phi}^{-l_\phi} a(k-2j)\phi_k (2x)
	+2\sum_{k=1-h_\phi}^{-l_\phi} a(k+1-2j)\phi_k(2x-1),
	\end{align*}
	since all $\phi_k$ for $k\in \Z \bs [1-h_\phi,-l_\phi]$ are identically zero.
	By $\vec{\phi}=(\phi_{1-h_\phi},\ldots,\phi_{-l_\phi})^\tp$, this proves the first identity in \eqref{vecphi:refeq}. The proof of the second identity in \eqref{vecphi:refeq} is similar.
	
	We now prove \eqref{Mphi}. Noting that $\vec{\phi}(2x)$ is supported inside $[0,1/2]$ and $\vec{\phi}(2x-1)$ is supported inside $[1/2,1]$, we deduce from \eqref{vecphi:refeq} that
	\begin{align*}
	M&=\int_0^1 \vec{\tilde{\phi}}(x)\ol{ \vec{\phi}(x)}^\tp dx\\
	&=4 \int_0^1 \Big( \tilde{A}_0 \vec{\tilde{\phi}}(2x)+\tilde{A}_1 \vec{\tilde{\phi}}(2x-1)\Big)
	\Big( \ol{\vec{\phi}(2x)}^\tp \ol{A_0}^\tp
	+\ol{\vec{\tilde{\phi}}(2x-1)}^\tp \ol{A_1}^\tp\Big) dx\\
	&=4 \tilde{A}_0 \int_0^1 \vec{\tilde{\phi}}(2x) \ol{\vec{\phi}(2x)}^\tp dx \ol{A_0}^\tp+4\tilde{A}_1 \int_0^1 \vec{\tilde{\phi}}(2x-1) \ol{\vec{\phi}(2x-1)}^\tp dx \ol{A_1}^\tp\\
	&=2\tilde{A}_0 \int_0^1 \vec{\tilde{\phi}}(x) \ol{\vec{\phi}(x)}^\tp dx \ol{A_0}^\tp+2\tilde{A}_1 \int_0^1 \vec{\tilde{\phi}}(x) \ol{\vec{\phi}(x)}^\tp dx \ol{A_1}^\tp\\
	&=2\tilde{A}_0 M \ol{A_0}^\tp+
	2\tilde{A}_1 M \ol{A_1}^\tp.
	\end{align*}
	This proves \eqref{Mphi}. We now prove that up to a multiplicative constant \eqref{Mphi} has a unique solution. By $\mbox{vec}(M)$ we denote the column vector by arranging the columns of $M$ one by one.
	Then \eqref{Mphi} is equivalent to
	\be \label{Mphi:2}
	T\mbox{vec}(M)=\mbox{vec}(M)\quad \mbox{and}\quad
	T:=2(\ol{A_0}\otimes \tilde{A}_0+\ol{A_1}\otimes \tilde{A}_1),
	\ee
	where $\otimes$ stands for the right Kronecker product of matrices.
	To prove (S3), it suffices to prove that $1$ is a simple eigenvalue of the matrix $T$ in \eqref{Mphi:2}.
	Define $s:=\#\vec{\phi}$ and $\tilde{s}:=\#\vec{\tilde{\phi}}$.
	Note that $\vec{\phi}$ is a compactly supported refinable vector function in $\Lp{2}$ and the integer shifts of $\vec{\phi}$ are linearly independent. By \cite[Corollary~5.6.12 and Proposition~5.6.2]{hanbook}, we conclude that $1$ must be a simple eigenvalue of $A_0+A_1$ and the mask/filter associated with $\vec{\phi}$ must have at least order one sum rule, that is, by \eqref{sr},
	there must exist a nontrivial row vector $\vec{v}\in \C^{1\times s}$ such that
	 $\vec{v}A_0=\vec{v}A_1=\frac{1}{2}\vec{v}$ and
	$\vec{v}\wh{\vec{\phi}}(0)=1$.
	Since $1$ is a simple eigenvalue of $A_0+A_1$, such row vector $\vec{v}$ must be unique by $\vec{v}(A_0+A_1)=\vec{v}$.
	Similarly, there exists a unique row vector $\vec{\tilde{v}}\in \C^{1\times \tilde{s}}$ such that
	 $\vec{\tilde{v}}\tilde{A}_0=\vec{\tilde{v}}\tilde{A}_1=\frac{1}{2}\vec{\tilde{v}}$ and $\vec{\tilde{v}}\wh{\vec{\tilde{\phi}}}(0)=1$.
	By $\vec{v}A_0=\vec{v}A_1=\frac{1}{2}\vec{v}$ and $\vec{\tilde{v}}\tilde{A}_0=\vec{\tilde{v}}\tilde{A}_1=\frac{1}{2}\vec{\tilde{v}}$, we trivially have
	\begin{align*}
	(\ol{\vec{v}} \otimes \vec{\tilde{v}})T
	&=(\ol{\vec{v}} \otimes \vec{\tilde{v}}) 2(\ol{A_0}\otimes \tilde{A}_0+\ol{A_1}\otimes \tilde{A}_1)
	=2(\ol{\vec{v}A_0})\otimes (\vec{\tilde{v}}\tilde{A}_0)+
	2(\ol{\vec{v}A_1})\otimes (\vec{\tilde{v}} \tilde{A}_1)\\
	&=\frac{1}{2}(\ol{\vec{v}} \otimes \vec{\tilde{v}})+\frac{1}{2}(\ol{\vec{v}} \otimes \vec{\tilde{v}})
	=\ol{\vec{v}} \otimes \vec{\tilde{v}}.
	\end{align*}
	Hence $1$ must be an eigenvalue of $T$.
	Next we prove that the eigenvalue $1$ of $T$ has multiplicity one by employing the joint spectral radius technique in \cite{hj98}.
	Let $U$ be the space of all column vectors $u\in \C^s$ such that $\vec{v} u=0$. By $\vec{v}A_0=\vec{v}A_1=\frac{1}{2}\vec{v}$, it is trivial to observe that $A_0 U\subseteq U$ and $A_1 U\subseteq U$. Since all the entries in $\vec{\phi}$ are compactly supported functions in $\Lp{2}$ and the integer shifts of $\vec{\phi}$ are linearly independent, we must have (e.g., see \cite[Theorems~5.6.11 and~5.7.4]{hanbook}. Also c.f. \cite[Theorem~3.3]{hj98}) that
	\be \label{sm:tz}
	\lim_{n\to \infty} 2^{n/2} \|\{A_0,A_1\}^n u\|_{l_2}=0, \qquad \forall \, u\in U
	\ee
	and for every $w\in \C^s$, there exists a positive constant $C_w$ such that
	\be \label{sm:tz:2}
	2^{n/2}\|\{A_0,A_1\}^n w\|_{l_2}\le C_w,\qquad \forall\, n\in \N,
	\ee
	where as in \cite[Section~2]{hj98} we define
	\[
	\| \{A_0,A_1\}^n u\|^2_{l_2}:=\sum_{\gamma_1=0}^1\cdots\sum_{\gamma_n=0}^1
	\| A_{\gamma_1}\cdots A_{\gamma_n} u\|^2.
	\]
	Similar conclusions hold for $\tilde{A}_0$ and $\tilde{A}_1$.
	Take particular vectors $w:=\wh{\vec{\phi}}(0)$ and
	 $\tilde{w}:=\wh{\vec{\tilde{\phi}}}(0)$.
	Hence, we must have $\vec{v} w=1$ and $\vec{\tilde{v}}\tilde{w}=1$.
	Now considering $T^n (\ol{u}\otimes \tilde{w})$ with $u\in U$ and using the Cauchy-Schwarz inequality, we conclude that
	\[
	\|T^n (\ol{u}\otimes \tilde{w})\|
	\le
	2^n \| \{A_0, A_1\} u \|_{l_2}
	\| \{\tilde{A}_0, \tilde{A_1}\} \tilde{w}\|_{l_2}
	\le C_{\tilde{w}} 2^{n/2} \| \{A_0, A_1\} u \|_{l_2} \to 0
	\]
	as $n\to \infty$. Similarly, for $\tilde{u}\in \tilde{U}$, as $n\to \infty$, we have
	\[
	\|T^n (\ol{w}\otimes \tilde{u})\|
	\le 2^n \| \{A_0, A_1\} w \|_{l_2} \| \{\tilde{A}_0, \tilde{A_1}\} \tilde{u}\|_{l_2}
	\le C_w 2^{n/2} \| \{\tilde{A}_0,\tilde{A}_1\} \tilde{u} \|_{l_2} \to 0.
	\]
	Also, for all $u\in U$ and $\tilde{u}\in \tilde{U}$, we similarly have $\lim_{n\to \infty}
	\|T^n (\ol{u}\otimes \tilde{u})\|=0$.
	Note that $w$ and $U$ span the whole space $\C^s$ while $\tilde{w}$ and $\tilde{U}$ span $\C^{\tilde{s}}$, where $s:=\#\vec{\phi}$ and $\tilde{s}:=\#\vec{\tilde{\phi}}$.
	The above three identities prove that all the other eigenvalues of $T$ must be less than one in modulus. Hence, $1$ is a simple eigenvalue of $T$. Hence, up to a multiplicative constant, $M$ is the unique solution to \eqref{Mphi}.
	
	Note that $\sum_{k\in \Z} \vec{v}\vec{\phi}(\cdot-k)=1$ and
	$\sum_{k\in \Z} \vec{\tilde{v}}\vec{\tilde{\phi}}(\cdot-k)=1$.
	Since both $\vec{\phi}$ and $\vec{\tilde{\phi}}$ are supported inside $[0,1]$, we must have $\vec{v}\vec{\phi}(x)=1$ and $\vec{\tilde{v}}\vec{\tilde{\phi}}(x)=1$ for almost every $x\in [0,1]$ and hence
	\[
	\vec{\tilde{v}} M \ol{\vec{v}}^\tp=\int_0^1 \vec{\tilde{v}} \vec{\tilde{\phi}}(x)
	\ol{ \vec{v}\vec{\phi}(x)}^\tp dx=
	\int_0^1 1 dx
	=1.
	\]
	This proves \eqref{Mphi:normalize} and completes the proof.
\end{proof}

\begin{proof}[Proof of \cref{thm:direct}]
	By $m_\phi=\max(2n_\phi+h_{\tilde{a}},2n_\psi+h_{\tilde{b}})$, we have $m_\phi\ge 2n_\phi+h_{\tilde{a}}\ge n_\phi$ since $n_\phi\ge -l_a\ge -h_{\tilde{a}}$.
	By the definition of $n_{\tilde{\phi}}$ and $n_{\tilde{\psi}}$, we trivially have $n_{\tilde{\phi}}\ge \max(-l_{\tilde{\phi}}, -l_{\tilde{a}})$ and $n_{\tilde{\psi}}\ge \max(-l_{\tilde{\psi}}, \frac{n_{\tilde{\phi}}-l_{\tilde{b}}}{2})$. Therefore, \eqref{I:tphi} and \eqref{I:tpsi} must hold.
	We now prove \eqref{phi2k0:2}.
	By the definition of $n_{\tilde{\phi}}$ and $n_{\tilde{\psi}}$,
	we also have $n_{\tilde{\phi}}\ge 1-l_{\tilde{a}}$ and $n_{\tilde{\psi}}\ge \lceil \frac{n_{\tilde{\phi}}
		-l_{\tilde{b}}+1}{2}\rceil$.
	Hence, we have $\frac{k_0-l_{\tilde{a}}}{2}\le n_{\tilde{\phi}}-1$ and $\frac{k_0-l_{\tilde{b}}}{2}\le n_{\tilde{\psi}}-1$ for all $k_0<n_{\tilde{\phi}}$. Therefore, for all $k_0\in \Z$ satisfying $m_\phi\le k_0<n_{\tilde{\phi}}$, it follows from \eqref{phi2k0} and \cref{lem:W} that
	\eqref{phi2k0:2} must hold.
	
	Define (infinite) column vector functions by
	\[
	\vec{\phi}:=\{ \phi(\cdot-k) \setsp k\ge n_{\tilde{\phi}}\}
	\quad \mbox{and}\quad
	\vec{\psi}:=\{\psi(\cdot-k) \setsp k\ge n_{\tilde{\psi}}\}.
	\]
	Abusing notations a little bit by using the same notations for augmented $A_L, B_L$ and $A, B$ with $\phi^L,\psi^L$ being replaced by $\mathring{\phi}^L,\mathring{\psi}^L$, respectively, we can equivalently rewrite \eqref{nphi}, \eqref{npsi}, \eqref{I:phi} and \eqref{I:psi} as
	\be \label{M}
	{{\begin{bmatrix}
				\mathring{\phi}^L\\
				\vec{\phi}\\
				\mathring{\psi}^L\\
				\vec{\psi}
	\end{bmatrix}}}
	=2 \mathcal{M} \begin{bmatrix} \mathring{\phi}^L(2\cdot)\\
		\vec{\phi}(2\cdot)\end{bmatrix}
	\quad \mbox{with}\quad
	\mathcal{M}:=
	{{\begin{bmatrix}
				A_L &M_A\\
				0 &M_a\\
				B_L &M_B\\
				0 &M_b
	\end{bmatrix}}},
	\ee
	where $M_A, M_B, M_{a}, M_b$ are matrices associated with filters $A, B, a, b$, respectively.
	More precisely, using \eqref{I:phi}, \eqref{I:psi}, \eqref{nphi} and \eqref{npsi},
	we have
	$M_A:=[A(k)]_{n_{\tilde{\phi}}\le k<\infty}$ (which is equivalent to $M_A \vec{\phi}=\sum_{k=n_{\tilde{\phi}}}^\infty A(k) \phi(\cdot-k)$),
	$M_B:=[B(k)]_{n_{\tilde{\phi}}\le k<\infty}$, and
	\be \label{Mab}
	M_a:=[a(k-2k_0)]_{n_{\tilde{\phi}}\le k_0<\infty, n_{\tilde{\phi}}\le k<\infty}
	\quad\mbox{and}\quad M_b:=[b(k-2k_0)]_{n_{\tilde{\psi}}\le k_0<\infty, n_{\tilde{\phi}}\le k<\infty},
	\ee
	where $k_0$ is row index and $k$ is column index.
	By $n_{\tilde{\phi}}\ge m_\phi$,
	we see from \cref{lem:W} and \eqref{phi2k0} that
	\be \label{vec:phi}
	\vec{\phi}(2\cdot)=
	\ol{M_{\tilde{A}}}^\tp \mathring{\phi}^L+\ol{M_{\tilde{a}}}^\tp \vec{\phi}+\ol{M_{\tilde{B}}}^\tp \mathring{\psi}^L+\ol{M_{\tilde{b}}}^\tp \vec{\psi},
	\ee
	where $M_{\tilde{A}}, M_{\tilde{B}}, M_{\tilde{a}}, M_{\tilde{b}}$ are matrices uniquely determined by the filters $\tilde{a}$ and $\tilde{b}$.
	More precisely,
	\be \label{tMAB}
	M_{\tilde{A}}:=
	\begin{bmatrix}
		0_{(\#\phi^L)\times \infty}\\
		(\tilde{a}(k-2k_0))_{n_\phi\le k_0<n_{\tilde{\phi}},
			n_{\tilde{\phi}}\le k<\infty}
	\end{bmatrix},
	\qquad
	M_{\tilde{B}}:=
	\begin{bmatrix}
		0_{(\#\psi^L)\times \infty}\\
		(\tilde{b}(k-2k_0))_{n_\psi\le k_0<n_{\tilde{\psi}},
			n_{\tilde{\phi}}\le k<\infty}
	\end{bmatrix},
	\ee
	and $M_{\tilde{a}}$, $M_{\tilde{b}}$ are defined similarly as in
	\eqref{Mab} using $\tilde{a}$ and $\tilde{b}$ instead of $a$ and $b$, where $k_0$ is row index and $k$ is column index.
	Therefore, we deduce from \eqref{inv:phiL:2} and the above identity in \eqref{vec:phi} that
	\be \label{tM}
	\begin{bmatrix}
		\mathring{\phi}^L(2\cdot)\\
		\vec{\phi}(2\cdot)\end{bmatrix}
	=\ol{\tilde{\mathcal{M}}}^\tp
	{{ \begin{bmatrix}
				\mathring{\phi}^L\\
				\vec{\phi}\\
				\mathring{\psi}^L\\
				\vec{\psi}
	\end{bmatrix}}}
	\quad \mbox{with}\quad
	\tilde{\mathcal{M}}:=
	{{\begin{bmatrix}
				\tilde{A}_L &M_{\tilde{A}}\\
				0 &M_{\tilde{a}}\\
				\tilde{B}_L &M_{\tilde{B}}\\
				0 &M_{\tilde{b}}
	\end{bmatrix}}}.
	\ee
	By assumption, $\Phi$ is a Riesz sequence in $L_2([0,\infty))$ and hence linearly independent.
	By item (i), the elements in $\Phi\cup \Psi$ must be linearly independent.
	Consequently, we deduce from \eqref{M} and \eqref{tM}
	that $\mathcal{\tilde{M}} \ol{\mathcal{M}}^\tp=2^{-1}I$ and $\ol{\mathcal{M}}^\tp\tilde{\mathcal{M}} =2^{-1}I$, where $I$ here stands for the infinite identity matrix.
	
	We now prove that $\tilde{A}$ and $\tilde{B}$ in \eqref{tAB} are finitely supported.
	For all $k\ge 2n_{\tilde{\phi}}+h_{\tilde{a}}$, we have $k-2k_0>h_{\tilde{a}}$ for all $k_0=n_\phi,\ldots,n_{\tilde{\phi}}-1$ and hence $\tilde{a}(k-2k_0)=0$.
	So, $\{\tilde{A}(k)\}_{k= n_{\tilde{\phi}}}^\infty$ in \eqref{tAB} is finitely supported.
	Similarly, for all $k\ge 2n_{\tilde{\psi}}+h_{\tilde{b}}$, we have $k-2k_0>h_{\tilde{b}}$ for all $k_0=n_{\psi},\ldots,n_{\tilde{\psi}}-1$ and hence $\tilde{b}(k-2k_0)=0$. So, $\{\tilde{B}(k)\}_{k= n_{\tilde{\phi}}}^\infty$ in \eqref{tAB} is finitely supported.
	Since $\rho(\tilde{A}_L)<2^{-1/2}$ in \eqref{tAL}, we conclude from \cref{thm:Phi:direct} that $\tilde{\phi}^L$ in \eqref{tphi:implicit} is a well-defined compactly supported vector function in $L_2([0,\infty))\cap \HH{\tau}$ for some $\tau>0$ and satisfies \eqref{I:phi:dual}.
	Since $\tilde{B}$ is finitely supported, $\tilde{\psi}^L$ in \eqref{tpsi:implicit} is a well-defined compactly supported vector function in
	$L_2([0,\infty))\cap \HH{\tau}$ and satisfies \eqref{I:psi:dual}.
	
	We now prove that $\tilde{\Phi}$ must be biorthogonal to $\Phi$.
	Define $\vec{\tilde{\phi}}:=\{\tilde{\phi}(\cdot-k) \setsp k\ge n_{\tilde{\phi}}\}$.
	By \eqref{tMAB},
	we have $M_{\tilde{A}} \vec{\tilde{\phi}}
	=\sum_{k=n_{\tilde{\phi}}}^\infty \tilde{A}(k) \tilde{\phi}(\cdot-k)$.
	Since we assumed that $\Phi$ is a Riesz sequence,
	we conclude from \cref{thm:rz}  that item (4) of \cref{thm:rz} holds. If necessary, enlarging $n_{\tilde{\phi}}$, then we can assume that $\tilde{H}=\tilde{\eta}^L \cup \vec{\tilde{\phi}}$ in item (4) of \cref{thm:rz} is biorthogonal to $\Phi$ with $\#\tilde{\eta}^L=\#\mathring{\phi}^L$.
	Define $f_0:=\mathring{\phi}^L$ and $\tilde{f}_0:=\tilde{\eta}^L$.
	For $n\in \N$, we can recursively define
	\be \label{fn}
	f_n:=2A_L f_{n-1}(2\cdot)+g(2\cdot) \quad \mbox{and}\quad \tilde{f}_n:=2\tilde{A}_L \tilde{f}_{n-1}(2\cdot)+\tilde{g}(2\cdot),\qquad n\in \N,
	\ee
	where $g:=2\sum_{k=n_{\tilde{\phi}}}^\infty A(k) \phi(\cdot-k)$, $\tilde{g}$ is given in \eqref{tphi:implicit}, and $A_L$ and $A$ are augmented version in \eqref{I:phi}.
	Let $F_n:=f_n\cup \vec{\phi}$ and $\tilde{F}_n:=\tilde{f}_n\cup \vec{\tilde{\phi}}$. By the choice of $\tilde{f}_0=\tilde{\eta}^L$  and $f_0=\mathring{\phi}^L$, we have
	$\tilde{F}_0=\tilde{H}$ and
	$F_0=\Phi$. Therefore,
	$\tilde{F}_0$ is biorthogonal to $F_0$ by \cref{thm:rz}. Suppose that $\tilde{F}_{n-1}$ is biorthogonal to $F_{n-1}$ (induction hypothesis), i.e., $\la \tilde{F}_{n-1},F_{n-1}\ra=I$.
	We now prove the claim for $n$. Note that
	\be \label{NtN}
	F_n=2\mathcal{N} F_{n-1}(2\cdot),\quad
	\tilde{F}_n=2\tilde{\mathcal{N}} \tilde{F}_{n-1}(2\cdot)
	\quad \mbox{with}\quad
	\mathcal{N}:={{ \begin{bmatrix} A_L &M_A\\
				0 &M_a\end{bmatrix}}},
	\quad
	\tilde{\mathcal{N}}:={{ \begin{bmatrix} \tilde{A}_L &M_{\tilde{A}}\\
				0 &M_{\tilde{a}}\end{bmatrix}}}.
	\ee
	It follows trivially from the identity
	 $\tilde{\mathcal{M}}\ol{\mathcal{M}}^\tp=2^{-1}I$
	that $\tilde{\mathcal{N}}\ol{\mathcal{N}}^\tp=2^{-1}I$.
	Therefore, by induction hypothesis $\la \tilde{F}_{n-1},F_{n-1}\ra=I$, we have
	\[
	\la \tilde{F}_n,F_n\ra
	=4 \tilde{\mathcal{N}} \la \tilde{F}_{n-1}(2\cdot), F_{n-1}(2\cdot) \ra \ol{\mathcal{N}}^\tp
	=2 \tilde{\mathcal{N}}\ol{\mathcal{N}}^\tp=I.
	\]
	This proves the claim for $n$.
	By mathematical induction,
	we proved that $\tilde{F}_n$ is biorthogonal to $F_n$ for all $n\in \N$.
	By \eqref{I:phi:dual} and \eqref{fn} with $f_0=\mathring{\phi}^L$,
	we trivially have $f_n=\mathring{\phi}^L$ and hence, $F_n=\Phi$ for all $n\in \N$.
	So, $\tilde{F}_n$ is biorthogonal to $\Phi$ for all $n\in \N$. We deduce from the definition of $\tilde{f}_n$ in \eqref{fn} that
	\[
	\tilde{f}_n=2^{n} \tilde{A}_L^n \tilde{f}_0(2^{n}\cdot)+
	\sum_{j=1}^{n} 2^{j-1} \tilde{A}_L^{j-1} g(2^{j}\cdot).
	\]
	Since $\rho(\tilde{A}_L)<2^{-1/2}$ and $\|2^{n} \tilde{A}_L^n \tilde{f}_0(2^{n}\cdot)\|_{\Lp{2}}
	\le \|\tilde{f}_0\|_{\Lp{2}} 2^{n/2} \|\tilde{A}_L^n\|$, we conclude that
\[
\lim_{n\to \infty} \|2^{n} \tilde{A}_L^n \tilde{f}_0(2^{n}\cdot)\|_{\Lp{2}}=0.
\]
This proves $\lim_{n\to \infty} \|\tilde{f}_n-\tilde{\phi}^L\|_{\Lp{2}}=0$.
	Since $\tilde{\Phi}=\lim_{n\to \infty} \tilde{F_n}$ in $\Lp{2}$ and $\tilde{F}_n$ is biorthogonal to $\Phi$, we conclude that $\tilde{\Phi}$ must be biorthogonal to $\Phi$.
	
	By \eqref{tMAB},
	we have $\tilde{\psi}^L=2\tilde{B}_L\tilde{\phi}^L(2\cdot)+
	2M_{\tilde{B}} \vec{\tilde{\phi}}(2\cdot)$.
	Also note that
	$\tilde{\phi}^L=2\tilde{A}_L \tilde{\phi}^L(2\cdot)+2M_{\tilde{A}}\vec{\tilde{\phi}}(2\cdot)$, $\vec{\tilde{\phi}}=2M_{\tilde{a}} \vec{\tilde{\phi}}(2\cdot)$, and
	 $\vec{\tilde{\psi}}=2M_{\tilde{b}}\vec{\tilde{\phi}}(2\cdot)$ with $\vec{\tilde{\psi}}:=\{\tilde{\psi}(\cdot-k) \setsp k\ge n_{\tilde{\psi}}\}$.
	Using the identities $\tilde{\mathcal{M}} \ol{\mathcal{M}}^\tp=2^{-1}I$ and $\ol{\mathcal{M}}^\tp \tilde{\mathcal{M}}=2^{-1}I$, we can now check that all items (i)--(iv) of \cref{thm:wbd} are satisfied.
	Because we assumed $\phi^L\subseteq \HH{\tau}$ for some $\tau>0$, by the choice of $\psi^L$ in item (i), we must have $\psi^L\subseteq \HH{\tau}$. Note that we already proved $\tilde{\phi}^L\cup \tilde{\psi}^L \subseteq \HH{\tau}$ for some $\tau>0$.
	Since all the conditions in \cref{thm:wbd} are satisfied,
	we conclude from \cref{thm:wbd} that
	 $\AS_J(\tilde{\Phi};\tilde{\Psi})_{[0,\infty)}$ and $\AS_J(\Phi;\Psi)_{[0,\infty)}$ form a pair of biorthogonal Riesz bases in $L_2([0,\infty))$ for every $J\in \Z$.
\end{proof}

\begin{proof}[Proof of \cref{thm:direct:tPhi}]
	By the choice of $n_{\tilde{\phi}}$ in item (S1) of \cref{alg:tPhi}, for all $k\ge n_{\tilde{\phi}}$, we have $\la \tilde{\phi}(\cdot-k),\phi(\cdot-k)\ra=I_r$ and $\la \tilde{\phi}(\cdot-k), \eta\ra=0$ for all $\eta\in \Phi \bs \{\phi(\cdot-k)\}$. Define $\vec{\phi}:=\{\phi(\cdot-k) \setsp k\ge n_{\tilde{\phi}}\}$ and $\vec{\tilde{\phi}}:=\{\tilde{\phi}(\cdot-k) \setsp k\ge n_{\tilde{\phi}}\}$. Then \eqref{I:phi} and \eqref{I:phi:dual} become
	\[
	\mathring{\phi}^L=2A_L \mathring{\phi}^L(2\cdot)+2M_A\vec{\phi}(2\cdot),\qquad
	\tilde{\phi}^L=2\tilde{A}_L \tilde{\phi}^L(2\cdot)+2M_{\tilde{A}}
	\vec{\tilde{\phi}}(2\cdot),
	\]
	where $M_A:=[A(k)]_{n_{\tilde{\phi}} \le k<\infty}$ and $M_{\tilde{A}}:=[\tilde{A}(k)]_{n_{\tilde{\phi}}\le k<\infty}$ with $k$ being the column index. Since $n_{\tilde{\phi}}\ge \max(-l_{\tilde{\phi}},-l_{\tilde{a}})$ and $n_{\tilde{\phi}}\ge n_\phi\ge \max(-l_\phi,-l_a)$,
	by \eqref{nphi} and \eqref{I:tphi}, we must have $\vec{\phi}=2M_a \vec{\phi}(2\cdot)$ and $\vec{\tilde{\phi}}=2 M_{\tilde{a}} \vec{\tilde{\phi}}(2\cdot)$, where $M_a$ is defined in \eqref{Mab} and $M_{\tilde{a}}$ is defined similarly.
	Define $\mathcal{N}$ and $\tilde{\mathcal{N}}$ as in \eqref{NtN}.
	Using \eqref{bwfb} and \eqref{filter:biorth}, we must have
	 $\tilde{\mathcal{N}}\ol{\mathcal{N}}^\tp=2^{-1}I$.
	Since $\Phi$ is a Riesz sequence,
	by the same argument as in the proof of \cref{thm:direct}, we conclude that $\tilde{\Phi}$ is biorthogonal to $\Phi$ and $\tilde{\Phi}$ satisfies item (ii) of \cref{thm:wbd}.
\end{proof}

\begin{proof}[Proof of \cref{thm:bw:0N}]
	Let $j\ge J_0$. By assumption in (S3), all the boundary elements in $\Phi_j\cup \Psi_j$ belong to $L_2([0,N])$.
	Since $\fs(\phi(\cdot-k))\subseteq [0,\infty)$ for all $k\ge n_\phi$,
	to show $\Phi_j\subseteq L_2([0,N])$, it suffices to prove that
	$\fs(\phi(\cdot-k))\subseteq (-\infty,2^j N]$ for all $k\le 2^jN-n_{\mathring{\phi}}$, which is equivalent to $\phi(2^j N-\cdot-k)
	\in L_2([0,\infty))$.
	For all $k\le 2^jN-n_{\mathring{\phi}}$, we note that $\phi(2^j N-\cdot-k)
	=\mathring{\phi}(\cdot-(2^j N-k))$
	and $2^j N-k \ge n_{\mathring{\phi}}$.
	By the definition of $n_{\mathring{\phi}}$, we must have
\[
\phi(2^j N-\cdot-k)=\mathring{\phi}(\cdot-(2^j N-k))\subseteq
	L_2([0,\infty)).
\]
This proves $\phi_{j;k}\in L_2([0,N])$ for all $n_\phi\le k\le 2^j N-n_{\mathring{\phi}}$. Consequently, we proved $\Phi_j\subseteq L_2([0,N])$ for all $j\ge J_0$. Similarly, we have $\Psi_j\subseteq L_2([0,N])$ for all $j\ge J_0$.
	
	We now prove item (1).
	By \eqref{I:psi} and $h_B+n_{\mathring{\phi}}\le 2^{j+1}N$ in \eqref{J0}, we have $h_B\le 2^{j+1}N-n_{\mathring{\phi}}$ and
	\[
	\psi^L_{j;0}=\sqrt{2}B_L \phi^L_{j+1;0}+\sqrt{2}\sum_{k=n_\phi}^{h_B} B(k)\phi_{j+1;k}=\sqrt{2}B_L \phi^L_{j+1;0}+\sqrt{2}\sum_{k=n_\phi}^{2^{j+1}N-n_{\mathring{\phi}}} B(k)\phi_{j+1;k}
	\]
	with $B(h_B+1)=\cdots=B(2^{j+1}N-n_{\mathring{\phi}})=0$ due to $[l_B,h_B]=\fs(B)$ and $2^{j+1}N\ge h_B+n_{\mathring{\phi}}$ in \eqref{J0} for all $j\ge J_0$.
	By $n_{\mathring{\psi}}\ge \max(-l_{\mathring{\psi}}, \frac{n_{\mathring{\phi}}-l_{\mathring{b}}}{2})$, we have $l_{\mathring{b}}+2n_{\mathring{\psi}}\ge n_{\mathring{\phi}}$. Since $\mathring{b}=b(-\cdot)$, we have $l_{\mathring{b}}=-h_b$ and hence, we proved $h_b-2n_{\mathring{\psi}}\le -n_{\mathring{\phi}}$, from which we get $h_b+2(2^jN-n_{\mathring{\psi}})\le 2^{j+1}N-n_{\mathring{\phi}}$.
	By $n_\psi\ge \max(-l_\psi, \frac{n_\phi-l_b}{2})$, we have $l_b+2n_\psi\ge n_\phi$.
	So,
	for every $k=n_\psi,\ldots, 2^jN-n_{\mathring{\psi}}$,
	we have $n_\phi\le l_b+2k\le h_b+2k\le 2^{j+1}N-n_{\mathring{\phi}}$ and thus we deduce from $\psi=2\sum_{n=l_b}^{h_b} b(n) \phi(2\cdot-n)$ that
	\[
	\psi_{j;k}=\sqrt{2} \sum_{n=l_b+2k}^{h_b+2k}
	b(n-2k) \phi_{j+1;n}=\sqrt{2} \sum_{n=n_\phi}^{2^{j+1}N-n_{\mathring{\phi}}}
	b(n-2k) \phi_{j+1;n},\qquad k=n_\psi,\ldots, 2^jN-n_{\mathring{\psi}}.
	\]
	By \eqref{I:psi} for $\mathring{\psi}$ and $h_{\mathring{B}}+n_{\phi}\le 2^{j+1}N$ in \eqref{J0}, noting that $\psi^R=\psi^L(N-\cdot)$ and
	$\mathring{\phi}=\phi(-\cdot)$,
	we have $2^{j+1}N-h_{\mathring{B}}\ge
	n_{\phi}$ and
	\begin{align*}
	\psi^R_{j;2^jN-N}
	&=\mathring{\psi}^L_{j;0}(N-\cdot)\\
	&=\sqrt{2} \mathring{B}_L \mathring{\phi}^L_{j+1;0}(
	 N-\cdot)+\sqrt{2}\sum_{k=n_{\mathring{\phi}}}
	^{h_{\mathring{B}}} \mathring{B}(k) \mathring{\phi}_{j+1;k}(N-\cdot)\\
	&=\sqrt{2} \mathring{B}_L \phi^R_{j+1;2^{j+1}N-N}+\sqrt{2}\sum_{k=n_{\mathring{\phi}}}
	^{h_{\mathring{B}}} \mathring{B}(k) \phi_{j+1;2^{j+1}N-k}\\
	&=\sqrt{2} \mathring{B}_L \phi^R_{j+1;2^{j+1}N-N}+\sqrt{2}\sum^{2^{j+1}N-n_{\mathring{\phi}}}
	_{k=n_\phi} \mathring{B}(2^{j+1}N-k) \phi_{j+1;k},
	\end{align*}
	where we used $2^{j+1}N-h_{\mathring{B}}\ge
	n_{\phi}$ due to our assumption
	$h_{\mathring{B}}+n_\phi\le 2^{j+1}N$ in \eqref{J0} for $j\ge J_0$.
	Hence, we proved the existence of a matrix $B_j$ such that $\Psi_j=B_j \Phi_{j+1}$. The existence of a matrix $A_j$ can be proved similarly by the same argument.
	Similarly we can prove the first part of item (2).
	
	Due to item (S1), by item (ii) of \cref{thm:wbd} or \eqref{cardinality} in \cref{thm:wbd:0}, we must have
	 $\#\tilde{\phi}^L-\#\phi^L=(n_{\tilde{\phi}}-n_\phi)(\#\phi)$ and
	 $\#\tilde{\mathring{\phi}}^L-\#\mathring{\phi}^L=
	 (n_{\tilde{\mathring{\phi}}}-n_{\mathring{\phi}})(\#\phi)$, from which we have
	\[
	 \#\tilde{\phi}^L+\#\tilde{\mathring{\phi}}^L
	 -(n_{\tilde{\mathring{\phi}}}+n_{\tilde{\phi}})
	(\#\phi)
	=(\#\phi^L+\#\mathring{\phi}^L)
	-(n_{\mathring{\phi}}+n_{\phi})
	(\#\phi).
	\]
	By \eqref{reflection}, we have $\#\tilde{\phi}^R=\#\tilde{\mathring{\phi}}^L$ and
	$\#\phi^R=\#\mathring{\phi}^L$.
	Note that  $2^j N\ge n_\phi+n_{\mathring{\phi}}-1$ for all $j\ge J_0$ by \eqref{J0} and $2^j N\ge n_{\tilde{\phi}}+n_{\tilde{\mathring{\phi}}}-1$ for all $j\ge \tilde{J}_0$ by \eqref{tJ0}. Consequently,
	\be \label{Phij:count}
	\#\Phi_j=\#\phi^L+\#\mathring{\phi}^L
	 +(2^jN-n_{\mathring{\phi}}-n_{\phi}+1)(\#\phi),\qquad j\ge J_0.
	\ee
	Using the above two identities and $\tilde{J}_0\ge J_0$, for $j\ge \tilde{J}_0$, we deduce that
	\[ \#\tilde{\Phi}_j=\#\tilde{\phi}^L+\#\tilde{\mathring{\phi}}^L
	 +(2^jN-n_{\tilde{\mathring{\phi}}}-n_{\tilde{\phi}}+1)(\#\phi)
	=\#\phi^L+\#\mathring{\phi}^L
	 +(2^jN-n_{\mathring{\phi}}-n_{\phi}+1)(\#\phi)=\#\Phi_j.
	\]
	By \cref{thm:wbd}, we must have
	 $\#\tilde{\psi}^L-\#\psi^L=(n_{\tilde{\psi}}-n_\psi)(\#\psi)$ and
	 $\#\tilde{\mathring{\psi}}^L-\#\mathring{\psi}^L=
	 (n_{\tilde{\mathring{\psi}}}-n_{\mathring{\psi}})(\#\psi)$. By the same argument, we must have $\#\tilde{\Psi}_j=\#\Psi_j$.
	By the proved identities $\#\tilde{\Phi}_j=\#\Phi_j$ and $\#\tilde{\Psi}_j=\#\Psi_j$ for $j\ge \tilde{J}_0$, we observe from \eqref{disjointPhiPsi} that $\tilde{\Phi}_j\cup \tilde{\Psi}_j$ is biorthogonal to $\Phi_j\cup \Psi_j$ for all $j\ge \tilde{J}_0$.
	
	Since $\tilde{\Phi}_j\cup \tilde{\Psi}_j$ is biorthogonal to $\Phi_j\cup \Psi_j$, the proved identities $\Phi_j=A_j\Phi_{j+1}$ and $\Psi_j=B_j\Phi_{j+1}$ imply $\#\Phi_j+\#\Psi_j\le \#\Phi_{j+1}$ and $\Phi_j\cup \Psi_j$ is a Riesz sequence. To prove the other direction,
	by \eqref{phi2k0},
	\[
	\phi_{j+1;m}=\sqrt{2}\sum_{k=\lceil \frac{m-h_{\tilde{a}}}{2}\rceil}
	^{\lfloor \frac{m-l_{\tilde{a}}}{2}\rfloor}
	\ol{\tilde{a}(m-2k)}^\tp\phi_{j;k}+
	\sqrt{2}\sum_{k=\lceil \frac{m-h_{\tilde{b}}}{2}\rceil}
	^{\lfloor \frac{m-l_{\tilde{b}}}{2}\rfloor}
	\ol{\tilde{b}(m-2k)}^\tp \psi_{j;k}.
	\]
	Define $m_1:=\max(2n_\phi+h_{\tilde{a}}, 2n_\psi+h_{\tilde{b}})$ and $m_2:=\max(2n_{\mathring{\phi}}-l_{\tilde{a}},
	2n_{\mathring{\psi}}-l_{\tilde{b}})$. Using \eqref{lem:W}, we conclude from the above identity that $\phi_{j+1;m}\in \mbox{span}(\Phi_j\cup\Psi_j)$ for all $m_1\le m\le 2^{j+1}N-m_2$.
	On the other hand, for sufficiently large $j$, it follows directly from item (3) of \cref{thm:wbd:0} that
\[
\phi^L_{j+1;0} \cup\{\phi_{j+1;k}\}_{k=n_\phi}^{m_1-1}\subseteq \mbox{span}(\Phi_j\cup\Psi_j)
\]
and
\[
\phi^R_{j+1;2^jN-N}\cup \{\phi_{j+1;k}\}_{k=2^{j+1}N-m_2+1}^{2^{j+1}N-n_{\mathring{\phi}}}
	\subseteq \mbox{span}(\Phi_j\cup\Psi_j).
\]
This proves that $\Phi_{j+1}\subseteq \mbox{span}(\Phi_j\cup \Psi_j)$ for sufficiently large $j$. Since $\Phi_j\cup \Psi_j$ is a Riesz sequence, we conclude from $\Phi_{j+1}\subseteq \mbox{span}(\Phi_j\cup \Psi_j)$ that $\#\Phi_{j+1}\le \#\Phi_j+\#\Psi_j$. Hence, we proved $\#\Phi_{j+1}= \#\Phi_j+\#\Psi_j$ and we deduce from \eqref{Phij:count} that
	$\#\Psi_j=\#\Phi_{j+1}-\#\Phi_j=
	2^jN(\#\phi)$ for sufficiently large $j$.
	Note that $2^j N\ge n_\psi+n_{\mathring{\psi}}-1$ for all $j\ge J_0$ by \eqref{J0} and
	$2^j N\ge n_{\tilde{\psi}}+n_{\tilde{\mathring{\psi}}}-1$ for all $j\ge \tilde{J}_0$ by \eqref{tJ0}.
	From the definition of $\Psi_j$, noting that $\#\psi=\#\phi$ (see item (3) of \cref{thm:bw}) and $\#\Psi_j=2^j N(\#\phi)$, we have
	\be \label{cardmatchPsi}
	\#\psi^L+\#\psi^R=\#\Psi_j-(2^j N-n_\psi-n_{\mathring{\psi}}+1)(\#\psi)=
	(n_\psi+n_{\mathring{\psi}}-1)(\#\phi).
	\ee
	Now for any arbitrary $j\ge J_0$, by definition  of $\Psi_j$ in \eqref{Psij} and the above identity, 
	we have
	\[
	 \#\Psi_j=\#\psi^L+\#\psi^R+(2^jN-n_{\mathring{\psi}}-n_\psi+1)(\#\phi)
	=(n_\psi+n_{\mathring{\psi}}-1)(\#\phi)
	 +(2^jN-n_{\mathring{\psi}}-n_\psi+1)(\#\phi)
	=2^j N(\#\phi).
	\]
	Consequently, $\#\Psi_j=2^j N(\#\phi)=\#\Phi_{j+1}-\#\Phi_j$ and hence $\#\Phi_{j+1}= \#\Phi_j+\#\Psi_j$ for all $j\ge J_0$.
	This proves both items (1) and (2).
	
	Next we prove item (3).
	By proved items (1) and (2),
	$[\ol{A_j}^\tp, \ol{B_j}^\tp]$
	must be a square matrix for all $j\ge J_0$ and $[\tilde{A}_j^\tp, \tilde{B}_j^\tp]$ must be a square matrix for all $j\ge \tilde{J}_0$. Since $\tilde{\Phi}_j\cup \tilde{\Psi}_j$ is biorthogonal to $\Phi_j\cup \Psi_j$, we must have \eqref{refstr:inv}.
	Now by items (1) and (2),
	we can directly check that $\cB_J$ and $\tilde{\cB}_J$ are biorthogonal to each other for all $J\ge \tilde{J}_0$.
	Note that $\cB_{J}$ is a Bessel sequence, since $\cB_{J} \subseteq \AS_{J}(\Phi;\Psi)_{[0,\infty)} \cup \{\eta(N-\cdot): \eta \in\AS_{J}(\mathring{\Phi};\mathring{\Psi})_{[0,\infty)}\}$. By a similar reasoning, $\tilde{\cB}_{J}$ is also a Bessel sequence.
	By the same standard argument as in the proof of (4)$\imply$(1) in \cref{thm:rz}, both $\cB_{J}$ and $\tilde{\cB}_J$ are Riesz sequences in $L_2([0,N])$ for all $J\ge \tilde{J}_0$.
	By the proved item (1), we have $\mbox{span}(\cB_J)\supset \cup_{j=J}^\infty \{\phi_{j;k} \setsp n_\phi\le k\le 2^j N-n_{\mathring{\phi}}\}$, which spans  a dense subset of $L_2([0,N])$. That is, $\mbox{span}(\cB_J)$ is dense in $L_2([0,N])$ and hence,
	$\cB_J$ must be a Riesz basis of $L_2([0,N])$. Similarly, we can prove that $\tilde{\cB}_J$ is a Riesz basis of $L_2([0,N])$ for all $J\ge \tilde{J}_0$.
	Since $\cB_J$ and $\tilde{\cB}_J$ are biorthogonal to each other,
	this proves that $\tilde{\cB}_J$ and $\cB_J$ form a pair of biorthogonal Riesz bases for $L_2([0,N])$ for all $J\ge \tilde{J}_0$. This proves item (3).
	
	Using item (3), item (4) can be easily proved by the same argument as in \cref{lem:vm}.
	
	By item (3),
	for $J_0\le J<\tilde{J}_0$ such that $J$ decreases from $\tilde{J}_0-1$ to $J_0$, using \eqref{refstr:inv} and the biorthogonality relation between $\cB_J$ and $\tilde{\cB}_J$, we can recursively prove that $\tilde{\cB}_J$ and $\cB_J$ form a pair of biorthogonal Riesz bases for $L_2([0,N])$. This proves item (5).
\end{proof}

\end{document}